\newdimen\hoogte    \hoogte=12pt    
\newdimen\breedte   \breedte=14pt   
\newdimen\dikte     \dikte=0.5pt    
\newenvironment{Young}{\begingroup
       \def\vr{\vrule height0.8\hoogte width\dikte depth 0.2\hoogte}
       \def\fbox##1{\vbox{\offinterlineskip
                    \hrule height\dikte
                    \hbox to \breedte{\vr\hfill##1\hfill\vr}
                    \hrule height\dikte}}
       \vbox\bgroup \offinterlineskip \tabskip=-\dikte \lineskip=-\dikte
            \halign\bgroup &\fbox{##\unskip}\unskip  \crcr }
       {\egroup\egroup\endgroup}
\def\diagram#1{\relax\ifmmode\vcenter{\,\begin{Young}#1\end{Young}\,}\else%
              $\vcenter{\,\begin{Young}#1\end{Young}\,}$\fi}
\theoremstyle{plain}
\newtheorem{thm}{Theorem}[subsection]
\newtheorem{lem}[thm]{Lemma}
\newtheorem*{thme}{Theorem}
\newtheorem{prp}[thm]{Proposition}
\newtheorem{dfn}[thm]{Definition}
\newtheorem{cor}[thm]{Corollary}
\newtheorem{rmk}[thm]{Remark}
\newtheorem{exa}[thm]{Example}
\newenvironment{pff}{{\em Proof:}}{\QED}
\newcommand{\af}{\alpha}
\newcommand{\bt}{\beta}
\newcommand{\gm}{\gamma}
\newcommand{\dt}{\delta}
\newcommand{\ep}{\varepsilon}
\newcommand{\zt}{\zeta}
\newcommand{\te}{\theta}
\newcommand{\ld}{\lambda}
\newcommand{\sm}{\sigma}
\newcommand{\kp}{\kappa}
\newcommand{\ph}{\varphi}
\newcommand{\om}{\omega}
\newcommand{\Dt}{\Delta}
\newcommand{\Ph}{\Phi}
\newcommand{\Om}{\Omega}
\newcommand{\del}{\partial}
\newcommand{\Q}{{\mathbb{Q}}}
\newcommand{\Z}{{\mathbb{Z}}}
\newcommand{\R}{{\mathbb{R}}}
\newcommand{\C}{{\mathbb{C}}}
\newcommand{\Sl}{{\mathfrak{sl}}}
\newcommand{\gl}{{\mathfrak{gl}}}
\newcommand{\g}{{\mathfrak{g}}}
\newcommand{\h}{{\mathfrak{h}}}
\newcommand{\n}{{\mathfrak{n}}}
\newcommand{\q}{{\mathfrak{q}}}
\newcommand{\p}{{\mathfrak{p}}}
\renewcommand{\b}{{\mathfrak{b}}}
\newcommand{\He}{{\mathcal{H}^{\mathrm{aff}}}}
\newcommand{\Cl}{{\mathcal{C}\ell}}
\newcommand{\Se}{{\mathcal{S}}}
\newcommand{\A}{{\mathcal{A}}}
\newcommand{\ASe}{{\mathcal{H}_{\Cl}^{\mathrm{aff}}}}
\renewcommand{\P}{{\mathcal{P}}}
\renewcommand{\L}{{\mathcal{L}}}
\newcommand{\M}{{\mathcal{M}}}
\newcommand{\U}{{\mathcal{U}}}
\newcommand{\curlyQ}{{\mathcal{Q}}}
\newcommand{\F}{{\mathcal{F}}}
\newcommand{\W}{{\mathcal{W}}}
\newcommand{\str}{\mathrm{str}}
\newcommand{\ch}{\operatorname{ch}}
\newcommand{\ind}{\operatorname{Ind}}
\newcommand{\res}{\operatorname{Res}}
\newcommand{\coind}{\operatorname{Coind}}
\newcommand{\Ker}{\operatorname{Ker}}
\newcommand{\Hom}{\operatorname{Hom}}
\newcommand{\End}{\operatorname{End}}
\newcommand{\Rep}{\operatorname{Rep}}
\newcommand{\Rad}{\operatorname{Rad}}
\newcommand{\height}{\mathrm{ht}}
\renewcommand{\Pr}{{P^+_{\mathrm{rat}}}}
\newcommand{\Pt}{{P^{++}}}
\newcommand{\Pp}{{P_{\mathrm{poly}}^+}}
\newcommand{\Prt}{{P^{++}_{\mathrm{rat}}}}
\newcommand{\Ppt}{{P^{++}_{\mathrm{poly}}}}
\newcommand{\Ppos}{{P_{\geq0}}}
\newcommand{\zero}{{\bar{0}}}
\newcommand{\one}{{\bar{1}}}
\newcommand{\oi}{{\bar{i}}}
\newcommand{\oj}{{\bar{j}}}
\newcommand{\ok}{{\bar{k}}}
\newcommand{\e}{{\bar{e}}}
\newcommand{\f}{{\bar{f}}}
\newcommand{\ui}{{\underline{i}}}
\newcommand{\uj}{{\underline{j}}}
\newcommand{\la}{{\langle}}
\newcommand{\ra}{{\rangle}}
\newcommand{\va}{{\mathbf{1}}}
\newcommand{\andeqn}{\,\,\,\,\,\, {\mbox{and}} \,\,\,\,\,\,}
\newcommand{\QED}{\rule{0.4em}{2ex}}
\newcommand{\fq}{\ensuremath{\mathfrak{q}}}
\newcommand{\fg}{\ensuremath{\mathfrak{g}}}
\newcommand{\fb}{\ensuremath{\mathfrak{b}}}
\newcommand{\fh}{\ensuremath{\mathfrak{h}}}
\newcommand{\fnminus}{\ensuremath{\mathfrak{n}^{-}}}
\newcommand{\even}{\ensuremath{\bar{0}}}
\newcommand{\odd}{\ensuremath{\bar{1}}}
\numberwithin{equation}{subsection}
\begin{document}
\title{Degenerate Affine Hecke-Clifford Algebras and Type $Q$ Lie Superalgebras}

\author{David Hill }
\address{Department of Mathematics \\
            University of California, Berkeley \\
            Berkeley, CA 94720-3840}
\email{dhill1@math.berkeley.edu}
\author{Jonathan R. Kujawa}
\address{Department of Mathematics \\
            University of Oklahoma \\
            Norman, OK 73019}
\email{kujawa@math.ou.edu}
\author{Joshua Sussan}
\address{Department of Mathematics \\
            University of California, Berkeley \\
            Berkeley, CA 94720-3840}
\email{sussan@math.berkeley.edu}
\thanks{Research of the second author was partially supported by NSF grant
DMS-0734226. Research of the first and third author was partially supported by
NSF EMSW21-RTG grant DMS-0354321}\
\date{\today}
\subjclass[2000]{Primary 20C08,20C25; Secondary 17B60,17B20,17B37}

\begin{abstract} We construct the finite dimensional simple integral modules for the (degenerate) affine Hecke-Clifford algebra (AHCA), $\ASe(d)$. Our construction includes an
analogue of Zelevinsky's segment representations, a complete combinatorial
description of the simple calibrated $\ASe(d)$-modules, and a classification of the simple
integral $\ASe(d)$-modules. Our main tool is an analogue of the
Arakawa-Suzuki functor for the Lie superalgebra $\q(n)$.
\end{abstract}

\maketitle

\section{Introduction}\label{S:Intro}
\subsection{} Throughout this paper, we will work over the ground field $\C$. As is well
known,
 the symmetric group, $S_d$, has a non-trivial \emph{central extension}:
\[
\xymatrix{1\ar[r]&\Z/2\Z\ar[r]&\widehat{S}_d\ar[r]&S_d\ar[r]&1}.
\]
The double cover $\widehat{S}_d$ is generated by elements
$\zt,\hat{s}_1,\ldots,\hat{s}_{d-1}$, where $\zt$ is central, $\zt^2=1$, and
the $\hat{s}_i$ satisfy the relations
$\hat{s}_i\hat{s}_{i+1}\hat{s}_i=\hat{s}_{i+1}\hat{s}_i\hat{s}_{i+1}$ and
$\hat{s}_j\hat{s}_i=\zt\hat{s}_i\hat{s}_j$ for admissible $i$ and $j$
satisfying $|i-j|>1$. The \emph{projective} or $\emph{spin}$ representations of
$S_d$ are the linear representations of $\widehat{S}_d$ which factor through $\C\widehat{S}_d/(\zt+1)$.
This paper is a study of some structures arising from the
projective representation theory of symmetric groups.

The double cover $\widehat{S}_d$ suffers a defect: it is difficult to define
parabolic induction, see \cite[Section 4]{stem}. Since the inductive approach
to the study of linear representations of the symmetric group is so effective,
it is preferable to study the \emph{Sergeev algebra} $\Se(d)$ introduced in
\cite{s,n}, which provides a natural fix to this problem. As a vector space,
$\Se(d)=\Cl(d)\otimes\C S_d$, where $\Cl(d)$ is the $2^d$-dimensional Clifford
algebra with generators $c_1,\ldots,c_d$ subject to the relations $c_i^2=-1$
and $c_ic_j=-c_jc_i$ for $i\neq j$, and $\C S_d$ is the group algebra of $S_d$. Let
$s_i=(i,i+1)\in S_d$ be the $i$th basic transposition, and identify $\Cl(d)$
and $\C S_d$ with the subspaces $\Cl(d)\otimes 1$ and $1\otimes\C S_d$
respectively. Multiplication is defined so that $\Cl(d)$ and $\C S_d$ are
subalgebras, and $wc_i=c_{w(i)}w$ for all $1\leq i\leq d$ and $w\in S_d$. The
Sergeev algebra admits a natural definition of parabolic induction and the
projective representation theory of the symmetric group can be recovered from
that of $\Se(d)$, \cite[Theorem 3.4]{bk1}.

Additionally, the Sergeev algebra is a \emph{superalgebra}, and plays the role of the
symmetric group for a super version of Schur-Weyl duality known as Sergeev
duality in honor of A. N. Sergeev who extended the classical theorem of Schur
and Weyl \cite{s}.  If $V=\C^{n|n}$ is the standard representation of the Lie
superalgebra $\q(n)$, then both $\Se(d)$ and $\q(n)$ act on the tensor product
$V^{\otimes d}$ and each algebra is the commutant algebra of the other. In particular, there exists an isomorphism of superalgebras
\[
\Se(d)\rightarrow\End_{\q(n)}(V^{\otimes d}).
\]

The algebra $\Se(d)$ admits an affinization, $\ASe(d)$, called the (degenerate) affine Hecke-Clifford algebra (AHCA). The affine Hecke-Clifford
algebra was introduced by Nazarov in \cite{n} and studied in \cite{n,bk2,kl,w}. As a
vector space, $\ASe(d)=\P_d[x]\otimes\Se(d)$, where
$\P_d[x]=\C[x_1,\ldots,x_d]$. We identify $\P_d[x]$ and $\Se(d)$ with the
subspaces $\P_d[x]\otimes 1$ and $1\otimes\Se(d)$. Multiplication is defined so
that these are subalgebras, $c_ix_j=x_jc_i$ if $j\neq i$, $c_ix_i=-x_ic_i$,
$s_ix_j=x_js_i$ if $j\neq i,i+1$, and
\[
s_ix_i=x_{i+1}s_i-1+c_ic_{i+1}.
\]
In addition to $\Se (d)$ being a subalgebra of $\ASe(d)$, there also exists a natural
surjection $\ASe(d)\twoheadrightarrow\Se(d)$ obtained by mapping $x_1\mapsto
0$, $c_i\mapsto c_i$ and $s_i\mapsto s_i$. Therefore, the representation theory
of the AHCA contains that of the Sergeev algebra.

Surprisingly little is explicitly known about the representation theory of
$\ASe(d)$, in contrast with its linear counterpart, the \emph{(degenerate)
affine Hecke algebra} $\He(d)$. The most significant contribution to the
projective theory is from \cite{bk2,kl}, which describe the Grothendieck group
of the full subcategory of \emph{integral} $\ASe(d)$-modules in terms of the
crystal graph associated to a maximal nilpotent subalgebra of $\b_\infty$ (or,
more generally, $A_{2\ell}^{(2)}$ if working over a field of odd prime
characteristic $2\ell-1$). We will return to this important topic later
on.

The algebra $\He(d)$ has been studied for many years. Of particular interest
are those modules for $\He(d)$ which admit a generalized weight space
decomposition with respect to the polynomial generators. It is known that among
these modules it is enough to consider those for which the generalized
eigenvalues of the polynomial generators are integers, cf. \cite[$\S7.1$]{kl}.
These are known as \emph{integral modules}. As discovered in \cite{n}, the appropriate
analogue of integral modules for $\ASe(d)$ are those which admit a generalized
weight space decomposition with respect to the $x_i^2$, and the generalized
eigenvalues of the $x_i^2$ are of the form $q(a):=a(a+1)$, $a\in\Z$.

The finite dimensional, irreducible, integral modules for $\He(d)$ were
classified by Zelevinsky in \cite{z} via combinatorial objects known as
multisegments. A segment is an interval $[a,b]\in\Z$. To each segment $[a,b]$
with $d=b-a+1$, Zelevinsky associates a 1-dimensional $\He(d)$-module
$\C_{[a,b]}$ defined from the trivial representation of $\C S_d$ by letting
$x_1$ act by the scalar $a$. A multisegment may be regarded as a pair of
compositions $(\bt,\af)=((b_1,\ldots,b_n),(a_1,\ldots,a_n))\in\Z^n\times\Z^n$,
with $d_i=b_i-a_i\geq 0$. If $d=d_1+\cdots+d_n$, Zelevinsky associates to the
multisegment $(\bt,\af)$ a \emph{standard cyclic} $\He(d)$-module
\[
\M(\bt,\af) =\ind_{\He(d_1)\otimes\cdots\otimes\He(d_n)}^{\He(d)}
\C_{[a_1,b_1-1]}
    \boxtimes\cdots\boxtimes\C_{[a_n,b_n-1]}.
\]
To explain the classification, let $P=\Z^n$ be the weight lattice associated to
$\gl_n(\C)$, $P^+$ the dominant weights, and $\rho=(n-1,\ldots,1,0)$. Additionally, define the weights
\[
\Ppos(d)=\{\mu\in\Z^n_{\geq0}\mid \mu_1+\cdots+\mu_n=d\}\andeqn P^+[\ld]=\{\mu\in P\mid \mu_i\geq\mu_j\mbox{ whenever }\ld_i=\ld_j\}.
\]
Given
$\ld\in P^+$, let
\begin{align}\label{E:Bsubd}
\mathcal{B}_d[\ld]=\{\mu\in P^+[\ld]\mid \ld-\mu\in\Ppos(d)\},
\end{align}
and
\[
\mathcal{A}_d=\{(\ld,\mu)\mid \ld\in P^+,\mbox{ and
}\mu\in\mathcal{B}_d[\ld+\rho]\}.
\]
Then, the set $\{\L(\bt,\af)\mid (\bt,\af)\in\A_d\}$ is a complete list of
irreducible integral $\He(d)$-modules.

In the case of $\ASe(d)$, the situation is more subtle. To describe this, fix a
segment $[a,b]$. The obvious analogue of the trivial representation of $\C S_d$
is the $2^d$-dimensional basic spin representation $\Cl_d=\Cl(d).1$ of
$\Se(d)$. If $a=0$, the action of $\ASe(d)$ factors through $\Se(d)$ and it can
be checked that $\Cl_d$ is the desired segment representation. If $a\neq 0$, it
is not immediately obvious how to proceed. Inspiration comes from a \emph{rank
1} application of the functor described below. We define a module structure on
the \emph{double} of $\Cl_d$: $\hat{\Ph}_{[a,b]}=\Ph_a\otimes\Cl_d$, where
$\Ph_a$ is a 2-dimensional Clifford algebra. The module $\hat{\Ph}_{[a,b]}$ is
not irreducible, but decomposes as a direct sum of irreducibles
$\Ph_{[a,b]}^+\oplus\Ph_{[a,b]}^-$, where $\Ph_{[a,b]}^+$ and $\Ph_{[a,b]}^-$
are isomorphic via an \emph{odd} isomorphism.  Let  $\Ph_{[a,b]}$ denote one of these simple summands. Now, given a multisegment
$(\ld,\mu)$, with $\ld_i-\mu_i=d_i$ and $d=d_1+\cdots+d_n$, we define the
standard cyclic module
\[
\M(\ld,\mu)=\ind_{\ASe(d_1)\otimes\cdots\otimes\ASe(d_n)}^{\ASe(d)}
    \Ph_{[\mu_1,\ld_1-1]}\circledast\cdots\circledast\Ph_{[\mu_n,\ld_n-1]},
\]
where $\circledast$ is an analogue of the outer tensor product adapted for
superalgebras, see section \ref{S:Prelim} below.

A weight $\ld\in P$ is called typical if $\ld_i+\ld_j\neq0$ for all $i\neq j$.
Let
\[
\Pt=\{\ld\in P^+\mid \lambda_{1}\geq \dotsb  \geq \lambda_{n}, \text{ and } \ld_i+\ld_j\neq 0\mbox{ for all }i\neq j\}
\]
be the set of
dominant typical weights. We prove

\begin{thme}
Assume that $\ld\in\Pt$ and $\mu\in\mathcal{B}_d[\ld]$. Then, $\M(\ld,\mu)$ has
a unique simple quotient, denoted $\L(\ld,\mu)$.
\end{thme}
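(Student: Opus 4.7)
The plan is to analyze $\M(\ld,\mu)$ through its generalized weight space decomposition for the commuting family $x_1^2,\ldots,x_d^2$, and to exhibit a distinguished ``top'' weight space that controls all proper submodules.

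First, I would fix a cyclic generator $v^+=1\otimes(v_1\circledast\cdots\circledast v_n)$ of $\M(\ld,\mu)$, where $v_i$ is a chosen generating vector of the segment module $\Ph_{[\mu_i,\ld_i-1]}$. Since the generator of $\Ph_{[a,b]}$ carries generalized $x^2$-weight $(q(a),q(a+1),\ldots,q(b))$, the vector $v^+$ is a generalized eigenvector for $x_1^2,\ldots,x_d^2$ with weight
\[
\chi^+=(q(\mu_1),\ldots,q(\ld_1-1),q(\mu_2),\ldots,q(\ld_n-1)).
\]

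Next, the relation $c_ix_i=-x_ic_i$ implies $c_ix_j^2=x_j^2c_i$ for all $i,j$, so $\Cl(d)\subseteq\ASe(d)$ preserves the generalized weight decomposition $\M(\ld,\mu)=\bigoplus_\chi\M(\ld,\mu)_\chi$ and each weight space is a $\Cl(d)$-module. Using the PBW basis of $\ASe(d)$ on the induced module, the only weights $\chi$ occurring in $\M(\ld,\mu)$ are obtained from $\chi^+$ by a permutation of its entries, modulo the identification $q(a)=q(-a-1)$.

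The central step is to show that $\M(\ld,\mu)_{\chi^+}=\Cl(d)\cdot v^+$, and that this space is isomorphic as a $\Cl(d)$-module to the basic spin module $\Cl_d$, hence simple. Here both hypotheses are essential: $\mu\in\B_d[\ld]$ controls the ordering and permitted overlap of segments, while typicality $\ld_i+\ld_j\neq0$, combined with the equivalence $q(a)=q(b)\iff b\in\{a,-a-1\}$, rules out reflected coincidences between entries drawn from distinct segments of the multisegment $(\ld,\mu)$. Carrying out this weight-coincidence analysis, and deducing the resulting simplicity of the Clifford-module structure on the top weight space, will be the main obstacle.

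With this in hand, the conclusion is standard. Any submodule $N\subseteq\M(\ld,\mu)$ inherits the weight decomposition, and $N_{\chi^+}$ is a $\Cl(d)$-submodule of the simple $\Cl(d)$-module $\M(\ld,\mu)_{\chi^+}$; so $N_{\chi^+}$ is either $0$ or all of $\M(\ld,\mu)_{\chi^+}$. In the latter case $v^+\in N$, forcing $N=\M(\ld,\mu)$ by cyclicity. Hence every proper submodule satisfies $N_{\chi^+}=0$, the sum of all proper submodules still satisfies this, and that sum is the unique maximal submodule. The corresponding quotient is the desired simple module $\L(\ld,\mu)$.
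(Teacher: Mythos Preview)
Your overall strategy---controlling proper submodules through a distinguished top weight space---matches the paper's, but the execution has two genuine gaps.

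First, the claim that the generalized $\chi^+$-weight space equals $\Cl(d)\cdot v^+$ is false in general. The paper computes
\[
\dim \M(\ld,\mu)_{\zt_{\ld,\mu}}^{\mathrm{gen}} = 2^{\,d-\lfloor\gm_0(\mu)/2\rfloor}\cdot\bigl|D_{\ld-\mu}\cap S_d[\zt_{\ld,\mu}]\bigr|,
\]
and the set $D_{\ld-\mu}\cap S_d[\zt_{\ld,\mu}]$ has more than one element whenever the segments overlap or repeat. The smallest instance already satisfies your hypotheses: take $\ld=(2,2)\in\Pt$ and $\mu=(1,1)\in\B_2[\ld]$. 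Both segments are $[1,1]$, the weight is $(q(1),q(1))$, its stabilizer is all of $S_2=D_{(1,1)}$, and the generalized weight space is the entire $8$-dimensional module, while $\Cl(2)\cdot v^+$ has dimension $4$. Typicality of $\ld$ governs only the sums $\ld_i+\ld_j$; it does not prevent equal or overlapping segments, so your proposed weight-coincidence analysis cannot close this.

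Second, $\Cl(d)\cdot v^+$ is not simple as a $\Cl(d)$-supermodule for $d\ge 2$: the unique simple $\Cl(d)$-supermodule has dimension $2^{\lceil d/2\rceil}$, not $2^d$. The space $\Cl(d)\cdot v^+=\Ph(\ld,\mu)$ \emph{is} irreducible over $\ASe(\ld-\mu)$, and that is what the paper ultimately uses---but then one must also show that a nonzero $\ASe(d)$-submodule meeting the top generalized weight space actually meets $\Ph(\ld,\mu)$, which is precisely the hard step.

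The paper resolves both issues by passing to a much finer invariant: the simultaneous eigenspace $\M(\ld,\mu)_{\sqrt{\zt_{\ld,\mu}}}$ for the $x_i$ themselves (each acting by the specified positive square root), rather than for the $x_i^2$. Proving this space is one-dimensional is the real content of the section: it requires reducing to the special modules $\M_{a,b,n}$ built from identical segments, moving to the anti-dominant weight via the intertwiners $\phi_\tau$, and invoking the structure of Kato modules there. Once $\M(\ld,\mu)_{\sqrt{\zt_{\ld,\mu}}}=\C\va_{\ld,\mu}$ is established, your final paragraph does go through: any submodule with nonzero generalized $\zt_{\ld,\mu}$-component contains a $\sqrt{\zt_{\ld,\mu}}$-weight vector, hence $\va_{\ld,\mu}$, hence is the whole module.
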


In the special case where the multisegment $(\lambda,\mu)$ corresponds to skew
shapes (i.e. $\lambda,\mu \in P^+$), the associated $\He(d)$-modules are called
calibrated. The calibrated representations may also be characterized as those
modules on which the polynomial generators act semisimply, and were originally classified by Cherednik in \cite{ch0}. In \cite{ram}, Ram
gives a complete combinatorial description of the calibrated representations of
$\He(d)$ in terms of skew shape tableaux and provides a complete classification
(see also \cite{kr} for another combinatorial model).

The projective analogue of the skew shapes are the shifted skew shapes which
have appeared already in \cite{s2,stem} and correspond to when $\ld$ and $\mu$
are \emph{strict} partitions. As in the linear case, these are the modules for which the $x_i$ act semisimply. In the
spirit of \cite{ram}, we prove that

\begin{thme}
For each shifted skew shape $\lambda/\mu$, where $\lambda$ and $\mu$ are strict
partitions such that $ \lambda $ contains $ \mu, $ there is an irreducible
$\ASe(d)$-module $H^{\lambda/\mu}$.  Every irreducible, calibrated
$\ASe(d)$-module is isomorphic to exactly one such $H^{\lambda/\mu}$.
\end{thme}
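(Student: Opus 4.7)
The plan is to realize $H^{\lambda/\mu}$ concretely via a Clifford-twisted analogue of Young's seminormal representation, and then match these modules against the classification of simple integral modules from the preceding theorem.

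For the construction, I would index basis vectors of $H^{\lambda/\mu}$ by standard shifted tableaux $T$ on $\lambda/\mu$, each carrying an auxiliary irreducible module over a Clifford subalgebra so that the $x_i$ can be diagonalized by square-roots of contents. On the basis vector $v_T$, the operator $x_i^2$ acts by the scalar $q(c_T(i))$, where $c_T(i)$ is the content of the box containing $i$ in $T$; the $c_i$ act through the Clifford factor; and each $s_i$ acts by an explicit seminormal-type formula involving $v_T$ and, when $s_i\cdot T$ is again a standard shifted tableau, $v_{s_i\cdot T}$. Strictness of $\lambda$ and $\mu$, combined with the shifted geometry, guarantees that consecutive boxes never share a content, which is what keeps the denominators of these formulas non-zero and ultimately yields a semisimple action of the $x_i$.

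Next I would verify the relations of $\ASe(d)$. The relation $s_ix_i=x_{i+1}s_i-1+c_ic_{i+1}$ pins down the off-diagonal coefficients in the $s_i$-action, with the Clifford correction $c_ic_{i+1}$ playing a role that has no analogue in the linear affine Hecke case; the braid and quadratic relations are then a box-by-box matching of coefficients. Irreducibility follows from the observation that the joint $x_i^2$-spectrum together with the Clifford-module structure separates the $v_T$: any non-zero submodule must contain some $v_T$, and the seminormal action of the $s_i$ then sweeps out all of $H^{\lambda/\mu}$.

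For the converse, let $M$ be a simple calibrated $\ASe(d)$-module. Semisimplicity of the $x_i$ forces $M$ to be integral, so the preceding theorem applies and gives $M\cong\L(\lambda,\mu)$ for some $(\lambda,\mu)\in\A_d$. I would then characterize calibratedness of $\L(\lambda,\mu)$ by translating any non-semisimple behaviour of an $x_i$ into a combinatorial obstruction on the multisegment $(\lambda,\mu)$; the obstruction vanishes precisely when $\lambda$ and $\mu$ are strict partitions with $\mu\subseteq\lambda$, at which point the multisegment repackages as a shifted skew shape. An explicit identification $\L(\lambda,\mu)\cong H^{\lambda/\mu}$ in this case, combined with the content statistics on weight spaces, yields both that every calibrated simple arises and that the shape is uniquely determined.

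The hard part will be the explicit construction: calibrating the Clifford signs against the Young-style denominators so that every affine Hecke--Clifford relation holds, especially at boxes lying on the main shifted diagonal where the content vanishes and the interplay between $c_i,c_{i+1}$ and $x_i,x_{i+1}$ is most delicate. Once the family $\{H^{\lambda/\mu}\}$ is in hand and shown to be simple, recognizing arbitrary simple calibrated modules among them is comparatively soft.
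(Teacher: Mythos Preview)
Your construction and irreducibility argument are essentially the paper's: it builds $\widehat{H}^{\lambda/\mu}=\bigoplus_L \Cl(d)v_L$ with explicit seminormal formulas for $x_i$ and $s_i$, checks the relations by brute-force case analysis on local tableau configurations, and proves irreducibility by separating the $v_L$ via their $x_i$-eigenvalues and then sweeping with intertwiners. So that half of your plan is on target.

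The converse direction, however, diverges from the paper and has a genuine gap. You want to invoke a classification theorem to write an arbitrary calibrated simple $M$ as some $\L(\lambda,\mu)$ and then diagnose calibratedness in terms of $(\lambda,\mu)$. But the ``preceding theorem'' in the paper only says that each $\M(\lambda,\mu)$ has a unique simple head; it does \emph{not} say that every simple integral module is of this form. That exhaustiveness is the final theorem of the paper and is proved only later, via the Arakawa--Suzuki-type functor $F_\lambda$ from $\mathcal{O}(\q(n))$ and the Lyndon-word machinery. Also, semisimplicity of the $x_i$ does not by itself force integrality; the paper is already working inside the integral category. Even if you reorder the logic and assume the full classification, you would still owe a proof that $\L(\lambda,\mu)$ is calibrated precisely when $\lambda,\mu$ are strict with $\mu\subseteq\lambda$, and the only visible route to that is exactly the direct weight-combinatorics the paper carries out. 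Concretely, the paper takes a weight vector $m_t$ in a calibrated module and proves \emph{from scratch}, by a case analysis on $j-i$ using the intertwiners $\phi_i$ and the constraint that no generalized eigenvectors appear, that whenever $t_i=t_j$ there must be intermediate entries $t_i\pm 1$ (or $t_k=1$ if $t_i=t_j=0$), forcing $t$ to be the content reading of a standard shifted tableau. This is a direct adaptation of Ram's argument and is logically independent of the later classification; your proposed shortcut through $\L(\lambda,\mu)$ does not actually bypass it.
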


The $H^{\ld/\mu}$ are constructed directly using the combinatorics of shifted skew shapes.
Furthermore, we show that $H^{\ld/\mu}\cong\L(\ld,\mu)$. We would also like to point out that Wan, \cite{wan}, has recently obtained a classification of the calibrated representations for $\ASe(d)$ over any arbitrary algebraically closed field of characteristic not equal to 2.

The appearance of the weight lattice for $\gl_n(\C)$ in the representation
theory of $\He(d)$ is explained by a work of Arakawa and Suzuki who introduced
in \cite{as} a functor from the BGG category $ \mathcal{O}(\mathfrak{gl}_n) $
to the category of finite dimensional representations of $\He(d)$.  The authors
proved that the functor maps Verma modules to the standard modules or zero.
Using the Kazdhan-Lusztig conjecture together with the results of \cite{ginz},
they proved that simple objects in $\mathcal{O}(\mathfrak{gl}_n)$ are mapped by
the functor to simple modules or zero. In \cite{su1}, Suzuki avoided the
Kazdhan-Lusztig conjecture, and proved that the functor maps simples to simples
using Zelevinsky's classification together with the existence of a nonzero
$\He(d)$-contravariant form on certain standard modules, see \cite{r}. In
\cite{su2}, Suzuki was able to avoid the results of Zelevinsky and independently reproduce
the classification via a careful analysis of the standard modules. For a
complete explanation of the functor in type $A$, we refer the reader to
\cite{or}.

The functor and related constructions have had numerous applications in various areas of representation theory.  This includes the study of affine Braid groups and Hecke algebras \cite{or}, Yangians \cite{KN}, the centers of parabolic category $\mathcal{O}$ for $\mathfrak{gl}_{n}$ \cite{b2}, finite W-algebras \cite{bk4}, and the proof of Brou\'e's abelian defect conjecture for symmetric groups by Chuang and Rouquier via $\mathfrak{sl}_{2}$ categorification \cite{CR}.

We define an analogous functor from the category $\mathcal{O}(\q(n))$ to the
category of finite dimensional modules for $\ASe(d)$. The contruction of this
functor relies on the following key result:

\begin{thme}
Let $M$ be a $\q(n)$-supermodule. Then, there exists a homomorphism
\[
\ASe(d)\rightarrow\End_{\fq (n)}(M\otimes V^{\otimes d}).
\]
\end{thme}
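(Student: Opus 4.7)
The plan is to exhibit explicit operators on $M\otimes V^{\otimes d}$ corresponding to each generator of $\ASe(d)$, check $\q(n)$-linearity, and then verify the defining relations. The Sergeev part is essentially free: Sergeev duality gives a map $\Se(d)\to\End_{\q(n)}(V^{\otimes d})$, and acting by the identity on the $M$-slot produces $\q(n)$-linear operators on $M\otimes V^{\otimes d}$ realizing the $c_i$ and $s_i$ and satisfying all relations of $\Se(d)$ by construction. The content of the proof lies in defining the polynomial generators $x_1,\dots,x_d$ and verifying their relations with $\Se(d)$.

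I would construct the $x_k$ by imitating the Arakawa-Suzuki recipe. Let $\{g_a\},\{g^a\}$ be dual bases of $\q(n)$ with respect to an invariant (super)bilinear form, and set $\Om=\sum_a g_a\otimes g^a$. Labelling the factors of $M\otimes V^{\otimes d}$ by $0,1,\dots,d$ and writing $\Om_{j,k}$ for the operator placing the two tensorands of $\Om$ in positions $j$ and $k$, define
\[
x_k\;=\;\sum_{0\le j<k}\Om_{j,k},\qquad 1\le k\le d.
\]
Because $\Om$ is $\q(n)$-invariant, each $x_k$ super-commutes with the diagonal $\q(n)$-action on $M\otimes V^{\otimes d}$, and pairwise commutativity $[x_j,x_k]=0$ follows in the usual way from the identity $[\Om_{12},\Om_{13}+\Om_{23}]=0$ coupled with invariance.

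The mixed relations split into an easy part and a hard part. The easy part: $c_i$ commutes with $\Om_{j,k}$ whenever $i\notin\{j,k\}$ and similarly for $s_i$ (the operators act in disjoint tensor slots), which immediately gives $c_ix_j=x_jc_i$ for $j\ne i$ and $s_ix_j=x_js_i$ for $j\notin\{i,i+1\}$. The relation $c_ix_i=-x_ic_i$ reduces to the single check that $c_i$ anticommutes with each $\Om_{j,i}$ for $j<i$; this follows from writing $c_i$ as an odd $\q(n)$-intertwiner $P$ acting in slot $i$ and using that $P\otimes 1$ anticommutes with $\Om$ by parity.

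The main obstacle is the Hecke-Clifford relation
\[
s_ix_i \;=\; x_{i+1}s_i \;-\; 1 \;+\; c_ic_{i+1}.
\]
For $j<i$ the summand $\Om_{j,i}$ of $s_ix_i$ is carried by $s_i$ to $\Om_{j,i+1}$, cancelling the corresponding summand of $x_{i+1}s_i$. Thus the whole relation collapses to a single identity in $\End(V\otimes V)$ of the form
\[
s\,\Om_{12}\;=\;\Om_{12}\,s\;-\;1\;+\;c_1c_2,
\]
where $s$ is the graded swap. This is the distinctive feature of the queer setting: unlike the $\gl_n$ case, where the analogous identity produces no Clifford correction, here the odd component of the $\q(n)$-Casimir contributes the $c_1c_2$ term, reproducing exactly the AHCA relation of Nazarov. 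I would prove it by splitting $\Om$ into its even and odd parts, writing each explicitly using the standard basis of $\q(n)$, and computing its action on $V\otimes V$ directly; comparison with $s$, $\mathrm{id}$, and $c_1c_2$ then yields the identity. Once this computation is in hand, all the defining relations of $\ASe(d)$ are verified and the desired homomorphism exists.
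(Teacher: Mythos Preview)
Your outline follows the Arakawa--Suzuki template faithfully, but it runs into the one obstruction that makes the $\q(n)$ case genuinely different from $\gl_n$: there is \emph{no even nondegenerate invariant bilinear form on $\q(n)$}. The paper says this explicitly at the start of Section~\ref{S:LieTheoreticConstr}: ``The main difficulty is the lack of an even invariant bilinear form, and consequently, a lack of a suitable Casimir element in $\q(n)^{\otimes 2}$.'' The only natural invariant form on $\q(n)$ is the \emph{odd} trace form; dual bases with respect to it pair even elements with odd ones, so $\Om=\sum_a g_a\otimes g^a$ is an odd element of $\q(n)\otimes\q(n)$. Your $x_k=\sum_{j<k}\Om_{j,k}$ would then be an odd operator, whereas $x_k$ is even in $\ASe(d)$; the map cannot be a superalgebra homomorphism. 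So the very first step of your construction---forming a Casimir in $\q(n)^{\otimes 2}$---does not go through.

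The paper's fix, following Olshanski, is to embed $\q(n)$ in $\gl(n|n)$ and use the even supertrace form on $\gl(n|n)$ to build a Manin triple $(\gl(n|n),\q(n),\p_2^\pm)$. Pairing $\q(n)$ against its complement produces an even element $\Om\in\q(n)\otimes\End(V)$ (formula~\eqref{casimir}), not in $\q(n)\otimes\q(n)$. Because the second tensorand lives only in $\End(V)$, one cannot write $x_k=\sum_{j<k}\Om_{jk}$; instead the paper sets $X_i=\Om_{0i}+\sum_{j<i}(1-C_jC_i)S_{ji}$, with the Jucys--Murphy tail inserted by hand. In particular the correction $-1+c_ic_{i+1}$ in the key relation does \emph{not} come from the Casimir piece at all---the relevant identity for $\Om$ is simply $S_i\Om_{0i}S_i=\Om_{0,i+1}$---but from the $j=i$ term of the tail in $X_{i+1}$. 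Likewise the commutativity $[X_i,X_j]=0$ is not a consequence of a clean Drinfeld identity $[\Om_{12},\Om_{13}+\Om_{23}]=0$; it reduces to the mixed identity $[\Om_{0i},\Om_{0j}]=(\Om_{0j}-\Om_{0i})S_{ij}+(\Om_{0j}+\Om_{0i})C_iC_jS_{ij}$, which must be checked directly. Your sketch would need to be rebuilt around this substitute Casimir to succeed.
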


To define the functor, let $\q(n)=\n^+\oplus\h\oplus\n^-$ be the triangular
decomposition of $\q(n)$. For each $\ld\in P$, the functor
\[
F_\ld:\mathcal{O}(\q(n))\rightarrow\ASe(d)\mbox{-mod}
\]
is defined by
\begin{equation*}
F_\ld M=\{\,m\in M \mid \n^+.m=0 \text{ and } hv =\lambda(h)v \text{ for all } h\in \h \}.
\end{equation*}
The functor $F_\ld$ is exact when $\ld\in\Pt$.

The dimension of the highest weight space of a Verma module in
$\mathcal{O}(\q(n))$ is generally greater than one. A consequence of this is
that the functor maps a Verma module to a direct sum of the same standard
module. A simple object in $ \mathcal{O}(\mathfrak{q}(n)) $ is mapped to a
direct sum of the same simple module or else zero.  Determining when a simple
object is mapped to something non-zero is a more difficult question than in the
non-super case and we have only partial results in this direction. The main
difficulty is a lack of information about the category $\mathcal{O}(\q(n))$.
The category of finite dimensional
representations of $\q(n)$ has been studied by Penkov and Serganova
\cite{p,ps,ps2}; they give a character formula for all finite dimension simple $\q(n)$-modules.
Using other methods, Brundan \cite{b} has also studied this
category, and has even obtained some (conjectural) information about the whole category
$\mathcal{O}(\q(n))$ via the theory of crystals. The most useful information, however, comes from Gorelik
\cite{g}, who defines the Shapovalov form for Verma modules and calculates the
linear factorization of its determinant.

In various works by Ariki, Grojnowski, Vazirani, and Kleshchev
\cite{ar,gr,v,kl} it was shown that there is an action of $U(\gl_\infty)$ on the direct sum of Grothendieck groups of the categories of integral $\He(d)$-modules, for all $d$. This gives another type of classification of the simple integral modules as
nodes on the crystal graph associated to a maximal nilpotent subalgebra of
$\gl_\infty$. In \cite{bk1}, Brundan and Kleshchev show there is a
classification of the simple integral modules for $\ASe(d)$ parameterized by
the nodes of the crystal graph associated to a maximal nilpotent subalgebra of
$\b_\infty$, see also \cite{kl}.

In \cite{lec}, Leclerc studied dual canonical bases of the quantum group
$\U_q(\g)$ for various finite dimensional simple Lie algebras $\g$ via
embeddings of the quantized enveloping algebra $\U_q(\n)$ of a maximal
nilpotent subalgebra $\n\subseteq\g$ in the \emph{quantum shuffle algebra}. To
describe the quantum shuffle algebra associated to $\g$ of rank $r$, let
$\mathcal{F}$ be the free associative algebra on the letters
$[0],\ldots,[r-1]$, and let $[i_1,i_2,\ldots,i_k]:=[i_1]\cdot[i_2]\cdots[i_k]$.
Then, the quantum shuffle algebra is the algebra $(\mathcal{F},*)$, where
\[
[i_1,\ldots,i_k]*[i_{k+1},\ldots,i_{k+\ell}]=\sum_\sm
q^{-e(\sm)}[i_\sm(1),\ldots,i_{\sm(k+\ell)}],
\]
where the sum is over all minimal length coset representatives in
$S_{k+\ell}/(S_k\times S_\ell)$, and $e(\sm)$ is some explicit function of $\sm$.
There exists an \emph{injective} homomorphism $\Psi:\U_q(\n)\hookrightarrow\F$
satisfying $\Psi(xy)=\Psi(x)*\Psi(y)$ for all $x,y\in\U_q(\n)$. Let
$\mathcal{W}=\Psi(\U_q(\n))$.

The ordering $[0]<[1]<\cdots<[r-1]$ yields two total ordering on words in $\F$:
One the standard lexicographic ordering reading from \emph{left to right}, and
the other the \emph{costandard} lexicographic ordering reading from \emph{right
to left}. These orderings give rise to special words in $\F$ called Lyndon
words, and every word has a canonical factorization as a non-increasing product
of Lyndon words. In \cite{lec}, Leclerc uses the standard ordering, while we
use the costandard ordering. It is easy to translate between results using one
ordering as opposed to the other. However, in our situation, choosing the
costandard ordering leads to some significant differences in the \emph{shape}
of Lyndon words. We will explain this shortly.

Bases for $\W$ are parameterized by certain words called \emph{good words}.
 A \emph{good word} is a nonincreasing product of
\emph{good Lyndon word} which have been studied in \cite{lr, ro1,ro2,ro3}. The
good Lyndon words are in 1-1 correspondence with the positive roots, $\Dt^+$,
of $\g$, and the (standard or costandard) lexicographic ordering on good Lyndon
words gives rise to a convex ordering on $\Dt^+$. The convex ordering on
$\Dt^+$ gives rise to a PBW basis for $\U_q(\n)$, which in turn gives a
multiplicative basis $\{E^*_g=(E^*_{l_k})*\cdots*(E_{l_1}^*)\}$ for $\W$
labeled by good words $g=l_1\cdots l_k$, where $l_1\geq\cdots \geq l_k$ are
good Lyndon words. Additionally, the bar involution on $\U_q(\n)$ gives rise to
a bar involution on $\W$, and hence, a \emph{dual canonical basis} $\{b^*_g\}$
labeled by good words. The transition matrix between the basis $\{E^*_g\}$ and
$\{b^*_g\}$ is triangular and, in particular, $b^*_l=E^*_l$ for each good
Lyndon word $l$. In what follows, let $\underline{w}$ denote the specialization
at $q=1$ of an element $w\in\W$.

For $\g$ of type $A_\infty=\underrightarrow{\lim}A_r$, good Lyndon words are
labelled by segments $[a,b]$, and there is no difference between the standard
and costandard ordering. In this case, for a good Lyndon word $l$,
$\underline{E^*_l}=l$. The Mackey theorem for $\He(d)$ (see section
\ref{SS:Mackey}) implies that the formal character of a standard module
$\M(\bt,\af)$ is given by $\underline{E^*_g}$, where $g$ is the good word
$[\af_1,\ldots,\bt_1-1,\ldots,\af_n,\ldots,\bt_n-1]$. A much deeper fact,
proved by Ariki in \cite{ar}, is that the character of the simple module
$\L(\bt,\af)$ is given by the dual canonical basis element $\underline{b^*_g}$.

Leclerc also studied the Lie algebra $\b_r$ of type $B_r$, and hence that of
type $B_\infty=\underrightarrow{\lim}B_r$. The good Lyndon words for $\b_r$
with respect to the standard ordering are segments $[i,\ldots,j]$, $0\leq i\leq
j<r$, and \emph{double segments} $[0,\ldots,j,0,\ldots,k]$, $0\leq j<k<r$ (cf.
\cite[$\S8.2$]{lec}). In this case, when $l=[i,\ldots,j]$ is a segment,
$\underline{b_l^*}=[i,\ldots,j]=\ch\Ph_{[i,j]}$. However, when
$l=[0,\ldots,j,0,\ldots,k]$ is a double segment
\begin{align}\label{E:StdDblSeg}
\underline{b^*_l}=2[0]\cdot([0,\ldots,j]*[1,\ldots,k]).
\end{align}
When we adopt the costandard ordering, the picture becomes much more familiar.
Indeed, the good Lyndon words are of the form $[i,\ldots,j]$ $0\leq i<j<r$ and
$[j,\ldots,0,0,\ldots,k]$, $0\leq j<k<r$! In particular, they correspond to
weights of the segment representations $\Phi_{[i,j]}$ and $\Phi_{[-j-1,k]}$
respectively. Moreover, for $l=[j,\ldots,0,0,\ldots,k]$
\[
\underline{b^*_{l}}=2[j,\ldots,0,0,\ldots,k]=\ch \Phi_{[-j-1,k]}.
\]

Leclerc conjectures \cite[Conjecture 52]{lec} that for each good word $g$ of
\emph{principal degree} $d$, there exists a simple $\ASe(d)$-module with
character given by $b^*_g$. We are not yet able to confirm the conjecture for
general good words. However, the combinatorial construction of $H^{\ld/\mu}$
immediately implies Leclerc's conjecture for calibrated representations (cf.
\cite[Proposition 51]{lec} and Corollary \ref{C:characters}). Additionally, for
each good Lyndon word $l$ (with respect to the costandard ordering), there is a
simple module with character $b^*_l$.

Also, an application of the functor $F_\ld$ gives a representation theoretic
interpretation of \eqref{E:StdDblSeg} above. Indeed, let $\ld=(k+1,j+1)$ and
$\af=(1,-1)$. Then,
\[
\ch\L(\ld,-\af)=2[0]\cdot([0,\ldots,j]*[1,\ldots,k]).
\]

Finally, the analysis of good Lyndon words leads to a classification of simple
integral modules for $\ASe(d)$. Indeed, recall the set \eqref{E:Bsubd}, and let
\[
\mathcal{B}_d=\{(\ld,\mu)\mid \ld\in\Pt,\mbox{ and }\mu\in\mathcal{B}_d(\ld)\}.
\]
Then,

\begin{thme} The following is a complete list of pairwise non-isomorphic simple modules
for $\ASe(d)$:
\[
\{\,\L(\ld,\mu) \mid (\ld,\mu)\in \mathcal{B}_d\,\}.
\]
\end{thme}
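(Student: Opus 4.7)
The strategy is to prove two complementary claims: (i) the modules $\L(\ld,\mu)$ for $(\ld,\mu)\in\mathcal{B}_d$ are pairwise non-isomorphic, and (ii) every simple integral $\ASe(d)$-module is of this form.

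For (i), the plan is to distinguish the $\L(\ld,\mu)$ via formal characters. A Mackey-type decomposition for $\ASe(d)$ expresses the formal character of $\M(\ld,\mu)$ (read as a function of the $x_i^2$-eigenvalues) as a shuffle product at $q=1$ of the characters of the segment representations $\Ph_{[\mu_i,\ld_i-1]}$. Endowing words in the shuffle algebra with the costandard lexicographic ordering, the leading term of $\ch \M(\ld,\mu)$ is the concatenation of the segment words. This leading word recovers $(\ld,\mu)\in\mathcal{B}_d$ uniquely, since $\ld\in\Pt$ forces the segments to be pairwise distinct while the condition $\mu\in P^+[\ld]$ guarantees the concatenation is nonincreasing in the costandard order. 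Because $\L(\ld,\mu)$ appears as the head of $\M(\ld,\mu)$ with multiplicity one (by the first cited theorem) and any other composition factor has a strictly smaller leading word, the characters of the $\L(\ld,\mu)$ form an upper-triangular family and are thus linearly independent.

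For (ii), the plan is to compare the list with the Brundan--Kleshchev parametrization: simple integral $\ASe(d)$-modules are in bijection with nodes of principal degree $d$ in the crystal graph of the basic representation of $\mathfrak{b}_\infty$. Via Leclerc's quantum shuffle realization, these nodes are indexed by good words of principal degree $d$ in the costandard ordering, and every such good word factors uniquely as a nonincreasing product of good Lyndon words. In the costandard ordering these good Lyndon words are precisely the segment and double-segment words attached to the segment representations $\Ph_{[\mu_i,\ld_i-1]}$ that assemble $\M(\ld,\mu)$, so reading off the good-Lyndon factorization of a good word yields a pair $(\ld,\mu)\in\mathcal{B}_d$. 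Combined with the character computation in (i), this bijection forces our list to be complete.

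The main obstacle is the combinatorial matching required in (ii): one must check that the typicality condition $\ld_i+\ld_j\neq 0$ together with the $P^+[\ld]$-condition is exactly the data picking out a nonincreasing factorization into good Lyndon words in the costandard ordering. A further subtlety is the doubling phenomenon for atypical double segments, where the factor of $2$ in Leclerc's formula \eqref{E:StdDblSeg} for $\underline{b^*_l}$ must be reconciled with the decomposition $\widehat{\Ph}_{[a,b]} = \Ph_{[a,b]}^+ \oplus \Ph_{[a,b]}^-$ into two odd-isomorphic simple summands. Handling this bookkeeping — and confirming that the resulting character formulas agree with the specialized dual canonical basis elements — is where the delicate work of the proof will concentrate.
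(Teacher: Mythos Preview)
Your approach is correct and reaches the same conclusion, but it is organized quite differently from the paper's proof. The paper does not separate the argument into ``distinctness'' and ``completeness'' steps. Instead, it first shows (Theorem~\ref{T:GrothendieckBasis1}) that the classes $[\M(\ld,\mu)]$ for $(\ld,\mu)\in\mathcal{B}_d$ form a basis of $K(\Rep\ASe(d))$ by identifying $\ch\M(\ld,\mu)$ with the specialized dual PBW element $\underline{E^*_{[\ld-\mu]}}$; then it uses the Arakawa--Suzuki type functor $F_\ld$ to transport the composition series of the Verma module $M(\mu)\in\mathcal{O}(\q(n))$ to a composition series of $\M(\ld,\mu)$, so that every composition factor is $\L(\ld,(\mu-\gm)^+)$ for some $\gm\in Q^+$. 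Corollary~\ref{C:StandardizingWords} and Lemma~\ref{L:WordTriangularity} then rewrite these as $\L(\eta,\nu)$ with $(\eta,\nu)\in\mathcal{B}_d$ and $[\eta-\nu]\geq[\ld-\mu]$, giving an invertible triangular transition matrix between standards and the proposed simples---hence both distinctness and completeness at once.

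Your route avoids the functor entirely for this step: distinctness comes from the leading-word argument (which works because the radical $\mathcal{R}(\ld,\mu)$ avoids the weight $\zt_{\ld,\mu}$, so $\min(\ch\L(\ld,\mu))=[\ld-\mu]$), and completeness comes from invoking the Brundan--Kleshchev crystal parametrization as a black-box count. This is cleaner and more modular, but it trades the paper's internal machinery for an external deep result, and it gives less structural information---you do not learn \emph{which} $\L(\eta,\nu)$ occur in $\M(\ld,\mu)$, only that the cardinalities match. One small caution: your phrase ``any other composition factor has a strictly smaller leading word'' has the inequality reversed relative to the paper's convention (the good word attached to a simple is the \emph{minimum} of its character, and other composition factors of $\M(\ld,\mu)$ have strictly \emph{larger} minimum); this does not affect the validity of the distinctness argument.
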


We believe this paper may serve as a starting point for future investigations
into categorification theories associated to non-simply laced Dynkin diagrams. In
particular, we hope that the functor introduced here will play a role in
showing that the 2-category for $\mathfrak{b}_\infty$, introduced by Khovanov-Lauda  and independently by Rouquier, acts on
$\mathcal{O}(\mathfrak{q}(n))$, see \cite{khl1,khl2,khl3,rq}.
Additionally, in \cite{wz}, Wang and Zhao initiated a study of super analogues of $W$-algebras.
This functor should be useful for studying these $W$-superalgebras along the lines of
\cite{bk3, bk4}.

In \cite{b}, Brundan studied the category of finite dimensional modules for
$\q(n)$ via Kazhdan-Lusztig theory. Among the finite dimensional
$\q(n)$-modules are the polynomial representations, which correspond under our
functor to calibrated representations. Other modules in this category are those
associated to \emph{rational} weights, i.e.\  strict partitions with negative
parts allowed. The functor should map these modules to interesting
$\ASe(d)$-modules. These should be investigated. It would also be interesting
to compare the Kazhdan-Lusztig polynomials in \cite{b} to those appearing in
\cite{lec}.

We now briefly outline the paper. In section~\ref{S:Prelim}, we review some
basic notion of super representation theory.  In section ~\ref{S:ASA} we define
the degenerate AHCA and review some of its properties which
may also be found in \cite{kl}.  The standard modules and their irreducible
quotients are introduced in section \ref{S:standardreps}.  The classification
of the calibrated representations are given in section ~\ref{S:Calibrated}. In
section ~\ref{S:Lie algebras} we review some basic notions about category $
\mathcal{O}(\mathfrak{q}(n)) $ which may be found in \cite{b,g}. Next, in
section ~\ref{S:LieTheoreticConstr} the functor is developed along with its
properties. Finally, in section \ref{S:Classification} a classification of
simple modules is obtained.

\subsection{Acknowlegments}\label{SS:acknowlegements}  The work presented in this paper was begun while the second author visited the Mathematical Sciences Research Institute in Berkeley, CA.  He would like to thank the administration and staff of MSRI for their hospitality and especially the organizers of the ``Combinatorial Representation Theory'' and ``Representation Theory of Finite Groups and Related Topics'' programs for providing an exceptionally stimulating semester.

We would like to thank Mikhail Khovanov for suggesting we consider a super analogue of the Arakawa-Suzuki functor. We would also like to thank Bernard Leclerc for pointing out \cite{lec}, as well as Monica Vazirani and Weiqiang Wang for some useful comments.

\section{(Associative) Superalgebras and Their Modules}\label{S:Prelim} We now review
some basics of the theory of superalgebras, following \cite{bk1,bk2,kl}. The
objects in this theory are $\Z_2$-graded. Throughout the exposition, we will
make definitions for homogeneous elements in this grading. These definitions
should always be extended by linearity. Also, we often choose to not write the
prefix \emph{super}. As the paper progresses this term may be dropped; however,
we will always point out when we are explicitly ignoring the $\Z_2$-grading.

A vector superspace is a $\Z_2$-graded $\C$-vector space $V=V_{\zero}\oplus
V_{\one}$. Given a nonzero homogeneous vector $v\in V_{\oi}$, let $p(v) = \oi  \in\Z_2$
be its \emph{parity}. Given a superspace $V$, let $\Pi V$ be the superspace
obtained by reversing the parity. That is, $\Pi V_\oi=V_{\oi+1}$. A
supersubspace of $V$ is a \emph{graded} subspace $U\subseteq V$. That is,
$U=(U\cap V_{\zero})\oplus(U\cap V_{\one})$. Observe that $U$ is a
supersubspace if, and only if, $U$ is stable under the map
$v\mapsto(-1)^{p(v)}v$ for homogeneous vectors $v\in V$.

Given two superspaces $V,W$, the direct sum $V\oplus W$
and tensor product $V\otimes W$ satisfy $(V\oplus
W)_{\oi}=V_\oi\oplus W_\oi$ and
\[
(V\otimes W)_\oi=\bigoplus_{\oj+\ok=\oi}V_\oj\otimes W_\ok.
\]
We may regard $\Hom_\C(V,W)$ as a superspace by setting
$\Hom_\C(V,W)_\oi$ to be the set of all homogeneous linear maps of
degree $\oi$. That is, linear maps $\ph:V\rightarrow W$ such that
$\ph(V_\oj)\subseteq W_{\oj+\oi}$. Finally,
$V^*=\Hom_\C(V,\C)$ is a superspace, where $\C=\C_\zero$.

Now, a superalgebra is a vector superspace $A$ that has the
structure of an associative, unital algebra such that $A_\oi
A_\oj\subseteq A_{\oi+\oj}$. A superideal of $A$ is a two sided
ideal of $A$ that is also a supersubspace of $A$. A superalgebra
homomorphism $\ph:A\rightarrow B$ is an even (i.e.\ grading preserving) linear map which is
also an algebra homomorphism. Observe that since $\ph$ is even, its
kernel, $\ker\ph$, is a superideal of $A$. Finally, given
superalgebras $A$ and $B$, their tensor product $A\otimes B$ is a
superalgebra with product given by
\begin{equation}\label{tensor product rule-algebra}
(a\otimes b)(a'\otimes b')=(-1)^{p(a')p(b)}(aa'\otimes bb').
\end{equation}

We now turn our attention to supermodules. Given a superalgebra $A$, let
$A$-smod denote the category of all finite dimensional $A$-supermodules, and $A$-mod be the category
of $A$-modules in the usual ungraded sense. An object in $A$-smod is a
$\Z_2$-graded left $A$-module $M=M_\zero\oplus M_\one$ such that $A_\oi
M_\oj\subseteq M_{\oi+\oj}$. A homomorphism of $A$-supermodules $M$ and $N$ is
a map of vector superspaces $f:M\rightarrow N$ satisfying
$f(am)=(-1)^{p(a)p(f)}af(m)$ when $f$ is homogeneous. A submodule of an
$A$-supermodule $M$ will always be a supersubspace of $M$. An $A$-supermodule
$M$ is called irreducible if it contains no proper nontrivial subsupermodules.

The supermodule $M$ may or may not remain irreducible when regarded
as an object in $A$-mod. If $M$ remains irreducible as an
$A$-module, it is called \emph{absolutely irreducible}, and if it
decomposes, it is called \emph{self associate}. Alternatively,
absolutely irreducible supermodules are said to be irreducible of
type \texttt{M}, while self associate supermodules are irreducible
of type \texttt{Q}. When $M\in A$-smod is self associate, there
exists an odd $A$-smod homomorphism $\te_M$ which interchanges the
two irreducible components of $M$ as an object in $A$-mod.

Now, let $A$ and $B$ be superalgebras, $M\in A$-smod and $N\in
B$-smod. The vector superspace $M\otimes N$ has the
structure of an $A\otimes B$-supermodule via the action is given
by
\begin{eqnarray}\label{tensor product rule-module}
(a\otimes b)(m\otimes n)=(-1)^{p(b)p(m)}(am\otimes bn)
\end{eqnarray}
for homogeneous $b\in B$ and $m\in M$. This is called the outer
tensor product of $M$ and $N$ and is denoted $M\boxtimes N$.

Unlike the classical situation, it may happen that the outer tensor product of
irreducible supermodules is no longer irreducible. This only happens when both
modules are self associate. To see this, let $M\in A$-smod and $N\in B$-smod be
self associate, and recall the odd homomorphisms $\te_M$ and $\te_N$. Then,
$\te_M\otimes\te_N:M\boxtimes N\rightarrow M\boxtimes N$, is an even
automorphism of $M\boxtimes N$ that squares to $-1$. Hence $M\boxtimes N$
decomposes as direct sum of two $A\otimes B$-supermodules, namely the
$(\pm\sqrt{-1})$-eigenspaces. These two summands are absolutely irreducible and
isomorphic under the odd isomorphism $\Theta_{M,N}:=\te_M\otimes\mathrm{id}_N$,
see \cite[Lemma 2.9]{bk1} and \cite[Section 2-b]{bk2}. When $M$ and $N$ are
irreducible, define the (irreducible) $A\otimes B$-module $M\circledast N$ by
the formula
\begin{equation}\label{E:startensor}
M\boxtimes N
= \begin{cases} M\circledast N, & \text{if either $M$ or $N$ is of type \texttt{M};}\\
                     (M\circledast N)\oplus\Theta_{M,N}(M\circledast N),& \text{if both $M$ and $N$ are of type \texttt{Q}.}
\end{cases}
\end{equation}
When $M=M'\oplus M''$, define $M\circledast N=(M'\circledast
N)\oplus(M''\circledast N)$.

Finally, let $A-\mbox{smod}_{\mbox{ev}}$ be the abelian subcategory of
$A-\mbox{smod}$ with the same objects, but only \emph{even} morphisms. Then,
the Grothendieck group $K(A-\mbox{smod})$ is the quotient of the Grothendieck
group $K(A-\mbox{smod}_{\mbox{ev}})$ modulo the relation $M-\Pi M$ for every
$A$-supermodule $M$. We would like to emphasize again that we allow odd
morphisms and, therefore, $M\cong\Pi M$ in the original category.

\section{The Degenerate Affine Hecke-Clifford Algebra}\label{S:ASA}

In this section we define the algebra which is the principle object of study in
this paper and summarize the results we will require in what follows. Many of
the results may be found in \cite{kl}, however, we include them here in an
effort to make this paper self contained and readable to a wider audience.

\subsection{The Algebra}\label{SS:Saffdef} Let $\Cl(d)$ denote the Clifford algebra
over $\C$ with generators $c_1,\ldots,c_d$, and relations
\begin{eqnarray}\label{c}
c_i^2=-1,\;\;\; c_ic_j=-c_jc_i\;\;\; 1\leq i\neq j\leq d.
\end{eqnarray}  Then $\Cl (d)$ is a superalgebra by declaring the generators
$c_{1}, \dotsc , c_{d}$ to all be of degree $\odd$.

Let $S_d$ be the symmetric group on $d$ letters with Coxeter generators
$s_1,\ldots, s_{d-1}$ and relations
\begin{eqnarray}\label{s}
s_i^2=1\;\;\; s_is_{i+1}s_i=s_{i+1}s_is_{i+1}\;\;\;s_is_j=s_js_i
\end{eqnarray}
for all admissible $i$ and $j$ such that $|i-j|>1$.  The group algebra of the
symmetric group, $\C S_{d}$, is a superalgebra by viewing it as concentrated in
degree $\even$; that is, $(\C S_{d})_{\even}= \C S_{d}$.

The \emph{Sergeev algebra} is given by setting
\[
\Se(d)= \Cl(d)\otimes \C S_d
\] as a vector superspace and declaring
$\Cl(d) \cong \Cl(d)\otimes 1$ and $\C S_d \cong 1\otimes\C S_d$ to be
subsuperalgebras. The Clifford generators $c_1,\ldots,c_d$ and Coxeter
generators $s_1,\ldots,s_{d-1}$ are subject to the mixed relation
\begin{eqnarray}\label{c&s}
s_ic_i=c_{i+1}s_i,\;\;\;s_ic_{i+1}=c_is_i,\;\;\; s_ic_j=c_js_i,
\end{eqnarray}
for all admissible $i$ and $j$ such that $j\neq i,i+1$.

The algebra of primary interest in this paper is the \emph{(degenerate) affine Hecke-Clifford algebra}, AHCA.  It is given as
\[
\ASe(d) = \P_d[x]\otimes\Se(d)
\]
as a vector superspace, where $\P_d[x]:=\C[x_1,\ldots,x_d]$ is the polynomial
ring in $d$ variables and is viewed as a superalgebra concentrated in degree
$\even$.  Multiplication is defined so that $\Se (d) \cong 1\otimes\Se (d) $ and $\P_{d}[x] \cong \P_{d}[x]
\otimes 1$ are subsuperalgebras.  The generators of these two
subalgebras are subject to the mixed relations
\begin{eqnarray}\label{c&x}
c_ix_i=-x_ic_i,\;\;\;c_jx_i=x_ic_j,\;\;\;1\leq i\neq j\leq d,
\end{eqnarray}
and
\begin{eqnarray}\label{s&x}
s_ix_i=x_{i+1}s_i-1+c_ic_{i+1},\;\;\;s_ix_j=x_js_i
\end{eqnarray}
for $1\leq i\leq d-1$, $1\leq j\leq d$, $j\neq i,i+1$.

Note that relation \eqref{s&x} differs from the corresponding relation in
\cite{bk2,kl}. This is because in \eqref{c} we choose $c_{i}^{2}=-1$, following
\cite{o,s,s2}, whereas in \emph{loc. cit.} the authors take $c_{i}^{2}=1$.  The
resulting algebras are isomorphic and the only effect of this convention is
that this change of sign has to be taken into account when comparing formulae.

It will be useful to
consider another decomposition
\begin{equation}\label{E:AlternateDecomp}
\ASe(d) \cong\A(d)\otimes\C S_d,
\end{equation}
where $A(d)$ is the subalgebra generated by $\Cl(d)$ and $\P_{d}[x]$.  As a
superspace
\begin{equation}\label{E:Adef}
A(d) \cong \P_d[x]\otimes\Cl(d).
\end{equation}

We have the following
PBW-type theorem for $\ASe(d)$.  Given
$\af=(\af_1,\ldots,\af_d)\in\Z_{\geq0}^d$ and
$\ep=(\ep_1,\ldots,\ep_d)\in\Z_2^d$, set $x^\af=x_1^{\af_1}\cdots x_d^{\af_d}$
and $c^\ep=c_1^{\ep_1}\cdots c_d^{\ep_d}$. Then,

\begin{thm}\cite[Theorem 14.2.2]{kl} The set $\{\,x^\af c^\ep
w\,|\,\af\in\Z_{\geq0}^d,\,\ep\in\Z_2^d,\,w\in S_d\}$ forms a basis for
$\ASe(d)$.
\end{thm}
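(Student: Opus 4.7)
My plan is a standard two-step PBW argument: first show that the monomials $\{x^\af c^\ep w\}$ span $\ASe(d)$, then establish their linear independence.

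For spanning, I would show by induction that every word in the generators $x_i,c_j,s_k$ can be reduced, using only the defining relations, to a $\C$-linear combination of monomials in the claimed normal form. The strategy is a three-stage sort: push all $x$'s to the left, normalize the Clifford part in the middle via $c_i^2=-1$ and $c_ic_j=-c_jc_i$ for $i>j$, and leave an element of $S_d$ on the right. Moving $c$'s past $x$'s is immediate from~\eqref{c&x}. Moving $s_i$ past $x_j$ for $j\neq i,i+1$ is immediate from~\eqref{s&x}; the delicate case is $s_ix_i = x_{i+1}s_i-1+c_ic_{i+1}$ (together with the derived identity $s_ix_{i+1}=x_is_i+1+c_ic_{i+1}$ obtained by multiplying through by $s_i$), whose main term preserves the total $x$-degree while the correction has strictly smaller $x$-degree. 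Inducting on total $x$-degree, the corrections are in normal form by hypothesis, and the main term is normalized by writing its $S_d$-factor in the standard basis of $\C S_d$ and its Clifford factor in the basis $\{c^\ep\}$.

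For linear independence I would use Bergman's Diamond Lemma. Fix a total ordering on the generators, say $x_1\prec\cdots\prec x_d\prec c_1\prec\cdots\prec c_d\prec s_1\prec\cdots\prec s_{d-1}$, and the induced degree-lexicographic ordering on words. Orient each defining relation as a rewrite rule (e.g.\ $s_ix_i\to x_{i+1}s_i-1+c_ic_{i+1}$, $c_jc_i\to -c_ic_j$ for $i<j$, $c_i^2\to -1$, the braid relation oriented lexicographically, and the commutation rules of~\eqref{c&s}, \eqref{c&x}). Each rule is compatible with the ordering in the sense that the leading term on the right is strictly smaller than the left, so the reduction procedure terminates. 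It then suffices to verify that all critical overlaps resolve to a common normal form. The most delicate overlaps involve triples such as $s_is_{i+1}x_i$, $s_ic_ix_i$, $s_is_jx_k$ for $|i-j|>1$, and the braid relation applied to words containing $x$'s — each reducible in two ways whose outputs must be shown equal modulo lower-order terms. Confluence yields a well-defined normal form and hence the claimed basis.

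The main obstacle is the sheer volume of ambiguities to check for the Diamond Lemma; the calculations are routine but sign-sensitive because of the $\Z_2$-grading and our convention $c_i^2=-1$. As an alternative, one can construct an explicit faithful representation of $\ASe(d)$ on $\P_d[x]\otimes\Cl(d)$, letting $x_i$ and $c_i$ act by left multiplication (with the appropriate twist forcing \eqref{c&x}) and letting $s_i$ act via a Demazure-Lusztig-type operator with leading part the swap of $x_i$ and $x_{i+1}$ and a correction term of lower $x$-degree designed to enforce~\eqref{s&x}. In this route the main obstacle shifts to verifying the braid relation $s_is_{i+1}s_i=s_{i+1}s_is_{i+1}$ and the Clifford intertwiners~\eqref{c&s} for the chosen operators, after which comparing leading $x$-degree terms separates monomials with distinct $(\af,\ep)$ and further tracking the $S_d$-action (by tensoring with a faithful $\C S_d$-module if necessary) separates distinct $w$.
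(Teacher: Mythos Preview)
The paper does not give its own proof of this statement; it is quoted verbatim as \cite[Theorem 14.2.2]{kl} without further argument, so there is nothing in the paper to compare your sketch against directly.

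That said, your proposal is sound, and your second route (the explicit faithful representation on $\P_d[x]\otimes\Cl(d)$) is precisely the method used in Kleshchev's book for this result: one defines the ``basic representation'' on this space, verifies that the operators satisfy the defining relations (the braid relation being the only substantive check), and then observes that applying a monomial $x^\af c^\ep w$ to the vector $1\otimes 1$ yields $x^\af c^\ep$ modulo lower $x$-degree terms, from which linear independence follows by triangularity. Your Diamond Lemma alternative is also valid and would give the same conclusion, though as you note the overlap checks are numerous; the representation-theoretic route is both shorter and the one actually taken in the cited reference.
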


\subsection{Some (Anti)Automorphisms}\label{SS:alghomoms}  The superalgebra $\ASe (d)$
admits an automorphism $\sm:\ASe(d)\rightarrow\ASe(d)$ given by
\begin{equation}\label{E:sigmadef}
\sm(s_i)=-s_{d-i},  \hspace{.25in} \sm(c_i)=c_{d+1-i},  \hspace{.25in}
\sm(x_i)=x_{n+1-i}.
\end{equation}

 It also admits an antiautomorphism $\tau:\ASe(d)\rightarrow\ASe(d)$ given by
\[
\tau(s_i)=s_i, \hspace{.25in} \tau(c_i)=-c_i, \hspace{.25in} \tau(x_i)=x_i.
\]
Note that, for superalgebras, antiautomorphism means that, for any homogeneous
$x,y \in \ASe (d)$,
\begin{equation}\label{E:taudef}
\tau(xy) = (-1)^{p(x)p(y)}\tau(y) \tau(x).
\end{equation}

\subsection{Weights and Integral Modules}\label{SS:weights}  We now introduce the class
of integral $\ASe (d)$-modules. It is these modules which are the main focus
of the paper.  To this end, for each $a\in\C$, define
\begin{equation}\label{E:qdef}
q(a)=a(a+1).
\end{equation}
By \cite[Theorem 14.3.1]{kl}, the center of $\ASe(d)$ consists of symmetric polynomials in $x_1^2,\ldots,x_d^2$.
Let $\P_{d}[x^{2}]=\C[x_1^2,\ldots,x_d^2]\subset\P_d[x]$. A \emph{weight} is an
algebra homomorphism
\[
\zt:\P_d[x^2]\rightarrow\C.
\]
It is often convenient to identify a weight $\zt$ with the $d$-tuple of complex
numbers $\zt=(\zt(x_1^2),\ldots,\zt(x_d^2))\in\C^d$.

Given an $\ASe(d)$-supermodule $M$ and a weight $\zt$, define the \emph{$\zeta$
weight space},
\[
M_\zeta=\left\{ m\in\M \mid x_i^2m =q\left(\zeta\left( x_i^2\right)\right)m
\text{ for all $i=1,\ldots,d$} \right\},
\]
and the \emph{generalized $\zeta$ weight space},
\[
M_\zeta^{\mathrm{gen}} =\left\{ m\in\M \mid \left( x_i^2-q(\zeta\left(x_i^2
\right)\right)^km=0 \text{ for $k\gg 0$  and all $i=1,\ldots,d$} \right\}.
\]
Observe that if $M_\zt^{\text{gen}}\neq 0$, then $M_\zt\neq0$.

Following \cite{bk2}, say that an $\ASe(d)$-module $M$ is \emph{integral} if
\[
M=\bigoplus_\zt M_\zt^{\text{gen}}
\]
and $M^{\text{gen}}_\zt\neq0$ implies $\zt\left( x_i^2\right)\in\Z$ for
$i=1,\ldots,d$.

Let $\Rep\ASe(d)$ denote the full subcategory of $\ASe(d)$-smod of finite
dimensional \emph{integral} modules for the degenerate AHCA.
Unless stated otherwise, all $\ASe(d)$-modules will be integral by assumption.

\subsection{The Mackey Theorem}\label{SS:Mackey} In this section we review the
Mackey Theorem for integral $\ASe$-modules. Refer to \cite{kl} for details.

Let $\mu=(\mu_1,\ldots,\mu_k)$ be a composition of $d$. Define the parabolic
subgroup $S_\mu=S_{\mu_1}\times\cdots\times S_{\mu_k}\subseteq S_d$, and
parabolic subalgebra
$\ASe(\mu):=\ASe(\mu_1)\otimes\cdots \otimes \ASe(\mu_k)\subseteq\ASe(d)$. Define the
functor
\[
\ind_\mu^d:\Rep\ASe(\mu)\rightarrow\Rep\ASe(d),\;\;\;
\ind_\mu^dM=\ASe(d)\otimes_{\ASe(\mu)}M.
\]
This functor is left adjoint to
$\res_\mu^d:\Rep\ASe(d)\rightarrow\Rep\ASe(\mu)$. Also, given a composition
$\nu=(\nu_1,\ldots,\nu_\ell)$ of $d$, which is a refinement of $\mu$ (i.e.\
there exist $0=i_1\leq\ldots\leq i_{k+1}=\ell$ such that
$\nu_{i_j}+\ldots+\nu_{i_{j+1}-1}=\mu_j$), define $\ind_\nu^\mu$ and
$\res_\nu^\mu$ in the obvious way.

Now, let $\mu$ and $\nu$ be compositions of $d$, and let $D_{\mu,\nu}$ denote
the set of minimal length $S_\mu\backslash S_d/S_\nu$-double coset
representatives and $D_\nu=D_{(1^d),\nu}$. Let $w\in D_{\mu,\nu}$. The
following lemma is standard.

\begin{lem}\label{L:MinCosetReps} Let $\nu=(\nu_1,\ldots,\nu_n)$ be a composition
of $d$, and set $a_i=\nu_1+\cdots+\nu_{i-1}+1$ and $b_i=\nu_1+\cdots+\nu_i$. If
$w\in D_\nu$ and $a_i\leq k<k'\leq b_i$ for some $i$, then $w(k)<w(k')$.
\end{lem}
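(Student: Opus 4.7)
The plan is to deduce the statement from the well-known Coxeter-theoretic characterization of minimum length coset representatives via descents, and then promote consecutive increases within each block to the full statement by transitivity.

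First, I would invoke the standard fact that $w\in D_\nu$ is the unique minimum length representative of the left coset $wS_\nu$ if and only if $\ell(ws) > \ell(w)$ for every simple reflection $s\in S_\nu$. Combined with the elementary observation that for a simple reflection $s_j=(j,j+1)\in S_d$, one has $\ell(ws_j)>\ell(w)$ exactly when $w(j)<w(j+1)$, this yields the working criterion: $w\in D_\nu$ if and only if $w(j)<w(j+1)$ for every simple reflection $s_j$ lying in $S_\nu$.

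Next I would identify which $s_j$ lie in $S_\nu$. With the notation $a_i=\nu_1+\cdots+\nu_{i-1}+1$ and $b_i=\nu_1+\cdots+\nu_i$, the factor $S_{\nu_i}$ is the symmetric group on the interval $\{a_i,\ldots,b_i\}$, generated by $s_{a_i},s_{a_i+1},\ldots,s_{b_i-1}$. Hence the criterion above specializes to: for every $i$ and every $j\in\{a_i,\ldots,b_i-1\}$, $w(j)<w(j+1)$.

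Finally, given $a_i\leq k<k'\leq b_i$, I would chain these consecutive inequalities to obtain $w(k)<w(k+1)<\cdots<w(k')$, which gives $w(k)<w(k')$ as claimed. There is no real obstacle: the lemma is a routine consequence of the description of parabolic coset representatives in a Coxeter group, and the only content is to translate this description into the concrete block-by-block monotonicity statement.
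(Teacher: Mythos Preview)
Your proof is correct and is exactly the standard argument one would expect; the paper itself does not give a proof, simply noting that ``the following lemma is standard.'' Your invocation of the descent characterization of minimal length coset representatives and the chaining of consecutive inequalities within each block is precisely the intended justification.
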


It is known that $S_\mu\cap wS_\nu w^{-1}$ and $w^{-1}S_\mu w\cap S_\nu$ are
parabolic subgroups of $S_d$. Hence we may define compositions $\mu\cap w\nu$
and $w^{-1}\mu\cap\nu$ by the formulae
\[
S_\mu\cap w^{-1}S_\nu w=S_{\mu\cap w\nu}\andeqn w^{-1}S_\mu w\cap
S_\nu=S_{w^{-1}\mu\cap\nu}.
\]
Moreover, the map $\sm\mapsto w\sm w^{-1}$ induces a length preserving
isomorphism $S_{\mu\cap w\nu}\rightarrow S_{w^{-1}\mu\cap\nu}$.

Using this last fact, it can be proved that for each $w\in D_{\mu,\nu}$ there
exists an algebra isomorphism
\[
\ph_{w^{-1}}:\ASe(\mu\cap w\nu)\rightarrow\ASe(w^{-1}\mu\cap\nu)
\]
given by $\ph_{w^{-1}}(\sm)=w^{-1}\sm w$, $\ph_{w^{-1}}(c_i)=c_{w^{-1}(i)}$ and
$\ph_{w^{-1}}(x_i)=x_{w^{-1}(i)}$ for $1\leq i\leq d$ and $\sm\in S_{\mu\cap
w\nu}$. If $M$ is a left $\ASe(\mu\cap w\nu)$-supermodule, let $^wM$ denote the
$\ASe(w^{-1}\mu\cap\nu)$-supermodule obtained by twisting the action with the
isomorphism $\ph_{w^{-1}}$. We have the following ``Mackey Theorem'':

\begin{thm}\label{Mackey}\cite[Theorem 14.2.5]{kl} Let $M$ be an
$\ASe(\nu)$-supermodule. Then $\res_\mu^d\ind_\nu^dM$ admits a filtration with
subquotients isomorphic to
\[
\ind_{\mu\cap w\nu}^\mu{}^w(\res_{w^{-1}\mu\cap\nu}^\nu M),
\]
one for each $w\in D_{\mu,\nu}$. Moreover the subquotients can be taken in any
order refining the Bruhat order on $D_{\mu,\nu}$. In particular,
$\ind_{\mu\cap\nu}^\mu\res_{\mu\cap\nu}^\nu M$ appears as a subsupermodule.
\end{thm}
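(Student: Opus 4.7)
The plan is to follow the template of the classical Mackey decomposition argument, carrying along the Clifford and polynomial generators. Combining the PBW-type basis of Theorem 3.1.1 with the standard double-coset partition $S_d = \bigsqcup_{w \in D_{\mu,\nu}} S_\mu w S_\nu$, I first establish a vector-superspace decomposition
\[
\ASe(d) \;=\; \bigoplus_{w \in D_{\mu,\nu}} \ASe(\mu) \cdot w \cdot \ASe(\nu),
\]
where each summand is the span of those PBW monomials $x^\af c^\ep \sm$ with $\sm \in S_\mu w S_\nu$. Tensoring on the right over $\ASe(\nu)$ with $M$ turns this into a vector-space decomposition of $\res_\mu^d \ind_\nu^d M$ indexed by $D_{\mu,\nu}$.

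Next, fix any total order $w_1, w_2, \dotsc$ on $D_{\mu,\nu}$ refining the Bruhat order, and define the candidate filtration
\[
F_k \;=\; \sum_{i \leq k} \ASe(\mu) \cdot w_i \cdot \ASe(\nu) \otimes_{\ASe(\nu)} M.
\]
The key claim is that each $F_k$ is a left $\ASe(\mu)$-subsupermodule; granting this, $F_k/F_{k-1}$ is canonically isomorphic to $\ASe(\mu) \otimes_{\ASe(\mu)\cap w_k \ASe(\nu) w_k^{-1}} {}^{w_k}\! M$. Applying the algebra isomorphism $\ph_{w_k^{-1}}:\ASe(\mu \cap w_k\nu) \to \ASe(w_k^{-1}\mu \cap \nu)$ identifies this with the desired factor $\ind_{\mu \cap w_k \nu}^\mu \, {}^{w_k}\!\bigl(\res^\nu_{w_k^{-1}\mu \cap \nu} M\bigr)$. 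Lemma \ref{L:MinCosetReps}, together with the length-preservation of $\sm \mapsto w_k \sm w_k^{-1}$ on $S_{\mu \cap w_k\nu}$, guarantees that $\ph_{w_k^{-1}}$ extends compatibly across the polynomial and Clifford generators. Choosing $w_1 = e$ (which is the Bruhat minimum of $D_{\mu,\nu}$) makes $F_1 = \ind_{\mu \cap \nu}^\mu \res^\nu_{\mu \cap \nu} M$ a subsupermodule, giving the last assertion.

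The main obstacle is verifying that $F_k$ is closed under left multiplication by $\ASe(\mu)$. Since the polynomial and Clifford subalgebras of $\ASe(\mu)$ produce only length-preserving (or strictly length-decreasing) corrections when pushed against $c^\ep w_k$, it suffices to treat the case of a simple transposition $s_j \in S_\mu$. The hard case is the mixed relation $s_j x_j = x_{j+1} s_j - 1 + c_j c_{j+1}$, whose correction $-1 + c_j c_{j+1}$ carries an $S_d$-component (namely, the identity) of strictly smaller length than $s_j$; the Clifford--symmetric relation $s_j c_i = c_{s_j(i)} s_j$ is length-preserving outright. Picking a reduced expression for $w_k$ and inducting on length, I expect every application of a mixed relation either to leave the Bruhat class $S_\mu w_k S_\nu$ unchanged or to land strictly below it in Bruhat order, with the latter terms contained in $F_{k-1}$ by the inductive hypothesis. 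Carrying out this combinatorial bookkeeping carefully---and in particular certifying that the leading terms genuinely stay in $\ASe(\mu) w_k \ASe(\nu)$---is the main technical step; once done, the identification of subquotients via $\ph_{w_k^{-1}}$ is essentially formal.
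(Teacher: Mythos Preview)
The paper does not prove this theorem; it is quoted from \cite[Theorem~14.2.5]{kl} without argument, so there is no proof in the present paper to compare your proposal against. Your outline is essentially the standard argument in that reference, but there is a real slip that obscures where the work lies.

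The direct sum $\ASe(d)=\bigoplus_{w\in D_{\mu,\nu}}\ASe(\mu)\,w\,\ASe(\nu)$ is false in general: take $d=2$, $\mu=\nu=(1,1)$, so $\ASe(\mu)=\ASe(\nu)=\A(2)$ and $D_{\mu,\nu}=\{e,s_1\}$; then $1-c_1c_2=x_2s_1-s_1x_1\in\A(2)\,s_1\,\A(2)\cap\A(2)$ is nonzero, so the two pieces overlap (and the overlap persists after tensoring with $M$). Consequently your ``main obstacle'' paragraph is misplaced: with $F_k$ defined as $\sum_{i\le k}\ASe(\mu)\,w_i\,\ASe(\nu)\otimes_{\ASe(\nu)}M$, closure under left $\ASe(\mu)$-multiplication is a tautology and the commutation analysis proves nothing. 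The fix is to set up the filtration from PBW instead: show that $\ASe(d)$ is free as a right $\ASe(\nu)$-module on the set $D_\nu$, so that $\ind_\nu^d M=\bigoplus_{u\in D_\nu}u\otimes M$ as superspaces, and take $F_k$ to be the span of those $u\otimes M$ with $u$ lying in one of the double cosets $S_\mu w_1 S_\nu,\dots,S_\mu w_k S_\nu$. Now $\ASe(\mu)$-stability of $F_k$ is the genuine content, and the Bruhat-drop argument you sketch in your final paragraph is exactly what establishes it. With that correction the identification of subquotients via $\ph_{w_k^{-1}}$ proceeds as you say.
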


\subsection{Characters}\label{SS:characters}  Following \cite[Chapter 16]{kl},
we now describe the notion of characters for integral $\ASe(d)$-supermodules.

Recall the subsuperalgebra $\A(d)\subseteq\ASe(d)$ defined in \eqref{E:Adef}.
When $d=1$ and $a\in\Z$ there exists a $2$-dimensional simple $\A(1)$-module
\[
\L(a)=\Cl(1)1_a=\C1_a\oplus\C c_1.1_a,
\]
which is free as a $\Cl (1)$-module satisfying
\[
x_1.1_a=\sqrt{q(a)}1_a.
\]
The $\Z_{2}$-grading on $\L(a)$ is given by setting $p(1_{a})=\even$.

Observe that $\L(a)\cong\L(-a-1)$ and that by replacing $\sqrt{q(a)}$ with
$-\sqrt{q(a)}$ in the action of $x_1$ yields an isomorphic supermodule under
the odd isomorphism $1_a\mapsto c_1.1_a$. A direct calculation verifies that
this module is of type \texttt{M} if $a\neq 0$ and of type \texttt{Q} if $a=0$.

Now, $\A(d) \cong \A(1)\otimes\cdots\otimes\A(1)$. Hence, applying
\eqref{E:startensor} we obtain a simple $\A (d)$-module
$\L(a_1)\circledast\cdots\circledast\L(a_d)$. Given $(a_{1}, \dotsc ,
a_{d})\in\Z^d_{\geq0}$, let
\begin{equation}\label{E:gammazerodef}
\gamma_{0}(a_{1}, \dotsc, a_{d})=|\{ i \mid a_i=0 \}|.
\end{equation}
We have

\begin{lem}\label{A(d) irreducibles}\cite[Lemma 16.1.1]{kl} The set
\[
\left\{\L(a_1)\circledast\cdots\circledast \L(a_d) \mid
(a_1,\ldots,a_d)\in\Z_{\geq0}^d \right\}
\] is a complete set of pairwise non-isomorphic irreducible integral $\A (d)$-modules.

The module $\L(a_1)\circledast\cdots\circledast\L(a_d)$ is of type \texttt{M}
if $\gm_0$ is even and of type \texttt{Q} if $\gm_0$ is odd. Moreover,
\[
\dim\L(a_1)\circledast\cdots\circledast\L(a_d)=2^{n-\lfloor\gm_0/2\rfloor}
\]
where $\gm_0=\gm_0(a_1,\ldots,a_d)$ as above.
\end{lem}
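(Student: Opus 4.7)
The plan is to reduce the classification to the rank-one case via the isomorphism $\A(d)\cong \A(1)^{\otimes d}$ (as superalgebras, coming from $\A(d)\cong \P_d[x]\otimes\Cl(d)\cong(\P_1[x]\otimes\Cl(1))^{\otimes d}$) and then track the type and dimension inductively using the outer tensor product rules \eqref{E:startensor}. First I would pin down the rank-one picture: show that the list $\{\L(a)\mid a\in\Z_{\geq 0}\}$ exhausts the simple integral $\A(1)$-modules, and determine each type. The module $\L(a)$ is two-dimensional with $x_1$ acting by $\pm\sqrt{q(a)}$ on the two basis vectors $1_a,c_1.1_a$. For $a\neq 0$ one has $\sqrt{q(a)}\neq 0$, so the two $x_1$-eigenspaces are one-dimensional; since $c_1$ intertwines them, $\L(a)$ is ungraded-irreducible, hence of type \texttt{M}. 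For $a=0$, $x_1$ acts by zero and $c_1$ has distinct eigenvalues $\pm\sqrt{-1}$, splitting $\L(0)$ into two ungraded $\A(1)$-submodules; hence $\L(0)$ is of type \texttt{Q}.

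Next I would invoke the standard classification of irreducible supermodules over a tensor product of superalgebras (as used throughout \cite{bk1,bk2,kl}): every simple $(A\otimes B)$-supermodule is of the form $M\circledast N$ with $M$ a simple $A$-supermodule and $N$ a simple $B$-supermodule. Applied iteratively to $\A(d)\cong \A(1)^{\otimes d}$, this gives that every simple integral $\A(d)$-module is isomorphic to $\L(a_1)\circledast\cdots\circledast\L(a_d)$ for some $(a_1,\ldots,a_d)\in\Z_{\geq 0}^d$.

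To pin down the type and the dimension, I would induct on $d$ using the three cases of \eqref{E:startensor}. The type rule $\texttt{M}\circledast\texttt{M}=\texttt{M}$, $\texttt{M}\circledast\texttt{Q}=\texttt{Q}$, $\texttt{Q}\circledast\texttt{Q}=\texttt{M}$ realizes addition in $\Z/2\Z$ on the count of \texttt{Q}-factors, so the product is of type \texttt{Q} if and only if the number of zero coordinates $\gm_0$ is odd. For the dimension, $\dim(\L(a_1)\boxtimes\cdots\boxtimes\L(a_d))=2^d$; the $\circledast$-product agrees with $\boxtimes$ in the first two cases but halves the dimension in the case $\texttt{Q}\circledast\texttt{Q}$. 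Pairing off the $\gm_0$ type-\texttt{Q} factors, the dimension is halved exactly $\lfloor\gm_0/2\rfloor$ times, yielding $2^{d-\lfloor\gm_0/2\rfloor}$.

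Finally, pairwise non-isomorphism follows from examining weights: on $\L(a_1)\circledast\cdots\circledast\L(a_d)$ the central elements $x_i^2$ act by the scalars $q(a_i)=a_i(a_i+1)$, so an isomorphism forces $q(a_i)=q(a_i')$ for all $i$. Since the map $a\mapsto a(a+1)$ is strictly increasing on $\Z_{\geq 0}$, this gives $a_i=a_i'$. The main obstacle is the second step: while the tensor product classification of simples is standard in this setting, one must handle the parity-type bookkeeping carefully, and verify that taking $\circledast$-products of simples really does produce all simples (and not just a proper subset), which requires the correct super-version of Schur's lemma and the analysis of centers in $\A(1)^{\otimes d}$.
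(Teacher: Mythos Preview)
The paper does not give its own proof of this lemma; it is simply cited from \cite[Lemma 16.1.1]{kl}. Your outline is the standard argument and is essentially what one finds in \cite{kl}: reduce to rank one via $\A(d)\cong\A(1)^{\otimes d}$, classify the simple integral $\A(1)$-modules directly, and then use the general theory of simple supermodules over tensor products (\cite[\S2]{bk1}, \cite[\S12.2]{kl}) together with the type and dimension rules encoded in \eqref{E:startensor}. One small point worth tightening: in the rank-one step you should also check that the $\L(a)$ with $a\in\Z_{\geq 0}$ really exhaust the simple integral $\A(1)$-modules (not just that they are simple), which follows since $x_1^2$ must act by some $q(a)$ and the resulting $\A(1)$-module is then forced up to the sign of $\sqrt{q(a)}$, with the two sign choices giving isomorphic supermodules via $c_1$. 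Otherwise your proposal is correct and matches the intended proof.
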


Restriction to the subalgebra $A(d)=\ASe((1^d))\subseteq\ASe(d)$ defines a
functor from $\Rep\ASe(d)$ to $\A (d)$-mod.  The map obtained by applying this
functor and passing to the Grothendieck group of the category $\A(d)$-mod
yields a map
\[
\ch:\Rep\ASe(d)\rightarrow K(\A(d)\mbox{-mod})
\]
defined by
\[
\ch M=\left[ \res^{d}_{1^d}M \right]
\]
where $[X]$ is the image of an $\A(d)$-module, $X$, in $K(\A(d)\mbox{-mod})$.
The image $\ch M$ is called the \emph{formal character} of the $\ASe(d)$-module
$M$.

The following fundamental result is given in \cite[Theorem 17.3.1]{kl}.
\begin{lem}\label{L:independenceofcharacters} The induced map on Grothendeick rings
\[
\ch : K(\Rep\ASe(d)) \to K(\A (d)\text{-mod})
\] is injective.

\end{lem}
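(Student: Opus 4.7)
The plan is to show that the formal characters of pairwise non-isomorphic simple objects in $\Rep\ASe(d)$ are $\Z$-linearly independent in $K(\A(d)\text{-mod})$. Since $K(\Rep\ASe(d))$ is the free abelian group on the isomorphism classes of simples (modulo $[M]=[\Pi M]$, as explained in Section~\ref{S:Prelim}), this linear independence is equivalent to the injectivity of $\ch$.

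The first preliminary observation is that by Lemma~\ref{A(d) irreducibles}, every simple $\A(d)$-module appearing in a generalized weight space $M_\zeta^{\mathrm{gen}}$ has the shape $\L(a_1)\circledast\cdots\circledast\L(a_d)$ with $q(a_i)=\zeta(x_i^2)$ and $a_i\in\Z_{\geq 0}$. Consequently, the weight $\zeta$ can be recovered from the $\A(d)$-isomorphism type, and the subgroups of $K(\A(d)\text{-mod})$ spanned by simples at distinct weights are independent direct summands. Any equation in $K(\A(d)\text{-mod})$ coming from a relation among characters can thus be extracted one weight at a time.

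Now fix any total order on the set of integral weights and, for each simple integral module $L$, let $\zeta_L$ denote the maximal weight with $L_{\zeta_L}^{\mathrm{gen}}\neq 0$. The crucial point is the following \emph{Key Claim}: a simple integral $\ASe(d)$-module $L$ is determined up to isomorphism by the $\A(d)$-module $L_{\zeta_L}^{\mathrm{gen}}$. Granting this, suppose $\sum_i a_i\ch L_i=0$ is a nontrivial $\Z$-linear relation among characters of pairwise non-isomorphic simples. Pick $\zeta$ maximal among $\{\zeta_{L_i}:a_i\neq 0\}$ and extract the $\zeta$-piece of the equation to obtain
$$\sum_{\zeta_{L_i}=\zeta} a_i\,[(L_i)_\zeta^{\mathrm{gen}}]=0\quad\text{in }\; K(\A(d)\text{-mod}).$$
By the Key Claim the classes $[(L_i)_\zeta^{\mathrm{gen}}]$ appearing here are pairwise distinct, forcing all $a_i=0$, a contradiction.

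The main obstacle is establishing the Key Claim. My strategy is to let $\mu$ be the composition of $d$ corresponding to the stabilizer $S_\mu$ of $\zeta_L$ in $S_d$ and check, using relations~\eqref{c&x}--\eqref{s&x}, that the generalized weight space $L_{\zeta_L}^{\mathrm{gen}}$ is stable under the parabolic subalgebra $\ASe(\mu)$ (the generators $x_i,c_i$ preserve every weight space, while an $s_j$ with $j\in\mu$ fixes $\zeta_L$). Since $L$ is simple, $L=\ASe(d)\cdot L_{\zeta_L}^{\mathrm{gen}}$, so there is a surjection $\ind_\mu^d L_{\zeta_L}^{\mathrm{gen}}\twoheadrightarrow L$. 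Applying the Mackey theorem (Theorem~\ref{Mackey}) to analyze the weight-space structure of $\ind_\mu^d L_{\zeta_L}^{\mathrm{gen}}$, and using that no nontrivial coset representative $w\in D_\mu$ fixes $\zeta_L$, one shows that the $\zeta_L$-weight space of this induction coincides with $L_{\zeta_L}^{\mathrm{gen}}$. Consequently $L$ is the unique simple quotient of $\ind_\mu^d L_{\zeta_L}^{\mathrm{gen}}$ whose $\zeta_L$-weight space is the full $L_{\zeta_L}^{\mathrm{gen}}$, which determines $L$ up to isomorphism from its maximal weight datum alone.
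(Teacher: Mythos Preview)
The paper does not prove this lemma; it is quoted from \cite[Theorem~17.3.1]{kl}.  Your attempt to supply a self-contained argument has a real gap.

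After extracting the $\zeta$-component you arrive at
\[
\sum_{\zeta_{L_i}=\zeta} a_i\,\bigl[(L_i)_\zeta^{\mathrm{gen}}\bigr]=0
\]
in $K(\A(d)\text{-mod})$ and assert that the Key Claim forces all $a_i=0$ because these classes are pairwise distinct.  But distinct elements of a free abelian group need not be $\Z$-linearly independent, and here the situation is as degenerate as possible: by Lemma~\ref{A(d) irreducibles} there is exactly \emph{one} simple integral $\A(d)$-module at a given weight $\zeta$, so every $[(L_i)_\zeta^{\mathrm{gen}}]$ is a positive integer multiple of that single basis vector.  As soon as two non-isomorphic simples share the maximal weight $\zeta$, the displayed equation is a relation among parallel vectors and yields no contradiction.

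Your argument for the Key Claim has a related defect.  The induction $\ind_\mu^d N$ you invoke depends on $N=L_{\zeta_L}^{\mathrm{gen}}$ as an $\ASe(\mu)$-module; the $S_\mu$-action on $N$ is invisible to $\ch$.  So at best your Mackey argument shows that $L$ is recoverable from $N$ \emph{together with its $\ASe(\mu)$-structure}, which is not the information the formal character records.  What is actually needed --- and what Kleshchev proves --- is that for a \emph{specific} partial order on weights (not an arbitrary total order) the assignment $L\mapsto\zeta_L$ is itself injective, so that only one simple contributes to the $\zeta$-component.  This requires the Kato-module analysis of \cite[\S16--17]{kl} and is not a formality.
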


For convenience of notation, set
\[
[a_1,\ldots,a_d]=[\L(a_1)\circledast\cdots\circledast \L(a_d)].
\]
The following lemma describes how to calculate the character of $M \circledast
N$ in terms of the characters of $M$ and $N$, and is a special case of the
Mackey Theorem:

\begin{lem}\label{L:ShuffleLemma}\cite[Shuffle Lemma]{kl} Let $K\in\ASe(k)$
and $M\in\ASe(m)$ be simple, and assume that
\[
\ch K=\sum_{\ui\in\Z_{\geq0}^k}r_{\ui}[i_1,\ldots,i_k]\andeqn \ch
M=\sum_{\uj\in\Z_{\geq0}^m}s_{\uj}[j_1,\ldots,j_m].
\]
Then,
\begin{eqnarray*}
\ch\ind_{m,k}^{m+k}K\circledast M
=\sum_{\ui,\uj}r_{\ui}s_{\uj}[i_1,\ldots,i_k]*[j_1,\ldots,j_m]
\end{eqnarray*}
where
\[
[i_1,\ldots,i_k]*[i_{k+1},\ldots,i_{k+m}] =\sum_{w\in
D_{(m,k)}}[w(i_1),\ldots,w(i_{k+m})].
\]
\end{lem}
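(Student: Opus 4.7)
The plan is to apply the Mackey Theorem \ref{Mackey} to the restriction
\[
\res^{k+m}_{(1^{k+m})}\ind^{k+m}_{(k,m)}(K\circledast M),
\]
and read off characters in $K(\A(k+m)\text{-mod})$. Since $\ch$ is additive on short exact sequences, a Mackey filtration translates directly into a sum identity for characters; by Lemma \ref{L:independenceofcharacters} no information is lost at the character level.

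First I would unpack the Mackey pieces. Take $\mu=(1^{k+m})$ and $\nu=(k,m)$ in Theorem \ref{Mackey}. Because $S_\mu=\{1\}$, the compositions $\mu\cap w\nu$ and $w^{-1}\mu\cap\nu$ both collapse to $(1^{k+m})$ for every double-coset representative $w$, so the Mackey subquotient corresponding to $w$ reduces simply to the $w$-twist ${}^w\bigl(\res^{(k,m)}_{(1^{k+m})}(K\circledast M)\bigr)$, and $w$ ranges over $D_{(1^{k+m}),(k,m)}=D_{(k,m)}$.

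Next I would compute each piece. Since $\A(k+m)=\A(k)\otimes\A(m)$ sits inside $\ASe(k)\otimes\ASe(m)$, and restriction commutes with outer tensor products, the hypothesis on $\ch K$ and $\ch M$ gives
\[
\ch\bigl(\res^{(k,m)}_{(1^{k+m})}(K\circledast M)\bigr)=\sum_{\ui,\uj}r_{\ui}s_{\uj}[i_1,\ldots,i_k,j_1,\ldots,j_m].
\]
Unraveling the twist $\ph_{w^{-1}}(x_i)=x_{w^{-1}(i)}$ then shows that if a vector $v$ has weight $(a_1,\ldots,a_{k+m})$ in the untwisted module, then inside ${}^wM$ the same vector satisfies $x_i^2 v=q(a_{w^{-1}(i)})v$. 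With the convention $[w(a_1),\ldots,w(a_{k+m})]:=[a_{w^{-1}(1)},\ldots,a_{w^{-1}(k+m)}]$, summing over $w\in D_{(k,m)}$ then assembles the shuffle product $[i_1,\ldots,i_k]*[j_1,\ldots,j_m]$ exactly as stated.

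The main subtlety I anticipate is bookkeeping when both $K$ and $M$ are of type $\mathtt{Q}$, where $K\boxtimes M = (K\circledast M)\oplus \Theta_{K,M}(K\circledast M)$ and one must take care not to accidentally double-count. The saving grace is that $\ch$ lands in the ungraded Grothendieck group $K(\A(k+m)\text{-mod})$, where $[M]=[\Pi M]$ and the two summands above have identical classes, so the relation between $\ch(K\circledast M)$ and the product of $\ch K$ and $\ch M$ can be pinned down directly from Lemma \ref{A(d) irreducibles} and the stated dimension formula $2^{n-\lfloor\gm_0/2\rfloor}$, which removes the apparent factor-of-two ambiguity. Once this is settled, the Mackey calculation above yields the Shuffle Lemma.
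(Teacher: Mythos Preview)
Your proposal is correct and follows exactly the approach the paper indicates: the paper does not give its own proof but states that the lemma ``is a special case of the Mackey Theorem'' and cites \cite{kl}. Your Mackey computation with $\mu=(1^{k+m})$ and $\nu=(k,m)$ is precisely the intended argument, and your handling of the type~\texttt{Q} bookkeeping via Lemma~\ref{A(d) irreducibles} is the right way to resolve the $\circledast$ versus $\boxtimes$ discrepancy.
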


\subsection{Duality}\label{SS:duality}
Now, given an $\ASe(d)$-module $M$, we obtain a new module $M^\sm$ by twisting
the action of $\ASe(d)$ by $\sm$. That is, define a new action, $*$, on $M$ by
$x*m=\sm(x).m$ for all $x\in\ASe(d)$. We have

\begin{lem}\label{sm twisted action}\cite[Lemma 14.6.1]{kl} If $M$ is an
$\ASe(k)$-module and $N$ is an $\ASe(\ell)$-module, then
\[
(\ind_{k,\ell}^{k+\ell}M\circledast N)^\sm
\cong\ind_{k,\ell}^{k+\ell}M^\sm\circledast N^\sm.
\]
\end{lem}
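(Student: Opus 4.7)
My plan is to build an explicit $\ASe(k+\ell)$-equivariant isomorphism by combining the automorphism $\sigma$ with a sign-corrected swap of the two tensor factors. The first ingredient is a structural observation: although $\sigma$ does not fix the parabolic $\ASe(k)\otimes\ASe(\ell)\subset\ASe(k+\ell)$ pointwise, it carries it isomorphically onto the opposite parabolic $\ASe(\ell)\otimes\ASe(k)$ (positions swapped). A direct check on the generators $s_i,c_i,x_i$ shows that the composition of $\sigma$ restricted to the parabolic with the canonical super-swap $\ASe(\ell)\otimes\ASe(k)\to\ASe(k)\otimes\ASe(\ell)$, $a\otimes b\mapsto(-1)^{p(a)p(b)}(b\otimes a)$, is precisely $\sigma_k\otimes\sigma_\ell$, where $\sigma_k$ and $\sigma_\ell$ denote the analogous reversal automorphisms on the smaller algebras. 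The sign $-s_{d-i}$ for simple reflections is exactly what is needed for this composite to be an algebra map.

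Next, I would invoke the general principle that automorphism twists commute with induction: for an algebra automorphism $\sigma$ of $A$ and a subalgebra $B$ with $\sigma^{-1}(B)=B'$, there is a natural isomorphism $(\ind_B^A W)^\sigma\cong\ind_{B'}^A W^{\sigma|_{B'}}$ given on representatives by $u\otimes w\mapsto\sigma^{-1}(u)\otimes w$. Well-definedness across the tensor relation uses that $\sigma^{-1}|_B\colon B\to B'$ is an algebra map, and equivariance is automatic from $\sigma^{-1}\circ\sigma=\mathrm{id}$. Applied with $B=\ASe(k)\otimes\ASe(\ell)$ and $W=M\circledast N$, and combined with the structural observation above, this yields
\[
\bigl(\ind_{k,\ell}^{k+\ell}(M\circledast N)\bigr)^\sigma \;\cong\; \ind_{\ell,k}^{k+\ell}\bigl(N^{\sigma_\ell}\circledast M^{\sigma_k}\bigr),
\]
where the identification of $(M\circledast N)^{\sigma|}$ with $N^{\sigma_\ell}\circledast M^{\sigma_k}$ uses the sign-twisted linear map $m\circledast n\mapsto (-1)^{p(m)p(n)}(n\boxtimes m)$, whose $(\ASe(\ell)\otimes\ASe(k))$-equivariance is verified by a direct calculation against the super-tensor action rule \eqref{tensor product rule-module}.

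Finally, to reach the right-hand side of the lemma as stated, I would apply the natural isomorphism $\ind_{\ell,k}^{k+\ell}(N'\circledast M')\cong\ind_{k,\ell}^{k+\ell}(M'\circledast N')$, implemented by left multiplication by the minimal length double-coset representative $w_0\in S_{k+\ell}$ exchanging the blocks $\{1,\ldots,\ell\}$ and $\{\ell+1,\ldots,\ell+k\}$; this is the standard parabolic-swap iso and could alternatively be seen via Frobenius reciprocity, since both sides represent the same functor on $\ASe(k+\ell)$-modules. Composing all three isomorphisms gives the claim.

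The main obstacle is the consistent bookkeeping of super-signs: the $-1$ in $\sigma(s_i)=-s_{d-i}$, the Koszul signs produced whenever odd elements of $\Cl$ cross each other in the super-swap, and the signs built into $\circledast$ versus $\boxtimes$ via \eqref{E:startensor} all have to cancel cleanly. The mechanics of these cancellations are routine but tedious and would constitute the bulk of a detailed proof; conceptually the only nontrivial input is the initial observation that $\sigma$ intertwines with $\sigma_k\otimes\sigma_\ell$ up to the super-swap of factors.
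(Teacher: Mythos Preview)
Your first two steps are sound and in fact prove the correct form of the lemma: $\sigma$ carries $\ASe(k,\ell)$ isomorphically onto $\ASe(\ell,k)$, the restriction identifies (after the super-swap) with $\sigma_k\otimes\sigma_\ell$, and the twist-of-induction principle then gives
\[
\bigl(\ind_{k,\ell}^{k+\ell}M\circledast N\bigr)^\sigma\;\cong\;\ind_{\ell,k}^{k+\ell}\bigl(N^\sigma\circledast M^\sigma\bigr).
\]
The paper supplies no proof of its own, only citing \cite[Lemma~14.6.1]{kl}.

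Your third step, however, is a genuine error. There is \emph{no} isomorphism $\ind_{\ell,k}^{k+\ell}(N'\circledast M')\cong\ind_{k,\ell}^{k+\ell}(M'\circledast N')$ for affine Hecke--Clifford algebras in general: parabolic induction is commutative only in the Grothendieck group (via the shuffle product of Lemma~\ref{L:ShuffleLemma}), not at the level of modules. Your proposed implementation by left multiplication by the block-swap permutation $w_0$ fails because conjugation by $w_0$ inside $\ASe(k+\ell)$ does not carry the subalgebra $\ASe(\ell,k)$ onto $\ASe(k,\ell)$: the polynomial generators satisfy $w_0x_iw_0^{-1}\neq x_{w_0(i)}$ owing to the correction terms in relation~\eqref{s&x}. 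A counterexample already exists for the ordinary degenerate affine Hecke algebra $\He(2)$: writing $\C_a$ for the one-dimensional module on which $x_1$ acts by $a$, the two-dimensional modules $\ind_{1,1}^2\,\C_0\boxtimes\C_1$ and $\ind_{1,1}^2\,\C_1\boxtimes\C_0$ have the same composition factors but opposite socle and head, and one checks directly that $(\ind_{1,1}^2\,\C_0\boxtimes\C_1)^\sigma\cong\ind_{1,1}^2\,\C_1\boxtimes\C_0\not\cong\ind_{1,1}^2\,\C_0\boxtimes\C_1$. The displayed statement in the paper thus has the tensor factors transposed relative to what is actually true; since the lemma is only invoked for character computations (immediately afterward and in Remark~\ref{R:Duality}), where the order is immaterial, this is harmless for the paper's purposes---but your argument should end after step two, which already establishes the correct assertion.
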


If $M$ is an $\ASe(d)$-module, with character
\[
\ch M=\sum_{\ui\in\Z_{\geq0}^d}r_{\ui}[i_1,\ldots,i_d],
\]
then Lemma ~\ref{sm twisted action} implies that
\[
\ch M^{\sm}=\sum_{\ui\in\Z_{\geq0}^d}r_{\ui}[i_d,\ldots,i_1].
\]

\subsection{Contravariant Forms}\label{SS:contravariantforms}
Let $M$ be in $\Rep\ASe(d)$. A bilinear form $(\cdot,\cdot):M\otimes
M\rightarrow\C$ is called a contravariant form if
\[
(x.v,v')=(v,\tau(x).v')
\]
for all $x\in\ASe(d)$ and $v,v'\in M$.

\begin{lem}\label{L:ASeContraForm} Let $M$ be in $\Rep\ASe(d)$
equipped with a contravariant form $(\cdot,\cdot)$. Then
\[
M_\eta\perp M_\zt^{\mathrm{gen}}\;\;\;\mbox{unless}\;\;\;\eta=\zt.
\]
\end{lem}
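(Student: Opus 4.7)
The plan is to exploit the contravariance of the form together with the fact that polynomials in the $x_i^2$ both lie in the center of $\ASe(d)$ (by \cite[Theorem 14.3.1]{kl}) and are fixed by $\ta$.

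Given $v \in M_\eta$ and $v' \in M_\zt^{\mathrm{gen}}$ with $\eta \neq \zt$, I would reduce the problem to producing an element $p \in \P_d[x^2]$ with three properties: (i) $\ta(p) = p$, (ii) $p$ acts on $v$ as a nonzero scalar, and (iii) $p$ annihilates $v'$. Granted such a $p$, contravariance gives
\[
(p \cdot v, v') = (v, \ta(p) \cdot v') = (v, p \cdot v') = 0,
\]
while the left-hand side equals (nonzero scalar)$\cdot (v, v')$, forcing $(v, v') = 0$.

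To produce $p$, I use that $\eta \neq \zt$ implies $q(\eta(x_i^2)) \neq q(\zt(x_i^2))$ for some index $i$ (see below). By the definition of the generalized weight space, I can choose $k$ large enough that $(x_i^2 - q(\zt(x_i^2)))^k v' = 0$, and then set $p := (x_i^2 - q(\zt(x_i^2)))^k$. Because $x_i$ is even and $\ta(x_i) = x_i$, the antiautomorphism formula \eqref{E:taudef} yields $\ta(x_i^2) = x_i \cdot x_i = x_i^2$, hence $\ta(p) = p$. Finally, $v \in M_\eta$ forces $p \cdot v = (q(\eta(x_i^2)) - q(\zt(x_i^2)))^k v$, which is a nonzero multiple of $v$ by the choice of $i$.

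The only mild subtlety is the step from $\eta \neq \zt$ to $q(\eta(x_i^2)) \neq q(\zt(x_i^2))$ for some $i$: since $q(a) = q(-a-1)$, the map $q$ is not injective on $\C$, so in principle two distinct algebra homomorphisms could induce identical eigenvalues on the $x_i^2$. Within the integral setting here, however, the standard normalization convention (taking representative values in $\Z_{\geq 0}$) makes $q$ injective on the relevant range, so this issue evaporates. I do not anticipate any other obstacle; the argument is essentially the standard orthogonality-of-eigenspaces calculation, with the superalgebra input contained entirely in verifying $\ta(x_i^2) = x_i^2$.
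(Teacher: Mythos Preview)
Your proposal is correct and is essentially identical to the paper's own proof: both pick an index $i$ with $q(\eta(x_i^2))\neq q(\zt(x_i^2))$, take a sufficiently high power $p=(x_i^2-q(\zt(x_i^2)))^N$, observe that $\tau(p)=p$, and use contravariance to conclude $(v,v')=0$. Your remark on the ``mild subtlety'' is a fair observation that the paper leaves implicit; it is resolved exactly as you say, since in the integral setting weights are normalized to lie in $\Z_{\geq 0}^d$ where $a\mapsto q(a)$ is injective (equivalently, two weights inducing the same eigenvalues of the $x_i^2$ define the same weight space and are identified).
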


\begin{proof} Assume $\eta\neq\zt$, and let $v\in M_\eta$ and $v'\in M^{\mathrm{gen}}_\zt$.
Choose $i$ such that $q(\eta(x_i^2))\neq q(\zt(x_i^2))$, and $N\gg0$ such that
\[
(x_i^2-q(\zt(x_i^2))^N.v'=0.
\]
Then
\begin{align*}
(q(\eta(x_i^2))-q(\zt(x_i^2))^N(v,v')
=&((x_i^2-q(\zt(x_i^2)))^N.v,v')\\
=&(v,\tau((x_i^2-q(\zt(x_i^2)))^N).v')\\
=&(v,(x_i^2-q(\zt(x_i^2)))^N.v')=0
\end{align*}
showing that $(v,v')=0$.
\end{proof}

\subsection{Intertwiners} Define the intertwiner
\begin{eqnarray}\label{E:intertwiner}
\phi_i=s_i(x_i^2-x_{i+1}^2)+(x_i+x_{i+1})-c_ic_{i+1}(x_i-x_{i+1}).
\end{eqnarray}
Given an $\ASe(d)$-supermodule $M$, we understand that
$\phi_i\M_{\zeta}^{\mathrm{gen}}\subseteq\M_{s_i(\zeta)}^{\mathrm{gen}}$.
Moreover, a straightforward calculation gives
\begin{eqnarray}\label{E:intertwinersquared}
\phi_i^2=2x_i^2+2x_{i+1}^2-(x_i^2-x_{i+1}^2)^2.
\end{eqnarray}
The following lemma now directly follows (see also \cite{kl}).

\begin{lem}\label{L:InvertibleIntertwiner} Assume that $Y$ is in $\Rep\ASe(d)$,
and $v\in Y$ satisfies $x_i.v=\sqrt{q(a)}v$ and $x_{i+1}.v=\sqrt{q(b)}v$ for
some $a,b\in\Z$. Then, $\phi_i^2.v\neq 0$ unless $q(a)=q(b+1)$ or
$q(a)=q(b-1)$.
\end{lem}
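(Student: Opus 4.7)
The plan is to exploit the explicit formula \eqref{E:intertwinersquared} for $\phi_i^2$ and reduce the lemma to an elementary identity involving the polynomial $q(a)=a(a+1)$. Since $x_i.v = \sqrt{q(a)}v$ and $x_{i+1}.v = \sqrt{q(b)}v$, squaring gives $x_i^2.v = q(a)v$ and $x_{i+1}^2.v = q(b)v$. Substituting into \eqref{E:intertwinersquared} produces the scalar eigenvalue
\[
\phi_i^2.v = \bigl(2q(a) + 2q(b) - (q(a) - q(b))^2\bigr)v,
\]
so the whole question reduces to asking when this scalar vanishes.

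The key step is to factor this scalar. The plan is to verify the two numerical identities
\[
q(b+1) + q(b-1) = 2q(b) + 2, \qquad q(b+1)\cdot q(b-1) = q(b)^2 - 2q(b),
\]
which follow by direct expansion of $q(b\pm 1)=(b\pm 1)(b\pm 1 + 1)$. From these, Vieta's formulas for the monic quadratic in $q(a)$ with roots $q(b+1)$ and $q(b-1)$ yield
\[
\bigl(q(a) - q(b+1)\bigr)\bigl(q(a) - q(b-1)\bigr) = (q(a) - q(b))^2 - 2q(a) - 2q(b).
\]
Consequently,
\[
\phi_i^2.v = -\bigl(q(a) - q(b+1)\bigr)\bigl(q(a) - q(b-1)\bigr)v.
\]
This is zero precisely when $q(a) = q(b+1)$ or $q(a) = q(b-1)$, which is the conclusion.

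I do not anticipate any genuine obstacle: the only work is the two lines of polynomial algebra identifying $2q(a) + 2q(b) - (q(a)-q(b))^2$ as (minus) a product of two linear factors in $q(a)$. The mild point worth a sentence of care is that knowing $x_i.v = \sqrt{q(a)}v$ implies $x_i^2.v = q(a)v$ regardless of the chosen square root, so the statement really only depends on the values $q(a)$ and $q(b)$ and not on any sign convention for $\sqrt{q(a)}$ or $\sqrt{q(b)}$.
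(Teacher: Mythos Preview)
Your proposal is correct and follows the same approach as the paper, which simply states that the lemma ``directly follows'' from \eqref{E:intertwinersquared}. You have merely made explicit the elementary factorization $2q(a)+2q(b)-(q(a)-q(b))^2=-\bigl(q(a)-q(b+1)\bigr)\bigl(q(a)-q(b-1)\bigr)$ that the paper leaves to the reader.
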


\section{Standard Modules}\label{S:standardreps}
We construct a family of standard modules which are an analogue of Zelevinsky's
construction for the degenerate affine Hecke algebra.  The key ingredient is to
define certain irreducible supermodules for a parabolic subalgebra of $\ASe
(d)$; the so-called segment representations.  The standard modules are then
obtained by inducing from the outer tensor product of these modules.

\subsection{Segment Representations}\label{subsection irred modules}
We begin by constructing a family of irreducible $\ASe(d)$-supermodules that
are analogues of Zelevinsky's segment representations for the degenerate affine
Hecke algebra. To begin, define the $2^d$-dimensional $\Se(d)$-supermodule
\begin{equation}\label{E:Cldef}
\Cl_{d}=\ind_{S_d}^{\Se(d)}\C\va,
\end{equation}
where $\C\va$ is the trivial representation of $S_d$. That is,
$\Cl_{d}=\Cl(d).\va$, where the cyclic vector $\va$ satisfies
\begin{eqnarray*}
w.\va=\va,\;\;\;w\in S_d.
\end{eqnarray*}
This is often referred to as the \emph{basic spin representation} of $\Se(d)$.

Introduce algebra involutions $\epsilon_i:\Cl(d)\rightarrow\Cl(d)$ by
$\epsilon_i(c_j)=(-1)^{\dt_{ij}}c_j$ for $1\leq i,j\leq d$. The elements $
\epsilon_i $ act on $\Cl_{d}$ by $\epsilon_i.\va=\va$ for $1\leq i\leq d$ and, more generally, $\epsilon_i.s\va=\epsilon_{i}(s)\va$ for $1\leq i\leq d$.
Also, note that the operators $ \epsilon_i $ commute with each other.

For each $a\in\Z$, define the Clifford algebra

\begin{equation}\label{Pha}
\Ph_a= \begin{cases} \C\langle \ph \rangle / (\ph^2-a),
                &\text{if $a \neq 0$}; \\
                \C \langle \ph \rangle / (\ph),
                & \text{if $a=0$}.\end{cases}
\end{equation}
The $\Z_{2}$-grading on $\Ph_{a}$ is given by declaring $p(\ph)=\one$.

Given a pair of integers $a\leq b$ define the \emph{segment}
\begin{equation*}
[a,b]=\{a,a+1,\ldots,b\}.
\end{equation*}
Given a segment $[a,b]$ with $b-a+1=d\in\Z_{\geq0}$, define the
$\Ph_a\otimes {\Se}(d)$-module
\begin{equation}\label{E:segment}
\hat{\Ph}_{[a,b]}=\Ph_a\boxtimes\Cl_{d}.
\end{equation}
 Of course, when $d=0$ the segment
$[a,a-1]=\emptyset$, and $\hat{\Ph}_{\emptyset}=\Ph_a\otimes\C$.

For $i=1, \dotsc ,d$ let $s_{ij}$ denote the transposition $(ij)$, and
\begin{align}\label{E:JMelt}
\L_i=\sum_{j<i}(1-c_jc_i)s_{ij}
\end{align}
be the \emph{$i$th Jucys-Murphy element} (cf. \cite[(13.22)]{kl}).

\begin{prp}\label{segment representation}  Let $[a,b]$ be a segment with $b-a+1=d.$ Then,
\begin{enumerate}
\item[(i)] The vector space
$\hat{\Ph}_{[a,b]}$ is an $\ASe(d)$-module with $s_i.v=(1\otimes s_i).v$,
$c_i.v=(1\otimes c_i).v$ and
\begin{align*}
x_i.v &= \left(a\otimes \epsilon_i+1\otimes \L_{i}-\ph\otimes c_i\right).v \\
&=\left(a\otimes \epsilon_i+\sum_{k<i}1\otimes(1-c_kc_i)s_{ki}-\ph\otimes
c_i\right).v,
\end{align*}
for all $v\in\hat{\Ph}_{[a,b]}$.
\item[(ii)]  The action of $\P_d[x^2]$ on $\hat{\Ph}_{[a,b]}$ is determined by
\[
x_i^2.(\ph^{\dt}\otimes \va)=q(a+i-1)\ph^\dt\otimes\va,\;\;\;\dt\in\{0,1\},
\;\;\;i=1,\ldots,d.
\]
\end{enumerate}
\end{prp}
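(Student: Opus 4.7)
The plan is to verify directly that the formulas in (i) define an action of $\ASe(d)$ on $\hat\Ph_{[a,b]} = \Ph_a \boxtimes \Cl_d$; namely, that the operators so defined satisfy all the relations of Subsection~\ref{SS:Saffdef}. The Clifford relations \eqref{c}, the symmetric-group relations \eqref{s}, and the mixed relation \eqref{c&s} hold automatically, since $c_i$ and $s_i$ are declared to act via the existing $\Se(d)$-action on the second tensor factor. Thus the content of (i) reduces to checking three things: that the $x_i$'s commute among themselves; the mixed relations \eqref{c&x}; and the mixed relations \eqref{s&x}.

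Writing $x_i = a\otimes\epsilon_i + 1\otimes\L_i - \ph\otimes c_i$, I would treat each relation by decomposing into these three summands and exploiting the following ingredients: the $\epsilon_i$'s mutually commute and satisfy $\epsilon_i c_j = (-1)^{\delta_{ij}}c_j\epsilon_i$; the Jucys-Murphy elements $\L_i$ of \eqref{E:JMelt} mutually commute in $\Se(d)$ and satisfy the usual $c$- and $s$-intertwining identities (these are the classical facts underlying the fact that at $a=0$ the formula recovers the well-known $\ASe(d)$-action on $\Cl_d$ coming from the surjection $\ASe(d)\twoheadrightarrow\Se(d)$, $x_1\mapsto 0$); and finally, under the tensor-product sign rule \eqref{tensor product rule-module}, $\ph$ and $c_j$ anticommute as operators on $\Ph_a\otimes \Cl_d$ while $\ph$ commutes with the even operators $\epsilon_i$ and $\L_i$. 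The relation $c_i x_i = -x_i c_i$ then falls out from the combination of $\epsilon_i c_i = -c_i\epsilon_i$, $c_i\L_i = -\L_i c_i$, and the identity $c_i(\ph c_i) = \ph$ (because $c_i\ph = -\ph c_i$ and $c_i^2 = -1$). Commutativity of the $x_i$'s and the remaining $s$-relations proceed similarly.

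The most delicate calculation is the verification of $s_i x_i = x_{i+1}s_i - 1 + c_i c_{i+1}$, which is the only relation in which the extra $-1 + c_ic_{i+1}$ actually appears. Here the $\L_i$-part contributes the known Sergeev-algebra identity relating $s_i\L_i$ to $\L_{i+1}s_i - 1 + c_ic_{i+1}$; the $\epsilon_i$-part contributes via $s_i\epsilon_i = \epsilon_{i+1}s_i$; and the $\ph c_i$-part needs $s_i c_i = c_{i+1}s_i$, which is \eqref{c&s}. Assembling these three pieces and comparing both sides gives the desired identity. This bookkeeping is the main technical obstacle; the other relations are more routine.

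For (ii), I would compute $x_i^2\cdot(\ph^\delta\otimes\va)$ directly from (i), using $\epsilon_i\va=\va$ and $s_{ki}\va=\va$, so that $\L_i\va = (i-1)\va - \sum_{k<i}c_kc_i\va$. Squaring and collecting: the $(a\epsilon_i)^2$-term contributes $a^2$; the $(\ph c_i)^2$-term contributes $a$ after using $\ph^2 = a$, $c_i^2 = -1$, and the sign from \eqref{tensor product rule-module}; the cross term $\{a\epsilon_i,\L_i\}$ on $\va$ contributes $2a(i-1)$; the $\L_i^2$-piece on $\va$ contributes the classical Jucys-Murphy eigenvalue $(i-1)i$; and the cross terms involving $\ph c_i$ with $a\epsilon_i$ and with $\L_i$ cancel (being odd in the $\Ph_a$-grading, they produce vectors in the wrong $\delta$-component, which must cancel by parity). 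The total is $a^2 + a + 2a(i-1) + (i-1)i = (a+i-1)(a+i) = q(a+i-1)$, as claimed.
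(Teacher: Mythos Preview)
Your outline for (i) is exactly what the paper has in mind (it simply says ``easy check which we leave to the reader''), and you have identified the right ingredients: the Sergeev--algebra identity $s_i\L_i=\L_{i+1}s_i-1+c_ic_{i+1}$, the relations $s_i\epsilon_i=\epsilon_{i+1}s_i$ and $s_ic_i=c_{i+1}s_i$, and the anticommutations $\epsilon_ic_i=-c_i\epsilon_i$, $\L_ic_i=-c_i\L_i$. Assembling these gives \eqref{s&x} as you describe, and the remaining relations are routine.

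For (ii) your direct expansion of $x_i^2$ is correct in substance and gives the same answer as the paper, which instead first records
\[
x_i.(1\otimes\va)=\Bigl(a+i-1-\sum_{j<i}c_jc_i\Bigr)(1\otimes\va)+c_i.(\ph\otimes\va),\qquad
x_i.(\ph\otimes\va)=\Bigl(a+i-1-\sum_{j<i}c_jc_i\Bigr)(\ph\otimes\va)+a\,c_i.(1\otimes\va),
\]
and then applies $x_i$ once more using the $\ASe(d)$ commutation relations. There is, however, one genuine soft spot in your write-up: your parenthetical claim that the cross terms $\{-\ph c_i,\,a\epsilon_i\}$ and $\{-\ph c_i,\,\L_i\}$ ``must cancel by parity'' is not a valid argument. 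These terms are indeed odd in the $\Ph_a$-grading, so they land in the other $\delta$-component, but nothing forces them to vanish a priori; you are trying to \emph{prove} that $x_i^2$ acts diagonally in that grading, so you cannot assume it. The correct (and easy) justification is the one you already have in hand from part (i): writing $D=a\epsilon_i+\L_i$ and $O=-\ph c_i$, the sign rule \eqref{tensor product rule-module} reduces $DO+OD$ to $-\ph\otimes\{D,c_i\}$, and $\{D,c_i\}=a\{\epsilon_i,c_i\}+\{\L_i,c_i\}=0$ by the very anticommutations you used to verify $c_ix_i=-x_ic_i$. With that one line inserted, your computation is complete.
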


\begin{proof} (i) The fact that this is an $\ASe(d)$-module is an easy check which
we leave to the reader.

(ii) To check the action of $x_i^2$, observe that
\[
x_i.1\otimes\va=\left(a+i-1-\sum_{j<i}c_jc_i\right).1\otimes\va
+c_i.\ph\otimes\va
\]
and
\[
x_i.\ph\otimes\va=\left(a+i-1-\sum_{j<i}c_jc_i\right).\ph\otimes\va
+ac_i.1\otimes\va.
\]
Now, the result follows using the commutation relations for $\ASe(d)$.
\end{proof}

\begin{rmk}\label{R:Duality} In fact, we need not consider all $a,b\in\Z$. Given any
segment $[a,b]$, consider the module $\hat{\Ph}_{[a,b]}^\sm$ obtained by
twisting the action of $\ASe(d)$ by the automorphism $\sm$ as described in
Section~\ref{SS:duality}.  Note that when $b\neq-1$,
\[
\hat{\Ph}_{[a,b]}^\sm\cong\hat{\Ph}_{[-b-1,-a-1]}.
\]
When $b=-1$,
$\hat{\Ph}_{[a,-1]}^\sm\cong\hat{\Ph}_{[0,-a-1]}^{\oplus2}$. In particular, for
$b\neq0$, $\hat{\Ph}_{[-(b+1),b-1]}^\sm\cong\hat{\Ph}_{[-b,b]}$, and
$\hat{\Ph}_{[-1,-1]}^\sm\cong\hat{\Ph}_{[0,0]}^{\oplus2}$. Therefore, it is
enough to describe the modules
\begin{enumerate}
\item $\hat{\Ph}_{[a,b]}$, $0\leq a\leq b$, and
\item $\hat{\Ph}_{[-a,b]}$, $0<a\leq b$.
\end{enumerate}
\end{rmk}
The following result describes $\hat{\Ph}_{[a,b]}$ at the level of characters.

\begin{prp}\label{character formula} Let $[a,b]$ be a segment with $a,b \geq 0.$
Then,
\begin{enumerate}
\item if $0\leq a\leq b$, then
\begin{equation*}
\ch\hat{\Ph}_{[a,b]}=\begin{cases}[a,\ldots,b],
    &\text{if $a=0$};\\ 2[a,\ldots,b], &\text{if $a \neq 0$};\end{cases}
\end{equation*}

\item if $0<a\leq b$, then
\[
\ch\hat{\Ph}_{[-a,b]}=4[a-1,\ldots,1,0,0,1,\ldots,b]
\]
\end{enumerate}
\end{prp}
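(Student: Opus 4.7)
The strategy is to reduce the entire character computation to a single observation: each $x_i^2$ acts on $\hat{\Ph}_{[a,b]}$ as the scalar $q(a+i-1)\cdot\mathrm{id}$. Once this is known, the character will be forced by dimension counting together with the classification of $\A(d)$-irreducibles in Lemma~\ref{A(d) irreducibles}.

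The main step is to establish this scalar action. Using the defining relations \eqref{c&x}, namely $c_i x_i=-x_i c_i$ and $c_j x_i=x_i c_j$ for $j\neq i$, one immediately obtains $c_j x_i^2 = x_i^2 c_j$ for all $i,j$. Thus the whole subalgebra $\Cl(d)\subseteq\ASe(d)$ centralizes each $x_i^2$, so the operator $x_i^2-q(a+i-1)$ on $\hat{\Ph}_{[a,b]}$ is $\Cl(d)$-linear. By Proposition~\ref{segment representation}(ii) this operator already vanishes on the cyclic vectors $\ph^\delta\otimes\va$ for $\delta\in\{0,1\}$, and since $\Cl(d)\cdot\va=\Cl_d$ these vectors generate $\hat{\Ph}_{[a,b]}=\Ph_a\otimes\Cl_d$ under the $\Cl(d)$-action. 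Hence $x_i^2-q(a+i-1)$ vanishes identically on $\hat{\Ph}_{[a,b]}$.

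Next I would identify the corresponding $\A(d)$-irreducible label. Using $\L(a)\cong\L(-a-1)$, there is a unique $a_i\in\Z_{\geq 0}$ with $q(a_i)=q(a+i-1)$. In Case~(1), with $0\leq a\leq b$, this yields $(a_1,\ldots,a_d)=(a,a+1,\ldots,b)$, and $\gm_0$ equals $1$ if $a=0$ and $0$ otherwise. In Case~(2), applied to $\hat{\Ph}_{[-a,b]}$ with $0<a\leq b$, the identity $q(-k)=q(k-1)$ yields $(a_1,\ldots,a_d)=(a-1,a-2,\ldots,1,0,0,1,\ldots,b)$, with $\gm_0=2$.

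To conclude I would compare dimensions. Because each $x_i^2$ acts by a single scalar on $\hat{\Ph}_{[a,b]}$, every composition factor of the restriction to $\A(d)$ shares these eigenvalues, and by Lemma~\ref{A(d) irreducibles} is isomorphic to $\L(a_1)\circledast\cdots\circledast\L(a_d)$, of dimension $2^{d-\lfloor\gm_0/2\rfloor}$. Dividing $\dim\hat{\Ph}_{[a,b]}=\dim\Ph_a\cdot 2^d$ by this gives the multiplicity in each case: $1$ when $a=0$ in Case~(1), $2$ when $a\neq0$ in Case~(1), and $4$ in Case~(2), precisely the coefficients in the claimed formulas. The only nontrivial ingredient is the $\Cl(d)$-centrality argument in the second paragraph; the remaining steps are bookkeeping with $q$ and with the dimension formula of Lemma~\ref{A(d) irreducibles}.
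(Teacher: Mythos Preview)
Your proof is correct and follows essentially the same approach as the paper: both argue that $x_i^2$ commutes with $\Cl(d)$, that $\hat{\Ph}_{[a,b]}$ is generated over $\Cl(d)$ by the vectors $\ph^\delta\otimes\va$, and then invoke Proposition~\ref{segment representation}(ii) together with the dimension formula of Lemma~\ref{A(d) irreducibles}. You spell out the bookkeeping (the explicit nonnegative labels $a_i$, the value of $\gm_0$, and the multiplicity count) that the paper leaves implicit, but the underlying argument is the same.
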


\begin{proof} The action of $x_i^2$ commutes with $\Cl(d)$ and
$\hat{\Ph}_{[a,b]}=\Cl(d).(1\otimes\va)+\Cl(d).(\ph\otimes\va)$. Therefore,
applying Proposition~\ref{segment representation}(2), we deduce in both cases
that the $x_i^2$ act by the prescribed eigenvalues. The result now follows from
the dimension formula in Lemma \ref{A(d) irreducibles}.
\end{proof}

Let $\ph\hat{\va}_{[a,b]}=\ph\otimes\va$ and $\hat{\va}_{[a,b]}=1\otimes\va$.
Also, in what follows, we omit the tensor symbols. For example, we write
\[
a\epsilon_i+\L_i-\ph c_i:=a\otimes \epsilon_i+1\otimes \L_{i}-\ph\otimes c_i.
\]

\begin{dfn}\label{X} Let $a\in\Z$ and $\kp_1,\ldots,\kp_d\in\R$ satisfy
$\kp_i^2=q(a+i-1)$ where $d=b-a+1$. Given a subset $S\subseteq\{1,\ldots,d\}$
define the element $X_{S} \in \ASe (d)$ by
\[
X_S=\prod_{i\notin S}(x_i+\kp_i).
\]
Observe that $X_S$ is only defined up to the choices of sign for
$\kp_1,\ldots,\kp_d$.
\end{dfn}

\begin{lem}\label{nonzero} Let $[a,b]$ be a segment with $d=b-a+1$. Assume that either
$-a\notin\{1,\ldots,d\}$ and $S$ is arbitrary, or assume that
$-a\in\{1,\ldots,d\}$ and either $-a+1\in S$ or $-a\in S$. Then
$X_S.\hat{\va}_{[a,b]}\neq0$.
\end{lem}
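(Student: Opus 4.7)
The plan is to prove $X_{S}\hat\va_{[a,b]}\ne 0$ by identifying a specific basis vector of $\hat\Ph_{[a,b]}$ whose coefficient in $X_{S}\hat\va_{[a,b]}$ is nonzero. The key tools are the explicit formula of Proposition~\ref{segment representation}(i),
\[
x_{i}\hat\va_{[a,b]}=(a+i-1)\hat\va_{[a,b]}-\sum_{j<i}c_{j}c_{i}\hat\va_{[a,b]}-\ph c_{i}\hat\va_{[a,b]},
\]
the commutation rules $x_{i}c_{j}=c_{j}x_{i}$ for $j\ne i$ and $x_{i}c_{i}=-c_{i}x_{i}$, and the fact that the $x_{i}$ pairwise commute in $\ASe(d)$, so that the factors of $X_{S}$ commute and may be applied in any order. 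I would fix the Clifford basis $\{c_{I}\hat\va_{[a,b]},\ c_{I}\ph\hat\va_{[a,b]}:I\subseteq\{1,\dotsc,d\}\}$ of $\hat\Ph_{[a,b]}$ (the second family being zero when $a=0$, since then $\ph\in\Ph_{0}$ is zero).

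My strategy is to apply the factors $(x_{i}+\kappa_{i})$, for $i\notin S$, in decreasing order of $i$, and track the coefficient of the distinguished basis vector $c_{S^{c}}\hat\va_{[a,b]}$ (or of $c_{S^{c}\setminus\{i_{0}\}}\ph\hat\va_{[a,b]}$ in certain subcases). With the decreasing-index ordering, each factor $(x_{i}+\kappa_{i})$ contributes, through the $-c_{j}c_{i}\hat\va_{[a,b]}$ or $-\ph c_{i}\hat\va_{[a,b]}$ piece of $x_{i}\hat\va_{[a,b]}$, a fresh Clifford generator $c_{i}$ that no subsequent factor (with smaller index) can produce or cancel; this yields a clean product recursion for the coefficient of the distinguished vector. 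As a conceptual cross-check, Proposition~\ref{segment representation}(ii) gives $x_{i}^{2}=\kappa_{i}^{2}$ on the whole module, hence $(x_{i}+\kappa_{i})(x_{i}-\kappa_{i})=0$, so whenever $\kappa_{i}\ne 0$ the factor $(x_{i}+\kappa_{i})$ is $2\kappa_{i}$ times the projection onto the $+\kappa_{i}$-eigenspace of $x_{i}$, and $X_{S}\hat\va_{[a,b]}\ne 0$ amounts to showing $\hat\va_{[a,b]}$ has a nonzero projection to the joint $(+\kappa_{i})_{i\notin S}$-eigenspace.

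The two branches of the hypothesis then correspond to which of the two potentially vanishing scalars $\kappa_{1-a}$ (which is zero when $1\le 1-a\le d$) and $\kappa_{-a}$ (which is zero when $1\le -a\le d$) can occur with index outside $S$. In case (A), $-a\notin\{1,\dotsc,d\}$ removes $\kappa_{-a}$ from the range of indices, so at most one zero factor $(x_{1-a}+0)$ can appear in $X_{S}$, and the Clifford bookkeeping above handles that single nilpotent factor. In case (B), $-a\in\{1,\dotsc,d\}$ forces $a\le-1$, hence $\ph\ne 0$ so that the $-\ph c_{i}\hat\va_{[a,b]}$ contributions are nontrivial; the assumption that $-a\in S$ or $1-a\in S$ removes at least one of the two zero factors from $X_{S}$, and the surviving $\ph$-component keeps the distinguished coefficient nonzero under the remaining nilpotent factor.

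The principal obstacle is precisely the situation when some $(x_{i}+\kappa_{i})=x_{i}$ is nilpotent ($\kappa_{i}=0$ so $x_{i}^{2}=0$ on the module), because the eigenspace-projection picture collapses and cannot by itself detect nonvanishing. One must finish these cases by explicit coefficient bookkeeping in the Clifford basis, where the carefully chosen decreasing-index factor ordering and tracking of the $c_{i}$-anticommutation signs prevent unwanted cancellations; the hypothesis on $S$ is crucial precisely to keep the right zero-$\kappa$ factors out of $X_{S}$, so that the distinguished coefficient survives.
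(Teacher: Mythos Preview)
Your overall framework---expand $X_S\hat\va$ in the Clifford basis of $\hat\Ph_{[a,b]}$ and exhibit a basis vector with provably nonzero coefficient---is exactly what the paper does. The crucial difference is \emph{which} coefficient you track, and your choice does not give the clean recursion you anticipate.

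The paper tracks the coefficient of $\hat\va$ itself (no Clifford generators, no $\ph$), and applies the factors with \emph{smallest} index first. At each stage the next factor $(x_{d_j}+\kp_{d_j})$ has index $d_j$ strictly larger than every Clifford index present in the partial result, so one checks directly that it cannot send any non-constant basis vector back to $\hat\va$. Consequently the $\hat\va$-coefficient of $X_S\hat\va$ is exactly $\prod_{i\notin S}(a+i-1+\kp_i)$, and since $(a+i-1)+\kp_i=0$ forces $i=1-a$, this product is nonzero whenever $a\ge 1$ or $1-a\in S$, handling almost everything at once.

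Your ``fresh $c_i$'' heuristic, by contrast, is valid only for the very first factor. When you apply $(x_{d_1}+\kp_{d_1})$ first (largest index), its summand $-c_{d_2}c_{d_1}\hat\va$ already places $c_{d_2}$ into the state; then the second factor's own $c_{d_2}$ cancels against it via $c_{d_2}^2=-1$. So there is no product formula for the $c_{S^c}\hat\va$-coefficient. Concretely, for $a=1$, $d=2$, $S=\emptyset$ the $c_1c_2$-coefficient of $X_\emptyset\hat\va$ is $-\kp_1$, not a two-factor product, and for $a=1$, $d=1$, $S=\emptyset$ the $c_1$-coefficient is $0$ (only the $\ph c_1$-coefficient survives).

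For the remaining subcase of (B) with $-a\in S$ but $1-a\notin S$, the paper does not push coefficient bookkeeping further but instead uses the identity $x_{-a}\hat\va=-c_{-a}c_{1-a}\,x_{1-a}\hat\va$ to rewrite $X_S\hat\va=c_{-a}c_{1-a}X_T\hat\va$ with $T=(S\cup\{1-a\})\setminus\{-a\}$, reducing to a case already handled; your plan contains no analogue of this swap.
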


\begin{proof} Let $\hat{\va}=\hat{\va}_{[a,b]}$.  By
Proposition~\ref{segment representation}(i),
\[
x_k.v=(a\epsilon_k+\L_k-\ph c_k).v.
\]
Let $\{d_1>d_2>\ldots>d_\ell\}=\{1,\ldots,d\}\backslash S$. Since the $x_i$
mutually commute,
\begin{eqnarray*}
X_S.\hat{\va}&=&(x_{d_1}+\kp_{d_1})\cdots(x_{d_\ell}+\kp_{d_\ell}).\hat{\va}\\
    &=&(a\epsilon_{d_1}+\kp_{d_1}+\L_{d_1}-\ph c_{d_1})\cdots
    (a\epsilon_{d_\ell}+\kp_{d_\ell}+\L_{d_\ell}-\ph c_{d_\ell}).\hat{\va}\\
    &=&((a+\kp_{d_1})+\L_{d_1}-\ph c_{d_1})\cdots
    ((a+\kp_{d_\ell})+\L_{d_\ell}-\ph c_{d_\ell}).\hat{\va}.
\end{eqnarray*}
The last equality follows since $\epsilon_k\L_j=\L_j\epsilon_k$ if $k>j$. Now,
\begin{eqnarray}\label{w 1}
X_S.\hat{\va} &=&\bigg(\bigg(a+\kp_{d_1}+\sum_{j<d_1}s_{jd_1}\bigg)+
\bigg(\sum_{j<d_1}s_{jd_1}c_j-\ph
\bigg)c_{d_1}\bigg)\cdots\\\nonumber&&\hspace{1.5in}\cdots
    \bigg(\bigg(a+\kp_{d_\ell}+\sum_{j<d_\ell}s_{jd_\ell}\bigg)
    +\bigg(\sum_{j<d_\ell}s_{jd_\ell}c_j-\ph
\bigg)c_{d_\ell}\bigg).\hat{\va}\\\nonumber
    &=&\prod_{i\notin S}(a+i-1+\kp_i).\hat{\va}+(\bigstar).\hat{\va}
\end{eqnarray}
where $(\bigstar)=p'(c)-\ph p''(c)$, where $p'(c)\in\Cl(d)_\zero$,
$p''(c)\in\Cl(d)_\one$, and $p'(c)$ has no constant term. Therefore, if either
$a\geq 0$, or $-a+1\in S$, $X_S.\va\neq 0$.

Now, assume $-a+1\in\{1,\ldots,d\}$, and $-a+1\notin S$, but $a\in S$. Observe
that $\kp_{-a+1}=\kp_{-a}=0$. Now,
\begin{eqnarray}\label{nonzero 2}
x_{-a}.\hat{\va}=\left(-1-\sum_{j<-a}c_jc_{-a}-\ph
c_{-a}\right).\hat{\va}=-c_{-a}c_{-a+1}x_{-a+1}.\hat{\va}.
\end{eqnarray}
Let $R=S\cup\{-a+1\}$ and $T=R\backslash\{-a\}$. Then,
\[
X_S.\hat{\va}=X_{R}x_{-a+1}.\hat{\va}=c_{-a}c_{-a+1}X_{R}x_{-a}.\hat{\va}
=c_{-a}c_{-a+1}X_T.\hat{\va}\neq0.
\]

Finally, if $d=-a$, then in \eqref{w 1}, $d_1=-a$ and it is clear that
the coefficient of $c_{-a-1}c_{-a}$ is nonzero.
\end{proof}

\begin{lem}\label{A submodule}
If $i\notin S$, then $x_iX_S.\hat{\va}=\kp_iX_S.\hat{\va}$.
\end{lem}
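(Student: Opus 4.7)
The plan is very short because the claim is essentially an eigenvector computation, and the key observation is that the $x_j$'s pairwise commute inside $\ASe(d)$ (since $\P_d[x]=\C[x_1,\ldots,x_d]$ is a polynomial ring in commuting variables).

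First, I would reformulate the conclusion: showing $x_iX_S.\hat{\va}=\kp_iX_S.\hat{\va}$ is the same as showing $(x_i-\kp_i)X_S.\hat{\va}=0$. Since $i\notin S$, I factor off the corresponding factor and write
\[
X_S=(x_i+\kp_i)\,Y,\qquad\text{where } Y=\prod_{j\notin S,\ j\neq i}(x_j+\kp_j).
\]
This factorization is legitimate precisely because the $x_j$'s commute with each other, so the order in which we take the product in the definition of $X_S$ does not matter.

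Next I would multiply $(x_i-\kp_i)$ against this factorization, again using $[x_i,x_j]=0$, to obtain
\[
(x_i-\kp_i)X_S.\hat{\va}=(x_i-\kp_i)(x_i+\kp_i)Y.\hat{\va}=(x_i^2-\kp_i^2)Y.\hat{\va}.
\]
Here the $\kp_j$'s are scalars, so $Y$ is a polynomial in the mutually commuting variables $x_j$ ($j\neq i$); in particular $Y$ commutes with $x_i^2$. Hence
\[
(x_i^2-\kp_i^2)Y.\hat{\va}=Y(x_i^2-\kp_i^2).\hat{\va}.
\]

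Finally I would apply Proposition \ref{segment representation}(ii), which gives $x_i^2.\hat{\va}_{[a,b]}=q(a+i-1)\hat{\va}_{[a,b]}$, together with the defining property $\kp_i^2=q(a+i-1)$ from Definition \ref{X}. These together yield $(x_i^2-\kp_i^2).\hat{\va}=0$, so the entire expression vanishes, completing the argument. There is no real obstacle here; the only thing one must be careful about is to use the commutativity of the $x_j$'s to rearrange factors freely, and to invoke the correct formula for the eigenvalue of $x_i^2$ on $\hat{\va}_{[a,b]}$.
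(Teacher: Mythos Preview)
Your proof is correct and follows essentially the same approach as the paper: both use the commutativity of the $x_j$'s together with the fact that $x_i^2.\hat{\va}=\kp_i^2\hat{\va}$ (from Proposition~\ref{segment representation}(ii) and Definition~\ref{X}) to reduce the claim to the vanishing of $(x_i^2-\kp_i^2)$ on $\hat{\va}$. The paper phrases this as $x_i(x_i+\kp_i).\hat{\va}=\kp_i(\kp_i+x_i).\hat{\va}$ and then invokes commutativity, which is exactly your computation rearranged.
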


\begin{proof} Since $x_{i}^{2}.\hat{\va} = q(a-i+1)\hat{\va}= \kp_{i}^{2}\hat{\va}$,
\begin{eqnarray*}
x_i(x_i+\kp_i).\hat{\va}=(x_i^2+\kp_ix_i)\hat{\va}=\kp_i(\kp_i+x_i)\hat{\va},
\end{eqnarray*}
so the result follows because the $x_i$ commute.
\end{proof}

\begin{lem}\label{s_i action} If $i,i+1\notin S$ and $i\neq-a$, then
\[
s_iX_S.\hat{\va}=\left(\frac{\kp_{i+1}+\kp_i}{2(a+i)}+
    \frac{\kp_{i+1}-\kp_i}{2(a+i)}c_ic_{i+1}\right)X_S.\hat{\va}.
\]
\end{lem}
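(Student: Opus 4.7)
The plan is to apply the intertwiner $\phi_i$ of \eqref{E:intertwiner} to $v:=X_S\hat{\va}$ and argue that $\phi_iv=0$; the stated formula will then follow by solving the resulting linear relation for $s_iv$.

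First I would observe that $x_j^2$ acts on \emph{all} of $\hat{\Ph}_{[a,b]}$ as the scalar $q(a+j-1)=\kp_j^2$. Proposition~\ref{segment representation}(ii) records this on the cyclic vectors $\ph^\dt\otimes\va$, and the eigenvalue extends to the full module because $\hat{\Ph}_{[a,b]}=\Cl(d).\hat{\va}+\Cl(d).(\ph\hat{\va})$ and $x_j^2$ commutes with every Clifford generator (from $c_jx_j=-x_jc_j$ one gets $c_jx_j^2=x_j^2c_j$, while $c_ix_j=x_jc_i$ for $i\neq j$). Consequently, the entire module $\hat{\Ph}_{[a,b]}$ lies in the single $\P_d[x^2]$-weight space for $\zt=(q(a),q(a+1),\ldots,q(a+d-1))$. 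The hypothesis $i\neq-a$ forces $q(a+i-1)\neq q(a+i)$, so $s_i\zt\neq\zt$, and hence the $s_i\zt$-generalized weight space of $\hat{\Ph}_{[a,b]}$ is trivial. The intertwiner property recorded just before Lemma~\ref{L:InvertibleIntertwiner} then yields $\phi_iv=0$.

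On the other hand, applying \eqref{E:intertwiner} to $v$ and invoking Lemma~\ref{A submodule} (which gives $x_jv=\kp_jv$, hence $x_j^2v=\kp_j^2v$, for $j=i,i+1$) gives the direct evaluation
\[
\phi_iv=-2(a+i)\,s_iv+(\kp_i+\kp_{i+1})v+(\kp_{i+1}-\kp_i)c_ic_{i+1}v,
\]
where I have used $\kp_i^2-\kp_{i+1}^2=q(a+i-1)-q(a+i)=-2(a+i)$. Setting this expression equal to zero and dividing by $-2(a+i)$, which is permissible precisely because $i\neq-a$, produces the claimed identity. The only mildly delicate point in this strategy is the extension of the $x_j^2$-eigenvalue from the cyclic generators of $\hat{\Ph}_{[a,b]}$ to the entire module, but the Clifford commutation with $x_j^2$ makes this automatic; I do not anticipate any substantive obstacle beyond tracking signs.
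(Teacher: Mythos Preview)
Your proof is correct and follows exactly the paper's approach: the paper also concludes $\phi_i.\hat{\Ph}_{[a,b]}=\{0\}$ ``by character considerations'' and then solves the resulting linear relation for $s_iw$. Your write-up simply makes explicit what the paper's phrase ``character considerations'' means, namely that $\hat{\Ph}_{[a,b]}$ sits in the single $\P_d[x^2]$-weight $\zt=(q(a),\ldots,q(a+d-1))$ and that $i\neq -a$ forces $s_i\zt\neq\zt$, so the intertwiner $\phi_i$ lands in the zero weight space.
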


\begin{proof} Let $w:=X_S.\hat{\va}$, and recall the intertwining element $\phi_i$.
By character considerations $\phi_i.\hat{\Ph}_{[a,b]}=\{0\}$. In particular,
\begin{eqnarray*}
0&=&\phi_i.w\\
    &=&(s_i(x_i^2-x_{i+1}^2)+(x_i+x_{i+1})-c_ic_{i+1}(x_i-x_{i+1})).w\\
    &=&-2(a+i)s_i.w+((\kp_{i+1}+\kp_i)+(\kp_{i+1}-\kp_i)c_ic_{i+1}).w.
\end{eqnarray*}
Hence, the result.
\end{proof}

We can now describe the irreducible segment representations of $\ASe (d).$
\begin{thm}\label{module decomposition} The following holds:
\begin{enumerate}
\item[(i)] The module $\hat{\Ph}_{[0,d-1]}$
is an irreducible $\ASe(d)$-module of type \texttt{Q}.

\item[(ii)] Assume $0<a\leq b$. The module $\hat{\Ph}_{[a,b]}$, has a submodule
$\hat{\Ph}_{[a,b]}^+=\Cl(d).w$, where $w=X_\emptyset.\hat{\va}$. Moreover, if
$w'=(x_1-\kp_1)X_{\{1\}}.\hat{\va}$, and $\hat{\Ph}_{[a,b]}^-=\Cl(d).w'$, then
\[
\hat{\Ph}_{[a,b]}=\hat{\Ph}_{[a,b]}^+ \oplus \hat{\Ph}_{[a,b]}^-.
\] The
submodules $\hat{\Ph}_{[a,b]}^\pm$ are simple modules of type \texttt{M}.

\item[(iii)] If $0<a\leq b$, the $\hat{\Ph}_{[-a,b]}$ has a submodule
$\hat{\Ph}_{[-a,b]}^+=\Cl(d)w\oplus\Cl(d)\overline{w}$, where
\[
w=-(1+\sqrt{-1}c_ac_{a+1})X_{\{a+1\}}.\hat{\va}\andeqn\overline{w}=s_aw.
\]
Moreover, if
\[
w'=-(1-\sqrt{-1}c_ac_{a+1})X_{\{a+1\}}.\hat{\va},\;\;\;\overline{w}'=s_aw',
\]
and $\hat{\Ph}_{[-a,b]}^-=\Cl(d)w'\oplus\Cl(d)\overline{w}'$, then
\[
\hat{\Ph}_{[-a,b]}=\hat{\Ph}_{[a,b]}^+\oplus \hat{\Ph}_{[-a,b]}^-.
\] The
submodules $\hat{\Ph}_{[-a,b]}^\pm$ are simple of type \texttt{M}.
\end{enumerate}
\end{thm}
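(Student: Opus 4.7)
The plan is to prove all three parts by combining the character formula of Proposition~\ref{character formula} and the injectivity of the character map (Lemma~\ref{L:independenceofcharacters}) with the classification of irreducible $\A(d)$-modules (Lemma~\ref{A(d) irreducibles}) and the preparatory Lemmas~\ref{nonzero}, \ref{A submodule}, and \ref{s_i action} concerning the vectors $X_S.\hat{\va}$.

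For~(i) I observe that $\ch\hat\Ph_{[0,d-1]}=[0,1,\ldots,d-1]$ and that the corresponding irreducible $\A(d)$-module $\L(0)\circledast\cdots\circledast\L(d-1)$ has dimension $2^d=\dim\hat\Ph_{[0,d-1]}$. Hence the restriction to $\A(d)$ is already simple, which forces simplicity over $\ASe(d)$. The $\A(d)$-type is~\texttt{Q} because $\gm_0=1$; to upgrade this to the $\ASe(d)$-level I will exhibit an explicit odd automorphism. Since $\ph=0$ and $x_1.\hat{\va}_{[0,d-1]}=0$ in this case (by Proposition~\ref{segment representation}), the odd operator induced by $c_1$ supercommutes with every $x_j$, and a short check against relations~\eqref{c&s} shows it intertwines with the $s_i$-action, providing the required odd involution.

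For~(ii), since $a>0$ no $\kp_i$ vanishes. Lemma~\ref{nonzero} gives $w,w'\neq 0$ and Lemma~\ref{A submodule} yields $x_i.w=\kp_i w$ for every $i$, while a parallel calculation gives $x_1.w'=-\kp_1 w'$ and $x_i.w'=\kp_i w'$ for $i>1$. Thus $\Cl(d).w$ and $\Cl(d).w'$ have disjoint $x_1$-spectra and intersect trivially. Lemma~\ref{s_i action}, whose hypothesis $i\neq -a$ is automatic, implies $s_i.w\in\Cl(d).w$, and combined with relations~\eqref{c&x} and~\eqref{c&s} this makes $\Cl(d).w$ stable under all of $\ASe(d)$. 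Comparing with the character $2[a,\ldots,b]$ forces $\dim\hat\Ph_{[a,b]}^\pm=2^d$, and each restricts to the simple type~\texttt{M} $\A(d)$-module $\L(a)\circledast\cdots\circledast\L(b)$; simplicity at the $\A(d)$-level of type~\texttt{M} then precludes any nonzero odd $\ASe(d)$-endomorphism, giving the claim.

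Part~(iii) is the main obstacle, because $\kp_a=\kp_{a+1}=0$ renders the intertwiner $\phi_a$ singular and invalidates Lemma~\ref{s_i action} at $i=a$. This is precisely why $s_a.w$ can escape $\Cl(d).w$, forcing the extra generator $\overline{w}=s_a.w$. My plan has four steps: first, verify $w,\overline{w},w',\overline{w}'\neq 0$ via Lemma~\ref{nonzero} (with $-(-a)+1=a+1\in S$) together with the orthogonality of $1\pm\sqrt{-1}c_ac_{a+1}$ (since $(\sqrt{-1}c_ac_{a+1})^2=1$); second, establish $\ASe(d)$-stability of $\Cl(d).w\oplus\Cl(d).\overline{w}$ by invoking Lemma~\ref{s_i action} for $i\neq a$, using $s_a^2=1$ so that $s_a$ swaps the two $\Cl(d)$-summands, and closing the polynomial and Clifford actions via the commutation relations; third, distinguish $\hat\Ph^+_{[-a,b]}$ from $\hat\Ph^-_{[-a,b]}$ via the projections $1\pm\sqrt{-1}c_ac_{a+1}$ onto opposite eigenspaces of the involution $\sqrt{-1}c_ac_{a+1}$; and fourth, match dimensions against the character $4[a-1,\ldots,0,0,\ldots,b]$ to obtain $\dim\hat\Ph_{[-a,b]}^\pm=2^d$, each restricting to two copies of the $2^{d-1}$-dimensional type~\texttt{M} simple $\A(d)$-module (here $\gm_0=2$). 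Since the $\A(d)$-restriction is now isotypic of length two rather than simple, the type~\texttt{M} conclusion at the $\ASe(d)$-level is not automatic and must be deduced by showing that the only candidate odd $\A(d)$-endomorphism swapping the two copies fails to commute with~$s_a$. The hardest step is the third together with this final type verification, and it is precisely where the careful choice of $w$ and $\overline{w}$ built from $c_ac_{a+1}$ and $X_{\{a+1\}}$ is essential.
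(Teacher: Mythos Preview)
Your overall architecture matches the paper's, but several concrete steps fail as written.

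In~(i), left multiplication by $c_1$ is not an $\ASe(d)$-module endomorphism: from~\eqref{c&s} one has $s_1c_1=c_2s_1$, so $c_1\cdot(s_1.m)\neq s_1.(c_1.m)$ even though $x_1\equiv 0$ on this module. The paper instead notes that, since $x_1$ acts by zero on all of $\hat\Ph_{[0,d-1]}$, the $\ASe(d)$-action factors through $\Se(d)$, and exhibits the two non-homogeneous $\Se(d)$-submodules $\Cl(d)(\sqrt{-d}\pm(c_1+\cdots+c_d)).\hat\va$.

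In~(ii), the claim that $\Cl(d).w$ and $\Cl(d).w'$ have ``disjoint $x_1$-spectra'' is false: since $x_1c_1=-c_1x_1$, both subspaces carry the full spectrum $\{\pm\kp_1,\ldots,\pm\kp_d\}$. What does distinguish them is parity: the joint $(+\kp_1,\ldots,+\kp_d)$-eigenvector in $\Cl(d).w$ is the even vector $w$, while in $\Cl(d).w'$ it is the odd vector $c_1w'$; the paper phrases this by writing $w'=p(c).w$, using the eigenvalue constraints to force $p(c)=rc_1+s$, and then invoking parity.

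In~(iii) there are two real gaps. First, your stability argument is incomplete. Lemma~\ref{s_i action} controls $s_i.X_S.\hat\va$, but $w=-(1+\sqrt{-1}c_ac_{a+1})X_{\{a+1\}}.\hat\va$ carries a Clifford prefactor that does \emph{not} commute with $s_{a-1}$ or $s_{a+1}$; the paper handles these two cases by a separate computation (and for $\overline w$ by reusing the intertwiner vanishing $\phi_{a\pm1}.\overline w=0$ together with the explicit values~\eqref{x_a}--\eqref{x_{a+1}}). Second, and more seriously, you never prove that $\hat\Ph_{[-a,b]}^\pm$ is simple as an $\ASe(d)$-supermodule. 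You correctly observe that its $\A(d)$-restriction is isotypic of length two, but then only use this to discuss type~\texttt{M}; isotypic length two does not preclude a proper graded $\ASe(d)$-submodule. The paper supplies the missing step: any nonzero submodule $V\subseteq\hat\Ph_{[-a,b]}^+$ must contain a vector $v=p_1(c).w+p_2(c).\overline w$ with $x_i.v=\kp_i v$ for all $i$, and the relation $x_a.\overline w=-(1+\sqrt{-1})w$ from~\eqref{x_a} forces $p_2(c)=0$, after which the analogous computation for $s_a.v$ kills $p_1(c)$.
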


\begin{proof} (i) First, we deduce that $\hat{\Ph}_{[0,d-1]}$ is irreducible by
character considerations. It has two \emph{non-homogeneous} submodules:
\[
\Cl(d)(\sqrt{-d}+(c_1+\cdots+c_d)).\hat{\va}_{[0,d-1]}\andeqn
\Cl(d)(\sqrt{-d}-(c_1+\cdots+c_d)).\hat{\va}_{[0,d-1]}.
\]
These vector spaces are clearly stable under the action of $\Se(d)$.  Since $
x_1 $ acts by zero on these vector spaces, the action of $\ASe(d)$ factors
through $\Se(d)$ and thus these vector spaces are $\ASe(d)$-submodules.
Therefore $\hat{\Ph}_{[0,d-1]}$ is of type \texttt{Q} (cf.
Section~\ref{S:Prelim}).

(ii) Let $\hat{\va}=\hat{\va}_{[a,b]}$, $w=X_\emptyset.\hat{\va}$ and
$\hat{\Ph}_{[a,b]}^+=\Cl(d).w$. By Lemma \ref{nonzero}, $w\neq 0$. Now, Lemmas
\ref{A submodule} and \ref{s_i action} together imply that
$\hat{\Ph}_{[a,b]}^+$ is a submodule.

It now remains to show that $\hat{\Ph}_{[a,b]}=\hat{\Ph}_{[a,b]}^+\oplus
\hat{\Ph}_{[a,b]}^-$, where $\hat{\Ph}_{[a,b]}^-$ is as in the statement of the
proposition. To this end, assume that $w'\in \hat{\Ph}_{[a,b]}^+$. That is,
there exists $p(c)\in\Cl(d)$ such that $p(c).w=w'$. Write
\[
p(c)=\sum_{\ep}a_\ep c^\ep,
\]
where the sum is over $\ep=(\ep_1,\ldots,\ep_d)\in\Z_2^d$. Then, for $1\leq
i\leq d$,
\begin{eqnarray*}
(-1)^{\dt_{1i}}w'&=&\frac{1}{\kp_i}x_i.w'
    =\frac{1}{\kp_i}x_i\left(\sum_{\ep}a_\ep c^\ep\right).w
    =\left(\sum_{\ep}(-1)^{\ep_i}a_\ep c^\ep\right).w,
\end{eqnarray*}
where (of course) the $\dt$ on the left of the equal sign is the Kronecker
delta. This forces $ p(c)=r c_1 + s $ for complex numbers $ r $ and $ s$. Since
$ w' $ is even, $ r=0 $ implying that $ w'=s w $ which is impossible.

(iii) We deal with $\hat{\Ph}_{[-a,b]}^+$, the proposed submodule
$\Ph_{[-a,b]}^-$ being similar. Let
$w=-(1+\sqrt{-1}c_ac_{a+1})X_{\{a+1\}}.\hat{\va}$, $\overline{w}=s_a.w$, and
$\hat{\Ph}_{[a,b]}^+=\Cl(d).w+\Cl(d).\overline{w}$. The proof of Lemma
\ref{nonzero} shows that
\[
X_{\{a+1\}}.\hat{\va}=\prod_{\substack{1\leq i\leq d\\i\neq
a+1}}(a+i-1+\kp_i).\hat{\va}+(\bigstar).\hat{\va}
\]
where $(\bigstar)=p'(c)-\ph p''(c)$ where $p'(c)\in\Cl(d)_\zero$,
$p''(c)\in\Cl(d)_\one$, and $p'(c)$ has no constant term. It is also easy to
see that $p'(c)$ and $p''(c)$ have coefficients in $\R$. We conclude from this
that $w\neq 0$. Note that by definition, $c_ac_{a+1}.w=-\sqrt{-1}w$.

Lemma \ref{A submodule} shows that for $i\neq a,a+1$, $x_i.w=\kp_iw$. Moreover,
\[
x_a.w=-(1-\sqrt{-1}c_ac_{a+1})x_aX_{\{a+1\}}.\hat{\va}=0.
\]
Also, $x_a.\hat{\va}=-c_ac_{a+1}x_{a+1}.\hat{\va}$ (see the computation
\eqref{nonzero 2} for details). Thus,
\begin{eqnarray}\label{alternate w}
w=-\sqrt{-1}(1+\sqrt{-1}c_ac_{a+1})X_{\{a\}}.\hat{\va}
\end{eqnarray}
so $x_{a+1}.w=0$. As for $\overline{w}=s_aw$,
$x_i.\overline{w}=\kp_i\overline{w}$ for $i\neq a,a+1$. Using commutation
relations, we compute
\begin{eqnarray}\label{x_a}
x_a\overline{w}=x_as_a.w=(s_ax_{a+1}-1-c_ac_{a+1}).w=-(1+\sqrt{-1})w.
\end{eqnarray}
Similarly,
\begin{eqnarray}\label{x_{a+1}}
x_{a+1}.\overline{w}=(1+\sqrt{-1})w.
\end{eqnarray}

We now turn to the action of the symmetric group. First, for $i\neq a-1,a+1$,
Lemma \ref{s_i action} shows that $s_i.w\in\hat{\Ph}_{[a,b]}^+$. Also by Lemma
\ref{s_i action},
\[
s_{a-1}X_{\{a+1\}}.\hat{\va}=\frac{\kp_{a-1}}{2}(c_{a-1}c_a-1)X_{\{a+1\}}.\hat{\va}.
\]
Thus,
\begin{eqnarray*}
s_{a-1}.w&=&-\frac{\kp_{a-1}}{2}(1+\sqrt{-1}c_{a-1}c_{a+1})
        (c_{a-1}c_a-1)X_{\{a+1\}}.\hat{\va}\\
    &=&-\frac{\kp_{a-1}}{2}(1+c_{a-1}c_a+\sqrt{-1}c_{a-1}c_{a+1}
    -\sqrt{-1}c_ac_{a+1})X_{\{a+1\}}.\hat{\va}\\
    &=&\frac{\kp_{a-1}}{2}(c_{a-1}c_a-1).w.
\end{eqnarray*}
Similarly, by \eqref{alternate w} and Lemma \ref{s_i action},
\[
s_{a+1}.w=\frac{\kp_{a+2}}{2}(1+c_{a+1}c_{a+2}).w.
\]
Now, for $i\neq a-1,a+1$, $s_is_a=s_as_i$. Hence, by Lemma \ref{s_i action}
\begin{eqnarray}\label{s_i.overline{w}}
s_i.\overline{w}=\left(\frac{\kp_{i+1}+\kp_i}{2(a+i)}
    +\frac{\kp_{i+1}-\kp_i}{2(a+i)}c_ic_{i+1}\right).\overline{w}.
\end{eqnarray}
To deduce the action of $s_{a-1}$ and $s_a$ on $\overline{w}$, we proceed as in
the proof of Lemma \ref{s_i action}. Recall again the intertwining elements
$\phi_{a-1}$ and $\phi_{a+1}$. By character considerations, we deduce that
$\phi_{a-1}.\overline{w}=0=\phi_{a+1}.\overline{w}$. Unlike in lemma ~\ref{A
submodule}, in this case the action of $x_a$ (resp. $x_{a+1}$) is given by
\eqref{x_a} (resp. \eqref{x_{a+1}}). Thus,
\begin{eqnarray}\label{s_{a-1}.overline{w}}
s_{a-1}.\overline{w}=\frac{(1+\sqrt{-1})}{2}(1+c_{a-1}c_a).w
    -\frac{\kp_{a-1}}{2}(1-c_{a-1}c_a).\overline{w}
\end{eqnarray}
and
\begin{eqnarray}\label{s_a.overline{w}}
s_{a+1}.\overline{w}=\frac{(1-\sqrt{-1})}{2}(1-c_{a+1}c_{a+2}).w
    +\frac{\kp_{a+2}}{2}(1+c_{a+1}c_{a+2}).\overline{w}.
\end{eqnarray}

It is easy to see that
$\hat{\Ph}_{[-a,b]}=\hat{\Ph}_{[-a,b]}^+ + \hat{\Ph}_{[-a,b]}^-$ since
$\frac{1}{2}(w+w')=X_{\{a\}}.\hat{\va}$ is a cyclic vector for
$\hat{\Ph}_{[-a,b]}$.
As in part (ii), it is easy to see that if $ w' = p(c)w+r(c)s_a w $
where $ p(c) $ and $ r(c) $ are polynomials in the Clifford generators, that
$ p(c) = \lambda_1 + \lambda_2 c_a c_{a+1} $ and
$ r(c) = \lambda_3 + \lambda_4 c_a c_{a+1} $ for some complex numbers
$ \lambda_1, \lambda_2, \lambda_3, \lambda_4$.
Noting that $ c_a c_{a+1} w = -\sqrt{-1} w $ gives that all the coefficients are zero.

Therefore, we are left to show that
$\hat{\Ph}_{[-a,b]}^+$ is simple. Indeed, assume
$V\subseteq\hat{\Ph}_{[-a,b]}^+$ is a submodule. Then,
\[
\ch V=[a-1,\ldots,0,0,\ldots,b].
\]
Let $v=p_1(c).w+p_2(c).\overline{w}\in V$ be a vector satisfying $x_i.v=\kp_iv$
for all $i$, where $p_1(c),p_2(c)\in\Cl(d)$. For $i=1,2$, define $p_i'(c)$ by
the formulae $x_ap_i(c)=p_i'(c)x_a$. Then,
\[
0=x_a.v=-(1+\sqrt{-1})p_2'(c).w
\]
showing that $p_2'(c)=0$ (hence, $p_2(c)=0$). Now, arguing as above with the
vector $s_a.v$ shows that $p_1(c)=0$.
\end{proof}

We can now define the irreducible segment representations which are the key to
defining the standard $\ASe (d)$-modules.
\begin{dfn}\label{segments} Let $a,b \in \Z_{\geq 0}$.
\begin{enumerate}
\item Let $\Ph_{[0,d-1]}=\hat{\Ph}_{[0,d-1]}$,
$\va:=X_{\{1\}}.\hat{\va}$, where $\kp_i=\sqrt{q(i-1)}$.
\item If $0 < a\leq b$, let $\Ph_{[a,b]}=\hat{\Ph}_{[a,b]}^+$ in Proposition
\ref{module decomposition}(ii), with $\kp_i=+\sqrt{q(a+i-1)}$ for all $i$, and
let $\va:=w$.
\item If $0<a\leq b$, let $\Ph_{[-a,b]}=\hat{\Ph}_{[-a,b]}^+$ with
$\kp_i=+\sqrt{q(-a+i-1)}$, $\va:=w$ and $\overline{\va}:=\overline{w}$.
\item If $0\leq a$, let $\Ph_{[a,a-1]}=\Ph_\emptyset=\C$.
\end{enumerate}
\end{dfn}

\subsection{Some Lie Theoretic Notation}\label{SS:LieThy} It is convenient in this
section to introduce some Lie theoretic notation. This section differs from
\cite{kl} in that the notation defined here is associated to the Lie
superalgebra $\q(n)$ (as opposed to the Kac-Moody algebra $\b_\infty$).

Define the sets $P=\Z^n$, $P_{\geq0}=\Z^n_{\geq0}$, and
\begin{eqnarray}
\label{dom wt}P^+&=&\{\,\ld=(\ld_1,\ldots,\ld_n)\in
P\,|\,\ld_i\geq\ld_{i+1}\mbox{ for all
}1\leq i\leq n\,\}\\
\label{dom typ wt}\Pt&=&\{\,\ld\in P^+\,|\,\ld_i+\ld_j\neq0\mbox{ for all }
1\leq i,j\leq n\,\}\\
\label{rat wt}\Pr&=&\{\,\ld\in P^+\,|\,\ld_i=\ld_{i+1}\mbox{ implies }\ld_i=0\,\}\\
\label{poly wt}\Pp&=&\{\,\ld\in\Pr\,|\,\ld_n\geq 0\,\}\\
\label{pos wt}\Ppos&=&\{\ld\in P\,|\,\ld_i\geq0\mbox{ for all }i\,\},
\end{eqnarray}
The weights \eqref{dom wt} are called dominant, and \eqref{dom typ wt} are called dominant typical. A weight $\ld\in P$ is simply \emph{typical} if $\ld_i+\ld_j\neq0$ for all $i,j$. The weights \eqref{rat wt} are called rational, \eqref{poly wt} are polynomial, and the set \ref{pos wt} are simply compositions. For each of the sets $X=P^+,P^{++},\Pr,\Pp,\Ppos$ above, define
\[
X(d)=\{\ld\in X|\ld_1+\cdots+\ld_n=d\}.
\]
Let $R\subset P$ be the root system of type $A_{n-1}$. That is,
$R=\{\af_{ij}\mid 1\leq i\neq j\leq n\}$ where $\af_{ij}$ is the $n$-tuple with 1
in the $i$th coordinate and $-1$ in the $j$th coordinate. The positive roots
are $R^+=\{\af_{ij}\in R \mid i<j\}$, the root lattice $Q$ is the $\Z$-span of
$R$, and $Q^+$ is the $\Z_{\geq 0}$-span of $R^+$. The symmetric group, $S_n$, acts on
$P$ by place permutation. Define the length function
$\ell:S_n\rightarrow\Z_{\geq0}$ in the usual way:
\[
\ell(w)=|\{\af\in R^+\mid w(\af)\in-R^+\}|.
\]
Equivalently, $\ell(w)$ is the number of simple transpositions occurring in a
reduced expression for $w$. Write $w\rightarrow y$ if $y=s_\af w$ for some
$\af\in R^+$ and $\ell(w)<\ell(y)$. Define the \emph{Bruhat} order on $S_n$ by
$w<_by$ if there exists a sequence $w\rightarrow
w_1\rightarrow\cdots\rightarrow y$. Also, for $\ld\in P$, define
\[
S_n[\ld]=\{\,w\in S_n \mid w(\ld)=\ld\,\},\andeqn R[\ld]=\{\,\af_{ij}\in
R|\,s_{ij}(\ld)=\ld\,\},
\]
and define
\[
P^+[\ld]=\{\,\mu\in P\,|\,\mu_i\geq\mu_j\mbox{ if }s_{ij}\in
S_n[\ld]\,\},\andeqn P^-[\ld]=\{\,\mu\in P\,|\,\mu_i\leq\mu_j\mbox{ if
}s_{ij}\in S_n[\ld]\,\}
\]
where $s_{ij}\in S_n$ denotes the transposition $(ij)$.

\subsection{Induced Modules}\label{SS:inducedmodules} Using the irreducible segment representations defined above we now define standard representations. Let $\ld,\mu\in P$ satisfy
$\ld-\mu\in\Ppos(d)$. Define
\[
\widehat{\Ph}(\ld,\mu)
=\hat{\Ph}_{[\mu_1,\ld_1-1]}\boxtimes\cdots\boxtimes\hat{\Ph}_{[\mu_n,\ld_n-1]} \]
and
\[
\Ph(\ld,\mu)=\Ph_{[\mu_1,\ld_1-1]}\circledast\cdots\circledast\Ph_{[\mu_n,\ld_n-1]},
\]
and define \emph{standard (cyclic) modules} for $\ASe(d)$ by
\begin{equation}\label{E:Mhatdef}
\widehat{\M}(\ld,\mu)=\ind_{d_1,\ldots,d_n}^d\widehat{\Ph}(\ld,\mu)
\end{equation}
and
\begin{equation}\label{E:Mdef}
\M(\ld,\mu)=\ind_{d_1,\ldots,d_n}^d\Ph(\ld,\mu).
\end{equation}
We call the standard modules $\widehat{\M}(\ld,\mu)$ and $\M(\ld,\mu)$ \emph{big}
and \emph{little}, respectively.

Both the big and little standard modules are cyclic.  Let
\begin{align}\label{E:hatcyclicvector}
\hat{\va}_{\ld,\mu}=1\otimes(\hat{\va}\otimes\cdots\otimes{\hat{\va}})
    \in\widehat{\M}(\ld,\mu)
\end{align}
be the distinguished cyclic generator of $\widehat{\M}(\ld,\mu)$. Fix the following choice of distinguished cyclic
generator $\va_{\ld,\mu}\in\M(\ld,\mu)$. Let $i_1<\cdots<i_k$ be such that $\mu_{i_j}=0$ for all $j$ and $\gm_0(\mu)=k$. Choose
\[
\va_{\ld,\mu}=\prod_{j=1}^{\lfloor k/2\rfloor} (1-\sqrt{-1}c_{i_{2j-1}}c_{i_{2j}})1\otimes(\va\otimes\cdots\otimes\va).
\]

\begin{lem}\label{L:standard cyclic dim} Let $\lambda, \mu \in P$ so that $\lambda - \mu \in P_{\geq 0}(d).$  Then,
\begin{enumerate}
\item[(i)]  $\dim\widehat{\M}(\ld,\mu)
        =\frac{d!}{d_1!\cdots d_n!}2^{d+n-\gm_0(\mu)}$
\item[(ii)] $\dim\M(\ld,\mu)
        =\frac{d!}{d_1!\cdots d_n!}2^{d-\lfloor\frac{\gm_0(\mu)}{2}\rfloor}$
\item[(iii)] $\widehat{M}(\ld,\mu)\cong\M(\ld,\mu)^{\oplus 2^{n-\lfloor\frac{\gm_0(\mu)+1}{2}\rfloor}}$.
\end{enumerate}
\end{lem}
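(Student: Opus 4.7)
The plan is to reduce the lemma to computing $\dim\widehat{\Ph}(\ld,\mu)$ and $\dim\Ph(\ld,\mu)$ and to expressing $\widehat{\Ph}(\ld,\mu)$ as copies of $\Ph(\ld,\mu)$, after which each of (i), (ii), (iii) follows by applying parabolic induction. By the PBW theorem, $\ASe(d)$ is a free right module over the parabolic $\ASe(d_1)\otimes\cdots\otimes\ASe(d_n)$ of rank $d!/(d_1!\cdots d_n!)$, with basis given by minimal-length right coset representatives of $S_{d_1}\times\cdots\times S_{d_n}$ in $S_d$. Consequently $\dim\ind^d_{d_1,\ldots,d_n}V=(d!/(d_1!\cdots d_n!))\dim V$ for any finite-dimensional $V$, so each of (i), (ii), (iii) will follow once I have the analogous statement at the level of the modules being induced.

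For (i), I would use that $\hat{\Ph}_{[\mu_i,\ld_i-1]}=\Ph_{\mu_i}\boxtimes\Cl_{d_i}$ by definition; the basic spin representation $\Cl_{d_i}=\ind^{\Se(d_i)}_{S_{d_i}}\C\va$ has dimension $2^{d_i}$, while $\dim\Ph_{\mu_i}$ equals $2$ or $1$ according as $\mu_i\neq 0$ or $\mu_i=0$. Multiplying these dimensions across the $n$ factors gives $\dim\widehat{\Ph}(\ld,\mu)=2^d\cdot 2^{n-\gm_0(\mu)}$, which is (i).

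For (ii), Theorem~\ref{module decomposition} tells me that each irreducible segment representation $\Ph_{[\mu_i,\ld_i-1]}$ has dimension $2^{d_i}$ and is of type \texttt{Q} exactly when $\mu_i=0$ and $d_i\geq 1$. By iterating the defining relation \eqref{E:startensor}, a $\circledast$-product of modules involving $r$ type \texttt{Q} factors has dimension obtained from the corresponding $\boxtimes$-product by dividing by $2^{\lfloor r/2\rfloor}$, since each successive pairing of two type \texttt{Q} factors halves the dimension. With $r=\gm_0(\mu)$ this gives $\dim\Ph(\ld,\mu)=2^{d-\lfloor\gm_0(\mu)/2\rfloor}$, which proves (ii).

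For (iii), I would write $\hat{\Ph}_{[\mu_i,\ld_i-1]}=\Ph^+_{[\mu_i,\ld_i-1]}\oplus\Ph^-_{[\mu_i,\ld_i-1]}$ for each $i$ with $\mu_i\neq 0$ (using Theorem~\ref{module decomposition} and the fact that the two summands are isomorphic via an odd map and hence isomorphic in our category); for $\mu_i=0$ the factor is already irreducible. Distributing the outer tensor product over these binary splittings produces $2^{n-\gm_0(\mu)}$ summands, each a $\boxtimes$-product of $n$ irreducible modules, one isomorphic to $\Ph_{[\mu_i,\ld_i-1]}$ for each~$i$. Each such $\boxtimes$-product decomposes further as $2^{\lfloor\gm_0(\mu)/2\rfloor}$ copies of $\Ph(\ld,\mu)$, by the same counting argument used in (ii), now read as the ratio $\dim(\boxtimes):\dim(\circledast)$. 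Adding exponents yields
\[
(n-\gm_0(\mu))+\lfloor\gm_0(\mu)/2\rfloor=n-\lceil\gm_0(\mu)/2\rceil=n-\lfloor(\gm_0(\mu)+1)/2\rfloor
\]
copies of $\Ph(\ld,\mu)$, and applying the exact functor $\ind^d_{d_1,\ldots,d_n}$ finishes (iii). The principal hurdle will be keeping the two sources of powers of two cleanly separated — the binary splittings of the type \texttt{M} factors under $\boxtimes$, and the halvings coming from pairs of type \texttt{Q} factors under $\circledast$ — and then combining them into the single exponent appearing in (iii).
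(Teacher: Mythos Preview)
Your argument is correct and follows essentially the same route as the paper: reduce to the inducing modules, compute $\dim\widehat{\Ph}(\ld,\mu)$ and $\dim\Ph(\ld,\mu)$ directly from the definition and Theorem~\ref{module decomposition}, and then pass through parabolic induction. The only cosmetic difference is in (iii): the paper first observes that $\widehat{\Ph}(\ld,\mu)$ is a direct sum of copies of $\Ph(\ld,\mu)$ and then reads off the multiplicity by dividing the dimensions already computed in (i) and (ii), whereas you count the multiplicity constructively by separating the $2^{n-\gm_0(\mu)}$ binary splittings of the type~\texttt{M} factors from the $2^{\lfloor\gm_0(\mu)/2\rfloor}$ halvings coming from pairs of type~\texttt{Q} factors. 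Both arrive at the same exponent $n-\lfloor(\gm_0(\mu)+1)/2\rfloor$.
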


\begin{proof}(i) The dimension of $\widehat{\M}(\ld,\mu)$ follows from the definition.

(ii) Use Proposition \ref{module decomposition}.

(iii) Since induction commutes with direct sums we have that  $\widehat{\M}(\ld,\mu)$ is a direct sum of copies $\M(\ld,\mu)$.  A count using (i) and (ii) yields (iii).
\end{proof}

We end this section by recording certain data about the weight spaces and generalized weight spaces of $\M (\lambda, \mu)$ which will be useful later.  Define the weight $\zt_{\ld,\mu}:\P_d[x^2]\rightarrow\C$ by
$f.\va_{\ld,\mu}=\zt_{\ld,\mu}(f)\va_{\ld,\mu}$ for all $f\in\P_d[x]$. As in $\S$\ref{SS:LieThy}, the symmetric group, $S_d$, acts on an integral weight $\zt:\P_d[x^2]\rightarrow\C$ by $w(\zt)(x_i^2)=\zt(x_{w(i)}^2)$. Let
\[
S_d[\zt]=\{\,w\in S_d\,|\,w(\zt)=\zt\,\}.
\]
Define $\ell(w)$ to be the length of $w$ (i.e.\ the number of simple transpositions
occurring in a reduced expression of $w$) and recall the definition of the Bruhat order given in section~\ref{SS:LieThy}.

\begin{lem}\label{L:weights of M} Given $\ld,\mu\in P$ with $\ld-\mu\in\Ppos(d)$,
\begin{enumerate}
\item[(i)] $P(\M(\ld,\mu))=\{\,w(\zt_{\ld,\mu})\,|\,w\in D_{\ld-\mu}\,\}$,
\item[(ii)] For any $\zt\in P(\M(\ld,\mu))$,
\[
\dim\M(\ld,\mu)_\zt^{\mathrm{gen}}=2^{d-\lfloor\frac{\gm_0(\mu)}{2}\rfloor}
    |\{\,w\in D_{\ld-\mu}\,|\,w(\zt)=\zt\,\}|.
\]

In particular,
\[
\dim\M(\ld,\mu)_{\zt_{\ld,\mu}}^{\mathrm{gen}}=
    2^{d-\lfloor\frac{\gm_0(\mu)}{2}\rfloor}\big|D_{\ld-\mu}\cap
    S_d[\zt_{\ld,\mu}]\big|.
\]
\end{enumerate}
\end{lem}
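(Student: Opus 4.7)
The plan is to reduce the weight-space structure of $\M(\ld,\mu)$ to that of $\Ph(\ld,\mu)$ by restricting to $\ASe((1^d))$ and invoking the Mackey Theorem. The crucial input is Proposition \ref{segment representation}(ii): on each tensor factor $\Ph_{[\mu_j,\ld_j-1]}$ the operator $x_k^2$ acts by the honest scalar $q(\mu_j+k-1)$. Consequently $\Ph(\ld,\mu)$, viewed as an $\ASe(\ld-\mu)$-supermodule, is concentrated in a single (ordinary, not merely generalized) $\P_d[x^2]$-weight space, which by definition is $\zt_{\ld,\mu}$, and has full dimension $2^{d-\lfloor \gm_0(\mu)/2\rfloor}$ by Lemma \ref{L:standard cyclic dim}(ii). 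A parallel observation, using the explicit formula $\ph_{w^{-1}}(x_i)=x_{w^{-1}(i)}$ from Section \ref{SS:Mackey}, shows that for any $w\in D_{\ld-\mu}$ the twisted module ${}^w\Ph(\ld,\mu)$ is likewise concentrated in a single weight space, namely $w(\zt_{\ld,\mu})$, and retains dimension $2^{d-\lfloor \gm_0(\mu)/2\rfloor}$.

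Next I would apply Theorem \ref{Mackey} to $\res^d_{(1^d)}\M(\ld,\mu) = \res^d_{(1^d)}\ind_{\ld-\mu}^d\Ph(\ld,\mu)$. Since $S_{(1^d)}=\{1\}$, for every $w\in D_{(1^d),\ld-\mu}=D_{\ld-\mu}$ the compositions $(1^d)\cap w(\ld-\mu)$ and $w^{-1}(1^d)\cap(\ld-\mu)$ both collapse to $(1^d)$, and the Mackey subquotient indexed by $w$ reduces to ${}^w\Ph(\ld,\mu)$. Combined with the first paragraph, this exhibits a filtration of $\res^d_{(1^d)}\M(\ld,\mu)$ whose $w$-th subquotient is concentrated in weight $w(\zt_{\ld,\mu})$ and has dimension $2^{d-\lfloor \gm_0(\mu)/2\rfloor}$.

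Finally, generalized weight-space dimensions are additive over short exact sequences of integral supermodules, so summing the contributions of those $w\in D_{\ld-\mu}$ with $w(\zt_{\ld,\mu})=\eta$ yields the formula in (ii); statement (i) is the special case asserting that the support of $\M(\ld,\mu)$ is exactly $\{w(\zt_{\ld,\mu}):w\in D_{\ld-\mu}\}$, and the ``in particular'' assertion comes from specializing $\eta=\zt_{\ld,\mu}$, where the condition $w(\zt_{\ld,\mu})=\zt_{\ld,\mu}$ reads $w\in S_d[\zt_{\ld,\mu}]$. I do not anticipate a serious obstacle; the only point requiring care is the $w$ versus $w^{-1}$ bookkeeping in the twisting convention, which is settled once the explicit formula $\ph_{w^{-1}}(x_i)=x_{w^{-1}(i)}$ is substituted directly into the definition of weight.
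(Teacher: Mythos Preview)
Your proposal is correct and essentially matches the paper's approach: for (i) the paper also invokes the Mackey theorem directly. For (ii) the paper instead works with the explicit basis $\{wC\va_{\ld,\mu}:w\in D_{\ld-\mu},\,C\in\Cl(d)\}$ and the triangularity relation $fw = w\cdot w^{-1}(f) + \sum_{u<_b w} uC_uf_u$ from \cite[Lemma~14.2.1]{kl}, but this is just the Mackey filtration written out by hand, so the two arguments coincide.
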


\begin{proof} (i) This follows directly upon applying the Mackey Theorem to the character map.

(ii) Given $f\in\P_d[x^2]$ and $w\in S_d$, we have the relation
\[
fw=w\cdot w^{-1}(f)+\sum_{u<_{b}w}uC_uf_u
\]
where the sum is over $u<_{b}w$ in the Bruhat order, $C_u\in\Cl(d)$,
$f_u\in\P_d[x]$ and $\deg f_u<\deg f$, see \cite[Lemma 14.2.1]{kl}. Therefore, if $f\in\P_d[x^2]$, $C\in\Cl(d)$ and $w\in
D_{\ld-\mu}$,
\begin{align}\label{E:lowerTriangular}
f(wC.\va_{\ld,\mu})=w(\zt_{\ld,\mu})(f)wC.\va_{\ld,\mu}+\sum_{u<_{b}w}uC_uf_u.\va_{\ld,\mu}
\end{align}
where the sum is over $u\in D_{\ld-\mu}$. In particular, $wC.\va_{\ld-\mu}\in\M(\ld,\mu)_{\zt_{\ld,\mu}}^{\mathrm{gen}}$  only if $w\in D_{\ld-\mu}\cap S_d[\zt_{\ld,\mu}]$. Conversely, if $w\in D_{\ld-\mu}\cap S_d[\zt_{\ld,\mu}]$, it is straightforward to see that all $u$ occurring on the right hand side of \eqref{E:lowerTriangular} also belong to $D_{\ld-\mu}\cap S_d[\zt_{\ld,\mu}]$.  This gives the result.
\end{proof}

\subsection{Unique Simple Quotients}\label{unique simple quotient}
In general, the standard cyclic module $\M(\ld,\mu)$ may not have a unique
simple head. However, in this subsection, we determine sufficient conditions
for this to hold. Throughout this section, keep in mind that $q(a)=q(-a-1)$ for all $a\in\Z$.
We follow closely the strategy in \cite{su2}.
We begin with some preparatory lemmas.

\begin{lem}\label{L:x weights} Let $M$ be an $\ASe(d)$-module,
and $\zt$ a weight of $M$, then there exists $v\in\M(\ld,\mu)_{\zt}$ such that
\[
x_i.v=\sqrt{q(\zt(x_i^2))}\;v
\]
for all $i=1,\ldots,d$.
\end{lem}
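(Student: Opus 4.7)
The statement should be parsed as saying: inside the weight space $M_\zt$ of any $\ASe(d)$-module $M$ (the $\M(\ld,\mu)$ in the statement appears to be a typo for $M$), we can find a simultaneous eigenvector of $x_1,\ldots,x_d$ whose eigenvalues are the specific square roots $\kp_i := \sqrt{q(\zt(x_i^2))}$, rather than the other sign. My plan is to construct such a vector by a descending sequence of subspaces, using the Clifford generators to flip signs of eigenvalues as needed. Fix one choice of $\kp_i$ for each $i$ and set
\[
V_j = \{\, v \in M_\zt \,\mid\, x_k v = \kp_k v \text{ for all } k \leq j\,\},
\]
so that $V_0 = M_\zt \neq 0$ and any nonzero element of $V_d$ is the vector we want. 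I will show by induction on $j$ that $V_j \neq 0$.

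The key structural observations are that the $x_i$ commute among themselves, and that $c_j$ commutes with $x_k^2$ for every $k$ (trivially for $k\neq j$, and since $c_j x_j = -x_j c_j$ also for $k=j$), so $c_j$ preserves $M_\zt$; moreover $c_j$ commutes with $x_k$ for $k\neq j$, which means $c_j$ preserves $V_{j-1}$. In addition, $x_j$ itself commutes with $x_k$ for $k < j$, so $x_j$ stabilizes $V_{j-1}$.

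For the inductive step I separate two cases based on whether $\kp_j = 0$. If $\kp_j \neq 0$, then on $V_{j-1}$ the operator $x_j$ satisfies $x_j^2 = \kp_j^2$, so it is semisimple with eigenvalues $\pm\kp_j$, giving $V_{j-1}=V_{j-1}^+\oplus V_{j-1}^-$. The anticommutation $c_j x_j = -x_j c_j$ forces $c_j V_{j-1}^\pm \subseteq V_{j-1}^\mp$, and since $c_j^2 = -1$ acts by a nonzero scalar on the nonzero space $V_{j-1}$, both summands $V_{j-1}^\pm$ must be nonzero; in particular $V_j = V_{j-1}^+ \neq 0$. If $\kp_j = 0$, then $x_j^2 = 0$ on $V_{j-1}$, so $\operatorname{im}(x_j|_{V_{j-1}}) \subseteq \ker(x_j|_{V_{j-1}})$; if the kernel were zero then $x_j$ would vanish on $V_{j-1}$, forcing $V_j = \ker(x_j|_{V_{j-1}}) = V_{j-1} \neq 0$, and otherwise the kernel is directly nonzero. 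Either way $V_j \neq 0$.

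I expect no real obstacle; the only subtlety is the sign issue, which is handled cleanly by the Clifford generator argument above (the fact that $c_j$ exchanges the $\pm\kp_j$-eigenspaces and squares to a nonzero scalar is precisely what prevents the situation where only the ``wrong'' sign occurs). The case $\kp_j=0$ is handled separately but is essentially trivial since $x_j$ is then nilpotent on an invariant subspace.
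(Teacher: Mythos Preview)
Your proof is correct and uses the same ingredients as the paper's: the commutativity of the $x_i$, the relation $c_jx_j=-x_jc_j$, and $c_j^2\neq 0$. The paper's organization is slightly different---it first applies the projector $X_S=\prod_{i\notin S}(x_i+\kp_i)$ to an initial $v_0\in M_\zt$ to obtain a simultaneous $(\pm\kp_i)$-eigenvector and then flips all remaining wrong signs at once with a single Clifford monomial $\prod_{i\in S}c_i$---but your descending-chain induction is equivalent and handles the case $\kp_j=0$ more explicitly.
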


\begin{proof}
Choose $0\neq v_0\in M_\zt$. Recall the definition \ref{X}. We adapt this to
our current situation by setting $\kp_i=\sqrt{q(\zt(x_i^2))}$ and
$S=\{i \mid x_iv=-\kp_iv \}$. Then, $v_1:=X_S.v_0\in\M_\zt$ is nonzero and
$x_i.v_1=\pm\kp_iv_1$ for all $i$. Now, set
\[
v=\left(\prod_{i\in S}c_i\right)v_1.
\]
Then, $v$ is nonzero and has the desired properties.
\end{proof}

Therefore, we may define the non-zero subspace
\[
M_{\sqrt{\zt}}= \left\{\,m\in M_\zt \mid x_i.m=\sqrt{q(\zt(x_i^2))}\;m\mbox{ for
}i=1,\ldots,d\,\right\}.
\]

We will use the following key lemma repeatedly in this section.

\begin{lem}
\label{techlemma} Let $ Y $ be in $\Rep\ASe(d)$ and $v \in Y_{\sqrt{\zt}}$ for
some weight $\zt$. Assume that for some $1\leq i<d-1$, $x_i.v=\sqrt{q(a)}$,
$x_{i+1}=\sqrt{q(b)}$ where $a,b\in\Z$ and either $q(a)\neq0$ or $q(b)\neq 0$.
Further, if $q(a)=q(b\pm1)$, assume that
\begin{align}\label{E:techlemma}
s_{i+1}.v =(\kp_1+\kp_2c_{i+1}c_{i+2}).v
\end{align}
for some constants $\kp_1,\kp_2\in\C$, not both 0. Then,
$v\in\ASe(d).\phi_i.v$.
\end{lem}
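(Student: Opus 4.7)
My approach is to split into two cases depending on whether $q(a)=q(b\pm1)$ or not.

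In the generic case $q(a)\neq q(b+1)$ and $q(a)\neq q(b-1)$, formula \eqref{E:intertwinersquared} combined with the eigenvalue equations $x_i^2v=q(a)v$ and $x_{i+1}^2v=q(b)v$ yields
\[
\phi_i^2 v=\bigl(2q(a)+2q(b)-(q(a)-q(b))^2\bigr)v,
\]
and this scalar is nonzero by Lemma \ref{L:InvertibleIntertwiner}. Hence $v$ is a nonzero scalar multiple of $\phi_i\cdot(\phi_i.v)$, giving $v\in\ASe(d).\phi_i.v$ immediately.

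In the degenerate case $q(a)=q(b\pm1)$, the coefficient above vanishes, so $\phi_i^2 v=0$ and the preceding argument fails; this is where the hypothesis \eqref{E:techlemma} must be invoked. The first key observation is that since $s_{i+1}.v$ lies in the generalized $s_{i+1}(\zt)$-weight space while $(\kp_1+\kp_2c_{i+1}c_{i+2})v$ lies in the $\zt$-weight space, we must have $s_{i+1}(\zt)=\zt$; in particular $\zt(x_{i+2}^2)=q(b)$, so the ``degeneracy'' has been transferred from the pair $(i,i+1)$ to the pair $(i+1,i+2)$. Using this, a direct computation parallel to the generic case yields $\phi_{i+1}.v=2\sqrt{q(b)}\,v$, a nonzero multiple of $v$ whenever $q(b)\neq 0$. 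The plan is then to construct an explicit element $X\in\ASe(d)$, built from $\phi_{i+1}$, $s_{i+1}$, and Clifford generators $c_i,c_{i+1},c_{i+2}$, satisfying $X\phi_i.v=\lambda v$ with $\lambda\neq 0$; the nondegeneracy $(\kp_1,\kp_2)\neq(0,0)$ will guarantee $\lambda\neq0$ in the final step.

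The main obstacle is the explicit construction and verification of $X$ in the degenerate case. This requires carefully commuting $s_{i+1}$ past $\phi_i$ using \eqref{s&x} and \eqref{c&x}, together with the Clifford relations $s_{i+1}c_i=c_is_{i+1}$ and $s_{i+1}c_{i+1}=c_{i+2}s_{i+1}$, and then substituting the identity \eqref{E:techlemma} to reduce the combined operator to a scalar plus a term annihilating $\phi_i.v$. A separate sub-case arises when $q(b)=0$, in which case the hypothesis $q(a)\neq 0$ forces $a\in\{1,-2\}$ and $\phi_{i+1}.v=0$; in this regime the bootstrap must proceed entirely through the $s_{i+1}$-action on $\phi_i.v$ and the Clifford generator $c_{i+1}c_{i+2}$, and it is here that the precise Clifford bookkeeping is most delicate.
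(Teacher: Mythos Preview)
Your treatment of the generic case $q(a)\neq q(b\pm1)$ is correct and matches the paper's argument via $\phi_i^2$.

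The degenerate case, however, contains a genuine error. You assert that $s_{i+1}.v$ lies in the generalized $s_{i+1}(\zt)$-weight space, and from this deduce $\zt(x_{i+2}^2)=q(b)$ and hence $\phi_{i+1}.v=2\sqrt{q(b)}\,v$. But it is the \emph{intertwiner} $\phi_{i+1}$, not the group element $s_{i+1}$, that carries $Y_\zt^{\mathrm{gen}}$ into $Y_{s_{i+1}\zt}^{\mathrm{gen}}$; for $s_{i+1}$ alone no such statement holds. Indeed, in the intended application of the lemma (to the cyclic vector $\va_{a,b,n}$ of $\M_{a,b,n}$, where the hypothesis \eqref{E:techlemma} comes from Lemma~\ref{s_i action}), the eigenvalues along a segment are strictly increasing, so $\zt(x_{i+2}^2)\neq\zt(x_{i+1}^2)$ and your conclusion is false there. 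Consequently the computation of $\phi_{i+1}.v$ collapses, and with it the plan to build $X$ out of $\phi_{i+1}$.

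The paper's argument in the degenerate case uses neither $\phi_{i+1}$ nor any information about $x_{i+2}$. One writes $\phi_i v=(\kp_3 s_i-\kp_4 c_ic_{i+1}+\kp_5)v$ with $\kp_3=q(a)-q(b)$, $\kp_4=\sqrt{q(a)}-\sqrt{q(b)}$, $\kp_5=\sqrt{q(a)}+\sqrt{q(b)}$, and then applies an operator of the shape $1+\mathbf{c}'s_{i+1}-\mathbf{c}''s_is_{i+1}$ with $\mathbf{c}',\mathbf{c}''\in\Cl(d)$ to $\phi_i v$. After expanding and invoking \eqref{E:techlemma} to replace every occurrence of $s_{i+1}.v$ by $(\kp_1+\kp_2c_{i+1}c_{i+2})v$, the result is a Clifford combination of $v$, $s_iv$, and $s_{i+1}s_iv$. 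One then solves for $\mathbf{c}'$ and $\mathbf{c}''$ (this is where $(\kp_1,\kp_2)\neq(0,0)$ enters, via the nonvanishing of $\kp_1^2+\kp_2^2$) so that the coefficients of $s_iv$ and $s_{i+1}s_iv$ vanish, leaving $(\kp_5+\dt_1c_ic_{i+1}+\dt_2c_{i+1}c_{i+2}+\dt_3c_ic_{i+2})v$ with $\dt_j\in\R$. Left-multiplying by the conjugate Clifford element gives $(\kp_5^2+\dt_1^2+\dt_2^2+\dt_3^2)v$, which is a strictly positive multiple of $v$ since $\kp_5>0$. No separate treatment of $q(b)=0$ is required.
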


\begin{proof}
First, if $q(a)=q(b)\neq0$, then using \eqref{E:intertwiner} and Lemma 14.8.1 of \cite{kl} we deduce that
\[
\phi_i.v=2q(a)v\neq0,
\]
so the result is trivial. If $q(a)\neq q(b\pm1)$, then using
\eqref{E:intertwinersquared} we deduce that
\[
\phi_i^2.v=(2q(a)-2q(b)-(q(a)-q(b))^2)v\neq0
\]
and again the result is trivial.

Now, let $\kp_3=q(a)-q(b)\neq0$, $\kp_4=\sqrt{q(a)}-\sqrt{q(b)}\neq0$ and
$\kp_5=\sqrt{q(a)}+\sqrt{q(b)}>0$. Then, appealing again to
\eqref{E:intertwiner} we have that
\[
\phi_{i} v = (\kp_3 s_{i} - \kp_4 c_{i} c_{i+1} + \kp_5)v \]

Let $ \mathbf{c'}$ and $ \mathbf{c''} $ be two elements of the Clifford
algebra. Consider an expression of the form
\begin{align*}
(1+ \mathbf{c'} s_{i+1}- \mathbf{c''} s_{i}s_{i+1})\phi_i v =&(\kp_3 s_{i} -
\kp_4 c_{i}c_{i+1} + \kp_5 + \kp_3 \mathbf{c'}
    s_{i+1}s_{i}\\
&- \kp_4 \mathbf{c'} c_{i}c_{i+2}s_{i+1}+
    \kp_5 \mathbf{c'} s_{i+1}- \kp_3 \mathbf{c''} s_{i+1}s_{i}s_{i+1}\\
&+ \kp_4 \mathbf{c''} c_{i+1}c_{i+2}s_{i}s_{i+1}-\kp_5 \mathbf{c''}
s_{i}s_{i+1}) v.
\end{align*}
By \eqref{E:techlemma}, this equals
\begin{align*}
(\kp_3& s_{i}-\kp_4 c_{i}c_{i+1}+\kp_5+\kp_3 \mathbf{c'} s_{i+1}s_{i}
- \kp_1 \kp_4 \mathbf{c'} c_{i}c_{i+2}\\
&-\kp_2\kp_4 \mathbf{c'} c_{i}c_{i+1}+ \kp_1 \kp_5 \mathbf{c'} + \kp_2
\kp_5 \mathbf{c'} c_{i+1}c_{i+2} - \kp_1\kp_3 \mathbf{c''} s_{i+1}s_{i}\\
&-\kp_2\kp_3 \mathbf{c''} c_{i}c_{i+1} s_{i+1}s_{i} + \kp_1 \kp_4 \mathbf{c''}
c_{i+1}c_{i+2}s_{i} - \kp_2 \kp_4 \mathbf{c''}
c_{i}c_{i+1} s_{i}\\
&-\kp_1\kp_5 \mathbf{c''} s_{i} -\kp_2\kp_5 \mathbf{c''} c_{i}c_{i+2}s_{i})v.
\end{align*}

The coefficient of $ s_i v $ is
$$ \kp_3 + \kp_1 \kp_4 \mathbf{c''} c_{i+1}c_{i+2}
    - \kp_2 \kp_4 \mathbf{c''} c_{i}c_{i+1}
    - \kp_1 \kp_5 \mathbf{c''}
    - \kp_2 \kp_5 \mathbf{c''} c_{i}c_{i+2}. $$
The coefficient of $ s_{i+1}s_{i}v $ is
$$ \kp_3 \mathbf{c'} - \kp_1 \kp_3 \mathbf{c''}
    - \kp_2 \kp_3 \mathbf{c''} c_{i}c_{i+1}. $$
In order to make both of these coefficients zero, set $ \mathbf{c'} =
\mathbf{c''}(\kp_1+\kp_2c_{i}c_{i+1}) $ and
$$ \mathbf{c''} = \gamma(\kp_1 \kp_5 +\kp_1 \kp_4 c_{i+1}c_{i+2}
    -\kp_2 \kp_4 c_{i}c_{i+1} -\kp_2 \kp_5 c_{i}c_{i+2}), $$
where $$ \gamma = \frac{-\kp_3}{(\kp_1^2+\kp_2^2)(\kp_4^2+\kp_5^2)}.
$$

The coefficient of $ v $ is
\begin{align*}
-\kp_4 c_{i}c_{i+1}&+\kp_5-\kp_1\kp_4 \mathbf{c'} c_{i}c_{i+2}
    - \kp_2 \kp_4 \mathbf{c'} c_{i}c_{i+1} + \kp_1 \kp_5 \mathbf{c'}
    + \kp_2 \kp_5 \mathbf{c'} c_{i+1} c_{i+2}\\
=& -\kp_4 c_{i}c_{i+1} + \kp_5 -\kp_1 \kp_4 \mathbf{c''}(\kp_1c_{i}c_{i+2} +
\kp_2 c_{i+1}c_{i+2})
-\kp_2\kp_4 \mathbf{c''}(\kp_1 c_{i}c_{i+1} - \kp_2)\\
    &+ \kp_1\kp_5 \mathbf{c''}(\kp_1+\kp_2c_{i}c_{i+1})
    + \kp_2 \kp_4 \mathbf{c''}(\kp_1 c_{i+1}c_{i+2}
    - \kp_2 c_{i}c_{i+2}).
\end{align*}

This is equal to
\begin{align*}
\kp_5 - \kp_4& c_{i}c_{i+1} + (-\kp_1\kp_2\kp_4 + \kp_1 \kp_2 \kp_5)
\mathbf{c''} c_{i}c_{i+1}
+ (-\kp_1^2\kp_4 - \kp_2^2\kp_5)\mathbf{c''} c_{i}c_{i+2}\\
&+(-\kp_1 \kp_2 \kp_4 + \kp_1 \kp_2 \kp_5)\mathbf{c''} c_{i+1}c_{i+2} +
(\kp_2^2 \kp_4 + \kp_1^2 \kp_5) \mathbf{c''}\\
= &\kp_5 - \kp_4 c_{i}c_{i+1}+(\kp_1 \kp_2 \kp_5 - \kp_1 \kp_2 \kp_4)
\gamma(-\kp_1 \kp_5 c_{i}c_{r+p}
    - \kp_1 \kp_4 c_{i}c_{i+2} - \kp_2 \kp_4
    - \kp_2 \kp_5 c_{i+1}c_{i+2})\\
&+(-\kp_1^2 \kp_4 - \kp_2^2 \kp_5) \gamma(-\kp_1 \kp_5 c_{i}c_{i+2}
    + \kp_1 \kp_4 c_{i}c_{i+1} - \kp_2 \kp_5
    + \kp_2 \kp_4 c_{i+1}c_{i+2})\\
&+(-\kp_1 \kp_2 \kp_4 + \kp_1 \kp_2 \kp_5) \gamma(-\kp_1 \kp_5 c_{i+1}c_{i+2}
    - \kp_2 \kp_4c_{i}c_{i+2} + \kp_1 \kp_4
    + \kp_2 \kp_5 c_{i}c_{i+1})\\
&+(\kp_2^2 \kp_4 + \kp_1^2 \kp_5) \gamma(-\kp_1 \kp_4 c_{i+1}c_{i+2}
    + \kp_2 \kp_4 c_{i}c_{i+1} - \kp_1 \kp_5
    + \kp_2 \kp_5 c_{i}c_{i+2})\\
=& \kp_5 + \dt_1 c_{i}c_{i+1} + \dt_2 c_{i+1}c_{i+2} + \dt_3 c_{i}c_{i+2}
\end{align*}
for some constants $ \dt_1, \dt_2, \dt_3\in\R. $

Thus,
\begin{align*}
(\kp_5 -\dt_1 c_{i}c_{i+1}-\dt_2 c_{i+1} c_{i+2}
    -\dt_3 c_{i}c_{i+2})&(1 + \mathbf{c'} s_{i+1}
    - \mathbf{c''} s_{i}s_{i+1})\phi_i v\\
    =& (\kp_5+\dt_1^2+\dt_2^2+\dt_3^2) v.
\end{align*}
Since $ \dt_1^2, \dt_2^2, \dt_3^2 \in \R_{\geq 0} $ and $ \kp_5 > 0, $ the
result follows.
\end{proof}

\begin{prp}\label{dominant wt space} Assume that $\ld\in\Pt$,
$\mu\in P^+[\ld]$, and $\ld-\mu\in\Ppos(d)$. Then,
\[
\M(\ld,\mu)_{\sqrt{\zt_{\ld,\mu}}}=\C\va_{\ld,\mu}.
\]
\end{prp}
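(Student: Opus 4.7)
The argument proceeds by decomposing $v$ via the induced-module structure, then using typicality to force $v$ to be proportional to the cyclic vector. The strategy parallels Suzuki's argument in \cite{su2} for the linear case.

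Using $M(\lambda,\mu) = \bigoplus_{w \in D_{\lambda-\mu}} w \otimes \Phi(\lambda,\mu)$ as a $\C$-vector space, write any $v \in M(\lambda,\mu)_{\sqrt{\zeta_{\lambda,\mu}}}$ as $v = \sum_w w \otimes u_w$ with $u_w \in \Phi(\lambda,\mu)$, and let $w_0$ be a Bruhat-maximal index with $u_{w_0}\neq 0$. Applying $x_i^2-q(\zeta_{\lambda,\mu}(x_i^2))$ to $v$ and invoking formula \eqref{E:lowerTriangular}, the Bruhat-leading coefficient is
\[
\bigl(w_0(\zeta_{\lambda,\mu})(x_i^2)-\zeta_{\lambda,\mu}(x_i^2)\bigr)\,w_0\otimes u_{w_0}
\]
modulo strictly lower-Bruhat terms. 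Since $v$ lies in the \emph{actual} (not merely generalized) $\zeta_{\lambda,\mu}$-weight space, this must vanish for every $i$, forcing $w_0\in D_{\lambda-\mu}\cap S_d[\zeta_{\lambda,\mu}]$. A parallel sign-tracking argument using $x_i$ (not just $x_i^2$) shows further that $u_{w_0}$ must sit in the appropriately-shifted $\sqrt{\zeta}$-eigenspace of $\Phi(\lambda,\mu)$.

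Next, I will use the typicality hypothesis to show $w_0=1$. Since $\lambda\in P^{++}$, no two distinct parts $\lambda_j,\lambda_{j'}$ sum to zero, and a direct inspection of the possible $q$-value collisions between positions from segments $[\mu_j,\lambda_j-1]$ and $[\mu_{j'},\lambda_{j'}-1]$ with $\lambda_j\neq \lambda_{j'}$ shows no element of $S_d[\zeta_{\lambda,\mu}]$ can permute positions across such blocks (the collision identity $q(a)=q(b)$ iff $b=a$ or $b=-a-1$, combined with the segment endpoints, would force $\lambda_j+\lambda_{j'}=0$). When $\lambda_j=\lambda_{j'}$, the hypothesis $\mu\in P^+[\lambda]$ gives $\mu_j\geq \mu_{j'}$ whenever $j<j'$, so minimality of $w_0\in D_{\lambda-\mu}$ precludes any nontrivial block permutation. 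Combined with the per-segment analysis of the (possibly double) segment representations, these reduce the problem to $w_0=1$, i.e.\ $v=1\otimes u_1$ with $u_1\in\Phi(\lambda,\mu)_{\sqrt{\zeta_{\lambda,\mu}}}$.

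Finally, I claim $\Phi(\lambda,\mu)_{\sqrt{\zeta_{\lambda,\mu}}}=\C\mathbf{1}_{\lambda,\mu}$, which is verified by direct analysis of the segment representations $\Phi_{[\mu_j,\lambda_j-1]}$ from Theorem \ref{module decomposition} and Definition \ref{segments}: within any tensor factor, acting on the cyclic vector by $c_i$ or $c_ic_j$ flips the sign of the corresponding $x$-eigenvalues away from the prescribed positive square roots $\kappa_i$, while for zero-part indices the specific twisting factors $(1-\sqrt{-1}c_{i_{2k-1}}c_{i_{2k}})$ appearing in the construction of $\mathbf{1}_{\lambda,\mu}$ precisely select the correct one-dimensional subspace inside the outer tensor product $\circledast$ of type-\texttt{Q} modules. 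The main obstacle is the stabilizer analysis in the second paragraph, particularly when $\mu$ has negative parts: the double-segment representations $\Phi_{[-a,b]}$ carry internal $q$-value coincidences (e.g.\ $q(a-1)=q(-a)$) and one must verify that the minimal-coset condition together with typicality still rules out any element of $D_{\lambda-\mu}\cap S_d[\zeta_{\lambda,\mu}]$ beyond the identity.
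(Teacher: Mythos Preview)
Your argument has a genuine gap in the second paragraph: the claim that minimality of $w_0\in D_{\lambda-\mu}$ precludes nontrivial block permutations when $\lambda_j=\lambda_{j'}$ is false. Take $\lambda=(b,b)$, $\mu=(a,a)$ with $b>a>0$ (so $\lambda\in\Pt$ and $\mu\in P^+[\lambda]$). Setting $m=b-a$, the permutation $w$ that swaps the two blocks $\{1,\dots,m\}\leftrightarrow\{m+1,\dots,2m\}$ pointwise satisfies $w(1)<\cdots<w(m)$ and $w(m+1)<\cdots<w(2m)$, hence lies in $D_{\lambda-\mu}$, and it visibly fixes $\zeta_{\lambda,\mu}$. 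So $D_{\lambda-\mu}\cap S_d[\zeta_{\lambda,\mu}]\neq\{1\}$, and your Bruhat-leading-term argument cannot force $w_0=1$. More generally, whenever $\lambda$ has repeated parts (which $\Pt$ allows) and the corresponding $\mu$-parts coincide, many such stabilizer elements survive; Lemma~\ref{L:weights of M}(ii) in fact records that $\dim\M(\ld,\mu)_{\zt_{\ld,\mu}}^{\mathrm{gen}}$ is proportional to $|D_{\lambda-\mu}\cap S_d[\zeta_{\lambda,\mu}]|$, which is typically large.

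This is exactly the obstruction the paper's proof is built around. The paper first treats the extreme case $\M_{a,b,n}=\M((b^n),(a^n))$, where the stabilizer is as large as possible, by passing via intertwiners $\phi_\tau$ to the \emph{anti-dominant} weight space (Lemmas~\ref{cyclicvectorlemma} and~\ref{antidominantlemma}), identifying that space with a product of Kato weight spaces (Lemma~\ref{katolemma}), and then pulling back (Proposition~\ref{mainprop1}). Only after this special case is established does one reduce the general statement to it, using that $D_{\lambda-\mu}\cap S_d[\zeta_{\lambda,\mu}]\subset D_{\lambda-\mu}\cap S_I$ where the $I_p$ index the blocks on which $\lambda$ and $\mu$ are simultaneously constant. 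Your cross-block typicality argument is essentially this containment, but the within-block analysis cannot be finessed by minimality alone: you need the intertwiner/Kato-module machinery (or an equivalent substitute) to control the $\sqrt{\zeta}$-weight space of $\M_{a,b,n}$.
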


We begin by proving a special case of the Proposition. Suppose $ n $ divides $
d $, and $d/n=b-a$ for some $a,b\in\Z$, $b>0$. Let $ \lambda = (b, \ldots, b) $
and $ \mu = (a, \ldots, a) $ be weights of $ \q(n). $  Set $ \M_{a,b,n} =
\M(\lambda, \mu), $ and set $\va_{a,b,n}=\va_{\ld,\mu}$. Let
$$ \zt_{a,b,n} = (a,a+1, \ldots, b-1, \ldots, a, a+1, \ldots, b-1) $$
be a weight for $ \ASe(d) $ where the sequence $ a, a+1, \ldots, b-1 $ appears
$ n $ times.

The first goal is to compute the weight space $(\M_{a,b,n})_{\sqrt{\zt_{a,b,n}}}$.

Set $n=d$ in the definition above so that $b=a+1$. The resulting module is the
Kato module $K(a, \ldots, a)=K_a$, where all the $ x_i^2 $ act by $q(a)$ on the
vector $\va_{a,b,n}$.

The following  is \cite[Lemma 16.3.2, Theorem 16.3.3]{kl}.

\begin{lem}
\label{katolemma}
\begin{enumerate}
\item If $a\neq-1$ or $0$, the weight space of $ K(a, \ldots, a) $
corresponding to $ (a, \ldots, a) $ with respect to the operators $ x_1^2, \ldots, x_n^2 $
has dimension $ 2^n. $
If $a=-1$ or $0$,
then the weight space of $K(a,\ldots,a)$ corresponding to $(a,\ldots,a)$ with respect to the
operators $ x_1, \ldots, x_n $ has
dimension $2^{\lfloor\frac{n+1}{2}\rfloor}$.
\item The module $ K(a, \ldots, a) $ is equal to its generalized weight space for
the weight $ (a, \ldots, a). $
\item The module $ K(a, \ldots, a) $ is simple of type \texttt{Q} if $ a=0 $ and $ d $
is odd, and is of type \texttt{M} otherwise.
\end{enumerate}
\end{lem}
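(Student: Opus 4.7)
The lemma is cited from \cite[Lemma 16.3.2, Theorem 16.3.3]{kl}; I outline how to establish each part.

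\textbf{Part (2).} Apply Lemma \ref{L:weights of M}(ii) to $\ld = (a+1,\ldots,a+1)$, $\mu=(a,\ldots,a)$, and set $\zeta = \zeta_{\ld,\mu}$. Since $\ld-\mu = (1,\ldots,1)$ one has $D_{\ld-\mu} = S_d$, and $\zeta(x_i^2) = q(a)$ for all $i$ so every $w\in S_d$ fixes $\zeta$. The dimension formula then gives $\dim (K_a)_\zeta^{\mathrm{gen}} = d!\cdot 2^{d-\lfloor\gm_0(\mu)/2\rfloor}$, which equals $\dim K_a$ by Lemma \ref{L:standard cyclic dim}(ii). Hence the whole module is the generalized $(a,\ldots,a)$-weight space.

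\textbf{Part (1).} By Frobenius reciprocity $K_a$ is spanned by $\{w\cdot v : w\in S_d,\, v\in \Ph(\ld,\mu)\}$, and I claim the non-generalized $(q(a),\ldots,q(a))$-weight space coincides with the canonical copy of $\Ph(\ld,\mu) = \Cl(d)\cdot\va_{\ld,\mu}$ inside $K_a$. Expanding a hypothetical weight vector as $v = \sum_w (w\cdot v_w)$ with $v_w\in\Ph(\ld,\mu)$, the triangular relation \eqref{E:lowerTriangular} shows that applying $x_i^2 - q(a)$ produces strictly lower-Bruhat contributions that do not cancel unless $v_w = 0$ for all $w\neq e$. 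Bruhat induction therefore pins the weight space to $\Ph(\ld,\mu)$, of dimension $2^{d-\lfloor\gm_0(\mu)/2\rfloor}$: this equals $2^n$ when $a\neq 0,-1$ (so $\gm_0=0$) and $2^{\lceil n/2\rceil}$ when $a=0,-1$ (so $\gm_0=d$). In the latter case, Proposition \ref{segment representation}(i) gives $x_i\hat{\va}=0$ on the cyclic vector of $\Ph_{[0,0]}$, so $x_i$ annihilates all of $\Ph(\ld,\mu)$ and the joint kernel of the $x_i$ coincides with this weight space.

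\textbf{Part (3).} For $a\neq 0,-1$, irreducibility follows from Lemma \ref{L:InvertibleIntertwiner}: since $q(a)\neq q(a\pm 1)$ the intertwiners $\phi_i$ act invertibly, so any nonzero submodule contains a weight vector, hence all of $\Ph(\ld,\mu)$, and hence all of $K_a$. For $a=0,-1$ the intertwiners degenerate and irreducibility is checked directly using the irreducibility of $\Ph_{[0,0]}$ (Theorem \ref{module decomposition}(i)) together with an analysis of the Bruhat filtration on $K_a$. The type is inherited from $\Ph(\ld,\mu) \cong \L(a)^{\circledast d}$ via Lemma \ref{A(d) irreducibles}: type \texttt{Q} iff $\gm_0(a^d)=d$ is odd, i.e.\ iff $a=0$ and $d$ is odd; otherwise type \texttt{M}. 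The main obstacle is the Bruhat-triangularity induction in Part (1), where one must carefully track how $x_i^2$ commutes past arbitrary reduced expressions in the $s_j$ with Clifford contributions to verify that lower-order remainders never conspire to cancel.
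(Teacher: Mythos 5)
The paper itself offers no proof of this lemma; it is quoted directly from \cite[Lemma 16.3.2, Theorem 16.3.3]{kl}, so your outline has to stand on its own. Parts (2) and (3) of your sketch are essentially fine: the dimension count via Lemma \ref{L:weights of M}(ii) together with Lemma \ref{L:standard cyclic dim}(ii) does show that $K_a$ is a single generalized weight space, and once Part (1) is granted, irreducibility and the type statement follow from the $\A(d)$-irreducibility of $\Ph(\ld,\mu)\cong\L(a)\circledast\cdots\circledast\L(a)$ together with Frobenius reciprocity, much as you indicate (the intertwiners are not actually needed there: a nonzero submodule meets the honest weight space simply because it is a nonzero generalized weight space).

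The genuine gap is the one you flag yourself in Part (1), and it is not a technicality --- it is the entire content of the lemma. From \eqref{E:lowerTriangular} one only gets that $(x_i^2-q(a))\,wC.\va$ lies in $\sum_{u<_b w}u\,\Cl(d)\va$, i.e.\ that $x_i^2-q(a)$ strictly lowers the length filtration; this reproves Part (2) but says nothing yet about the honest weight space. If $v=\sum_w wC_w\va$ is a weight vector and $w_0$ is a term of maximal length, the lower-order contributions produced by $w_0$ land in Bruhat cells $u<_b w_0$ that in general also receive contributions from \emph{other} maximal-length terms $w\neq w_0$ with $u<_b w$ (an element covered by $w_0$ is typically covered by several $w$ of the same length), so ``the remainders never conspire to cancel'' is exactly the statement to be proved, not a consequence of triangularity. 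Closing it requires either an explicit computation of the symbol of $x_i^2-q(a)$ on the associated graded of the length filtration, keeping track of the Clifford coefficients $C_u$, or a different induction altogether (the argument in \cite{kl} proceeds by induction on $d$ using the Mackey theorem applied to $\res^{d}_{d-1,1}K_a$). As written, the dimension of the non-generalized weight space is asserted rather than established, and everything in Part (3) depends on it.
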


Set $m=d/n$. In the set of weights of $ \M_{a,b,n}, $ there exists a unique
anti-dominant weight $ \zt_{a,b,n}^{\circ} $ that is given by
$$ \zt_{a,b,n}^{\circ}
    = (\underbrace{a, \ldots, a,}_n \underbrace{a+1,
    \ldots, a+1,}_n \ldots, \underbrace{b-1,
    \ldots, b-1}_n). $$

Take an element $ \tau\in D_{\lambda-\mu} $ such that $ \tau(\zt_{a,b,n}) =
\zt_{a,b,n}^{\circ}. $ If $a\geq 0$, it is given by $ \tau = \om^1 \cdots
\om^{m-1}, $ where $ \om^p=\rho_{n-1}^p \rho_{n-2}^p \cdots \rho_1^p $,
$$
\rho_k^p = \xi_{k(p+1)-(k-1)}^p \cdots \xi_{(k(p+1)-1)}^p \xi_{k(p+1)}^p,
$$
and, for $1 \leq r \leq d-1, $ and $1 \leq p \leq d-r$, $\xi_r^p = s_{r+p-1}
\cdots s_{r+1}s_r$.

If $b\leq0$, then $\tau=\sm(\om^1\cdots\om^{m-1})$, where $\sm$ is the
automorphism of $\ASe(d)$. Finally, if $a<0$ and $b>0$,
$\tau=\sm_{(-a+1)n}(\om^2\cdots\om^{-a})\om^{-a+1}\cdots\om^{m-1}$, where
$\sm_{-a}$ is the automorphism of $\ASe(-a)\subseteq\ASe(d)$ embedded on the
left.

\begin{lem}\label{cyclicvectorlemma} The vector $ \phi_{\tau} \va_{a,b,n} $ is a cyclic
vector of $ \M_{a,b,n}. $
\end{lem}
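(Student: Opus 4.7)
The approach is to prove cyclicity by descending induction on the length of a reduced expression for $\tau$, leveraging the invertibility of intertwiners supplied by Lemma \ref{techlemma}. Since $\va_{a,b,n}$ is known to cyclically generate $\M_{a,b,n}$ from its construction as an induced module, it suffices to exhibit $\va_{a,b,n}$ inside $\ASe(d)\cdot(\phi_\tau \va_{a,b,n})$. Fix a reduced expression $\tau = s_{i_\ell}\cdots s_{i_1}$ and, for $0\leq t\leq\ell$, set $v_t = \phi_{i_t}\cdots\phi_{i_1}\va_{a,b,n}$, so that $v_0 = \va_{a,b,n}$ and $v_\ell = \phi_\tau \va_{a,b,n}$. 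I will show inductively (downward in $t$) that $v_{t-1} \in \ASe(d)v_\ell$; specializing to $t=1$ then produces $\va_{a,b,n} \in \ASe(d)(\phi_\tau \va_{a,b,n})$ and completes the proof.

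The inductive step is an application of Lemma \ref{techlemma} with $i = i_t$ and $v = v_{t-1}$: its conclusion is $v_{t-1}\in \ASe(d)\phi_{i_t}v_{t-1} = \ASe(d)v_t \subseteq \ASe(d)v_\ell$. To invoke the lemma one must check that $v_{t-1}$ lies in $(\M_{a,b,n})_{\sqrt{\zt_{t-1}}}$ for the weight $\zt_{t-1} = (s_{i_{t-1}}\cdots s_{i_1})(\zt_{a,b,n})$. This is verified by induction once one observes that $\va_{a,b,n} \in (\M_{a,b,n})_{\sqrt{\zt_{a,b,n}}}$---a direct computation from the action formulas in Proposition \ref{segment representation} together with the explicit choice of cyclic vector recorded in Definition \ref{segments}---and that each intertwiner $\phi_i$ carries $(\M_{a,b,n})_{\sqrt{\zt}}$ into $(\M_{a,b,n})_{\sqrt{s_i(\zt)}}$. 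The integrality of the eigenvalues and the inequality $p\neq q$ required by the lemma follow from the hypothesis $\tau \in D_{\ld-\mu}$ via Lemma \ref{L:MinCosetReps}, which prevents a swap of two equal entries within a single block of $\ld-\mu$.

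The main obstacle is verifying the technical hypothesis \eqref{E:techlemma} on the action of $s_{i_t+1}$ on $v_{t-1}$ precisely at those steps where the relevant pair of eigenvalues satisfies $q(p)=q(q\pm1)$. The plan is to exploit the explicit bubble-sort structure of $\tau$: in the factorization $\tau = \om^1\cdots \om^{m-1}$, with $\om^p = \rho_{n-1}^p\cdots \rho_1^p$ and each $\rho_k^p$ a product of the terms $\xi_r^p = s_{r+p-1}\cdots s_{r+1}s_r$, the generator $s_{i_t+1}$ neighbouring a critical step is determined by its position inside the enclosing $\xi$-block. Its action on $v_{t-1}$ can then be computed from the symmetric-group action formulas for $\Ph(\ld,\mu)$ extracted from Theorem \ref{module decomposition} and Lemma \ref{s_i action}, and a direct calculation from those formulas produces an expression of the required shape $(\kp_1+\kp_2 c_{i_t+1}c_{i_t+2})v_{t-1}$. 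The three cases $a\geq 0$, $b\leq 0$, and $a<0<b$ that enter into the definition of $\tau$ require separate bookkeeping, but the combinatorial skeleton of the argument is the same in each.
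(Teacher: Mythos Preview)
Your proposal is correct and takes essentially the same approach as the paper, whose proof is the single sentence ``This follows from iterated applications of lemma~\ref{techlemma}.'' You have supplied a useful expanded road map of that argument---the downward induction on the length of a reduced word for $\tau$, the observation that $\phi_i$ carries $(\M_{a,b,n})_{\sqrt{\zt}}$ into $(\M_{a,b,n})_{\sqrt{s_i\zt}}$, and the identification of hypothesis~\eqref{E:techlemma} as the only nontrivial check---exactly as the paper intends.
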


\begin{proof}
This follows from iterated applications of lemma ~\ref{techlemma}.
\end{proof}

The proof of the following lemma is similar to \cite[Lemma A.7]{su2},
substituting Lemmas ~\ref{katolemma} and ~\ref{L:weights of M} appropriately
into Suzuki's argument.

\begin{lem}
\label{antidominantlemma} $(\M_{a,b,n})_{\sqrt{\zt_{a,b,n}^{\circ}}} \subseteq
\phi_{\tau} \Cl(d) \va_{a,b,n}. $
\end{lem}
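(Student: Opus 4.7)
The plan is to imitate the proof of \cite[Lemma A.7]{su2}, with Lemma \ref{katolemma} and Lemma \ref{L:weights of M} playing the roles of the analogous linear statements in Suzuki's argument. The underlying idea is a dimension count: produce a lower bound on $\dim \phi_{\tau} \Cl(d) \va_{a,b,n}$ that already equals the dimension of the whole $\sqrt{\zt_{a,b,n}^{\circ}}$ weight space.

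First I would restrict $\M_{a,b,n}$ from $\ASe(d)$ to the parabolic subalgebra $\ASe(n)^{\otimes m}$, where $m = b-a$, and apply the Mackey Theorem (Theorem \ref{Mackey}) to obtain a filtration indexed by $D_{(n^m),(m^n)}$. Because $\zt_{a,b,n}^{\circ}$ is the unique antidominant $S_{(n^m)}$-orbit representative among the weights of $\M_{a,b,n}$, the entire generalized weight space $(\M_{a,b,n})_{\zt_{a,b,n}^{\circ}}^{\mathrm{gen}}$ is supported on a single Mackey subquotient, namely the one corresponding to $\tau^{-1}$. I would then identify this subquotient, as an $\ASe(n)^{\otimes m}$-module, with the outer tensor product of Kato modules $K(a,\dots,a) \boxtimes K(a+1,\dots,a+1) \boxtimes \cdots \boxtimes K(b-1,\dots,b-1)$.

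Next, Lemma \ref{katolemma}(1) determines the dimension of the $\sqrt{(j,\dots,j)}$ weight space in each Kato factor, and Lemma \ref{L:weights of M} then gives the precise value of $\dim(\M_{a,b,n})_{\sqrt{\zt_{a,b,n}^{\circ}}}$. On the other hand, the space $\Cl(d) \va_{a,b,n}$ surjects, after sorting signs of $x_i$-eigenvalues by multiplication by an appropriate monomial in the $c_i$'s (as in the proof of Lemma \ref{L:x weights}), onto the full $\sqrt{\zt_{a,b,n}}$ space. Applying $\phi_{\tau}$ transports this into the $\sqrt{\zt_{a,b,n}^{\circ}}$ weight space; by the construction of $\tau$, no adjacent pair of eigenvalues $(a',b')$ encountered along the chosen reduced expression satisfies $q(a')=q(b'\pm 1)$, so Lemma \ref{L:InvertibleIntertwiner} guarantees that $\phi_{\tau}$ is injective on $\Cl(d)\va_{a,b,n}$. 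Comparing the two dimension computations then forces the containment claimed in the lemma (and in fact equality).

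The main obstacle will be making the Mackey identification rigorous in the presence of the signs and Clifford-action subtleties, particularly when $0 \in [a,b-1]$, where the corresponding Kato factor $K(0,\dots,0)$ becomes Type $\mathtt{Q}$ by Lemma \ref{katolemma}(3) and its weight-space dimension follows the exceptional formula $2^{\lfloor(n+1)/2\rfloor}$ rather than $2^{n}$. This same exceptional behavior is exactly what is reflected in the factor $2^{-\lfloor \gamma_0(\mu)/2 \rfloor}$ of Lemma \ref{L:weights of M}(ii) and in the choice of cyclic generator $\va_{\ld,\mu}$ with its $(1-\sqrt{-1}c_{i_{2j-1}}c_{i_{2j}})$ prefactors, so the three counts will match; verifying this match case by case (the cases $a\geq 0$, $b\leq 0$, and $a<0<b$, which use the three different forms of $\tau$) is the bookkeeping that has to be carried out in detail.
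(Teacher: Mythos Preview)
Your overall plan---Mackey restriction, identification with a product of Kato modules, and a comparison with the Clifford orbit of $\phi_{\tau}\va_{a,b,n}$---is exactly the shape of the paper's argument (which in turn follows Suzuki). The gap is in the step where you invoke Lemma~\ref{L:InvertibleIntertwiner} to conclude that $\phi_{\tau}$ is injective on $\Cl(d)\va_{a,b,n}$. The hypothesis of that lemma fails along the chosen reduced expression for $\tau$: sorting the weight $\zt_{a,b,n}$ into $\zt_{a,b,n}^{\circ}$ forces you, at many steps, to swap adjacent positions carrying $x^2$-eigenvalues $q(j)$ and $q(j{+}1)$ for some $j$, and there $\phi_i^{2}$ acts by
\[
2q(j)+2q(j{+}1)-\bigl(q(j)-q(j{+}1)\bigr)^{2}=0.
\]
Already for $n=2$, $m=2$ one has $\tau=s_{2}$, with $x_{2}^{2}$-eigenvalue $q(a{+}1)$ and $x_{3}^{2}$-eigenvalue $q(a)$, so Lemma~\ref{L:InvertibleIntertwiner} does not apply. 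This is precisely why the paper needs the more delicate Lemma~\ref{techlemma} (rather than invertibility) to prove Lemma~\ref{cyclicvectorlemma}.

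The fix is not to argue injectivity at all. Once the Mackey identification shows that the $\zt_{a,b,n}^{\circ}$-weight space is (isomorphic to) the product of Kato weight spaces $(K_{a})_{a^{(n)}}\circledast\cdots\circledast(K_{b-1})_{(b-1)^{(n)}}$ (with the obvious modification when $a<0$), the vector $\phi_{\tau}\va_{a,b,n}$ corresponds under this identification to the product of the Kato cyclic vectors. Lemma~\ref{katolemma}(1) then says each Kato weight space is exactly the Clifford orbit of its cyclic vector, so the entire $\sqrt{\zt_{a,b,n}^{\circ}}$ weight space lies in $\Cl(d)\phi_{\tau}\va_{a,b,n}=\phi_{\tau}\Cl(d)\va_{a,b,n}$. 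No dimension count against an injectivity bound is required; the containment comes directly from the Kato-module structure.
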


\begin{proof}
By an argument similar to the proof of \cite[Lemma A.7]{su2}, we deduce that
\[
(\M_{a,b,n})_{\zt_{a,b,n}^{\circ}} \cong (K_a)_{a^{(n)}} \circledast
(K_{a+1})_{(a+1)^{(n)}} \circledast \cdots \circledast
(K_{b-1})_{(b-1)^{(n)}}
\]
if $a\geq 0$, and
\[
(\M_{a,b,n})_{\zt_{a,b,n}^{\circ}} \cong (K_{-a-1})_{(-a-1)^{(n)}}
\circledast \cdots
(K_1)_{1^{(n)}}\circledast(K_0)_{0^{(2n)}}\circledast(K_1)_{1^{(n)}}\cdots
\circledast (K_{b-1})_{(b-1)^{(n)}}
\]
if $a<0$. Here, $ (K_j)_{j^{(n)}} $ is the weight space $ K(j, \ldots,
j)_{(j, \ldots, j)} $ of a Kato module. Since
\[
(\M_{a,b,n})_{\sqrt{\zt_{a,b,n}^{\circ}}} \subseteq
(\M_{a,b,n})_{\zt_{a,b,n}^{\circ}},
\]
we deduce that if $a\geq 0$
\[
(\M_{a,b,n})_{\sqrt{\zt_{a,b,n}^{\circ}}} = (K_a)_{\sqrt{a^{(n)}}} \circledast
(K_{a+1})_{\sqrt{(a+1)^{(n)}}} \circledast \cdots
\circledast(K_{b-1})_{\sqrt{(b-1)^{(n)}}} \subseteq \Cl(d) \phi_{\tau}
\va_{a,b,n}.
\]
Similarly, if $a<0$, $(\M_{a,b,n})_{\sqrt{\zt_{a,b,n}^{\circ}}}\subseteq\Cl(d)
\phi_{\tau} \va_{a,b,n}$.
\end{proof}

\begin{prp}
\label{mainprop1} For the special standard module defined above, $
(\M_{a,b,n})_{\sqrt{\zt_{a,b,n}}} \subseteq \Cl(d) \va_{a,b,n}. $
\end{prp}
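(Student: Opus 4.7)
The plan is to reduce the problem to Lemma \ref{antidominantlemma} by transporting $v$ to the antidominant weight space $\zt_{a,b,n}^\circ$ via the intertwiner $\phi_\tau$, and then ``invert'' the transport using Lemma \ref{techlemma}. Given $v \in (\M_{a,b,n})_{\sqrt{\zt_{a,b,n}}}$, the vector $\phi_\tau v$ lies in $(\M_{a,b,n})_{\sqrt{\zt_{a,b,n}^\circ}}$, so by the containment established in the proof of Lemma \ref{antidominantlemma} we may write $\phi_\tau v = C\,\phi_\tau \va_{a,b,n}$ for some $C \in \Cl(d)$.

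Next I would apply Lemma \ref{techlemma} iteratively along a reduced expression $\tau = s_{i_1}\cdots s_{i_k}$. Starting with $v_0 := v$, define $v_j := \phi_{i_{k-j+1}}\cdots\phi_{i_k}\,v$; the lemma (applied to $v_{j-1}$ with intertwiner $\phi_{i_{k-j+1}}$) produces an explicit $y_j \in \ASe(d)$ such that $v_{j-1} = y_j\,v_j$. Composing, $v = y_1\cdots y_k\,\phi_\tau v = y_1\cdots y_k\,C\,\phi_\tau \va_{a,b,n}$. Since Lemma \ref{cyclicvectorlemma} identifies $\phi_\tau \va_{a,b,n}$ as a cyclic vector whose structure is controlled by the Mackey/Kato-module identification of the antidominant weight space, one can then track this composition to conclude $v \in \Cl(d)\va_{a,b,n}$.

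The main obstacle lies in verifying the hypotheses of Lemma \ref{techlemma} at each step of the iteration. The automatic case $\phi_i^2 \neq 0$ holds whenever the adjacent entries $a', b'$ of the intermediate weight satisfy $q(a') \neq q(b' \pm 1)$; this is the generic situation, covering in particular the interior of each block of $\zt_{a,b,n}$. However, at the block boundaries (where entries of the form $(b-1, a)$ are adjacent), the condition $q(b-1) = q(a\pm 1)$ can occur for small $|a|,|b|$, and one must instead verify the restricted form \eqref{E:techlemma} for the $s$-action on the intermediate vector. This is a delicate bookkeeping exercise, requiring one to track how the $s_{i+1}$-action propagates through each intertwiner application.

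A secondary obstacle is the transition from $v \in \ASe(d)\cdot C\,\phi_\tau \va_{a,b,n}$ to the desired $v \in \Cl(d)\va_{a,b,n}$: the generic bound gives only $v \in \M_{a,b,n}$, so one must exploit the specific form of the $y_j$ (each is a short polynomial in $\Cl(d)$, $s_{i_j}$, and $s_{i_j+1}$, as in the proof of Lemma \ref{techlemma}) together with the weight constraint $v \in (\M)_{\sqrt{\zt_{a,b,n}}}$ to see that the composition collapses into the Clifford orbit of $\va_{a,b,n}$. This parallels the analogous step in Suzuki's argument \cite[Lemma A.8]{su2} in the non-super case.
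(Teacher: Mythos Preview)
Your first step is right: push $v$ through $\phi_\tau$ and invoke Lemma~\ref{antidominantlemma} to write $\phi_\tau v = \phi_\tau(z\va_{a,b,n})$ for some $z\in\Cl(d)$. But the ``inversion'' via Lemma~\ref{techlemma} does not do what you need. That lemma only tells you $v\in\ASe(d)\cdot\phi_i v$, and iterating gives $v\in\ASe(d)\cdot\phi_\tau v$; since $\phi_\tau\va_{a,b,n}$ is cyclic (Lemma~\ref{cyclicvectorlemma}), this is all of $\M_{a,b,n}$, which is no information. Your ``secondary obstacle'' is in fact the entire difficulty, and the specific shape of the $y_j$ does not help: each $y_j$ contains $s_{i_j}$ and $s_{i_j\pm1}$, so composing them over a reduced word for $\tau$ produces words ranging over all of $D_{\ld-\mu}$, not elements of $\Cl(d)$.

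The paper circumvents this by a homomorphism trick rather than an inversion. Set $v_0 = v - z\va_{a,b,n}$, so $\phi_\tau v_0=0$. For each $i$ with $s_i\in S_{\ld-\mu}$ one has $\phi_i v_0=0$ (since $s_i\zt_{a,b,n}$ is not a weight of the module), and solving $\phi_i v_0=0$ for $s_iv_0$ shows $v_0$ satisfies \emph{exactly} the same relations under $s_i$, $c_j$, $x_j$ as the cyclic vector $\va_{a,b,n}$ does. Hence by Frobenius reciprocity there is an $\ASe(d)$-homomorphism $\psi:\M_{a,b,n}\to\M_{a,b,n}$ with $\psi(\va_{a,b,n})=v_0$ (with a small Clifford twist in the $a<0<b$ case). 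Then $\psi(\phi_\tau\va_{a,b,n})=\phi_\tau v_0=0$, and cyclicity of $\phi_\tau\va_{a,b,n}$ forces $\psi=0$, hence $v_0=0$ and $v=z\va_{a,b,n}\in\Cl(d)\va_{a,b,n}$. This is the missing idea: rather than trying to pull $v$ back through $\phi_\tau$, you show the difference $v-z\va_{a,b,n}$ generates a homomorphic image that must vanish.
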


\begin{proof}
For $ i = 1, \ldots, d, $ let $ i = jm+r $ where $ 0 \leq j <n $ and $ 0 < r <
m. $ Take any $v \in ({\M}_{a,b,n})_{\sqrt{\zt_{a,b,n}}}$.
Lemma~\ref{antidominantlemma} implies that $ \phi_{\tau} v =  \phi_{\tau} z\va
$ for some $ z \in \Cl(d). $ Put $ v_0 = v- z 1. $ Then $ \phi_{\tau} v_0 = 0$.
Note that since $ r \neq m, $ $ \phi_i v_0 = 0$ since $s_i(\zt_{a,b,n})$ is not
a weight of ${\M}_{a,b,n}$.

If $r\neq -a$, we can solve for $s_iv_0$ in the equation $\phi_i.v_0=0$ to get
\[
s_i.v_0=\left(\frac{\kp_r-\kp_{r-1}}{-2(a+r)}
    +\frac{\kp_r+\kp_{r-1}}{-2(a+r)}c_ic_{i+1}\right)v_0
\]
where $\kp_r=\sqrt{q(a+r-1)}$.

Similarly, if $ r \neq -a,$
\[
s_i.{\va}_{a,b,n}=\left(\frac{\kp_r-\kp_{r-1}}{-2(a+r)}
    +\frac{\kp_r+\kp_{r-1}}{-2(a+r)}c_ic_{i+1}\right){\va}_{a,b,n}.
\]

If $r=-a$, then routine calculations from earlier gives that
\[
c_i c_{i+1} {\va}_{a,b,n} = - \sqrt{-1} {\va}_{a,b,n}.
\]

Hence there exists an $\ASe(d)$-homomorphism $ \psi:{\M}_{a,b,n}
\rightarrow {\M}_{a,b,n} $ such that $ \psi({\va}_{a,b,n})=v_0 $ if $ a \geq 0 $ or $ b \leq 0. $
If $ a < 0 < b, $ then
there is an $\ASe(d)$-homomorphism $ \psi:{\M}_{a,b,n}
\rightarrow {\M}_{a,b,n} $ such that $ \psi({\va}_{a,b,n})=\prod_{0 \leq j <n} (1+ \sqrt{-1} c_{jm-a} c_{jm-a+1} )v_0 $

Thus by lemma ~\ref{antidominantlemma}, the kernel of $ \psi $ is equal to $
\M_{a,b,n}. $ Therefore $ v_0 =0. $ Thus $ v \in \Cl(d) {\va}_{a,b,n}. $
\end{proof}

We now reduce the general case to the special case above. To this end, fix
$\ld\in\Pt$, $\mu\in P^+[\ld]$, and $\ld-\mu\in\Ppos(d)$.  Set
$d_i=\ld_i-\mu_i$, and let $a_i=d_1+\cdots+d_{i-1}+1$, $b_i=d_1+\cdots+d_i$.
Observe that
\begin{align}\label{E:Step2Formulae}
\zt_{\ld,\mu}(x^2_{a_i})=\mu_i
\andeqn
\zt_{\ld,\mu}(x^2_{b_i})=\ld_i-1.
\end{align}
Furthermore, observe that if $a_i\leq c\leq b_i$,
\begin{align}\label{E:Step2Formulae2}
\zt_{\ld,\mu}(x^2_{c})=\zt_{\ld,\mu}(x^2_{b_i})-(b_i-c)
\andeqn
\zt_{\ld,\mu}(x^2_{c})=\zt_{\ld,\mu}(x^2_{a_i})+(c-a_i).
\end{align}

Since $\ld\in\Pt$ and $\mu\in P^+[\ld]$, we can find integers $ 0 = n'_0 < n'_1
< \cdots < n'_r = n $, and $ 0 = n_0 < n_1
< \cdots < n_s = n $ such that
\[
R[\ld] = R \cap \sum_{i \neq n'_0, \ldots, n'_r} \mathbb{Z} \af_i\andeqn
R[\ld]\cap R[\mu] = R \cap \sum_{i \neq n_0, \ldots, n_s} \mathbb{Z} \af_i.
\]
Let
$$ I'_p = \{\, a_{n'_{p-1}+1}, a_{n'_{p-1}+1}+1, \ldots, b_{n'_p}-1\,\} \;\;\;
(p=1, \ldots, r),\;\;\;
I' = I'_1 \cup \ldots \cup I'_r, $$
and
$$ I_p = \{\, a_{n_{p-1}+1}, a_{n_{p-1}+1}+1, \ldots, b_{n_p}-1\,\} \;\;\;
(p=1, \ldots, s),\;\;\;
I = I_1 \cup \ldots \cup I_s. $$
Then, $S_{\ld-\mu}\subseteq S_I\subseteq S_{I'}$ and
\[
S_{I'}/S_{\ld-\mu}\cong D_{\ld-\mu}\cap S_{I'}\andeqn S_I/S_{\ld-\mu}\cong D_{\ld-\mu}\cap S_I,\;\;\;\mbox{(cf. $\S$\ref{SS:Mackey}).}
\]

\begin{lem}\cite[Lemma A.9]{su2}
There is a containment of sets $ D_{\lambda - \mu}\cap S_d[\zt_{\lambda, \mu}]
\subset D_{\lambda - \mu}\cap S_I.  $
\end{lem}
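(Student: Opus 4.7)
The proof plan rests on reformulating the hypotheses in combinatorial terms. By \eqref{E:Step2Formulae} and \eqref{E:Step2Formulae2}, the weight $\zeta_{\lambda,\mu}\in\C^d$ is precisely the tuple of integer ``labels'' $\psi(c):=\mu_i+(c-a_i)$ for $c\in\{a_i,\ldots,b_i\}$. Hence $w\in S_d[\zeta_{\lambda,\mu}]$ is equivalent to the equality $\psi\circ w=\psi$ as functions on $\{1,\ldots,d\}$, and Lemma~\ref{L:MinCosetReps} says that $w\in D_{\lambda-\mu}$ acts by a strictly increasing sequence on each segment $\{a_i,\ldots,b_i\}$. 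The goal is then to show that any such label-preserving, segment-increasing $w$ must permute each $I$-block $J_p:=\{a_{n_{p-1}+1},\ldots,b_{n_p}\}$ into itself; since $S_I$ is generated by the simple transpositions inside the $J_p$'s, this suffices.

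I would proceed by induction on $p$. Suppose $w(J_{p'})=J_{p'}$ for all $p'<p$; I will show $w(J_p)\subseteq J_p$. All segments of $J_p$ share common $(\lambda,\mu)$-data $(\lambda^{(p)},\mu^{(p)})$, so the labels appearing in $J_p$ are exactly $\mu^{(p)},\mu^{(p)}+1,\ldots,\lambda^{(p)}-1$, each with multiplicity $n_p-n_{p-1}$. The smallest position of $J_p$ is $a_{n_{p-1}+1}$ and carries label $\mu^{(p)}$. I would track positions carrying this minimum label across all of $\{1,\ldots,d\}$ and argue that $w(a_{n_{p-1}+1})$ cannot escape $J_p$: the only positions outside $J_p$ with label $\mu^{(p)}$ lie in later blocks $J_{p''}$ with $\mu^{(p'')}\le\mu^{(p)}$, but if $w(a_{n_{p-1}+1})$ sat at such a position, then the preimage of the ``vacated'' label-$\mu^{(p)}$ slot inside $J_p$ would be forced to originate from a segment lying to the right of $J_p$, and then Lemma~\ref{L:MinCosetReps} applied to that segment would force some $w$-value strictly below $w(a_{n_{p-1}+1})$, contradicting the bijectivity/order-preservation combination. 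The dominance hypothesis $\mu\in P^+[\lambda]$ is exactly what makes this ``block comparison'' of labels coherent, since it guarantees that $\mu$ is non-increasing within each $\lambda$-block. Iterating this argument across $a_{n_{p-1}+1},a_{n_{p-1}+1}+1,\ldots,b_{n_p}$ gives $w(J_p)\subseteq J_p$, and bijectivity upgrades this to $w(J_p)=J_p$.

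Once the inductive claim $w(J_p)=J_p$ is established for every $p$, the element $w|_{J_p}$ is an increasing-on-each-segment, label-preserving bijection of $J_p$. Since all segments inside $J_p$ have identical label sequence $\mu^{(p)},\ldots,\lambda^{(p)}-1$, such a bijection is automatically realized by a permutation of the segments of $J_p$, which is precisely an element of the parabolic subgroup $S_{I_p}$ generated by the simple transpositions $\{s_c\mid c\in I_p\}$. Assembling across blocks gives $w\in S_{I_1}\times\cdots\times S_{I_s}=S_I$, finishing the proof.

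The main obstacle is making the inductive block-by-block combinatorial argument precise: one needs careful bookkeeping to exclude the possibility that $w$ fragments a source segment across multiple target $I$-blocks, and this is where the twin hypotheses $\lambda\in\Pt$ (forcing $\lambda_i+\lambda_j\neq 0$, which prevents spurious label coincidences across $\lambda$-blocks) and $\mu\in P^+[\lambda]$ (ensuring the monotonicity of $\mu$ within each $\lambda$-block) are both essential. The template is the corresponding linear statement \cite[Lemma A.9]{su2}, which I would mimic with the necessary modifications.
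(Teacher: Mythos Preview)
The paper gives no proof of its own; it simply invokes \cite[Lemma A.9]{su2}. Your combinatorial reformulation is correct: with $\zeta_{\lambda,\mu}$ read as the content tuple as in \eqref{E:Step2Formulae}--\eqref{E:Step2Formulae2}, membership in $S_d[\zeta_{\lambda,\mu}]$ is exactly $\psi\circ w=\psi$, and membership in $D_{\lambda-\mu}$ is exactly ``increasing on each segment''.

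However, your single induction over the $I$-blocks has a genuine gap. Your argument that $w^{-1}(a_{n_{p-1}+1})$ cannot originate from a later block $J_{p''}$ works when $\mu^{(p'')}<\mu^{(p)}$: then $w^{-1}(a_{n_{p-1}+1})$ sits strictly inside its segment $j$, and the increasing condition forces $w(a_j)$ below $a_{n_{p-1}+1}$ into an already-fixed block. But it fails when $\mu^{(p'')}=\mu^{(p)}$ and $\lambda^{(p'')}\neq\lambda^{(p)}$, i.e.\ when $J_p$ and $J_{p''}$ lie in distinct $I'$-blocks sharing the same bottom label. In that case the preimage is itself a segment-start $a_j$, and no contradiction comes from order-preservation alone. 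Since $\mu\in P^+[\lambda]$ only orders the $\mu$-values \emph{within} each $\lambda$-block, it does not exclude this situation; and contrary to your closing remark, the typicality hypothesis $\lambda\in\Pt$ is irrelevant for the literal-stabilizer statement (it would matter only if one worked with the $q$-stabilizer).

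The fix, implicit in Suzuki's argument, is a two-stage induction. First show $w\in S_{I'}$ by inducting upward on $I'$-blocks and tracking each segment from its \emph{top} label downward: in $J'_q\cup\cdots\cup J'_r$ the label $\Lambda_q-1$ (where $\Lambda_q$ is the common $\lambda$-value on $J'_q$) occurs only at the positions $b_i$ for segments $i\subset J'_q$, so $w$ permutes these; then $w(b_i-1)<w(b_i)\in J'_q$ forces $w(b_i-1)\in J'_q$ since all of $J'_{q+1}\cup\cdots$ lies to the right of every position in $J'_q$, and one continues down each segment. Second, inside each fixed $J'_q$, run your bottom-label argument by inducting \emph{downward} on its $I$-blocks (last $I$-block first): the smallest $\mu$-value in $J'_q$ occurs only in the last $I$-block $J_{p^*}$, so $w$ permutes the $a_i$ for $i\subset J_{p^*}$, and since earlier $I$-blocks of $J'_q$ lie strictly to the left, the same ``$w(a_i+1)>w(a_i)\in J_{p^*}$ forces $w(a_i+1)\in J_{p^*}$'' mechanism applies up each segment. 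With this restructuring the argument goes through.
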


Let $ v \in \M(\lambda, \mu)_{\sqrt{\zt_{\lambda, \mu}}}. $ For each $ p \in
\lbrace 1, \ldots, s \rbrace, $ we can write $ v = \sum_j x_j^{(p)} z_j^{(p)}
v_j $ where $ v_j \in \Phi(\lambda, \mu), $ $ \lbrace x_j^{(p)} \rbrace_j $ are
linearly independent elements of $ \C[D_{\lambda - \mu} \cap S_{I - I_p}] $ and
$z_j^{(p)} \in \C[D_{\lambda - \mu} \cap S_{I_p}]$. Let $\P_d[x^2]_{I_p}=\C[x_i^2|i\in I_p]$.

\begin{lem}\cite[Lemma A.10]{su2}
For $ f \in \P_d[x^2]_{I_p}, $ $ f z_k^{(p)} v_j =
\zt_{\lambda, \mu}(f) z_k^{(p)} v_j. $
\end{lem}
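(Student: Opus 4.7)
The plan is to induct on the degree of $f$, the case $\deg f = 0$ being immediate. For the inductive step, I exploit the fact that $v \in \M(\lambda,\mu)_{\sqrt{\zt_{\lambda,\mu}}}$ gives $f v = \zt_{\lambda,\mu}(f)\, v$ directly, and then unpack both sides via the given decomposition $v = \sum_j x_j^{(p)} z_j^{(p)} v_j$.

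The key structural observation is that each $x_j^{(p)}$ is a linear combination of elements $w \in D_{\lambda-\mu} \cap S_{I - I_p}$. Since every such $w$ permutes only indices lying in $I - I_p$ and fixes the indices in $I_p$ pointwise, and since $f \in \P_d[x^2]_{I_p}$ is a polynomial in the variables $x_i^2$ with $i \in I_p$, we have $w^{-1}(f) = f$. Applying the commutation relation from Lemma 14.2.1 of \cite{kl} (the same one invoked in the proof of Lemma \ref{L:weights of M}) yields
\[
f w = w f + \sum_{u <_b w} u\, C_u^w\, f_u^w,
\]
where $C_u^w \in \Cl(d)$ and $f_u^w \in \P_d[x]$ with $\deg f_u^w < \deg f$. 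Summing over the support of each $x_j^{(p)}$ gives
\[
f v \;=\; \sum_j x_j^{(p)} \bigl(f z_j^{(p)} v_j\bigr) \;+\; R,
\]
where $R$ collects terms in which the group-algebra factor has been replaced by something of strictly smaller Bruhat length and the polynomial factor has strictly smaller degree.

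Comparing with $\zt_{\lambda,\mu}(f) v = \sum_j x_j^{(p)}\bigl(\zt_{\lambda,\mu}(f)\, z_j^{(p)} v_j\bigr)$, I would expand everything in the PBW-type basis for $\M(\lambda,\mu)$ indexed by $D_{\lambda-\mu}$, the Clifford monomials, and a basis of $\Phi(\lambda,\mu)$. The product decomposition $D_{\lambda-\mu} \cap S_I = (D_{\lambda-\mu} \cap S_{I - I_p}) \cdot (D_{\lambda-\mu} \cap S_{I_p})$, together with linear independence of the $\{x_j^{(p)}\}$, separates the contribution from each $x_j^{(p)}$-slot on the leading-Bruhat level and forces, for every $j$,
\[
f z_j^{(p)} v_j \;\equiv\; \zt_{\lambda,\mu}(f)\, z_j^{(p)} v_j \pmod{\text{lower terms}}.
\]
The lower-order remainder, involving polynomials $f_u^w$ of strictly smaller degree acting on the appropriate pieces, is then handled by the inductive hypothesis applied to the same decomposition of $v$ viewed in lower filtration.

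The main obstacle is the bookkeeping in this last step: one must verify that the correction $R$ only couples the $(j,k)$-slot to itself and to slots already controlled by the induction. This is where it is essential that $x_j^{(p)}$ lies in $\C[D_{\lambda-\mu} \cap S_{I - I_p}]$ and $z_j^{(p)}$ lies in $\C[D_{\lambda-\mu} \cap S_{I_p}]$, so that commuting $f$ (which acts within the $I_p$-block) past $x_j^{(p)}$ cannot mix indices between the two slots at leading order, and the Bruhat-lower errors lie in the span of strictly shorter elements of $D_{\lambda-\mu} \cap S_I$, to which the inductive hypothesis applies verbatim.
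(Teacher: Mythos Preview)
You overcomplicate the argument and, as written, leave a gap. You correctly observe that every $w$ in the support of $x_j^{(p)}$ fixes the indices in $I_p$ pointwise, so $w^{-1}(f)=f$. But this observation should be pushed further: each such $w$ is a product of simple reflections $s_\ell$ with $\ell\in I\setminus I_p$, and because the blocks $I_q$ are separated (so that $i\in I_p$ forces $i\neq\ell,\ell+1$), relation~\eqref{s&x} gives $s_\ell x_i = x_i s_\ell$ \emph{exactly}. Hence $fw=wf$ on the nose, and your correction term $R$ is identically zero. No induction on $\deg f$ is needed.

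This is precisely the paper's proof: one writes
\[
0=(f-\zt_{\ld,\mu}(f))v=\sum_j x_j^{(p)}\,(f-\zt_{\ld,\mu}(f))\,z_j^{(p)} v_j
\]
directly (the first equality since $v\in\M(\ld,\mu)_{\sqrt{\zt_{\ld,\mu}}}$, the second by exact commutation), then uses that $S_{I_p}$ is Bruhat-closed to see each $(f-\zt_{\ld,\mu}(f))z_j^{(p)}v_j$ remains in $\C[D_{\ld-\mu}\cap S_{I_p}]\cdot\Phi(\ld,\mu)$, and finally the linear independence of the $x_j^{(p)}$ (via the product decomposition of $D_{\ld-\mu}\cap S_I$) forces each summand to vanish.

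Your proposed handling of a nonzero $R$ does not work as stated: the induction is on $\deg f$, but the ``lower terms'' you generate come from the Bruhat filtration on $S_{I\setminus I_p}$, and the inductive hypothesis---which concerns $f' z_k^{(p)} v_j$ for the \emph{same} $z_k^{(p)}$---says nothing about expressions of the form $u\,C_u^w f_u^w\, z_j^{(p)} v_j$ with $u<_b w$ an arbitrary shorter element. There is no mechanism here to re-express such terms in the original decomposition of $v$. Fortunately the issue disappears once you observe $R=0$.
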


\begin{proof}
Observe
\[
0=(f-\zt_{\ld,\mu}(f))v
=\sum_jx_j^{(p)}(f-\zt_{\ld,\mu}(f))z_j^{(p)}\va_{\ld,\mu}.
\]
Since $S_{I_p}\subset S_d$ is closed with respect to the Bruhat order we have
$f z_j^{(p)}\va_{\ld,\mu}\in\C[D_{\ld-\mu}\cap S_{I_p}]$. Since
$\{x_j^{(p)}\}_j$ are linearly independent, each
$(f-\zt_{\ld,\mu}(f))z_j^{(p)}\va_{\ld,\mu}$ must be 0.
\end{proof}

\noindent\emph{Proof of Proposition \ref{dominant wt space}.}
Let $ \ASe(I_p) $ be the subalgebra corresponding to $ I_p. $  Note that
$ \ASe(I_p) \cong \ASe(|I_p|). $ First note that $
\ASe(I_p) v_j \cong \M_{a,b,n_p-n_{p-1}} $ for some $a,b$. Thus by Proposition
~\ref{mainprop1}, $z_k^{(p)} v_j \in \C\va_{\lambda, \mu}$. Thus, $ v \in
\C[D_{\lambda - \mu} \cap S_{I - I_p}] $ for any $ p. $  It now follows that $
v \in \mathbb{C} \va_{\lambda, \mu}. $\QED

\begin{thm}\label{thm:unique irred quotient} Assume that $\ld\in\Pt$, $\mu\in P^+[\ld]$, and
$\ld-\mu\in\Ppos(d)$. Then $\M(\ld,\mu)$ has a unique simple quotient module,
denoted $\L(\ld,\mu)$.
\end{thm}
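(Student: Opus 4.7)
Plan.

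The key input is Proposition \ref{dominant wt space}, which shows $\M(\ld,\mu)_{\sqrt{\zt_{\ld,\mu}}}=\C\va_{\ld,\mu}$. The plan is to use this one-dimensionality to produce a unique maximal submodule $N_{\max}$ of $\M(\ld,\mu)$; the desired simple quotient will then be $\L(\ld,\mu) := \M(\ld,\mu)/N_{\max}$.

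First, I would observe that a submodule $N\subseteq\M(\ld,\mu)$ is proper if and only if $\va_{\ld,\mu}\notin N$ (since $\va_{\ld,\mu}$ is a cyclic generator of $\M(\ld,\mu)$), which by Proposition \ref{dominant wt space} is equivalent to $N_{\sqrt{\zt_{\ld,\mu}}}=0$: any nonzero element of $N\cap\M_{\sqrt{\zt_{\ld,\mu}}}$ would be a nonzero scalar multiple of $\va_{\ld,\mu}$, forcing $N=\M(\ld,\mu)$. Defining $N_{\max}$ to be the sum of all proper submodules of $\M(\ld,\mu)$, it is automatically a submodule, and it will be the unique maximal submodule provided it is itself proper; equivalently, provided $\va_{\ld,\mu}\notin N_{\max}$.

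To prove $\va_{\ld,\mu}\notin N_{\max}$, suppose for contradiction that $\va_{\ld,\mu}=v_1+\cdots+v_r$ with each $v_i$ in a proper submodule $N_i$. Since $\M(\ld,\mu)$ decomposes into generalized $\P_d[x^2]$-weight spaces and this decomposition is respected by every submodule, I may first project onto $\M(\ld,\mu)_{\zt_{\ld,\mu}}^{\mathrm{gen}}$, arranging $v_i\in N_i\cap \M(\ld,\mu)_{\zt_{\ld,\mu}}^{\mathrm{gen}}$. Since the pairwise commuting generators $x_i$ lie in $\ASe(d)$, a polynomial in them that projects onto the simultaneous generalized $+\kp_i$-eigenspace (with $\kp_i=\sqrt{q(\zt_{\ld,\mu}(x_i^2))}$) preserves each $N_i$ and fixes $\va_{\ld,\mu}$, so I may further arrange that $v_i$ lies in the generalized $(+\kp_1,\ldots,+\kp_d)$-eigenspace. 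The main obstacle is the final descent: bridging from this generalized simultaneous eigenspace down to the actual $\sqrt{\zt_{\ld,\mu}}$-weight space, which is a priori strictly smaller and which is not cut out by any module homomorphism. To handle this, I would apply the intertwiner-based machinery of Lemma \ref{techlemma} together with the reduction to the special ``rectangular'' modules $\M_{a,b,n}$ of Proposition \ref{mainprop1}, run not on the cyclic vector but on the vectors $v_i$ themselves; this should force each $v_i$ into $N_i\cap\C\va_{\ld,\mu}=0$, yielding the desired contradiction $\va_{\ld,\mu}=0$.
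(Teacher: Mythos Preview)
Your overall strategy is right and matches the paper's, but you manufacture an obstacle that does not exist, and your proposed Step~6 does not work as written.

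The point you are missing is that Lemma~\ref{L:x weights} applies to any $\ASe(d)$-module, in particular to each proper submodule $N_i$ itself. Concretely: if $(N_i)^{\mathrm{gen}}_{\zt_{\ld,\mu}}\neq 0$ then, since this finite-dimensional space is stable under the commuting operators $x_1,\dotsc,x_d$ (as $N_i$ is a submodule), Lemma~\ref{L:x weights} produces a nonzero vector in $(N_i)_{\sqrt{\zt_{\ld,\mu}}}$. By Proposition~\ref{dominant wt space} that vector is a nonzero scalar multiple of $\va_{\ld,\mu}$, contradicting $N_i$ proper. Hence every proper submodule satisfies $N^{\mathrm{gen}}_{\zt_{\ld,\mu}}=0$, i.e.\ $N\subseteq\bigoplus_{\eta\neq\zt_{\ld,\mu}}\M(\ld,\mu)^{\mathrm{gen}}_\eta$, and therefore so does the sum of all proper submodules. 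This is exactly the paper's argument. Your Step~4 already lands each $v_i$ in $(N_i)^{\mathrm{gen}}_{\zt_{\ld,\mu}}=0$, so you are done there; Steps~5--6 are unnecessary.

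Moreover, Step~6 as stated is not well-grounded: Lemma~\ref{techlemma} and the machinery of Proposition~\ref{mainprop1} are formulated for vectors in an \emph{actual} $\sqrt{\zt}$-weight space, not a generalized one, so ``running them on the $v_i$'' does not make sense without further work. The descent from generalized to actual eigenspace that you flag as the main obstacle is handled automatically by applying the existing lemmas inside $N_i$ rather than inside $\M(\ld,\mu)$.
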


\begin{proof}
Assume $N$ is a submodule of $\M(\ld,\mu)$. If
$N_{\zt_{\ld,\mu}}^{\mathrm{gen}}\neq0$, then $N_{\sqrt{\zt_{\ld,\mu}}}\neq0$.
By the previous lemma, $N\cap\Cl(d)\va_{\ld,\mu}\neq\{0\}$, so $\va_{\ld,\mu}\in N$ because $\Cl(d)\va_{\ld,\mu}$ is an irreducible $\ASe(\ld-\mu)$-module. Hence, $N=\M(\ld,\mu)$. It follows that
\[
N\subseteq\bigoplus_{\eta\neq\zt_{\ld,\mu}}\M(\ld,\mu)^{\mathrm{gen}}_\eta.
\]
The sum of all proper submodules satisfies this property. Therefore, $\M(\ld,\mu)$
has a unique maximal proper submodule and a unique simple quotient.
\end{proof}

Let $\mathcal{R}(\ld,\mu)$ denote the unique maximal submodule, and define
$\L(\ld,\mu)=\M(\ld,\mu)/\mathcal{R}(\ld,\mu)$.

\section{Classification of Calibrated Representations}\label{S:Calibrated}
A representation $ M $ of the AHCA is called
\emph{calibrated} if the polynomial subalgebra $\P_d[x]\subseteq\ASe(d)$ acts
semisimply. The main combinatorial object associated to such a representation
is the shifted skew shape.  Calibrated representations of the affine Hecke
algebra were studied and classified in \cite{ram}.  The main combinatorial
object in that case were pairs of skew shapes and content functions.  That
construction along with \cite[Conjecture 52]{lec} motivated the construction
given here.  A proof of a slightly modified version of that conjecture is given
here.  Leclerc defined a calibrated representation to be one in which
$\P_d[x^2]$ acts semisimply.  For example, the module $ \Phi_{[-1,0]} $ is
calibrated in the sense of \cite{lec} but $ x_1, x_2 $ do not act diagonally in
any basis.

\subsection{Construction of Calibrated Representations}\label{SS:Calibrated} Let $ \lambda = (\lambda_1, \ldots, \lambda_r) $ and $ \mu = (\mu_1, \ldots, \mu_r) $ be two partitions with $ \lambda_1 > \cdots > \lambda_r >0 $ and
$ \mu_1 \geq \cdots \geq \mu_r $ such that  $ \mu_i = \mu_{i+1} $ implies $ \mu_i = 0$ and $ \lambda_i \geq \mu_i $ for all $ i. $
To such data, associate a shifted skew shape of boxes where row $ i $ has $ \lambda_i - \mu_i $ boxes and the leftmost box occurs in position $ i. $  Figure \ref{ex1}
illustrates a skew shape for $ \lambda = (5,2,1) $ and $ \mu = (3,1,0). $

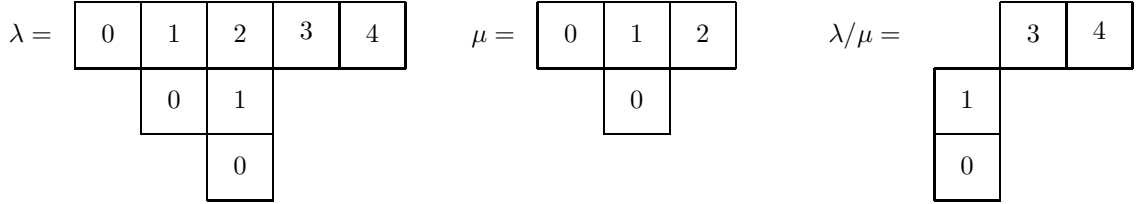
\begin{figure}[ht]
\begin{picture}(420,85)

\put(0,70){$\ld=$}

\put(35,70){$0$}\put(60,70){$1$}\put(85,70){2}\put(110,71){3}
    \put(135,70){4}
\put(60,45){0}\put(85,45){1}
\put(85,20){0}

\put(25,85){\line(1,0){125}}
    \put(25,85){\line(0,-1){25}}\put(50,85){\line(0,-1){50}}
    \put(75,85){\line(0,-1){75}}\put(100,85){\line(0,-1){75}}
    \put(125,85){\line(0,-1){25}}\put(150,85){\line(0,-1){25}}
\put(25,60){\line(1,0){125}}
\put(50,35){\line(1,0){50}}
\put(75,10){\line(1,0){25}}

\put(175,70){$\mu=$}

\put(210,70){$0$}\put(235,70){$1$}\put(260,70){2}
\put(235,45){0}

\put(200,85){\line(1,0){75}}
    \put(200,85){\line(0,-1){25}}\put(225,85){\line(0,-1){50}}
    \put(250,85){\line(0,-1){50}}\put(275,85){\line(0,-1){25}}
\put(200,60){\line(1,0){75}}
\put(225,35){\line(1,0){25}}

\put(310,70){$\ld/\mu=$}

\put(385,70){3}
    \put(410,71){4}
\put(360,45){1}
\put(360,20){0}

\put(375,85){\line(1,0){50}}
    \put(375,85){\line(0,-1){75}}\put(400,85){\line(0,-1){25}}
    \put(425,85){\line(0,-1){25}}
\put(350,60){\line(1,0){75}}
    \put(350,60){\line(0,-1){50}}
\put(350,35){\line(1,0){25}}
\put(350,10){\line(1,0){25}}

\end{picture}
\caption{Skew Shape filled with contents}\label{ex1}
\end{figure}

A standard filling of a skew shape $ \lambda / \mu $ with a total of $ d $
boxes is an insertion of the set $ \{ 1, \ldots, d \}$ into the boxes of the
skew shape such that each box gets exactly one element, each element is used
exactly once and the rows are increasing from left to right and the columns are
increasing from top to bottom.  In a shifted shape, $\ld$, all the boxes will
lie above one main diagonal running from northwest to southeast. Each box in
this main diagonal will be assigned content 0.  The contents of the other boxes
will be constant along the diagonals where the contents of the diagonal
northeast of its immediate neighbor will be one more than the contents of its
immediate neighbor. In a shifted skew shape, $\ld/\mu$, the contents are
defined as in figure \ref{ex1}.

Given a standard tableaux $ L $ for a shifted skew shape $ \lambda / \mu, $ let
$ c(L_i) $ be the contents of the box labeled by $ i. $  Thus $ L $ gives rise
to a $ d$-tuple $ c(L) = (c(L_1), \ldots, c(L_d)) $ called the content reading
of $ \lambda / \mu $ with respect to $ L. $

Let $ \lambda / \mu $ be a shifted skew shape such that $ \lambda / \mu $ has $
d $ boxes. Set $ \kp_{i,L} = \sqrt{q(c(L_{i}))} $ and
$$ \mathcal{Y}_{i,L} = \sqrt{1-\frac{1}{({\kp_{i+1,L}-\kp_{i,L}})^2}
        -\frac{1}{({\kp_{i+1,L}+\kp_{i,L}})^2}}.
        $$

Now to a skew shape $ \lambda/\mu, $ associate a vector space $
\widehat{H}^{\lambda / \mu} = \oplus_L Cl(d) v_L $ where $ L $ ranges over all
standard tableaux of shape $ \lambda/\mu $ and $ d $ is the number of boxes in the shifted skew shape.
Define $ x_i v_L = \kp_{i,L} v_L. $ Define
$$ s_i v_L = \frac{1}{\kp_{i+1,L}-\kp_{i,L}} v_L
+  \frac{1}{\kp_{i+1,L}+\kp_{i,L}} c_i c_{i+1} v_L + \mathcal{Y}_{i,L} v_{s_i L} $$
where $ v_{s_i L} = 0 $ if $ s_i L $ is not a standard tableaux.

\begin{prp}
The action of the $ x_i $ and $s_{i}$ given above endow $ \widehat{H}^{\lambda / \mu} $ with
the structure of a $ \ASe(d)$-module.
\end{prp}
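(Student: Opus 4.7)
The plan is to verify, for $1 \leq i \leq d-1$ and $1 \leq j \leq d$, each of the defining relations of $\ASe(d)$ from Section~\ref{SS:Saffdef}. Since $Cl(d)$ acts on each summand $Cl(d) v_L$ by left multiplication, the Clifford relations \eqref{c} are automatic. The polynomial generator $x_j$ is extended from its scalar action on $v_L$ to all of $Cl(d) v_L$ via the mixed relation \eqref{c&x}, by setting $x_j . c^\varepsilon v_L = (-1)^{\varepsilon_j} \kappa_{j,L} c^\varepsilon v_L$ on each monomial $c^\varepsilon v_L$. This makes both \eqref{c&x} and the commutativity $x_j x_k = x_k x_j$ hold by construction on every summand.

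What remains is to verify the relations involving the $s_i$. For \eqref{s&x}, the equality $s_i x_j = x_j s_i$ for $j \neq i, i+1$ is immediate because $\kappa_{j, s_i L} = \kappa_{j, L}$, while the key identity $s_i x_i = x_{i+1} s_i - 1 + c_i c_{i+1}$ is checked by applying both sides to $v_L$ using the defining formula for $s_i v_L$. The Clifford--symmetric relations \eqref{c&s} follow analogously, exploiting that the only Clifford factor appearing in $s_i v_L$ is $c_i c_{i+1}$, which has the correct commutation pattern with each $c_k$. The far commutation $s_i s_j = s_j s_i$ for $|i-j| > 1$ is routine since the corresponding formulas involve disjoint data.

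The heart of the argument is $s_i^2 v_L = v_L$. Expand $s_i (s_i v_L)$ using the defining formula, the anticommutation $s_i (c_i c_{i+1}) = -c_i c_{i+1} s_i$, and the symmetry $\mathcal{Y}_{i, s_i L} = \mathcal{Y}_{i, L}$ (which follows from $\kappa_{i, s_i L} = \kappa_{i+1, L}$, $\kappa_{i+1, s_i L} = \kappa_{i, L}$, together with the squares in the definition of $\mathcal{Y}$). The cross terms cancel and the coefficient of $v_L$ collapses to
\[
\frac{1}{(\kappa_{i+1,L}-\kappa_{i,L})^2} + \frac{1}{(\kappa_{i+1,L}+\kappa_{i,L})^2} + \mathcal{Y}_{i,L}^2,
\]
which equals $1$ by the very definition of $\mathcal{Y}_{i,L}$. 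When $s_i L$ is not a standard tableau (so $v_{s_i L} = 0$), one must additionally verify $\mathcal{Y}_{i,L} = 0$; in that case $i$ and $i+1$ occupy adjacent boxes of $\lambda/\mu$, forcing $c(L_{i+1}) - c(L_i) = \pm 1$, and a short computation using $q(a+1)-q(a) = 2(a+1)$ confirms that the radicand in $\mathcal{Y}_{i,L}$ vanishes.

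The main obstacle will be the braid relation $s_i s_{i+1} s_i v_L = s_{i+1} s_i s_{i+1} v_L$. The strategy is a case analysis on the relative positions of the boxes labelled $i, i+1, i+2$ in $L$, paralleling the classical verification for Young's seminormal form in \cite{ram} but complicated by the presence of the Clifford factors $c_k c_{k+1}$ in each of the six $s$-actions. This ultimately reduces to the Yang--Baxter identity $\phi_i \phi_{i+1} \phi_i = \phi_{i+1} \phi_i \phi_{i+1}$ for the intertwiners \eqref{E:intertwiner} in $\ASe(3)$, normalized by the scalars $\kappa_{j,L}^2 - \kappa_{k,L}^2$ and $\mathcal{Y}_{\bullet,\bullet}$ used to pass between the standard fillings in the orbit; this normalization is a finite computation which can be carried out uniformly in all cases.
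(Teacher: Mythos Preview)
Your treatment of the relations other than the braid is correct and coincides with the paper's: both verify $s_i^2 v_L=v_L$ and $s_ix_iv_L=(x_{i+1}s_i-1+c_ic_{i+1})v_L$ by direct substitution, and both record that when $s_iL$ is not standard one has
\[
\frac{1}{(\kp_{i+1,L}-\kp_{i,L})^2}+\frac{1}{(\kp_{i+1,L}+\kp_{i,L})^2}=1,
\]
i.e.\ $\mathcal{Y}_{i,L}=0$, so that the missing $v_{s_iL}$ term does no harm.

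The gap is in your braid argument. You announce a case analysis and then claim it ``ultimately reduces to the Yang--Baxter identity $\phi_i\phi_{i+1}\phi_i=\phi_{i+1}\phi_i\phi_{i+1}$ for the intertwiners \eqref{E:intertwiner} in $\ASe(3)$.'' As stated this is circular: that identity in the abstract algebra $\ASe(3)$ is a \emph{consequence} of the defining relations, in particular of the $s_i$-braid relation you are trying to establish, so it is not available as an independent input. If instead you mean to verify the intertwiner braid directly for the operators $\tilde\phi_i$ on $\widehat H^{\ld/\mu}$ (which is indeed just a scalar check, since $\tilde\phi_iv_L=(\kp_{i,L}^2-\kp_{i+1,L}^2)\mathcal{Y}_{i,L}\,v_{s_iL}$), you still owe an explanation of how to pass back to the $\tilde s_i$-braid. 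Writing $\tilde s_i=(\tilde\phi_i-A_i)(\tilde x_i^2-\tilde x_{i+1}^2)^{-1}$ with $A_i=(\tilde x_i+\tilde x_{i+1})-\tilde c_i\tilde c_{i+1}(\tilde x_i-\tilde x_{i+1})$, each triple product expands into eight terms mixing $\tilde\phi$'s and $A$'s, and matching those across the two sides is not a ``uniform normalization'' but exactly the case-by-case bookkeeping you were hoping to avoid.

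The paper does not attempt any such shortcut: it carries out the braid check by six explicit cases according to the relative positions of the boxes labelled $i,i+1,i+2$ (a single row; two hook shapes; a row with one box above; a full staircase; a $2\times 2$ corner), in each case writing out $s_is_{i+1}s_iv_{L_j}$ and $s_{i+1}s_is_{i+1}v_{L_j}$ as linear combinations of $v_{L_k}$, $c_ic_{i+1}v_{L_k}$, $c_{i+1}c_{i+2}v_{L_k}$, $c_ic_{i+2}v_{L_k}$ and comparing coefficients term by term.
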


\begin{proof}
We have
\begin{align*}
s_i^2 v_L &= \frac{1}{\kp_{i+1,L}-\kp_{i,L}} s_i v_L - \frac{1}{\kp_{i+1,L}+\kp_{i,L}} c_i c_{i+1} s_i v_L + \mathcal{Y}_{i,L} s_i v_{s_i L}\\
&= \frac{1}{\kp_{i+1,L}-\kp_{i,L}}\left(\frac{1}{\kp_{i+1,L}-\kp_{i,L}} v_L + \frac{1}{\kp_{i+1,L}+\kp_{i,L}} c_i c_{i+1} v_L + \mathcal{Y}_{i,L} v_{s_i L}\right)\\ &-\frac{c_i c_{i+1}}{\kp_{i+1,L}+\kp_{i,L}}\left(\frac{1}{\kp_{i+1,L}-\kp_{i,L}} v_L
+ \frac{1}{\kp_{i+1,L}+\kp_{i,L}} c_i c_{i+1} v_L + \mathcal{Y}_{i,L} v_{s_i L}\right)\\
&+ \mathcal{Y}_{i,L}\left(\frac{1}{\kp_{i,L}-\kp_{i+1,L}} v_{s_i L} + \frac{1}{\kp_{i+1,L}+\kp_{i,L}} c_i c_{i+1} v_{s_i L} + \mathcal{Y}_{i,L} v_{L}\right)\\
&= \left(\frac{1}{(\kp_{i+1,L}-\kp_{i,L})^2}+\frac{1}{(\kp_{i+1,L}+\kp_{i,L})^2} + \mathcal{Y}_{i,L} \mathcal{Y}_{i,L}\right) v_L = v_L.
\end{align*}
Note that if $ v_{s_i L} = 0, $ then $ \frac{1}{(\kp_{i+1,L}-\kp_{i,L})^2} + \frac{1}{(\kp_{i+1,L}+\kp_{i,L})^2}=1. $

Next,
$$ s_i x_i v_L = \frac{\kp_{i,L}}{\kp_{i+1,L}-\kp_{i,L}} v_L + \frac{\kp_{i,L}}{\kp_{i+1,L}+\kp_{i,L}} c_i c_{i+1} v_L + \mathcal{Y}_{i,L} v_{s_i L}.  $$
On the other hand,
$$ x_{i+1} s_i v_L - v_L + c_i c_{i+1} v_L =
\frac{\kp_{i+1,L}}{\kp_{i+1,L}-\kp_{i,L}} v_L - \frac{\kp_{i+1,L}}{\kp_{i+1,L}+\kp_{i,L}} c_i c_{i+1} v_L +\mathcal{Y}_{i,L} v_{s_i L} - v_L + c_i c_{i+1} v_L. $$
Thus it is easily seen that
$$ s_i x_i v_L = x_{i+1} s_i v_L - v_L + c_i c_{i+1} v_L. $$

We now check the braid relations.  To this end, fix $j\in\mathbb{N}$ and set
$\kp_i=\sqrt{j+i}$ for $i\geq0$.

\begin{figure}[ht]
\begin{picture}(340,40)

\put(115,14){$L\;\;=$}
    \put(160,14){$i$}\put(178,14){$i+1$}\put(203,14){$i+2$}

\put(150,5){\line(1,0){75}}
    \put(150,5){\line(0,1){25}}\put(175,5){\line(0,1){25}}
    \put(200,5){\line(0,1){25}}\put(225,5){\line(0,1){25}}
\put(150,30){\line(1,0){75}}
\end{picture}
\caption{Case 1}\label{F:Case 1}
\end{figure}

Case 1: Let $ L $ be the standard tableaux given in Figure \ref{F:Case 1}. A
calculation gives {\small
\begin{align*}
s_i s_{i+1} s_i v_L = s_{i+1} s_i s_{i+1} v_L
=&\left(\frac{1}{(\kp_3-\kp_2)^2(\kp_2-\kp_1)}
    -\frac{1}{(\kp_2+\kp_3)^2(\kp_1+\kp_2)}\right)v_L\\
&+\left(\frac{1}{(\kp_3^2-\kp_2^2)(\kp_2+\kp_1)}
        +\frac{1}{(\kp_3^2-\kp_2^2)^2(\kp_2-\kp_1)}\right)c_ic_{i+1}v_L\\
&+\left(\frac{1}{(\kp_3^2-\kp_2^2)(\kp_2-\kp_1)}
    +\frac{1}{(\kp_3^2-\kp_2^2)^2(\kp_2+\kp_1)}\right)c_{i+1}c_{i+2}v_L\\
&+\left(\frac{1}{(\kp_3-\kp_2)^2(\kp_2+\kp_1)}
    -\frac{1}{(\kp_2+\kp_3)^2(\kp_2-\kp_1)}\right)c_ic_{i+2}v_L.
\end{align*}
}

\begin{figure}[ht]
\begin{picture}(340,70)

\put(50,27){$L_1=$}
    \put(78,14){$i+2$}\put(85,39){$i$}\put(103,39){$i+1$}

\put(75,5){\line(1,0){25}}
    \put(75,5){\line(0,1){50}}\put(100,5){\line(0,1){50}}
\put(75,30){\line(1,0){50}}
    \put(100,30){\line(0,1){25}}\put(125,30){\line(0,1){25}}
\put(75,55){\line(1,0){50}}

\put(200,27){$L_2=$}
   \put(228,14){$i+1$}\put(235,39){$i$}\put(253,39){$i+2$}

\put(225,5){\line(1,0){25}}
    \put(225,5){\line(0,1){50}}\put(250,5){\line(0,1){50}}
\put(225,30){\line(1,0){50}}
    \put(250,30){\line(0,1){25}}\put(275,30){\line(0,1){25}}
\put(225,55){\line(1,0){50}}

\end{picture}
\caption{Case 2}\label{F:Case 2}
\end{figure}

Case 2: Let $ L_1 $ and $ L_2 $ be the standard tableaux given in Figure
\ref{F:Case 2}. A calculation gives {\small
\begin{align*}
s_i s_{i+1} s_i v_{L_1} = s_{i+1} s_i s_{i+1}v_{L_1} =&\left(\frac{1}{(\kp_3-\kp_2)^2(\kp_1-\kp_3)}
    +\frac{1}{(\kp_2+\kp_3)^2(\kp_1+\kp_3)}\right)v_{L_1}\\
    &+\left(\frac{1}{(\kp_3^2-\kp_2^2)(\kp_1-\kp_3)}
    -\frac{1}{(\kp_3^2-\kp_2^2)^2(\kp_1+\kp_3)}\right)c_ic_{i+1}v_{L_1}\\
&+\left(\frac{1}{(\kp_2^2-\kp_3^2)(\kp_1+\kp_3)}
    +\frac{1}{(\kp_3^2-\kp_2^2)^2(\kp_1-\kp_3)}\right)c_{i+1}c_{i+2}v_{L_1}\\
&+\left(\frac{1}{(\kp_3-\kp_2)^2(\kp_1+\kp_3)}
    -\frac{1}{(\kp_2+\kp_3)^2(\kp_1-\kp_3)}\right)c_ic_{i+2}v_{L_1}\\
&+\left(\frac{\mathcal{Y}_{i+1,L_1}}{(\kp_3-\kp_2)(\kp_1-\kp_2)}\right)v_{L_2}
    +\left(\frac{\mathcal{Y}_{i+1,L_1}}{(\kp_3-\kp_2)(\kp_1+\kp_2)}\right)
    c_ic_{i+1}v_{L_2}\\
&+\left(\frac{\mathcal{Y}_{i+1,L_1}}{(\kp_1-\kp_2)(\kp_2+\kp_3)}\right)
    c_{i+1}c_{i+2}v_{L_2}
    +\left(\frac{\mathcal{Y}_{i+1,L_1}}{(\kp_2+\kp_3)(\kp_1+\kp_2)}\right)
    c_ic_{i+2}v_{L_2}.
\end{align*}
\begin{align*}
s_i s_{i+1} s_i v_{L_2} = s_{i+1} s_i s_{i+1}v_{L_2} =&
\left(\frac{1}{(\kp_1-\kp_2)^2(\kp_3-\kp_1)}
    +\frac{1}{(\kp_1+\kp_2)^2(\kp_1+\kp_3)}\right)v_{L_2}\\
&+\left(\frac{1}{(\kp_1^2-\kp_2^2)(\kp_3-\kp_1)}
    -\frac{1}{(\kp_1^2-\kp_2^2)^2(\kp_1+\kp_3)}\right)c_ic_{i+1}v_{L_2}\\
&+\left(\frac{-1}{(\kp_1^2-\kp_2^2)(\kp_1+\kp_3)}
    +\frac{1}{(\kp_1^2-\kp_2^2)^2(\kp_3-\kp_1)}\right)c_{i+1}c_{i+2}v_{L_2}\\
&+\left(\frac{1}{(\kp_1-\kp_2)^2(\kp_1+\kp_3)}
    +\frac{1}{(\kp_1+\kp_2)^2(\kp_3-\kp_1)}\right)c_ic_{i+2}v_{L_2}\\
&+\left(\frac{\mathcal{Y}_{i+1,L_2}}{(\kp_3-\kp_2)(\kp_1-\kp_2)}\right)v_{L_1}
    +\left(\frac{\mathcal{Y}_{i+1,L_2}}{(\kp_1-\kp_2)(\kp_2+\kp_3)}\right)
    c_ic_{i+1}v_{L_1}\\
&+\left(\frac{\mathcal{Y}_{i+1,L_2}}{(\kp_3-\kp_2)(\kp_1+\kp_2)}\right)
    c_{i+1}c_{i+2}v_{L_1}
+\left(\frac{\mathcal{Y}_{i+1,L_1}}{(\kp_2+\kp_3)(\kp_1+\kp_2)}\right)c_ic_{i+2}v_{L_1}.
\end{align*}
}
\begin{figure}[ht]
\begin{picture}(340,70)

\put(50,27){$L_1=$}
    \put(80,14){$i$}\put(103,39){$i+1$}\put(103,14){$i+2$}

\put(75,5){\line(1,0){50}}
    \put(75,5){\line(0,1){25}}\put(100,5){\line(0,1){50}}\put(125,5){\line(0,1){50}}
\put(75,30){\line(1,0){50}}
\put(100,55){\line(1,0){25}}

\put(200,27){$L_2=$}
   \put(228,14){$i+1$}\put(252,14){$i+2$}\put(258,39){$i$}

\put(225,5){\line(1,0){50}}
    \put(225,5){\line(0,1){25}}\put(250,5){\line(0,1){50}}\put(275,5){\line(0,1){50}}
\put(225,30){\line(1,0){50}}
\put(250,55){\line(1,0){25}}

\end{picture}
\caption{Case 3}\label{F:Case 3}
\end{figure}

Case 3: Let $L_1$ and $L_2$ be as in figure \ref{F:Case 3}. Then, a calculation analogous to case 2 shows that $s_is_{i+1}s_iv_{L_1}=s_{i+1}s_is_{i+2}v_{L_1}$ and $s_is_{i+1}s_iv_{L_2}=s_{i+1}s_is_{i+2}v_{L_2}$.

\begin{figure}[ht]
\begin{picture}(420,60)

\put(0,27){$L_1=$}
    \put(37,14){$i$}\put(53,14){$i+1$}\put(78,39){$i+2$}

\put(25,5){\line(1,0){50}}
    \put(25,5){\line(0,1){25}}\put(50,5){\line(0,1){25}}
    \put(75,5){\line(0,1){50}}
\put(25,30){\line(1,0){75}}
    \put(100,30){\line(0,1){25}}
\put(75,55){\line(1,0){25}}

\put(150,27){$L_2=$}
    \put(187,14){$i$}\put(203,14){$i+2$}\put(228,39){$i+1$}

\put(175,5){\line(1,0){50}}
    \put(175,5){\line(0,1){25}}\put(200,5){\line(0,1){25}}
    \put(225,5){\line(0,1){50}}
\put(175,30){\line(1,0){75}}
    \put(250,30){\line(0,1){25}}
\put(225,55){\line(1,0){25}}

\put(300,27){$L_3=$}
    \put(328,14){$i+1$}\put(353,14){$i+2$}\put(385,39){$i$}

\put(325,5){\line(1,0){50}}
    \put(325,5){\line(0,1){25}}\put(350,5){\line(0,1){25}}
    \put(375,5){\line(0,1){50}}
\put(325,30){\line(1,0){75}}
    \put(400,30){\line(0,1){25}}
\put(375,55){\line(1,0){25}}

\end{picture}
\caption{Case 4}\label{F:Case 4}
\end{figure}

Case 4: Let $ L_1, L_2, $ and $ L_3 $ be the standard tableaux given in Figure
\ref{F:Case 3}. {\small
\begin{align*}
s_i s_{i+1} s_i v_{L_1} = s_{i+1} s_i s_{i+1}v_{L_1} =&
\left(\frac{1}{(\kp_1-\kp_0)^2(\kp_3-\kp_1)}
    +\frac{1}{(\kp_0+\kp_1)^2(\kp_1+\kp_3)}\right)v_{L_1}\\
&+\left(\frac{1}{(\kp_1^2-\kp_0^2)(\kp_3-\kp_1)}
    -\frac{1}{(\kp_1^2-\kp_0^2)(\kp_1+\kp_3)}\right)c_ic_{i+1}v_{L_1}\\
&+\left(\frac{1}{(\kp_0^2-\kp_1^2)(\kp_1+\kp_3)}
    -\frac{1}{(\kp_0^2-\kp_1^2)(\kp_3-\kp_1)}\right)c_{i+1}c_{i+2}v_{L_1}\\
&+\left(\frac{1}{(\kp_1-\kp_0)^2(\kp_1+\kp_3)}
    -\frac{1}{(\kp_0+\kp_1)^2(\kp_3-\kp_1)}\right)c_ic_{i+2}v_{L_1}\\
&+\left(\frac{\mathcal{Y}_{i+1,L_1}}{(\kp_1-\kp_0)(\kp_3-\kp_0)}\right)v_{L_2}
    +\left(\frac{\mathcal{Y}_{i+1,L_1}}{(\kp_1-\kp_0)(\kp_0+\kp_3)}\right)
    c_ic_{i+1}v_{L_2}\\
&+\left(\frac{\mathcal{Y}_{i+1,L_1}}{(\kp_0+\kp_1)(\kp_3-\kp_0)}\right)
    c_{i+1}c_{i+2}v_{L_2}
    +\left(\frac{\mathcal{Y}_{i+1,L_1}}{(\kp_0+\kp_1)(\kp_0+\kp_3)}\right) c_ic_{i+2}v_{L_2}\\
&+\left(\frac{\mathcal{Y}_{i+1,L_1}\mathcal{Y}_{i,L_2}}{\kp_1-\kp_0}\right)v_{L_3}
    +\left(\frac{\mathcal{Y}_{i+1,L_1}\mathcal{Y}_{i,L_2}}{\kp_0+\kp_1}\right) c_{i+1}c_{i+2}v_{L_3}.
\end{align*}
\begin{align*}
s_i s_{i+1} s_i v_{L_2} = s_{i+1} s_i s_{i+1}v_{L_2} =&
\left(\frac{1}{(\kp_3-\kp_0)^2(\kp_1-\kp_3)}
    +\frac{1}{(\kp_0+\kp_3)^2(\kp_1+\kp_3)}+\frac{\mathcal{Y}_{i,L_2} \mathcal{Y}_{i,L_3}}{\kp_1-\kp_0}\right)v_{L_2}\\
&+\left(\frac{1}{(\kp_3^2-\kp_0^2)(\kp_1-\kp_3)}
    -\frac{1}{(\kp_3^2-\kp_0^2)(\kp_1+\kp_3)}\right)c_ic_{i+1}v_{L_2}\\
&+\left(\frac{-1}{(\kp_3^2-\kp_0^2)(\kp_1-\kp_3)}
    -\frac{1}{(\kp_3^2-\kp_0^2)(\kp_1-\kp_3)}\right)c_{i+1}c_{i+2}v_{L_2}\\
&+\left(\frac{1}{(\kp_3-\kp_0)^2(\kp_1+\kp_3)}
    -\frac{1}{(\kp_0+\kp_3)^2(\kp_1-\kp_3)}
    +\frac{\mathcal{Y}_{i,L_2}\mathcal{Y}_{i,L_3}}{\kp_0+\kp_1}\right)
    c_ic_{i+2}v_{L_2}\\
&+\left(\frac{\mathcal{Y}_{i,L_2}}{(\kp_3-\kp_0)(\kp_1-\kp_3)}
    +\frac{\mathcal{Y}_{i,L_2}}{(\kp_1-\kp_0)(\kp_0-\kp_3)}\right)v_{L_3}\\
&+\left(\frac{-\mathcal{Y}_{i,L_2}}{(\kp_3+\kp_0)(\kp_1+\kp_3)}
    +\frac{\mathcal{Y}_{i,L_2}}{(\kp_1-\kp_0)(\kp_0+\kp_3)}\right)
    c_ic_{i+1}v_{L_3}\\
&+\left(\frac{\mathcal{Y}_{i,L_2}}{(\kp_3+\kp_0)(\kp_1-\kp_3)}
    -\frac{\mathcal{Y}_{i,L_2}}{(\kp_1+\kp_0)(\kp_0+\kp_3)}\right)
    c_{i+1}c_{i+2}v_{L_3}\\
&+\left(\frac{\mathcal{Y}_{i,L_2}}{(\kp_3-\kp_0)(\kp_1+\kp_3)}
    +\frac{\mathcal{Y}_{i,L_2}}{(\kp_1+\kp_0)(\kp_0-\kp_3)}\right)
    c_ic_{i+2}v_{L_3}\\
&+\left(\frac{\mathcal{Y}_{i+1,L_2}}{(\kp_1-\kp_0)(\kp_3-\kp_0)}\right)v_{L_1}
    +\left(\frac{\mathcal{Y}_{i+1,L_2}}{(\kp_1+\kp_0)(\kp_3-\kp_0)}\right)
    c_ic_{i+1}v_{L_1}\\
&+\left(\frac{\mathcal{Y}_{i+1,L_2}}{(\kp_3+\kp_0)(\kp_1-\kp_0)}\right)
    c_{i+1}c_{i+2}v_{L_1}
    +\left(\frac{\mathcal{Y}_{i+1,L_2}}{(\kp_3+\kp_0)(\kp_1+\kp_0)}\right)
    c_ic_{i+2}v_{L_1}.
\end{align*}
\begin{align*}
s_i s_{i+1} s_i v_{L_3} = s_{i+1} s_i s_{i+1}v_{L_3} =&
\left(\frac{1}{(\kp_3-\kp_0)^2(\kp_1-\kp_0)}
    +\frac{1}{(\kp_0+\kp_3)^2(\kp_0+\kp_1)}+\frac{\mathcal{Y}_{i,L_2} \mathcal{Y}_{i,L_3}}{\kp_1-\kp_3}\right) v_{L_3} \\
&+\left(\frac{1}{(\kp_0^2-\kp_3^2)(\kp_1-\kp_0)}
    -\frac{1}{(\kp_0^2-\kp_3^2)(\kp_0+\kp_1)}\right)c_ic_{i+1}v_{L_3}\\
&+\left(\frac{-1}{(\kp_0^2-\kp_3^2)(\kp_0+\kp_1)}
    +\frac{1}{(\kp_0^2-\kp_3^2)(\kp_1-\kp_0)}\right)c_{i+1}c_{i+2}v_{L_3}\\
&+\left(\frac{1}{(\kp_3-\kp_0)^2(\kp_0+\kp_1)}
    +\frac{1}{(\kp_0+\kp_3)^2(\kp_1-\kp_0)}
    +\frac{\mathcal{Y}_{i,L_2}\mathcal{Y}_{i,L_3}}{\kp_1+\kp_3}\right)
    c_ic_{i+2}v_{L_3}\\
&+\left(\frac{\mathcal{Y}_{i,L_3}}{(\kp_0-\kp_3)(\kp_1-\kp_0)}
    +\frac{\mathcal{Y}_{i,L_3}}{(\kp_1-\kp_3)(\kp_3-\kp_0)}\right)v_{L_2}\\
&+\left(\frac{-\mathcal{Y}_{i,L_3}}{(\kp_3+\kp_0)(\kp_0+\kp_1)}
    +\frac{\mathcal{Y}_{i,L_3}}{(\kp_1-\kp_3)(\kp_0+\kp_3)}\right)
    c_ic_{i+1}v_{L_2}\\
&+\left(\frac{\mathcal{Y}_{i,L_3}}{(\kp_3+\kp_0)(\kp_1-\kp_0)}
    -\frac{\mathcal{Y}_{i,L_3}}{(\kp_1+\kp_3)(\kp_0+\kp_3)}\right)
    c_{i+1}c_{i+2}v_{L_2}\\
&+\left(\frac{\mathcal{Y}_{i,L_3}}{(\kp_0-\kp_3)(\kp_0+\kp_1)}
    +\frac{\mathcal{Y}_{i,L_3}}{(\kp_1+\kp_3)(\kp_3-\kp_0)}\right)
    c_ic_{i+2}v_{L_2}\\
&+\left(\frac{\mathcal{Y}_{i,L_3}\mathcal{Y}_{i+1,L_2}}{\kp_1-\kp_0}\right)v_{L_1}
    +\left(\frac{\mathcal{Y}_{i,L_3}\mathcal{Y}_{i+1,L_2}}{\kp_1+\kp_0}\right)
    c_ic_{i+1}v_{L_1}.
\end{align*}
}

\begin{figure}[ht]
\begin{picture}(420,185)

\put(0,148){$L_1=$}
    \put(37,114){$i$}\put(53,139){$i+1$}\put(78,164){$i+2$}

\put(25,105){\line(1,0){25}}
    \put(25,105){\line(0,1){25}}\put(50,105){\line(0,1){50}}
\put(25,130){\line(1,0){50}}
    \put(75,130){\line(0,1){50}}
\put(50,155){\line(1,0){50}}
    \put(100,155){\line(0,1){25}}
\put(75,180){\line(1,0){25}}

\put(150,148){$L_2=$}
    \put(178,114){$i+1$}\put(212,139){$i$}\put(228,164){$i+2$}

\put(175,105){\line(1,0){25}}
    \put(175,105){\line(0,1){25}}\put(200,105){\line(0,1){50}}
\put(175,130){\line(1,0){50}}
    \put(225,130){\line(0,1){50}}
\put(200,155){\line(1,0){50}}
    \put(250,155){\line(0,1){25}}
\put(225,180){\line(1,0){25}}

\put(300,148){$L_3=$}
    \put(337,114){$i$}\put(353,139){$i+2$}\put(378,164){$i+1$}

\put(325,105){\line(1,0){25}}
    \put(325,105){\line(0,1){25}}\put(350,105){\line(0,1){50}}
\put(325,130){\line(1,0){50}}
    \put(375,130){\line(0,1){50}}
\put(350,155){\line(1,0){50}}
    \put(400,155){\line(0,1){25}}
\put(375,180){\line(1,0){25}}

\put(0,48){$L_4=$}
    \put(28,14){$i+2$}\put(62,39){$i$}\put(78,64){$i+1$}

\put(25,5){\line(1,0){25}}
    \put(25,5){\line(0,1){25}}\put(50,5){\line(0,1){50}}
\put(25,30){\line(1,0){50}}
    \put(75,30){\line(0,1){50}}
\put(50,55){\line(1,0){50}}
    \put(100,55){\line(0,1){25}}
\put(75,80){\line(1,0){25}}

\put(150,48){$L_5=$}
    \put(178,14){$i+2$}\put(203,39){$i+1$}\put(237,64){$i$}

\put(175,5){\line(1,0){25}}
    \put(175,5){\line(0,1){25}}\put(200,5){\line(0,1){50}}
\put(175,30){\line(1,0){50}}
    \put(225,30){\line(0,1){50}}
\put(200,55){\line(1,0){50}}
    \put(250,55){\line(0,1){25}}
\put(225,80){\line(1,0){25}}

\put(300,48){$L_6=$}
    \put(328,14){$i+1$}\put(353,39){$i+2$}\put(387,64){$i$}

\put(325,5){\line(1,0){25}}
    \put(325,5){\line(0,1){25}}\put(350,5){\line(0,1){50}}
\put(325,30){\line(1,0){50}}
    \put(375,30){\line(0,1){50}}
\put(350,55){\line(1,0){50}}
    \put(400,55){\line(0,1){25}}
\put(375,80){\line(1,0){25}}

\end{picture}
\caption{Case 5}\label{F:Case 5}
\end{figure}

Case 5: Let $ L_1, L_2, L_3, L_4, L_5, $ and $ L_6 $ be given as in Figure
\ref{F:Case 5}.

{\small
\begin{align*}
s_i s_{i+1} s_i v_{L_1} = s_{i+1} s_i s_{i+1}v_{L_1} =&
\left(\frac{1}{(\kp_2-\kp_0)^2(\kp_4-\kp_2)}
    +\frac{1}{(\kp_2+\kp_0)^2(\kp_4+\kp_2)}+\frac{\mathcal{Y}_{i,L_1} \mathcal{Y}_{i, L_2}}{\kp_4-\kp_0}\right)v_{L_1}\\
&+\left(\frac{1}{(\kp_2^2-\kp_0^2)(\kp_4-\kp_2)}
    -\frac{1}{(\kp_2^2-\kp_0^2)(\kp_4+\kp_2)}\right)c_ic_{i+1}v_{L_1}\\
&+\left(\frac{-1}{(\kp_2^2-\kp_0^2)(\kp_4+\kp_2)}
    +\frac{1}{(\kp_2^2-\kp_0^2)(\kp_4-\kp_2)}\right)c_{i+1}c_{i+2}v_{L_1}\\
&+\left(\frac{1}{(\kp_2-\kp_0)^2(\kp_4+\kp_2)}
    +\frac{1}{(\kp_2+\kp_0)^2(\kp_4-\kp_2)}+\frac{\mathcal{Y}_{i,L_1} \mathcal{Y}_{i, L_2}}{\kp_4+\kp_0}\right)c_ic_{i+2}v_{L_1}\\
&+\left(\frac{\mathcal{Y}_{i,L_1}}{(\kp_2-\kp_0)(\kp_4-\kp_2)}
    +\frac{\mathcal{Y}_{i,L_1}}{(\kp_4-\kp_0)(\kp_0-\kp_2)}\right)v_{L_2}\\
&+\left(\frac{-\mathcal{Y}_{i,L_1}}{(\kp_2+\kp_0)(\kp_4+\kp_2)}
    +\frac{\mathcal{Y}_{i,L_1}}{(\kp_4-\kp_0)(\kp_0+\kp_2)}\right)c_ic_{i+1}v_{L_2}\\
&+\left(\frac{\mathcal{Y}_{i,L_1}}{(\kp_2+\kp_0)(\kp_4-\kp_2)}
    -\frac{\mathcal{Y}_{i,L_1}}{(\kp_4+\kp_0)(\kp_0+\kp_2)}\right)
    c_{i+1}c_{i+2}v_{L_2}\\
&+\left(\frac{\mathcal{Y}_{i,L_1}}{(\kp_2-\kp_0)(\kp_4+\kp_2)}
    +\frac{\mathcal{Y}_{i,L_1}}{(\kp_4+\kp_0)(\kp_0-\kp_2)}\right)c_{i}c_{i+2}v_{L_2}\\
&+\left(\frac{\mathcal{Y}_{i+1,L_1}}{(\kp_2-\kp_0)(\kp_4-\kp_0)}\right)v_{L_3}+
    \left(\frac{\mathcal{Y}_{i+1,L_1}}{(\kp_2-\kp_0)(\kp_4+\kp_0)}\right)
    c_ic_{i+1}v_{L_3}\\
&+\left(\frac{\mathcal{Y}_{i+1,L_1}}{(\kp_2+\kp_0)(\kp_4-\kp_0)}\right)
    c_{i+1}c_{i+2}v_{L_3}
    +\left(\frac{\mathcal{Y}_{i+1,L_1}}{(\kp_2+\kp_0)(\kp_4+\kp_0)}\right) c_ic_{i+2}v_{L_3}\\
&+\left(\frac{\mathcal{Y}_{i+1,L_1})\mathcal{Y}_{i,L_3}}{\kp_2-\kp_0}\right)v_{L_6}
    +\left(\frac{\mathcal{Y}_{i+1,L_1}\mathcal{Y}_{i,L_3}}{\kp_2+\kp_0}\right) c_{i+1}c_{i+2}v_{L_6}\\
&+\left(\frac{(\mathcal{Y}_{i,L_1})(\mathcal{Y}_{i+1,L_2})}{\kp_4-\kp_2}\right)v_{L_4}
    +\left(\frac{\mathcal{Y}_{i,L_1}\mathcal{Y}_{i+1,L_2}}{\kp_4+\kp_2}\right)
    c_ic_{i+1}v_{L_4}+(\mathcal{Y}_{i,L_1}\mathcal{Y}_{i+1,L_2}\mathcal{Y}_{i,L_4})v_{L_5}.
\end{align*}
\begin{align*}
s_i s_{i+1} s_i v_{L_2} = s_{i+1} s_i s_{i+1}v_{L_2} =& \left(\frac{1}{(\kp_2-\kp_0)^2(\kp_4-\kp_0)}
    +\frac{1}{(\kp_2+\kp_0)^2(\kp_4+\kp_0)}+\frac{\mathcal{Y}_{i,L_1} \mathcal{Y}_{i,L_2}}{\kp_4-\kp_2}\right)v_{L_2}\\
&+\left(\frac{1}{(\kp_0^2-\kp_2^2)(\kp_4-\kp_0)}
    -\frac{1}{(\kp_0^2-\kp_2^2)(\kp_4+\kp_0)}\right)c_ic_{i+1}v_{L_2}\\
&+\left(\frac{-1}{(\kp_0^2-\kp_2^2)(\kp_4+\kp_0)}
    +\frac{1}{(\kp_0^2-\kp_2^2)(\kp_4-\kp_0)}\right)c_{i+1}c_{i+2}v_{L_2}\\
&+\left(\frac{1}{(\kp_0-\kp_2)^2(\kp_4+\kp_0)}
    +\frac{1}{(\kp_2+\kp_0)^2(\kp_4-\kp_0)}+\frac{\mathcal{Y}_{i,L_1} \mathcal{Y}_{i, L_2}}{\kp_4+\kp_2}\right)c_ic_{i+2}v_{L_2}\\
&+\left(\frac{\mathcal{Y}_{i,L_2}}{(\kp_0-\kp_2)(\kp_4-\kp_0)}
    +\frac{\mathcal{Y}_{i,L_2}}{(\kp_4-\kp_2)(\kp_2-\kp_0)}\right)v_{L_1}\\
&+\left(\frac{-\mathcal{Y}_{i,L_2}}{(\kp_2+\kp_0)(\kp_4+\kp_0)}
    +\frac{\mathcal{Y}_{i,L_2}}{(\kp_4-\kp_2)(\kp_0+\kp_2)}\right)c_ic_{i+1}v_{L_1}\\
&+\left(\frac{\mathcal{Y}_{i,L_2}}{(\kp_2+\kp_0)(\kp_4-\kp_0)}
    -\frac{\mathcal{Y}_{i,L_2}}{(\kp_4+\kp_2)(\kp_0+\kp_2)}\right)
    c_{i+1}c_{i+2}v_{L_1}\\
&+\left(\frac{\mathcal{Y}_{i,L_2}}{(\kp_0-\kp_2)(\kp_0+\kp_4)}
    +\frac{\mathcal{Y}_{i,L_2}}{(\kp_4+\kp_2)(\kp_0-\kp_2)}\right)c_{i}c_{i+2}v_{L_1}\\
&+\left(\frac{\mathcal{Y}_{i+1,L_2}}{(\kp_0-\kp_2)(\kp_4-\kp_2)}\right)v_{L_4}
    +\left(\frac{\mathcal{Y}_{i+1,L_2}}{(\kp_0-\kp_2)(\kp_4+\kp_2)}\right) c_ic_{i+1}v_{L_4}\\
&+\left(\frac{\mathcal{Y}_{i+1,L_2}}{(\kp_2+\kp_0)(\kp_4-\kp_2)}\right)
    c_{i+1}c_{i+2}v_{L_4}
    +\left(\frac{\mathcal{Y}_{i+1,L_2}}{(\kp_2+\kp_0)(\kp_4+\kp_2)}\right) c_ic_{i+2}v_{L_4}+\\
&+\left(\frac{\mathcal{Y}_{i+1,L_1}\mathcal{Y}_{i,L_4}}{\kp_0-\kp_2}\right)v_{L_5}
    +\left(\frac{\mathcal{Y}_{i+1,L_1}\mathcal{Y}_{i,L_4}}{\kp_2+\kp_0}\right) c_{i+1}c_{i+2}v_{L_5}\\
&+\left(\frac{\mathcal{Y}_{i,L_2}\mathcal{Y}_{i+1,L_1}}{\kp_4-\kp_0}\right)v_{L_3}
    +\left(\frac{\mathcal{Y}_{i,L_2}\mathcal{Y}_{i+1,L_1}}{\kp_4+\kp_0}\right) c_ic_{i+1}v_{L_3}
    +\left(\mathcal{Y}_{i,L_2} \mathcal{Y}_{i+1, L_1} \mathcal{Y}_{i, L_3}\right)v_{L_6}.
\end{align*}
\begin{align*}
s_i s_{i+1} s_i v_{L_3} = s_{i+1} s_i s_{i+1}v_{L_3} =& \left(\frac{1}{(\kp_4-\kp_0)^2(\kp_2-\kp_4)}
    +\frac{1}{(\kp_4+\kp_0)^2(\kp_4+\kp_2)}+\frac{\mathcal{Y}_{i,L_3} \mathcal{Y}_{i, L_6}}{\kp_2-\kp_0}\right)v_{L_3}\\
&+\left(\frac{1}{(\kp_4^2-\kp_0^2)(\kp_2-\kp_4)}
    -\frac{1}{(\kp_4^2-\kp_0^2)(\kp_4+\kp_2)}\right)c_ic_{i+1}v_{L_3}\\
&+\left(\frac{-1}{(\kp_4^2-\kp_0^2)(\kp_4+\kp_2)}
    +\frac{1}{(\kp_4^2-\kp_0^2)(\kp_2-\kp_4)}\right)c_{i+1}c_{i+2}v_{L_3}\\
&+\left(\frac{1}{(\kp_4-\kp_0)^2(\kp_4+\kp_2)}
    +\frac{1}{(\kp_4+\kp_0)^2(\kp_2-\kp_4)}+\frac{\mathcal{Y}_{i,L_3} \mathcal{Y}_{i, L_6}}{\kp_0+\kp_2}\right)c_i c_{i+2}v_{L_3}\\
&+\left(\frac{\mathcal{Y}_{i,L_3}}{(\kp_4-\kp_0)(\kp_2-\kp_4)}
    +\frac{\mathcal{Y}_{i,L_3}}{(\kp_2-\kp_0)(\kp_0-\kp_4)}\right)v_{L_6}\\
&+\left(\frac{-\mathcal{Y}_{i,L_3}}{(\kp_4+\kp_0)(\kp_4+\kp_2)}
    +\frac{\mathcal{Y}_{i,L_3}}{(\kp_2-\kp_0)(\kp_0+\kp_4)}\right) c_i c_{i+1}v_{L_6}\\
&+\left(\frac{\mathcal{Y}_{i,L_3}}{(\kp_4+\kp_0)(\kp_2-\kp_4)}
    -\frac{\mathcal{Y}_{i,L_3}}{(\kp_0+\kp_2)(\kp_0+\kp_4)}\right) c_{i+1} c_{i+2}v_{L_6}\\
&+\left(\frac{\mathcal{Y}_{i,L_3}}{(\kp_4-\kp_0)(\kp_2+\kp_4)}
    +\frac{\mathcal{Y}_{i,L_3}}{(\kp_0+\kp_2)(\kp_0-\kp_4)}\right)c_{i}c_{i+2}v_{L_6}\\
&+\left(\frac{\mathcal{Y}_{i+1,L_3}}{(\kp_4-\kp_0)(\kp_2-\kp_0)}\right)v_{L_1}
    +\left(\frac{\mathcal{Y}_{i+1,L_3}}{(\kp_4-\kp_0)(\kp_0+\kp_2)}\right) c_i c_{i+1}v_{L_1}\\
&+\left(\frac{\mathcal{Y}_{i+1,L_3}}{(\kp_4+\kp_0)(\kp_2-\kp_0)}\right)
    c_{i+1}c_{i+2}v_{L_1}+
    \left(\frac{\mathcal{Y}_{i+1,L_3}}{(\kp_4+\kp_0)(\kp_0+\kp_2)}\right) c_i c_{i+2}v_{L_1}\\
&+\left(\frac{\mathcal{Y}_{i+1,L_3}\mathcal{Y}_{i,L_1}}{\kp_4-\kp_0}\right)v_{L_2}
    +\left(\frac{\mathcal{Y}_{i+1,L_3}\mathcal{Y}_{i,L_1}}{\kp_4+\kp_0}\right) c_{i+1}c_{i+2}v_{L_2}\\
&\left(\frac{\mathcal{Y}_{i,L_3}\mathcal{Y}_{i+1,L_6}}{\kp_2-\kp_4}\right)v_{L_5}
    +\left(\frac{\mathcal{Y}_{i,L_3}\mathcal{Y}_{i+1,L_6}}{\kp_4+\kp_2}\right) c_ic_{i+1}v_{L_5}
    +(\mathcal{Y}_{i,L_3}\mathcal{Y}_{i+1,L_6}\mathcal{Y}_{i,L_3})v_{L_4}.
\end{align*}
\begin{align*}
s_i s_{i+1} s_i v_{L_4} = s_{i+1} s_i s_{i+1}v_{L_4} =& \left(\frac{1}{(\kp_4-\kp_2)^2(\kp_0-\kp_4)}+\frac{1}{(\kp_4+\kp_2)^2(\kp_4+\kp_0)}
    +\frac{\mathcal{Y}_{i,L_4}\mathcal{Y}_{i,L_5}}{\kp_0-\kp_2}\right)v_{L_4}\\
&+\left(\frac{1}{(\kp_4^2-\kp_2^2)(\kp_0-\kp_4)}
    -\frac{1}{(\kp_4^2-\kp_2^2)(\kp_4+\kp_0)}\right)c_ic_{i+1}v_{L_4}\\
&+\left(\frac{-1}{(\kp_4^2-\kp_2^2)(\kp_4+\kp_0)}
    +\frac{1}{(\kp_4^2-\kp_2^2)(\kp_0-\kp_4)}\right)c_{i+1}c_{i+2}v_{L_4}\\
&+\left(\frac{1}{(\kp_4-\kp_2)^2(\kp_4+\kp_0)}
    +\frac{1}{(\kp_4+\kp_2)^2(\kp_0-\kp_4)}
    +\frac{\mathcal{Y}_{i,L_4}\mathcal{Y}_{i,L_5}}{\kp_0+\kp_2}\right)
    c_ic_{i+2}v_{L_4}\\
&+\left(\frac{\mathcal{Y}_{i,L_4}}{(\kp_4-\kp_2)(\kp_0-\kp_4)}
    +\frac{\mathcal{Y}_{i,L_4}}{(\kp_0-\kp_2)(\kp_2-\kp_4)}\right)v_{L_5}\\
&+\left(\frac{-\mathcal{Y}_{i,L_4}}{(\kp_4+\kp_2)(\kp_4+\kp_0)}
    +\frac{\mathcal{Y}_{i,L_4}}{(\kp_0-\kp_2)(\kp_2+\kp_4)}\right)c_ic_{i+1}v_{L_5}\\
&+\left(\frac{\mathcal{Y}_{i,L_4}}{(\kp_4+\kp_2)(\kp_0-\kp_4)}
    -\frac{\mathcal{Y}_{i,L_4}}{(\kp_0+\kp_2)(\kp_2+\kp_4)}\right) c_{i+1} c_{i+2}v_{L_5}\\
&\left(\frac{\mathcal{Y}_{i,L_4}}{(\kp_4-\kp_2)(\kp_0+\kp_4)}
    +\frac{\mathcal{Y}_{i,L_4}}{(\kp_0+\kp_2)(\kp_2-\kp_4)}\right) c_{i} c_{i+2}v_{L_5}\\
&+\left(\frac{\mathcal{Y}_{i+1,L_4}}{(\kp_4-\kp_2)(\kp_0-\kp_2)}\right)v_{L_2}
    +\left(\frac{\mathcal{Y}_{i+1,L_4}}{(\kp_4-\kp_2)(\kp_0+\kp_2)}\right) c_i c_{i+1}v_{L_2}\\
&+\left(\frac{\mathcal{Y}_{i+1,L_4}}{(\kp_4+\kp_2)(\kp_0-\kp_2)}\right)
    c_{i+1}c_{i+2}v_{L_2}
    +\left(\frac{\mathcal{Y}_{i+1,L_4}}{(\kp_4+\kp_2)(\kp_0+\kp_2)}\right) c_i c_{i+2}v_{L_2}\\
&+\left(\frac{\mathcal{Y}_{i+1,L_4}\mathcal{Y}_{i,L_2}}{\kp_4-\kp_2}\right)v_{L_1}
    +\left(\frac{\mathcal{Y}_{i+1,L_4}\mathcal{Y}_{i,L_2}}{\kp_4+\kp_2}\right) c_{i+1} c_{i+2}v_{L_1}\\
&+\left(\frac{\mathcal{Y}_{i,L_4}\mathcal{Y}_{i+1,L_5}}{\kp_0-\kp_4}\right)v_{L_6}
    +\left(\frac{\mathcal{Y}_{i,L_4}\mathcal{Y}_{i+1,L_5}}{\kp_4+\kp_0}\right) c_i c_{i+1}v_{L_6}
    +(\mathcal{Y}_{i,L_4} \mathcal{Y}_{i+1, L_5} \mathcal{Y}_{i, L_4})v_{L_3}.
\end{align*}
\begin{align*}
s_i s_{i+1} s_i v_{L_5} = s_{i+1} s_i s_{i+1} v_{L_5} =& \left(\frac{1}{(\kp_4-\kp_2)^2(\kp_0-\kp_2)}
    +\frac{1}{(\kp_4+\kp_2)^2(\kp_2+\kp_0)}+\frac{\mathcal{Y}_{i,L_5} \mathcal{Y}_{i, L_4}}{\kp_0-\kp_4}\right)v_{L_5} \\
&+\left(\frac{1}{(\kp_2^2-\kp_4^2)(\kp_0-\kp_2)}
    -\frac{1}{(\kp_2^2-\kp_4^2)(\kp_2+\kp_0)}\right)c_ic_{i+1}v_{L_5}\\
&+\left(\frac{-1}{(\kp_2^2-\kp_4^2)(\kp_2+\kp_0)}
    +\frac{1}{(\kp_2^2-\kp_4^2)(\kp_0-\kp_2)}\right)c_{i+1}c_{i+2}v_{L_5}\\
&+\left(\frac{1}{(\kp_4-\kp_2)^2(\kp_2+\kp_0)}
    +\frac{1}{(\kp_4+\kp_2)^2(\kp_0-\kp_2)}+\frac{\mathcal{Y}_{i,L_5} \mathcal{Y}_{i, L_4}}{\kp_0+\kp_4}\right)c_ic_{i+2}v_{L_5}\\
&+\left(\frac{\mathcal{Y}_{i,L_5}}{(\kp_2-\kp_4)(\kp_0-\kp_2)}
    +\frac{\mathcal{Y}_{i,L_5}}{(\kp_0-\kp_4)(\kp_4-\kp_2)}\right)v_{L_4}\\
&+\left(\frac{-\mathcal{Y}_{i,L_5}}{(\kp_4+\kp_2)(\kp_2+\kp_0)}
    +\frac{\mathcal{Y}_{i,L_5}}{(\kp_0-\kp_4)(\kp_2+\kp_4)}\right)c_ic_{i+1}v_{L_4}\\
&+\left(\frac{\mathcal{Y}_{i,L_5}}{(\kp_4+\kp_2)(\kp_0-\kp_2)}
    -\frac{\mathcal{Y}_{i,L_5}}{(\kp_0+\kp_4)(\kp_2+\kp_4)}\right)
    c_{i+1}c_{i+2}v_{L_4}\\
&+\left(\frac{\mathcal{Y}_{i,L_5}}{(\kp_2-\kp_4)(\kp_0+\kp_2)}
    +\frac{\mathcal{Y}_{i,L_5}}{(\kp_0+\kp_4)(\kp_4-\kp_2)}\right) c_{i} c_{i+2}v_{L_4}\\
&+\left(\frac{\mathcal{Y}_{i+1,L_5}}{(\kp_2-\kp_4)(\kp_0-\kp_4)}\right)v_{L_6}
    +\left(\frac{\mathcal{Y}_{i+1,L_5}}{(\kp_4-\kp_2)(\kp_0+\kp_2)}\right)
    c_ic_{i+1}v_{L_6}\\
&+\left(\frac{\mathcal{Y}_{i+1,L_5}}{(\kp_4+\kp_2)(\kp_0-\kp_4)}\right)
    c_{i+1}c_{i+2}v_{L_6}
    +\left(\frac{\mathcal{Y}_{i+1,L_5}}{(\kp_4+\kp_2)(\kp_0+\kp_4)}\right) c_i c_{i+2}v_{L_6}\\
&+\left(\frac{(\mathcal{Y}_{i+1,L_5})(\mathcal{Y}_{i,L_6})}{\kp_2-\kp_4}\right)v_{L_3}
    +\left(\frac{(\mathcal{Y}_{i+1,L_5})(\mathcal{Y}_{i,L_6})}{\kp_4+\kp_2}\right) c_{i+1} c_{i+2}v_{L_3}\\
&+\left(\frac{(\mathcal{Y}_{i,L_5})(\mathcal{Y}_{i+1,L_4})}{\kp_0-\kp_2}\right)v_{L_2}
    +\left(\frac{(\mathcal{Y}_{i,L_5})(\mathcal{Y}_{i+1,L_4})}{\kp_2+\kp_0}\right) c_i c_{i+1}v_{L_6}
    +\left(\mathcal{Y}_{i,L_5}\mathcal{Y}_{i+1,L_4}\mathcal{Y}_{i,L_2}\right)v_{L_1}.
\end{align*}
\begin{align*}
s_i s_{i+1} s_i v_{L_6} = s_{i+1} s_i s_{i+1} v_{L_6} =& \left(\frac{1}{(\kp_0-\kp_4)^2(\kp_2-\kp_0)}
    +\frac{1}{(\kp_0+\kp_4)^2(\kp_2+\kp_0)}+\frac{\mathcal{Y}_{i,L_6} \mathcal{Y}_{i, L_3}}{\kp_2-\kp_4}\right) v_{L_6} \\
&+\left(\frac{1}{(\kp_0^2-\kp_4^2)(\kp_2-\kp_0)}
    -\frac{1}{(\kp_0^2-\kp_4^2)(\kp_2+\kp_0)}\right)c_ic_{i+1}v_{L_6}\\
&+\left(\frac{-1}{(\kp_0^2-\kp_4^2)(\kp_2+\kp_0)}
    +\frac{1}{(\kp_0^2-\kp_4^2)(\kp_2-\kp_0)}\right)c_{i+1}c_{i+2}v_{L_6}\\
&+\left(\frac{1}{(\kp_0-\kp_4)^2(\kp_2+\kp_0)}
    +\frac{1}{(\kp_4+\kp_0)^2(\kp_2-\kp_0)}
    +\frac{\mathcal{Y}_{i,L_6}\mathcal{Y}_{i,L_3}}{\kp_2+\kp_4}\right)
    c_ic_{i+2}v_{L_6}\\
&+\left(\frac{\mathcal{Y}_{i,L_6}}{(\kp_0-\kp_4)(\kp_2-\kp_0)}
    +\frac{\mathcal{Y}_{i,L_6}}{(\kp_2-\kp_4)(\kp_4-\kp_0)}\right)v_{L_3}\\
&+\left(\frac{-\mathcal{Y}_{i,L_6}}{(\kp_4+\kp_0)(\kp_2+\kp_0)}
    +\frac{\mathcal{Y}_{i,L_6}}{(\kp_2-\kp_4)(\kp_0+\kp_4)}\right)c_ic_{i+1}v_{L_3}\\
&+\left(\frac{\mathcal{Y}_{i,L_6}}{(\kp_0+\kp_4)(\kp_2-\kp_0)}
    -\frac{\mathcal{Y}_{i,L_6}}{(\kp_2+\kp_4)(\kp_0+\kp_4)}\right)
    c_{i+1}c_{i+2}v_{L_3}\\
&+\left(\frac{\mathcal{Y}_{i,L_6}}{(\kp_0-\kp_4)(\kp_0+\kp_2)}
    +\frac{\mathcal{Y}_{i,L_6}}{(\kp_2+\kp_4)(\kp_4-\kp_0)}\right) c_{i} c_{i+2}v_{L_3}\\
&+\left(\frac{\mathcal{Y}_{i+1,L_6}}{(\kp_0-\kp_4)(\kp_2-\kp_4)}\right)v_{L_5}
    +\left(\frac{\mathcal{Y}_{i+1,L_6}}{(\kp_0-\kp_4)(\kp_2+\kp_4)}\right) c_i c_{i+1} v_{L_5}\\
&+\left(\frac{\mathcal{Y}_{i+1,L_6}}{(\kp_0+\kp_4)(\kp_2-\kp_4)}\right)
    c_{i+1}c_{i+2}v_{L_5}
    +\left(\frac{\mathcal{Y}_{i+1,L_6}}{(\kp_0+\kp_4)(\kp_2+\kp_4)}\right)
    c_ic_{i+2}v_{L_5}\\
&+\left(\frac{\mathcal{Y}_{i+1,L_6}\mathcal{Y}_{i,L_5}}{\kp_0-\kp_4}\right)v_{L_4}
    +\left(\frac{\mathcal{Y}_{i+1,L_6}\mathcal{Y}_{i,L_5}}{\kp_0+\kp_4}\right) c_{i+1} c_{i+2}v_{L_4}\\
&\left(\frac{\mathcal{Y}_{i,L_6}\mathcal{Y}_{i+1,L_3}}{\kp_2-\kp_0}\right)v_{L_1}
    +\left(\frac{(\mathcal{Y}_{i,L_6})(\mathcal{Y}_{i+1,L_3})}{\kp_2+\kp_0}\right) c_i c_{i+1}v_{L_1}
    +\left(\mathcal{Y}_{i,L_6}\mathcal{Y}_{i+1,L_3}\mathcal{Y}_{i,L_6}\right)v_{L_2}.
\end{align*}
}
\begin{figure}[ht]
\begin{picture}(340,60)

\put(125,27){$L=$}
    \put(162,39){$i$}\put(178,39){$i+1$}\put(178,14){$i+2$}

\put(175,5){\line(1,0){25}}
    \put(175,5){\line(0,1){50}}\put(200,5){\line(0,1){50}}
\put(150,30){\line(1,0){50}}
    \put(150,30){\line(0,1){25}}
\put(150,55){\line(1,0){50}}

\end{picture}
\caption{Case 6}\label{F:Case 6}
\end{figure}

Case 6: Let $ L $ be as in Figure \ref{F:Case 6}.
Then
$$ s_i s_{i+1} s_i v_L = s_{i+1} s_i s_{i+1} v_L
=\frac{1}{\sqrt{2}}(-c_i c_{i+1} v_L + c_i c_{i+2} v_L). $$

\end{proof}

Now define an $ \A(d)$-module $ H^{\lambda / \mu} $ to be $ \sum_{w\in S_n}
\phi_w\L(c(L)) $ where  $ L $ is a fixed standard filling of the shifted skew shape
$ \lambda/ \mu $ and $ \mathcal{L}(c(L))=\L(c(L_1))\circledast\cdots\circledast\L(c(L_d)) $ is an irreducible $ \A(d)$
submodule of $ Cl(d) v_L $ introduced in section \ref{SS:characters}.

\begin{prp}
The  $ \mathcal{A}(d)$-module $ H^{\lambda / \mu} $ is a $ \ASe(d)$-module.
\end{prp}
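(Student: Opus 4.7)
The plan is to leverage the $\ASe(d)$-module structure on $\widehat{H}^{\ld/\mu}$ constructed in the preceding proposition. By definition, $H^{\ld/\mu}$ is an $\A(d)$-submodule of $\widehat{H}^{\ld/\mu}$, so it suffices to check closure under each Coxeter generator $s_{i}$ with $1 \leq i \leq d-1$.

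First I would reinterpret $H^{\ld/\mu}$ as $\sum_{L'} \L(c(L'))$, where $L'$ runs over all standard tableaux of the shifted skew shape $\ld/\mu$. The intertwiner $\phi_{w}$ sends the $\zt_{L}$-weight space of $\widehat{H}^{\ld/\mu}$ into the $w(\zt_{L})$-weight space, so it carries $\L(c(L))$ into an $\A(d)$-submodule of the $(w\cdot c(L))$-weight space. Because $\widehat{H}^{\ld/\mu}$ is calibrated, this target weight space is either zero or is associated to a (unique) standard filling $L'$ with $c(L')=w\cdot c(L)$; in the latter case $\phi_{w}\L(c(L)) = \L(c(L'))$, and every such $L'$ is accounted for by varying $w \in S_{d}$.

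To verify stability under $s_{i}$, I would appeal to the explicit formula
\[
s_{i}v_{L'} = \frac{1}{\kp_{i+1,L'} - \kp_{i,L'}}\, v_{L'} + \frac{1}{\kp_{i+1,L'} + \kp_{i,L'}}\, c_{i}c_{i+1}\, v_{L'} + \mathcal{Y}_{i,L'}\, v_{s_{i}L'}
\]
from the preceding proposition (with the convention $v_{s_{i}L'} = 0$ when $s_{i}L'$ is not standard). The first two summands lie in $\L(c(L'))$ and the third in $\L(c(s_{i}L'))$, both of which are summands of $H^{\ld/\mu}$. For a general element $a\cdot v_{L'} \in \L(c(L'))$ with $a \in \A(d)$, I would apply the defining relations of $\ASe(d)$ to rewrite $s_{i}a = a_{1}s_{i} + a_{2}$ with $a_{1}, a_{2} \in \A(d)$; then $s_{i}(a v_{L'}) = a_{1}(s_{i}v_{L'}) + a_{2}v_{L'}$ again lies in $\L(c(L')) + \L(c(s_{i}L')) \subseteq H^{\ld/\mu}$, giving the desired stability.

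The main obstacle is ensuring that the individual summands of $s_{i}v_{L'}$ really sit in the specific irreducible $\A(d)$-submodules $\L(c(L'))$ and $\L(c(s_{i}L'))$, rather than in the possibly larger $\Cl(d)v_{L'}$ and $\Cl(d)v_{s_{i}L'}$. When no box of content zero sits in rows $i$ or $i+1$ of the filling, this is automatic because all generators $\L(c(L_{j}))$ are of type $\mathtt{M}$ and $\L(c(L')) = \Cl(d)v_{L'}$. In the degenerate case, the type-$\mathtt{Q}$ structure of $\L(c(L'))$ dictated by Lemma \ref{A(d) irreducibles} has to be matched carefully with the chosen cyclic vector $v_{L'}$, but the containments then follow from the same dimension counts that identify $\L(c(L'))$ as an irreducible component of $\Cl(d)v_{L'}$.
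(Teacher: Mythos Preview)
Your approach breaks down precisely at the ``main obstacle'' you identify, and the hand-wave at the end does not repair it. When $\gm_0(\mu)\geq 2$ (i.e.\ the shifted skew shape has at least two boxes of content $0$), the vector $v_{L'}$ is \emph{never} contained in any irreducible $\A(d)$-submodule $\L(c(L'))$ of $\Cl(d)v_{L'}$: since each $x_j$ acts on $v_{L'}$ by a scalar, one has $\A(d)v_{L'}=\Cl(d)v_{L'}$, so $v_{L'}$ generates the whole space and cannot lie in a proper submodule. In particular none of the three terms in your displayed formula for $s_iv_{L'}$ can be placed inside $\L(c(L'))$ or $\L(c(s_iL'))$ individually, and writing a general element of $\L(c(L'))$ as $a\cdot v_{L'}$ is already illegitimate. (Your remark that the problem only occurs when a content-$0$ box sits in positions $i$ or $i+1$ is also off: the obstruction is global, depending only on $\gm_0$.)

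The paper sidesteps this difficulty entirely by working with the intertwiner $\phi_i$ rather than with $s_i$ directly. The point is that $\phi_i c = ({}^{s_i}c)\phi_i$ for every $c\in\Cl(d)$, so $\phi_i$ carries the $L'$-component $C_{L'}v_{L'}$ of $H^{\ld/\mu}$ to $({}^{s_i}C_{L'})v_{s_iL'}$, which is exactly the $s_iL'$-component; thus $\phi_i$ preserves $H^{\ld/\mu}$ essentially by definition. Since $\phi_i - s_i(x_i^2-x_{i+1}^2)\in\A(d)$ and $(x_i^2-x_{i+1}^2)$ acts by a nonzero scalar on each weight space (distinct nonnegative contents give distinct values of $q$), one can then solve for $s_i$ and conclude $s_iH^{\ld/\mu}\subseteq H^{\ld/\mu}$. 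This argument never needs to locate $v_{L'}$ inside the irreducible piece, which is the step your approach cannot supply.
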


\begin{proof}
Let $ c v_L \in  H^{\lambda / \mu}. $  Then $ (\phi_i - s_i(x_i^2-x_{i+1}^2)) c
v_L \in H^{\lambda / \mu}. $ Note that  $ \phi_i c v_L = {}^{s_i}c \phi_i v_L =
k {}^{s_i}c v_{s_i L} $ where $ {}^{s_i}c=s_ics_i$ denotes the Clifford element
twisted by $ s_i. $ This element is in $ H^{\lambda / \mu} $ because the
twisting of the Clifford element $ c $ by $ s_i $ is compatible with the
permutation of the zero eigenvalues of the $ x_j's $ by $ s_i. $ Thus $
s_i(x_i^2-x_{i+1}^2) c v_L = k' s_i c v_L \in  H^{\lambda / \mu}. $  Since by
construction $ (x_i^2-x_{i+1}^2) v_L \neq 0 $ by construction, $ s_i c v_L \in
H^{\lambda / \mu}. $
\end{proof}

\begin{thm}
For each shifted skew shape $ \lambda/\mu, $ $ H^{\lambda/\mu} $ is an
irreducible $ \ASe(d)$-module.  Every irreducible, calibrated $ \ASe(d)$-module
is isomorphic to exactly one such $ H^{\lambda/\mu}. $
\end{thm}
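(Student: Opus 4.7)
The plan is to prove irreducibility of each $H^{\lambda/\mu}$ first, and then to show that every simple calibrated $\ASe(d)$-module is isomorphic to exactly one such $H^{\lambda/\mu}$.

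For irreducibility, I decompose $H^{\lambda/\mu}$ as an $\A(d)$-module: by construction
\[
H^{\lambda/\mu} = \bigoplus_L \A(d)\,v_L,
\]
where $L$ ranges over standard tableaux of shape $\lambda/\mu$ and each summand is the irreducible $\A(d)$-module $\L(c(L_1))\circledast\cdots\circledast\L(c(L_d))$ from Lemma~\ref{A(d) irreducibles}. All contents in a shifted skew shape are non-negative, and distinct standard tableaux produce distinct content readings $c(L)$, so the $x^2$-weights of these summands are pairwise distinct. Any nonzero $\ASe(d)$-submodule $N$ is in particular $\A(d)$-stable, hence of the form $\bigoplus_{L\in S}\A(d)v_L$ for some nonempty $S$. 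If $L\in S$ and $s_iL$ is standard, the formula defining $s_iv_L$ contains the term $\mathcal{Y}_{i,L}\,v_{s_iL}$ with $\mathcal{Y}_{i,L}\neq 0$; projecting onto the $x^2$-weight space of $c(s_iL)$ yields $v_{s_iL}\in N$, so $s_iL\in S$. Since the set of standard tableaux of $\lambda/\mu$ is connected under admissible adjacent transpositions (a standard combinatorial fact for shifted skew shapes), $S$ is everything and $N=H^{\lambda/\mu}$.

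For the classification, let $M$ be a simple calibrated integral $\ASe(d)$-module. Calibration implies $M$ decomposes into joint $x$-eigenspaces, each a direct sum of irreducible $\A(d)$-modules of the form $\L(a_1)\circledast\cdots\circledast\L(a_d)$ with $(a_1,\ldots,a_d)\in\Z_{\geq0}^d$; call such a tuple a content sequence of $M$. By Lemma~\ref{L:InvertibleIntertwiner}, the intertwiner $\phi_i$ acts invertibly on a weight vector with consecutive entries $a_i, a_{i+1}$ unless $q(a_i)=q(a_{i+1}\pm 1)$, and in the invertible case it exchanges weight spaces with $a_i$ and $a_{i+1}$ swapped. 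Semisimplicity of the $x_i$-action forbids $a_i=a_{i+1}$ (by \eqref{E:intertwinersquared}, $\phi_i^2$ would then be nilpotent on an otherwise semisimple vector), and further constrains the admissible transitions to those compatible with standard tableaux on a shifted skew shape. Following the strategy of \cite{ram}, one shows that the set of content sequences of $M$ coincides with the set of content readings of standard tableaux of a unique shifted skew shape $\lambda/\mu$.

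Having identified $\lambda/\mu$, I construct an $\ASe(d)$-homomorphism $H^{\lambda/\mu}\to M$ by selecting, for each content reading $c(L)$, a nonzero $\A(d)$-generator of $M$ with that content and sending $v_L$ there. The formulae defining $H^{\lambda/\mu}$ in the preceding proposition are the universal relations valid on any calibrated module with these contents, so this map is well-defined; non-vanishing and simplicity of $M$ force surjectivity, and irreducibility of $H^{\lambda/\mu}$ from the first step promotes surjectivity to isomorphism. Pairwise non-isomorphism of the $H^{\lambda/\mu}$ is immediate since $\lambda/\mu$ is recoverable from its set of content readings. The main obstacle is the combinatorial classification in the second step: showing that the adjacency constraints forced by calibration, intertwiner invertibility, and integrality pin down content sequences to those of a shifted skew shape, with particular care needed for the type $\texttt{Q}$ behaviour at content $0$ where $\L(0)$ is doubled and the Clifford generators interact nontrivially with the sign choices $\kappa_i=\pm\sqrt{q(a_i)}$.
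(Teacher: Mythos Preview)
Your irreducibility argument is essentially the paper's: decompose $H^{\lambda/\mu}$ as an $\A(d)$-module into distinct weight components indexed by standard tableaux, and use the nonvanishing of $\mathcal{Y}_{i,L}$ to propagate along admissible transpositions. The paper routes through the column reading tableau via intertwiners, but the mechanism is the same.

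The classification step, however, has real gaps. First, your argument excluding $a_i=a_{i+1}$ is incorrect: by \eqref{E:intertwinersquared}, $\phi_i^2$ acts on such a weight vector as the scalar $4q(a_i)$, not nilpotently. The paper instead constructs an explicit vector (namely $s_i m_t-\tfrac{1}{2\varrho_i}c_ic_{i+1}m_t$ when $a_i\neq 0$, and $s_i m_t$ when $a_i=0$) lying in the generalized weight space but not the weight space, directly contradicting calibration. Second, and more seriously, you cannot simply defer the content-sequence characterisation to \cite{ram}. The heart of the paper's proof is an inductive argument showing that if $t_i=t_j$ with $j-i$ minimal, then there exist $i<k<l<j$ with $t_k=t_i\pm1$ and $t_l=t_i\mp1$, \emph{except} when $t_i=t_j=0$, where only a single $t_k=1$ is required. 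This exception is what distinguishes shifted shapes from ordinary ones, and it is established by a delicate Case~2 computation (for $j-i=2$) that uses the braid relation together with the Clifford terms in $\phi_i$; the resulting constraint $(\varrho_i-\varrho_{i+1})^3+(\varrho_i+\varrho_{i+1})^3=0$ forces $t_i=0$, $t_{i+1}=1$. None of this is visible from the type $A$ argument, and it is precisely the ``particular care at content~$0$'' you flag as an obstacle but do not address.
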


\begin{proof}
First to show that $ H^{(\lambda, \mu)} $ is irreducible. Let $ L $ be a
standard tableaux of shape $ \lambda / \mu. $ Let $ N $ be a non-zero submodule
of $ H^{\lambda / \mu} $ and let $ v = \Sigma_Q C_Q v_Q \in N $ be non-zero
where $ C_Q \in Cl(d). $ Let $ L $ be a standard tableaux such that $
\mathcal{Y}_L \neq 0. $ If $ P \neq L $ then there exists an $ i $ such that $
x_i v_P \neq x_i v_L. $  Suppose $ \mathcal{Y}_P \neq 0. $ Then $
\frac{x_i-\kp_{i,P}}{\kp_{i,L}-\kp_{i,P}} v $ no longer has a $ v_P $ term but
still has a $ v_L $ term.  This element is also in $ N. $ Iterating this
process it is clear that $ v_L \in N. $ The set of tableaux is identified with
an interval of $ S_n $ under the Bruhat order.  The minimal element is the
column reading $ C. $ Thus there exists a chain $ C < s_{i_1} C < \cdots <
s_{i_p} \cdots s_{i_1} C = L. $ Therefore $ \tau_{i_1} \cdots \tau_{i_p} v_L =
\kappa v_C $ for some non-zero complex number $ \kappa. $  This implies $ v_C
\in N. $ Now let $ Q $ be an arbitrary standard tableaux of $ \lambda / \mu. $
There is a chain $ C < s_{j_1} C < \cdots < s_{j_p} \cdots s_{j_1} C = Q. $
Then $ \tau_{j_p} \cdots \tau_{j_1} v_C = \kappa' v_Q $ for some non-zero
complex number $ \kappa'. $  Thus $ v_Q \in N $ so $ N = H^{\lambda/ \mu}. $

It is clear by looking at the eigenvalues that if $ \lambda / \mu \neq \lambda'
/ \mu', $ then $ H^{\lambda / \mu} \neq H^{\lambda' / \mu'}. $

Next to show that the weight of a calibrated module $ M $ is obtained by
reading the contents of a shifted skew shape via a standard filling. That is,
if $ (t_1, \ldots, t_d) $ be such a weight, then it is necessary to show that
it is equal to $ (c(L_1), \ldots, c(L_d)) $ for some standard tableaux $ L. $
It will be shown that if $ t_i = t_j $ for some $ i<j, $ then there exists $
k,l $ such that $ i<k<l<j $ such that $ t_k = t_i \pm 1 $ and $ t_l = t_i \mp 1
$ unless $ t_i =t_j= 0 $ in which case there is a $ k $ with $ i < k < j $ such
that $ t_k =1. $

Let $ j>i $ be such that $ t_j=t_i $ and $ j-i $ is minimal, let $m_t\in M$ be
anonzero vector of weight $t=(t_1,\ldots,t_d)$, and let
$\varrho_i=\sqrt{q(t_i)}$.  The proof will be by induction on $ j-i. $

\noindent\textbf{Case 1:} Suppose $ j-i=1. $

First the case that $ t_i = 0. $  If $ t_i =0, $ then $ t_{i+1}=0 $ by
assumption and then $ x_i s_i m_t = -m_t -c_i c_{i+1} m_t. $  It is clear that
$ -m_t - c_i c_{i+1} m_t \neq 0. $ Otherwise, $ m_t = -c_i c_{i+1} m_t $ which
implies after multiplying both sides by $ c_i c_{i+1} $ that $ m_t = -m_t $
giving $ m_t =0. $ Thus $ x_i^2 s_i m_t = 0 $ but $ x_i s_i m_t \neq 0. $
Similarly, $ x_{i+1}^2 s_i m_t = 0, $ but $ x_{i+1} s_i m_t \neq 0. $ Clearly $
(x_j - \varrho_j) s_i m_t = 0 $ for $ j \neq i, i+1. $ Thus if $ t_i = 0, $
then $ s_i m_t \in M^{\text{gen}}_t, $ but not in $ M_t $contradicting the
assumption that $ M $ is calibrated.

Now assume $ t_i \neq 0. $  Then, $ s_i m_t - \frac{1}{2t_i} c_i c_{i+1} m_t
\in M^{\text{gen}}_t $ but not in $ M_t. $ To see this, calculate:
$$ x_i(s_i m_t - \frac{1}{2\varrho_i}c_i c_{i+1} m_t) = t_i s_i m_t - \frac{1}{2} c_i c_{i+1} m_t - m_t. $$
This implies $ (x_i - \varrho_i)(s_i m_t - \frac{1}{2\varrho_i} c_i c_{i+1}
m_t) = -m_t \neq 0 $ and $ (x_i - \varrho_i)^2(s_i m_t - \frac{1}{2\varrho_i}
c_i c_{i+1} m_t) = 0. $ Similarly, $ (x_{i+1} - \varrho_{i+1})(s_i m_t -
\frac{1}{2\varrho_i} c_i c_{i+1} m_t) = m_t \neq 0 $ and $ (x_{i+1} -
\varrho_{i+1})^2(s_i m_t - \frac{1}{2\varrho_i} c_i c_{i+1} m_t) = 0. $ If $ j
\neq i, i+1, $ then $ (x_j -\varrho_j)(s_i m_t - \frac{1}{2\varrho_i} c_i
c_{i+1} m_t) =  0. $ Thus $ s_i m_t - \frac{1}{2\varrho_i} c_i c_{i+1} m_t \in
M^{\text{gen}}_t $ but not in $ M_t $ verifying case 1.

\noindent\textbf{Case 2:} Suppose $ j-i =2. $

Since $ m_t $ is a weight vector, the vector
$$ m_{s_i t}=\phi_i m_t =(\varrho_i - \varrho_{i+1}) s_i m_t
-(\varrho_i-\varrho_{i+1})c_i c_{i+1} m_t + (\varrho_i + \varrho_{i+1}) m_t $$
is a weight vector of weight $ t' = s_i t. $ Then $ t_{i+1}' = t_{i+2}'. $  By
case 1, this is impossible so $ m_{s_i t}=0. $ Note that $ \varrho_i +
\varrho_{i+1} \neq 0. $  If it did, then $ m_{s_i t} = 0 $ which would imply $
c_i c_{i+1} m_t = 0 $ which would imply $ m_t = 0. $ Thus, $ s_i m_t =
\frac{m_t}{\varrho_{i+1}-\varrho_i} + \frac{c_i c_{i+1}
m_t}{\varrho_{i+1}+\varrho_i}. $ Since $ s_i^2 m_t = m_t, $ it follows that $
m_t = (\frac{2(\varrho_i+\varrho_{i+1})}{(\varrho_i-\varrho_{i+1})^2}) m_t. $
This implies $ 2(\varrho_i + \varrho_{i+1}) = (\varrho_i-\varrho_{i+1})^2. $
The solutions of this equation are
$$ \varrho_{i+1} \in \lbrace \pm \sqrt{(t_i+1)(t_i+2)},\pm \sqrt{(t_i-1)(t_i)} \rbrace. $$
Since it is assumed that the positive square root is taken, there are only two
subcases to investigate. For the first subcase, assume $
\varrho_{i+1}=\sqrt{q(t_i+1)}. $ A routine calculation gives
$$ s_i s_{i+1} s_i m_t = \frac{-s_i m_t}{(\varrho_i-\varrho_{i+1})^2}
+ \frac{c_i c_{i+2} s_i m_t}{\varrho_{i+1}-\varrho_i} + \frac{c_{i+1}c_{i+2}
s_i m_t}{\varrho_i-\varrho_{i+1}} - \frac{c_i c_{i+1} s_i
m_t}{(\varrho_i+\varrho_{i+1})^2}. $$ From this it follows that the coefficient
of $ m_t $ is $
\frac{1}{(\varrho_i-\varrho_{i+1})^3}+\frac{1}{(\varrho_i+\varrho_{i+1})^3}. $
Similarly, from
$$ s_{i+1} s_{i} s_{i+1} m_t = \frac{-s_{i+1} m_t}{(\varrho_i-\varrho_{i+1})^2}
+ \frac{c_i c_{i+2} s_{i+1} m_t}{\varrho_i-\varrho_{i+1}}
+ \frac{c_{i}c_{i+1} s_{i+1} m_t}{\varrho_{i+1}-\varrho_i}
- \frac{c_{i+1} c_{i+2} s_{i+1} m_t}{(\varrho_i+\varrho_{i+1})^2} $$
it follows that the coefficient of $ m_t $ is $
\frac{-1}{(\varrho_i-\varrho_{i+1})^3}+\frac{-1}{(\varrho_i+\varrho_{i+1})^3}.
$ Therefore $ (\varrho_i-\varrho_{i+1})^3 + (\varrho_i+\varrho_{i+1})^3 =0. $
Recalling that $ \varrho_{i+1}=\sqrt{q(t_i+1)} $ in this subcase, it is clear
that $ t_i = t_{i+2}=0 $ and $ t_{i+1} = 1. $ The other subcase is similar.

Now for the induction step.  Assume $j-i>2$.  If $ t_{j-1} \neq t_j \pm 1, $
then the vector $ \phi_{j-1} m_t $ is a non-zero weight vector of weight $ t' =
s_{j-1} t $ by \cite[Lemma 14.8.1]{kl}. Since $ t_i' = t_i = t_j = t_{j-1}', $
the induction hypothesis may be applied to conclude that there exists $ k $ and
$ l $ with $ i < k < l < j-1 $ such that $ t_k' = t_j \pm 1 $ and $ t_l' = t_j
\mp 1. $  (In the case $ t_i = t_j = 0, $ then there exists $ t_k' =1. $) This
implies $ t_k  = t_j \pm 1 $ and $ t_l = t_j \mp 1. $  (In the case $ t_i = t_j
=0, $ there exists $ t_k = 1. $) Similarly, if $ t_{i+1} \neq t_i \pm 1, $
consider $ \phi_i m_t $ and proceed by induction. Otherwise, $ t_{i+1} = t_i
\pm 1 $ and $ t_{j-1} = t_i \pm 1. $  Since $ i $ and $ j $ are chosen such
that $ t_i = t_j $ and $ j-i $ is minimal, $ t_{i+1} \neq t_{j-1}. $  This then
gives the conclusion.  (If $ t_i = t_j = 0, $ then $ t_{i+1} = 1 $ or $ t_{j-1}
= 1. $

Suppose $ M $ is an irreducible, calibrated $ \ASe(d)$-module such that $ m_t $
is a weight vector with weight $ t = (t_1,\ldots,t_d) $ such that $ t_{i+1} =
t_i \pm 1. $  Then $ \phi_i m_t = 0.  $  This follows exactly as in step 5 of
\cite[Theorem 4.1]{ram}.

Finally, let $ m_t $ be a non-zero weight vector of an irreducible, calibrated
module $ M. $  By the above, $ t = (c(L_1), \ldots, c(L_d)) $ for $ L $ some
standard tableaux of shifted skew shape $ \lambda / \mu. $  The rest of the
proof follows as in step 6 of \cite[Theorem 4.1]{ram}. Choose a word $ w =
s_{i_p} \cdots s_{i_1} $ such that $ w $ applied to the column reading tableaux
of $ \lambda / \mu $ gives the tableaux $ L. $ Then $ m_C = \phi_{i_1} \cdots
\phi_{i_p} m_t $ is non-zero. Now to any other standard tableaux $ Q $ of $
\lambda / \mu $ there is a non-zero weight vector obtained by applying a
sequence of intertwiners to $ m_C. $  By the above, $ \phi_i m_Q = 0 $ if $ s_i
Q $ is not standard.  Thus the span of vectors $ \lbrace m_Q \rbrace $ over all
the standard tableaux of shape $ \lambda /\mu $ is a submodule of $ M. $ Since
$ M $ is irreducible, this span must be the entire module. Thus there is an
isomorphism $ M \cong H^{\lambda / \mu} $ defined by sending $ \phi_w m_C $ to
$ \phi_w v_C. $
\end{proof}

\begin{cor}\label{C:CalibratedSimples} Let $\ld/\mu$ be a shifted skew shape.
Then, $\L(\ld,\mu)\cong H^{\ld/\mu}$.
\end{cor}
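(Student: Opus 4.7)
My plan is to show that $H^{\ld/\mu}$ appears as a quotient of the standard module $\M(\ld,\mu)$; the isomorphism with $\L(\ld,\mu)$ will then follow from the uniqueness of the simple quotient. First I verify that the hypotheses of Theorem \ref{thm:unique irred quotient} apply to $(\ld,\mu)$: since $\ld$ is a strict partition with positive parts, $\ld \in \Pt$; the stabilizer $S_n[\ld]$ is trivial, so $P^+[\ld] = P \ni \mu$; and $\ld - \mu \in \Ppos(d)$ by the containment $\mu \subset \ld$. Thus $\L(\ld,\mu)$ is the unique simple quotient of $\M(\ld,\mu)$, and since $H^{\ld/\mu}$ is irreducible by the preceding theorem, it will suffice to exhibit any nonzero $\ASe(d)$-homomorphism $\M(\ld,\mu) \to H^{\ld/\mu}$.

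To produce such a map, I would use Frobenius reciprocity for the induction $\M(\ld,\mu) = \ind_{d_1,\dotsc,d_n}^{d} \Ph(\ld,\mu)$, reducing the problem to constructing a nonzero $\ASe(\ld-\mu)$-homomorphism $\Ph(\ld, \mu) \to \res_{\ld-\mu}^d H^{\ld/\mu}$. Let $L^*$ denote the row-reading tableau of $\ld/\mu$, which fills row $j$ from left to right with $a_j, a_j+1, \dotsc, b_j$. A direct check gives $c(L^*_i) = \mu_j + (i-a_j)$ for $i \in [a_j, b_j]$, so $c(L^*)$ coincides (as a tuple in $\C^d$) with the weight $\zt_{\ld,\mu}$ of $\va_{\ld,\mu} \in \M(\ld,\mu)$. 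The candidate image of $\va_{\ld,\mu}$ is then $v^* \in H^{\ld/\mu}$ obtained from $v_{L^*}$ by applying the same Clifford twist $\prod(1 - \sqrt{-1}\,c\,c)$ over pairs of zero-content positions that is used to define $\va_{\ld,\mu}$ in Section \ref{SS:inducedmodules}.

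Verifying that $\va_{\ld,\mu} \mapsto v^*$ extends to an $\ASe(\ld-\mu)$-homomorphism reduces to matching the actions of the $x_i$ and of $s_i$ for $i \in [a_j, b_j-1]$. The $x_i$-action is immediate from $c(L^*) = \zt_{\ld,\mu}$. For the $s_i$-action, note that $s_i L^*$ is not a standard tableau in this range (it would swap an adjacent increasing pair within row $j$), so the formula from Section \ref{SS:Calibrated} reduces to
$$ s_i v_{L^*} = \frac{1}{\kp_{i+1} - \kp_i}\, v_{L^*} + \frac{1}{\kp_{i+1} + \kp_i}\, c_i c_{i+1} v_{L^*}, $$
which matches the action on the cyclic generator of the segment representation $\Ph_{[\mu_j, \ld_j-1]}$ given in Lemma \ref{s_i action} via the identity $(\kp_{i+1} - \kp_i)(\kp_{i+1} + \kp_i) = \kp_{i+1}^2 - \kp_i^2 = 2(\mu_j + i - a_j + 1)$. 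The main technical obstacle will be handling the Clifford twist correctly, so that $v^*$ generates an image of the circled tensor product $\Ph(\ld,\mu)$ rather than of the larger boxed product $\widehat{\Ph}(\ld,\mu)$; this requires checking that the twist projects $v_{L^*}$ onto the correct eigenspace of the type-\texttt{Q} parity-interchange automorphism from Section \ref{S:Prelim}. Once this is established, the resulting map $\M(\ld,\mu) \to H^{\ld/\mu}$ is nonzero, hence surjective by irreducibility of $H^{\ld/\mu}$, and the corollary follows.
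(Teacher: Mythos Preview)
Your approach is essentially the same as the paper's: both use the row-reading tableau $T$ (your $L^{*}$), observe that $s_iT$ is not standard for $s_i\in S_{\ld-\mu}$ so that the $s_i$-action on $v_T$ collapses to the segment formula, and then invoke Frobenius reciprocity to produce a surjection $\M(\ld,\mu)\to H^{\ld/\mu}$. The paper's proof is two sentences and simply asserts the map $f(\va_{\ld,\mu})=v_T$; you are more explicit about verifying the hypotheses of Theorem~\ref{thm:unique irred quotient}, matching the $s_i$-action with Lemma~\ref{s_i action}, and flagging the Clifford twist needed to land in $H^{\ld/\mu}$ (the $\circledast$-summand) rather than $\widehat{H}^{\ld/\mu}$, which the paper glosses over.
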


\begin{proof} Let $T$ be the standard tableaux obtained by filling in the numbers
$1,\ldots,d$ along rows from top to bottom and left to right. Note that if
$s_i\in S_{\ld-\mu}$, then $v_{s_iT}=0$ because $s_iT$ is not standard. By
Frobenius reciprocity, it follows that there exists a surjective
$\ASe(d)$-homomorphism $f:\M(\ld,\mu)\rightarrow H^{\ld/\mu}$ given by
$f(\va_{\ld-\mu})=v_T$.
\end{proof}

Furthermore, by construction we have the following result.  Note that this agrees with Leclerc's conjectural formula for the calibrated simple modules of $\ASe (d)$ \cite[Proposition 51]{lec}.
\begin{cor}\label{C:characters}  Let $\lambda/\mu$ be a shifted skew shape.  Then,
\[
\ch \L (\lambda, \mu)= \sum_{L} \left[c(L_1),\ldots,c(L_d) \right],
\]
where the sum is over all standard fillings of the shape $\lambda / \mu$.
\end{cor}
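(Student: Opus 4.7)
The plan is to combine Corollary~\ref{C:CalibratedSimples} with the explicit construction of $H^{\ld/\mu}$ as an $\A(d)$-module. First I would apply Corollary~\ref{C:CalibratedSimples} to identify $\L(\ld,\mu)\cong H^{\ld/\mu}$, so it suffices to compute $\ch H^{\ld/\mu}$. By definition the character only sees the $\A(d)=\P_d[x]\otimes\Cl(d)$-module structure, so the calculation reduces to identifying the $\A(d)$-module decomposition of $H^{\ld/\mu}$.

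Unpacking the construction in Section~\ref{SS:Calibrated}, for each standard tableau $L$ of shape $\ld/\mu$ the Clifford cyclic module $\Cl(d).v_L$ contains the irreducible $\A(d)$-submodule $\L(c(L_1))\circledast\cdots\circledast\L(c(L_d))$, generated by the $x_i$-eigenvector $v_L$ with eigenvalues $\kp_{i,L}=\sqrt{q(c(L_i))}$. Applying the intertwiners $\phi_w$ transports this simple submodule into the analogous one inside $\Cl(d).v_{wL}$ whenever $wL$ is again standard, and by definition $H^{\ld/\mu}$ is exactly the sum of all such pieces. Because distinct standard tableaux $Q$ yield distinct content tuples $(c(Q_1),\ldots,c(Q_d))$, the corresponding simple $\A(d)$-submodules lie in distinct joint eigenspaces of $x_1^2,\ldots,x_d^2$ and are thus automatically independent. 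This gives
\[
\res^d_{1^d}H^{\ld/\mu}\;\cong\;\bigoplus_{Q}\L(c(Q_1))\circledast\cdots\circledast\L(c(Q_d)),
\]
where $Q$ ranges over all standard tableaux of shape $\ld/\mu$.

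Passing to the Grothendieck group and invoking the notation $[a_1,\ldots,a_d]=[\L(a_1)\circledast\cdots\circledast\L(a_d)]$ from Section~\ref{SS:characters} then yields the claimed formula for $\ch\L(\ld,\mu)$. The only delicate step I foresee is verifying that $\phi_w$ actually lands in the \emph{chosen} irreducible $\A(d)$-submodule of $\Cl(d).v_{wL}$, rather than merely somewhere in $\Cl(d).v_{wL}$; but this is implicit in the construction of $H^{\ld/\mu}$ and in its calibrated structure, since the $x_i^2$-eigenvalues of $\phi_w v_L$ are determined by the content reading of $wL$ and Lemma~\ref{A(d) irreducibles} then pins down the simple $\A(d)$-submodule uniquely.
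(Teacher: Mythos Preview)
Your proposal is correct and is precisely what the paper means by ``by construction'': use Corollary~\ref{C:CalibratedSimples} to identify $\L(\ld,\mu)\cong H^{\ld/\mu}$, then read off the $\A(d)$-decomposition of $H^{\ld/\mu}=\sum_w\phi_w\L(c(L))$ to get one copy of $\L(c(Q))$ per standard tableau $Q$. Your observation that distinct standard tableaux of a shifted skew shape have distinct content readings (a consequence of the fact that if $(i,i+c)$ and $(i+1,i+1+c)$ both lie in $\ld/\mu$ then so does $(i,i+1+c)$) is indeed the mechanism making the sum direct.

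One small correction to your handling of the delicate step: Lemma~\ref{A(d) irreducibles} only pins down the \emph{isomorphism type} of the simple $\A(d)$-module, not the specific submodule inside $\Cl(d)v_{wL}$ (which in general contains $2^{\lfloor\gm_0/2\rfloor}$ copies of $\L(c(wL))$). What actually forces $\phi_w\L(c(L))$ to be a single irreducible is the identity $\phi_i(c\,v_L)=({}^{s_i}c)\,\phi_i v_L$ on $\widehat H^{\ld/\mu}$, invoked by the paper in showing $H^{\ld/\mu}$ is an $\ASe(d)$-module: it makes $cv_L\mapsto({}^{w}c)v_{wL}$ a twisted $\A(d)$-isomorphism carrying $\L(c(L))$ onto $\phi_w\L(c(L))$. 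Combined with your uniqueness-of-content-reading observation (so that for each standard $Q$ exactly one $w$ with $wL$ standard contributes, and all other $w$ with $w\zeta_L=\zeta_Q$ have $wL$ non-standard, whence $\phi_w v_L=0$), this gives exactly one copy per tableau.
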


\section{The Lie Superalgebras $\gl(n|n)$ and $\q(n)$}\label{S:Lie algebras}

\subsection{The Algebras}\label{SS:qndfn} Let $I=\{-n,\ldots,-1,1,\ldots,n\}$, and
$I^+=\{1,\ldots,n\}$. Let $V=\C^{n|n}$ be the $2n$-dimensional vector
superspace with standard basis $\{v_i\}_{i\in I}$. The standard basis for the
superalgebra $\End(V)$ is the set of matrix units $\{E_{ij}\}_{i,j\in I}$, and
the $\Z_2$-grading for $\End(V)$ and $V$ are given by
\[
p(v_k)=\zero,\;\;\;p(v_{-k})=\one,\andeqn p(E_{ij})=p(v_i)+p(v_j)
\]
for $k\in I^+$ and $i,j\in I$.

Let $C=\sum_{i,j\in I^+}(E_{-i,j}-E_{i,-j})$, and let $Q(V)\subset\End(V)$ be the
supercentralizer of $C$. Then, $Q(V)$ has basis given by elements
\[
e_{ij}=E_{ij}+E_{-i,-j},\andeqn f_{ij}=E_{-i,j}+E_{i,-j}\;\;\;i,j\in I^+.
\]
When $Q(V)$ and $\End(V)$ are viewed as Lie superalgebras relative to the
superbracket:
\[
[x,y]=xy-(-1)^{p(x)p(y)}yx,
\]
for homogeneous $x,y\in\End(V)$, we denote them $\q(n)$ and $\gl(n|n)$
respectively.

We end this section by introducing important elements of $\gl(n|n)$ that will
be needed later. Set
\begin{eqnarray}\label{bar-e/f}
\e_{ij}=E_{ij}-E_{-i,-j},\andeqn \f_{ij}=E_{-i,j}-E_{i,-j},\;\;\;i,j\in I^+.
\end{eqnarray}

\subsection{Root Data, Category $\mathcal{O}$, and Verma Modules}\label{SS:RootData}
Fix the triangular decomposition
\[
\q(n)=\n^-\oplus\h\oplus\n^+,
\]
 where
$\n^+_{\bar{0}}$ (resp. $\n^-_{\bar{0}}$) is the subalgebra spanned by the
$e_{ij}$ for $1\leq i<j\leq n$ (resp. $i>j$), $\h_\zero$ is spanned by the
$e_{ii}$, $1\leq i\leq n$, $\n^+_\one$ (resp. $\n^-_\one$) is the subalgebra
spanned by the $f_{ij}$ for $1\leq i<j\leq n$ (resp. $i>j$) and $\h_\one$ is
spanned by the $f_{ii}$, $1\leq i\leq n$.  Let $\b^+=\h\oplus\n^+$ and let $\b^-=\h\oplus\n^-$.

The isomorphism
$\q(n)_\zero\rightarrow\gl(n)$, $e_{ij}\mapsto E_{ij}$, identifies $\h_\zero$
with the standard torus for $\gl(n)$. Let $\ep_i\in\h_\zero^*$ denote the $i$th
coordinate function. For $i\neq j$, define $\af_{ij}=\ep_i-\ep_j$, and fix the
choice of simple roots $\Dt=\{\af_i=\af_{i,i+1}|1\leq i<n\}$. The corresponding
root system is $R=\{\af_{ij}|1\leq i\neq j\leq n\}$, and the positive roots are
$R^+=\{\af_{ij}|1\leq i<j\leq n\}$. The root lattice is
$Q=\sum_{i=1}^{n-1}\Z\af_i$ and weight lattice $P=\sum_{i=1}^n\Z\ep_i$. We can,
and will, identify $P=\Z^n$, and $Q=\{\ld\in P|\ld_1+\cdots+\ld_n=0\}$. Define
the sets of weights $P^+$, $\Pt$, $\Pr$, $\Pp$ and $\Ppos$ as in
$\S$\ref{SS:LieThy}. We call these sets dominant, dominant-typical, rational,
polynomial, and positive, respectively. Finally, let $\Prt = \Pr \cap \Pt$, and $\Ppt =\Pp \cap \Pt$.

To begin, let $\mathcal{O}:=\mathcal{O}(\q(n))$ denote the category of all
finitely generated $\q(n)$-supermodules $M$ that are locally finite dimensional
over $\b$ and satisfy
\[
M=\bigoplus_{\ld\in P}M_\ld
\]
where $M_\ld=\{\,v\in M \mid h.v=\ld(h)v\mbox{ for all }h\in\h_\zero\,\}$ is the
$\ld$-weight space of $M$.

We now define two classes of \emph{Verma modules}.  To this end, given
$\ld\in P$, let $\C_\ld$ be the 1-dimensional $\h_\zero$-module associated to
the weight $\ld$.  Let $\te_\ld:\h_\one\rightarrow\C$
be given by $\te_\ld(k)=\ld([k,k])$ for all $k\in\h_\one$. Let  $\h_\one'=\ker\te$. Let
$\overline{\U(\h)}=\U(\h)/\mathfrak{i}$, where $\mathfrak{i}$ is the left ideal
of $U(\h)$ generated by $\{\,h-\ld(h) \mid h\in\h_\zero\,\}\cup\h_\one'$. Recall, $\gm_0(\ld)=|\{\,i \mid \ld_i=0\,\}|$. Since
$\overline{\U(\h)} $ is isomorphic to a Clifford algebra of rank $
n-\gm_0(\ld), $ we can define the $\overline{\U(\h)}$-modules $ C(\ld) $ and $
E(\ld) $ where $ C(\ld) $ is the regular representation of the resulting
Clifford algebra and $ E(\ld) $ is its unique irreducible quotient. Both $ C(\ld) $ and $ E(\ld) $ become modules for $\U(\h)$ via
inflation through the canonical projection $\U (\h)\to \overline{\U(\h)}$. Note that as a $ \U(\h)$-module, $ C(\ld) \cong
\ind_{\U(\h_\zero+\h_\one')}^{\U(\h)}\C_\ld. $ Extend $C(\ld)$ and $E(\ld)$ to
representations of $\U(\b^+)$ by inflation, and define the \emph{Big Verma}
$\widehat{M}(\ld)$ and \emph{Little Verma} $M(\ld)$ by
\[
\widehat{M}(\ld)=\ind_{\U(\b^+)}^{\U(\q(n))}C(\ld)\andeqn
M(\ld)=\ind_{\U(\b^+)}^{\U(\q(n))}E(\ld).
\]
The following lemma is obtained from the standard decomposition of the Clifford
algebra into irreducible modules:

\begin{lem}\label{L:little verma in big verma}
We have $\widehat{M}(\ld) \cong M(\ld)^{\oplus 2^{\lfloor
\frac{n-\gm_0(\ld)}{2} \rfloor}}. $
\end{lem}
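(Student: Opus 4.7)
The plan is to reduce the statement to the module theory of a finite-dimensional Clifford algebra and then transfer the decomposition through induction.

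First I would identify $\overline{\U(\h)}$ as a Clifford superalgebra of rank $r := n-\gm_0(\ld)$. The point is that $\h_\one$ carries the symmetric bilinear form $(k,k')\mapsto \ld([k,k'])$, and by construction $\h_\one'$ is precisely its radical (modulo fixing the even part to act by $\ld$). So in $\overline{\U(\h)}$ we pick up exactly $r$ odd generators whose squares are nonzero scalars and which anticommute; hence $\overline{\U(\h)} \cong \Cl_r$ as superalgebras. By definition, $C(\ld)$ is the regular $\Cl_r$-module (inflated along $\U(\h)\twoheadrightarrow\overline{\U(\h)}$), and $E(\ld)$ is its unique simple quotient.

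Next I would invoke the standard structure theory of Clifford algebras: $\Cl_r$ is semisimple of dimension $2^r$, with a unique simple supermodule of dimension $2^{\lceil r/2\rceil}$. A dimension count then gives
\[
C(\ld) \;\cong\; E(\ld)^{\oplus 2^{\lfloor r/2\rfloor}}
\]
as $\overline{\U(\h)}$-modules (and hence as $\U(\h)$-modules, and then as $\U(\b^+)$-modules after inflation along $\b^+\twoheadrightarrow\h$, since $\n^+$ acts trivially on both sides).

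Finally, apply the induction functor $\U(\q(n))\otimes_{\U(\b^+)}(-)$. By PBW, $\U(\q(n))$ is free as a right $\U(\b^+)$-module, so induction is exact and commutes with finite direct sums. Applying it to the displayed decomposition produces
\[
\widehat{M}(\ld) \;=\; \ind_{\U(\b^+)}^{\U(\q(n))} C(\ld) \;\cong\; \bigl(\ind_{\U(\b^+)}^{\U(\q(n))} E(\ld)\bigr)^{\oplus 2^{\lfloor r/2\rfloor}} \;=\; M(\ld)^{\oplus 2^{\lfloor (n-\gm_0(\ld))/2\rfloor}},
\]
which is the claimed isomorphism. The only genuine content is the Clifford-algebra decomposition in the middle step; everything else is formal, so the main (minor) obstacle is just ensuring one has the correct rank $r=n-\gm_0(\ld)$ for the Clifford quotient, which follows from the explicit description of the radical of the form on $\h_\one$.
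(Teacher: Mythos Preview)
Your proposal is correct and follows exactly the line the paper indicates: the paper simply remarks that the lemma ``is obtained from the standard decomposition of the Clifford algebra into irreducible modules'' without further detail, and your argument is precisely a fleshing-out of that sentence (identify $\overline{\U(\h)}\cong\Cl_r$ with $r=n-\gm_0(\ld)$, decompose the regular module as $E(\ld)^{\oplus 2^{\lfloor r/2\rfloor}}$, then induce).
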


It is known that $M(\ld)$ has a unique irreducible quotient $L(\ld)$ (see, for
example, \cite{g}). Moreover, it is known $L(\ld)$ is finite dimensional if,
and only if, $\ld\in\Pr$ (see \cite{p}).

The following lemma seems standard, but we cannot find it stated in the
literature. See \cite[Corollary 7.1, 11.6]{g} for related statements.  If $M$ is a $\U (\fq)$-module, then recall that a vector $m \in M$ is called \emph{primitive} if $\n^{+}v=0$.

\begin{lem}\label{L:InjHom} Let $\ld\in P$,
and assume that for some $\af\in R^+$, there exists $r>0$ such that
$s_\af\ld=\ld-r\af$. Then, there exists an injective homomorphism
\[
M(s_\af\ld)\rightarrow M(\ld).
\]
\end{lem}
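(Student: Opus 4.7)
The plan is to exhibit a non-zero primitive vector in $M(\ld)$ of weight $s_\af\ld$, convert it via Frobenius reciprocity into the required homomorphism, and then argue injectivity. First, I translate the hypothesis $s_\af\ld=\ld-r\af$ with $r>0$ into the numerical condition $\la\ld,\af^\vee\ra=r\in\Z_{>0}$. Because the even and odd contributions to the Weyl vector for $\q(n)$ cancel (so effectively $\rho=0$), this is precisely the Shapovalov/Kac--Kazhdan condition for a primitive vector of weight $\ld-r\af$ to occur in $M(\ld)$.

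The key step is to produce such a vector by invoking Gorelik's computation of the Shapovalov determinant for $\q(n)$ \cite{g}. The Shapovalov form on $M(\ld)$ is a contravariant form whose radical is the unique maximal submodule; its restriction to the weight space $M(\ld)_{s_\af\ld}$ has determinant factoring (up to nonzero scalars) into linear factors indexed by reflection hyperplanes, and the factor attached to the pair $(\af,r)$ is, up to scalar, $\la\ld,\af^\vee\ra-r$, which vanishes by hypothesis. Hence the form is degenerate on this weight space, yielding a nonzero primitive vector $v\in M(\ld)^{\n^+}_{s_\af\ld}$.

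The cyclic $\h$-submodule $U(\h).v\subseteq M(\ld)^{\n^+}$ is a quotient of the Clifford-type $\h$-module $C(s_\af\ld)$, and since $C(s_\af\ld)$ decomposes as a direct sum of copies of the simple $\h$-module $E(s_\af\ld)$, it contains a copy of $E(s_\af\ld)$. As $\n^+$ annihilates this copy, Frobenius reciprocity
\[
\Hom_{\q(n)}(M(s_\af\ld),M(\ld))\cong\Hom_{\b^+}(E(s_\af\ld),M(\ld))
\]
supplies a nonzero homomorphism $\phi:M(s_\af\ld)\to M(\ld)$. Injectivity follows from the standard observation that $\phi$ is an $\h$-injection on the top weight space $E(s_\af\ld)$, combined with the fact that the Shapovalov singular vector $v$ generates a free $U(\n^-)$-submodule of $M(\ld)$ (cf.\ \cite{g}); this forces $\ker\phi=0$.

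The main obstacle is the construction of the primitive vector in the second paragraph, which rests on Gorelik's Shapovalov determinant formula; injectivity, while requiring care in the super setting where $U(\n^-)$ has zero divisors, is then a formal consequence of the structural results of \emph{loc.\ cit.}
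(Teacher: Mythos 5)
There is a genuine gap in your construction of the primitive vector. The vanishing of the factor $\la\ld,\af^\vee\ra-r$ in the Shapovalov determinant on $M(\ld)_{s_\af\ld}$ tells you only that the radical of the form --- the maximal proper submodule --- meets $M(\ld)_{s_\af\ld}$ nontrivially; a vector of the radical lying in that weight space need not be annihilated by $\n^+$. Indeed, if $\ld$ also satisfies reflection conditions at other positive roots $\beta$ with $s_\beta\ld$ strictly higher than $s_\af\ld$, the radical already contains weights above $s_\af\ld$, and the vector you extract at weight $s_\af\ld$ can simply be $\U(\n^-)$ applied to a higher singular vector. Converting the determinantal data into a singular vector at exactly $s_\af\ld$ requires either a Jantzen-filtration argument or a generic-specialization argument, neither of which you supply. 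The paper avoids this issue by quoting from \cite[Corollary 7.1]{g} an \emph{explicit} singular vector, for $\af=\af_{ij}$:
\[
v_{\ld-r\af}=e_{ji}^{r-1}\bigl(rf_{ji}-e_{ji}(f_{ii}-f_{jj})\bigr).v_\ld,
\]
and checking directly that $\n^+$ annihilates it --- precisely the step the determinant-degeneracy reasoning alone does not provide. A secondary concern is injectivity: you assert that $v$ generates a free $\U(\n^-)$-module and wave at Gorelik, but $\U(\n^-)$ contains nilpotents (e.g.\ $f_{ji}^2=0$ for $i<j$), so freeness is sensitive to exactly which vector $v$ is; the paper explicitly notes that its freeness argument depends on ``our choice of primitive vector,'' i.e.\ the explicit formula, which your anonymous radical vector does not supply. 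Your remaining steps --- locating a copy of $E(s_\af\ld)$ inside $M(\ld)^{\n^+}$ and applying Frobenius reciprocity --- are correct and parallel the paper's.
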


\begin{proof}
Let $\af=\af_{ij}$, and let $v_\ld\in M(\ld)_\ld$ be an odd primitive
vector. Then, direct calculation verifies that
\[
v_{\ld-r\af}:=(e_{ji}^{r-1}(rf_{ji}-e_{ji}(f_{ii}-f_{jj})).v_\ld
\]
is a primitive vector of weight $\ld-r\af$ (see, for example \cite[Corollary 7.1]{g}). This implies that there is an
injective $\U(\b^+)$-homomorphism
\[
E(s_\af\ld)\to\U(\h).v_{\ld-r\af}.
\]
Indeed, clearly every vector in $\U(\h).v_{\ld-r\af}$ has weight $\ld-r\af$. Moreover, if $N\in\U(\n^+)$ and $H\in\U(\h)$, then $[N,H]\in\U(\n^+)$, so
\[
N.(H.v_{\ld-r\af})=(HN+[N,H]).v_{\ld-r\af}=0.
\]
The result follows because, by our choice of primitive vector, a standard argument using the filtration of $\U (\fq(n) )$ by total degree and a calculation in $U(\fq (2)$ shows that $\U(b^-).v_{\ld-r\af}$ is a free $\U(\n^-)$-module.
\end{proof}

\subsection{The Shapovalov Form}\label{SS:ShapovalovForm} The Shapovalov map for $\q(n)$ was constructed in
\cite{g}. We review this construction briefly.

Let $\mathcal{D}$ be the category of $Q^-=-Q^+$-graded $\q(n)$-modules with
degree 0 with respect to this grading. We regard the big
and little Verma's as objects in this category by declaring $\deg
M(\ld)_{\ld-\nu}=-\nu$ for all $\nu\in Q^+$. Let $\mathcal{C}$ be the category
of left $\h$-modules.

Let $\Psi_0:\mathcal{D}\rightarrow\mathcal{C}$ be the functor $\Psi_0(N)=N_0$
(i.e.\ the degree 0 component). The functor $\Psi_0$ has a left adjoint
$\ind:\mathcal{C}\rightarrow\mathcal{D}$ given by $\ind A=\ind_{\b^+}^{\q(n)}
A$, where we regard the $\h$-module $A$ as a $\b^+$-module by inflation. The
functor $\Psi_0$ also has an exact right adjoint $\coind$ (see \cite[Proposition
4.3]{g}).

As in \cite{g}, let $\Theta(A):\ind A\rightarrow\coind A$ be the morphism
corresponding to the identity map $\mathrm{id}_A:A\rightarrow A$. This induces
a morphism of functors $\Theta:\ind\rightarrow\coind$. The main property we
will use is

\begin{thm}\cite[Proposition 4.4]{g} We have $\ker\Theta(A)$ is the maximal
graded submodule of $\ind A$ which avoids $A$.
\end{thm}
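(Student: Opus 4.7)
The plan is to exploit the adjunctions $\ind \dashv \Psi_0 \dashv \coind$ in order to characterize $\ker \Theta(A)$ intrinsically. First I would verify the basic computation that $\Psi_0(\ind A)$ is canonically identified with $A$: by PBW, $\ind A \cong \U(\n^-) \otimes A$ as a vector space, and since $\n^-$ is concentrated in strictly negative $Q^-$-degrees, the degree zero component is precisely $1 \otimes A$. A parallel argument (using the explicit model of the right adjoint) shows $\Psi_0(\coind A) \cong A$. Under the adjunction isomorphism
\[
\Hom_{\mathcal{D}}(\ind A, \coind A) \;\cong\; \Hom_{\mathcal{C}}(A, \Psi_0(\coind A)) \;\cong\; \Hom_{\mathcal{C}}(A, A),
\]
$\Theta(A)$ is by definition the morphism corresponding to $\mathrm{id}_A$; equivalently $\Psi_0(\Theta(A))$ is the identity on $A$.

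For the inclusion direction ($\ker\Theta(A) \cap A = 0$), I would use exactly this last observation: the restriction of $\Theta(A)$ to the degree zero subspace $A \subseteq \ind A$ is $\mathrm{id}_A$, which is injective. Hence no nonzero element of $A$ lies in $\ker \Theta(A)$, so $\ker \Theta(A)$ genuinely avoids $A$.

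For the maximality direction, let $N \subseteq \ind A$ be any graded submodule with $N \cap A = 0$. Since $A$ is exactly the degree zero part of $\ind A$, this hypothesis is equivalent to $\Psi_0(N) = 0$. Consider the composition
\[
\iota_N \;:\; N \hookrightarrow \ind A \xrightarrow{\;\Theta(A)\;} \coind A.
\]
Under the right-adjoint adjunction $\Hom_{\mathcal D}(N,\coind A) \cong \Hom_{\mathcal C}(\Psi_0(N), A)$, the morphism $\iota_N$ corresponds to $\Psi_0(\iota_N) : \Psi_0(N) \to A$, which is the zero map because $\Psi_0(N) = 0$. The adjunction isomorphism is an isomorphism, so $\iota_N$ itself must be zero, i.e.\ $N \subseteq \ker \Theta(A)$. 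Combined with the first step, this shows $\ker \Theta(A)$ is the unique maximal graded submodule of $\ind A$ avoiding $A$.

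The main obstacle I anticipate is not the formal argument above (which is a categorical triviality once the adjunctions are known) but rather establishing the two preliminary identifications $\Psi_0(\ind A) = A$ and $\Psi_0(\coind A) = A$ \emph{canonically and compatibly} with the adjunction isomorphisms, so that the map "corresponds to $\mathrm{id}_A$" actually forces $\Psi_0(\Theta(A)) = \mathrm{id}_A$. This requires producing an explicit model of $\coind$ and tracking the unit/counit of both adjunctions through the $Q^-$-grading — a bookkeeping exercise, but the one place where an error could slip in.
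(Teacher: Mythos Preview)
Your argument is correct and is the standard categorical proof of this fact. Note, however, that the paper does not give its own proof of this statement: it is simply quoted from Gorelik \cite[Proposition 4.4]{g}, so there is no ``paper's proof'' to compare against.

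One small refinement: you do not actually need the full identification $\Psi_0(\coind A)\cong A$ to carry out Step~1. From the adjunction $\Psi_0\dashv\coind$, the map $\Theta(A)$ corresponding to $\mathrm{id}_A\colon \Psi_0(\ind A)=A\to A$ satisfies $\epsilon_A\circ\Psi_0(\Theta(A))=\mathrm{id}_A$, where $\epsilon_A\colon\Psi_0(\coind A)\to A$ is the counit. Hence $\Psi_0(\Theta(A))$ is a split monomorphism, so injective, and since $\Psi_0$ is exact (it just extracts a graded component) you get $\Psi_0(\ker\Theta(A))=\ker\Psi_0(\Theta(A))=0$. This sidesteps the bookkeeping obstacle you flagged: you only need the easy identification $\Psi_0(\ind A)=A$, not the harder one for $\coind$. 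Your Step~2 is clean as written and uses only the adjunction bijection $\Hom_{\mathcal D}(N,\coind A)\cong\Hom_{\mathcal C}(\Psi_0 N,A)$.
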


Define the Shapovalov map $S:=\Theta(\U(\h)):\ind(\U(\h)) \rightarrow
\coind(\U(\h))$. Given an object $A$ in $\mathcal{C}$, proposition 4.3 of
\cite{g} shows there is a canonical isomorphism $\ind A\cong
\ind\U(\h)\otimes_{\U(\h)}A$ and $\coind A\cong\coind\U(\h)\otimes_{\U(\h)}A$.
In this way, we may identify $\Theta(A)$ with
$\Theta(\U(\h))\otimes_{\U(\h)}\mathrm{id}_A$. It follows that the map
$\Theta(A)$ is completely determined by the Shapovalov map.

In order to describe $S$ in more detail, we introduce some auxiliary data. Let
$\varsigma:\U(\q(n))\rightarrow\U(\q(n))$ be the antiautomorphism defined by
$\varsigma(x)=-x$ for all $x\in\q(n)$ and extended to $\U(\q(n))$ by the rule
$\varsigma(xy)=(-1)^{p(x)p(y)}\varsigma(y)\varsigma(x)$ for $x,y\in\U(\q(n))$.
Also, define the Harish-Chandra projection $HC:\U(\q(n))\rightarrow\U(\h)$
along the decomposition
\[
\U(\q(n))=\U(\h)\oplus(\U(\q(n))\n^++\n^-\U(\q(n))).
\]

Now, we may naturally identify $\ind\U(\h)\cong\U(\b^-)$ as $ (\b^-,\h) $-bimodules. The
$Q^-$-grading on $\U(\b^-)$ is given by
\begin{eqnarray}\label{E:Q grading of bminus}
\U(\b^-)_{-\nu}=\{\,x\in\U(\b^-) \mid [h,x]=-\nu(h)x\mbox{ for all
}h\in\h_\zero\,\}
\end{eqnarray}
for all $\nu\in Q^+$.

To describe $\coind\U(\h)$, let $\mathcal{D}_+$ be the category of $Q^+$ graded
submodules and $\ind_+$ be the left adjoint to the functor
$\Psi_0^+:\mathcal{C}\rightarrow\mathcal{D}_+$. We may naturally identify
$\ind_+\U(\h)\cong\U(\b^+)$ as $ (\b^+,\h) $-bimodules  and $\U(\b^+)$ has a $Q^+$-grading analogous to
\eqref{E:Q grading of bminus}. Now, let $\U(\h)^\varsigma$ be the
$(\h, \h)$-bimodule obtained by twisting the action of $\h$ with $\varsigma$. That is,
$h.x=(-1)^{p(h)p(x)} \varsigma(h)x$ and $ x.h = (-1)^{p(h)p(x)} x \varsigma(h)$
for all $x\in\U(\h)^\varsigma$ and $h\in\h$. Then, there is
a natural identification of $\coind\U(\h)$ with the graded dual of $\U(\b^+)$ as $ (\U\g, \U\h) $ -bimodules:
\[
\coind\U(\h)\cong\U(\b^+)^{\#} :=\bigoplus_{\nu\in
Q^+}\Hom_{\mathcal{C}}(\U(b^+)_\nu,\U(\h)^\varsigma),
\]
see \cite[Proposition 4.3(iii)]{g}. Observe that $\U(\b^+)^{\#}$ has a $Q^-$
grading given by
$\U(\b^+)^{\#}_{-\nu}=\Hom_{\mathcal{C}}(\U(b^+)_\nu,\U(\h)^\varsigma)$.

Using these identifications, we realize the Shapovalov map via the formula:
\[
S(x)(y)=(-1)^{p(x)p(y)}HC(\varsigma(y)x),
\]
for $x\in\U(\q(n))$ and $y\in\U(\q(n))$, \cite[$\S$4.2.4, Claim 3]{g}.

The Shapovalov map is homogeneous of degree 0. Therefore, $S=\sum_{\nu\in
Q^+}S_\nu$, where $S_\nu:\U(\b^-)_{-\nu}\rightarrow\U(\b^+)^{\#}_{-\nu}$ is
given by restriction.

For our purposes, it is more convenient to introduce a bilinear form
\[
(\cdot,\cdot)_S:\U(\q(n))\otimes\U(\q(n))\rightarrow\U(\h)
\]
with the property that $\Rad(\cdot,\cdot)_S=\ker S$. To do this we introduce the (non-super)
\emph{transpose} antiautomorphism $\tau:\U(\q(n))\rightarrow\U(\q(n))$ given by
$\tau(x)=x^t$ if $x\in\q(n)$ and extend to $\U(\q(n))$ by
$\tau(xy)=\tau(y)\tau(x)$. Note that this is the ``naive'' antiautomorphism introduced in \cite{g}.
Define $(\cdot,\cdot)_S$ by
\[
(u,v)_S=(-1)^{p(u)p(v)}S(v)(\varsigma\tau(u))=HC(\tau(u)v)
\]
for all $u,v\in\U(\q(n))$.


\begin{prp}
The radical of the form may be identified as: $\Rad(\cdot,\cdot)_S=\ker S$.
\end{prp}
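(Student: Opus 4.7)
The plan is to compare the two expressions given for $(\cdot,\cdot)_S$ and then unpack what each of the sets $\Rad(\cdot,\cdot)_S$ and $\ker S$ means in terms of the Shapovalov map. Concretely, I would first verify the identity
\[
(u,v)_S \;=\; (-1)^{p(u)p(v)} S(v)(\varsigma\tau(u)) \;=\; HC(\tau(u)v),
\]
so that both formulas for the form are available. Starting from $S(x)(y) = (-1)^{p(x)p(y)}HC(\varsigma(y)x)$, substituting $x=v$ and $y=\varsigma\tau(u)$, and using that $\varsigma^2=\mathrm{id}$ and that $\varsigma,\tau$ preserve parity, the sign factor works out and the two expressions match. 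This step is essentially a one-line sign chase.

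The inclusion $\ker S\subseteq \Rad(\cdot,\cdot)_S$ is then immediate: if $S(v)=0$, the first formula forces $(u,v)_S=0$ for every $u$. For the reverse inclusion, the key observation is that $\varsigma\tau$ restricts to a $Q$-graded bijection $\U(\b^-)\to\U(\b^+)$. Indeed, $\tau$ transposes matrix indices and so swaps $\n^+$ with $\n^-$ while fixing $\h$, whereas $\varsigma$ acts by $-1$ on $\q(n)$ and hence preserves each of $\n^\pm$ and $\h$; the composition therefore sends $\b^-$ onto $\b^+$ bijectively, and by weight considerations it maps $\U(\b^-)_{-\nu}$ onto $\U(\b^+)_{\nu}$ for each $\nu\in Q^+$.

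Given this, suppose $v\in\Rad(\cdot,\cdot)_S$; by grading we may take $v$ homogeneous of degree $-\nu$. The defining condition $(u,v)_S=0$ for all $u$ specialises to $S(v)(\varsigma\tau(u))=0$ for all $u\in \U(\b^-)_{-\nu}$. Since $\varsigma\tau$ surjects onto $\U(\b^+)_{\nu}$ and $S(v)$ is, by the homogeneity of the Shapovalov map, a linear functional on $\U(\b^+)_\nu$, we conclude $S(v)=0$, i.e. $v\in\ker S$. For elements $u$ not lying in $\U(\b^-)$, the vanishing $(u,v)_S=HC(\tau(u)v)=0$ is automatic from the Harish-Chandra decomposition $\U(\q(n))=\U(\h)\oplus(\U(\q(n))\n^+ + \n^-\U(\q(n)))$, so nothing extra is tested by such $u$.

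The main (modest) obstacle is the first step: tracking the super signs carefully enough to confirm that the formula $(u,v)_S = HC(\tau(u)v)$ really does coincide with $(-1)^{p(u)p(v)}S(v)(\varsigma\tau(u))$, since this conversion is what lets us transport a statement about the radical of a bilinear form into a statement about the kernel of $S$. Everything else is then a clean consequence of the bijectivity of $\varsigma\tau$ between $\U(\b^\mp)$ and the compatibility of $S$ with the $Q$-grading.
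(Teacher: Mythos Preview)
Your proposal is correct and follows essentially the same approach as the paper: both directions rest on the identity $(u,v)_S = (-1)^{p(u)p(v)} S(v)(\varsigma\tau(u))$ together with the fact that $\varsigma\tau$ (equivalently $\tau\varsigma$) is a bijection between $\U(\b^-)$ and $\U(\b^+)$. Your additional remarks about the $Q$-grading and the automatic vanishing for $u\notin\U(\b^-)$ are helpful elaborations, but the core argument is the same as the paper's.
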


\begin{proof} Assume $u\in\ker S$ and $v\in\U(\b^-)$. Then, $\tau(v)\in\U(\b^+)$
and
\[
(\tau\varsigma(v),u)_S
=(-1)^{p(u)p(v)}S(u)(\varsigma\tau\tau\varsigma(v))=(-1)^{p(u)p(v)}S(u)(v)=0,
\]
showing that $u\in\Rad(\cdot,\cdot)_S$.

Conversely, assume $u\in\Rad(\cdot,\cdot)_S$ and $v\in\U(\b^+)$. Then,
$\tau\varsigma(v)\in\U(\b^-)$ and
\[
0=(\tau\varsigma(v),u)_S
=(-1)^{p(u)p(v)}S(u)(\varsigma\tau\tau\varsigma(v))=(-1)^{p(u)p(v)}S(u)(v).
\]
Hence, $u\in\Ker S$.
\end{proof}

\begin{rmk} We have already defined $\tau$ to be an antiautomorphism of the
AHCA. We will show the compatibility of the two
anti-automorphisms in Proposition \ref{P:when tau's collide}.
\end{rmk}

\section{A Lie-Theoretic construction of $\ASe(d)$}\label{S:LieTheoreticConstr}
Let $X$ be a $\fq(n)$-supermodule. In this section we construct a homomorphism of
superalgebras
\[
\ASe(d)\rightarrow\End_{\fq(n)}(X\otimes V^{\otimes d})
\]
along the lines of Arakawa and Suzuki, \cite{as}. The main
difficulty is the lack of an even invariant bilinear form, and
consequently, a lack of a suitable Casimir element in
$q(n)^{\otimes2}$. However, we find inspiration for a suitable
substitute in Olshanski's work in the quantum setting \cite{o}.

\subsection{Lie Bialgebra structures on $\fq(n)$}
We begin by reviewing the construction of a Manin triple for $\fq(n)$
from \cite{o} (see also \cite{d1}). A Manin triple $(\p,\p_1,\p_2)$
consists of a Lie superalgebra $\p$, a nondegenerate even invariant
bilinear symmetric form $B$ and two subalgebras $\p_1$ and $\p_2$
which are $B$-isotropic transversal subspaces of $\p$. Then, $B$
defines a nondegenerate pairing between $\p_1$ and $\p_2$.

Define a cobracket $\Dt:\p_1\rightarrow \p_1^{\otimes2}$ by
dualizing the bracket $\p_2^{\otimes 2}\rightarrow\p_2$:
\[
B^{\otimes2}(\Dt(X),Y_1\otimes Y_2)=B(X,[Y_1,Y_2]),\;\;\;(X\in\p_1).
\]
Then, the pair $(\p_1,\Dt)$ is called a Lie (super)bialgebra.

Choose a basis $\{X_\af\}$ for $\p_1$ and a basis $\{Y_\af\}$ for
$\p_2$ such that $B(X_\af,Y_\bt)=\dt_{\af\bt}$, and set $s=\sum_\af
X_\af\otimes Y_\af$. Then, it turns out that $s$ satisfies the
classical Yang-Baxter equation
\[
[s^{12},s^{13}]+[s^{12},s^{23}]+[s^{13},s^{23}]=0
\]
and $\Dt(X)=[1\otimes X+X\otimes 1,s]$, for $X\in\p_1$.

\subsection{The Super Casimir}  Note that when $\p=\g$ is a simple Lie algebra,
$\p_1=\mathfrak{b}_+$, $\p_2=\mathfrak{b}_-$ are the positive and
negative Borel subalgebras and $B$ is the trace form, $s$ becomes
the classical $r$-matrix, which we will denote $r^{12}$. We can
repeat this construction with the roles of $\p_1$ and $\p_2$
reversed and obtain another classical $r$-matrix which we denote
$r^{21}$. Then, the Casimir is simply $\Om=r^{12}+r^{21}$, see
\cite{as} $\S1.2$.

In \cite{o}, Olshanski constructs such an element $s$ for $\p=\gl(n|n)$,
$\p_1=\fq(n)$ and some fixed choice of $\p_2$ analogous to a positive Borel. We
will review this construction to obtain an element which we will call $s_+$,
then replace $\p_2$ with an analogue of a negative Borel to obtain another
element called $s_-$. Then, we show that the element $\Om=s_++s_-$ performs the
role of the Casimir in our setting.

\begin{dfn}Let $\p=\gl(n|n)$, $B(x,y)=\str(xy)$ (where
$\str(E_{ij})=\dt_{ij}\mathrm{sgn}(i)$ for $i,j\in I$), and
$\p_1=\q(n)$.

\begin{enumerate}
\item Let
\[
\p_2^+=\sum_{i\in I^+}\C(E_{ii}-E_{-i,-i})+\sum_{\substack{i,j\in
I,\\
i<j}}\C E_{ij}.
\]
Then the corresponding element $s_+$ is given by
\[
s_+=\frac12\sum_{i\in
I^+}e_{ii}\otimes\e_{ii}+\sum_{\substack{i,j\in
I^+\\i>j}}e_{ij}\otimes E_{ji}-\sum_{\substack{i,j\in
I^+\\i<j}}e_{ij}\otimes E_{-j,-i}-\sum_{i,j\in I^+}f_{ij}\otimes
E_{-j,i}.
\]
\item Let
\[
\p_2^-=\sum_{i\in I^+}\C(E_{ii}-E_{-i,-i})+\sum_{\substack{i,j\in
I,\\
i>j}}\C E_{ij}.
\]
Then, the corresponding element $s_-$ is given by
\[
s_-=\frac12\sum_{i\in
I^+}e_{ii}\otimes\e_{ii}-\sum_{\substack{i,j\in
I^+\\i>j}}e_{ij}\otimes E_{-j,-i}+\sum_{\substack{i,j\in
I^+\\i<j}}e_{ij}\otimes E_{j,i}+\sum_{i,j\in I^+}f_{ij}\otimes
E_{j,-i}.
\]
\end{enumerate}
\end{dfn}

We now define our substitute Casimir:
\begin{eqnarray}\label{casimir}
\Om=s_++s_-=\sum_{i,j\in I^+}e_{ij}\otimes\e_{ji}-\sum_{i,j\in
I^+}f_{ij}\otimes\f_{ji}\in Q(V)\otimes\End(V),
\end{eqnarray}
where $\e_{ij}$ and $\f_{ij}$ are given in \eqref{bar-e/f}.

\subsection{Classical Sergeev Duality}\label{SS:Sergeev Duality} We now need to recall Sergeev's duality between $\Se (d)$ and $\fq (n)$.  Recall the matrix
$C=\sum_{i\in I^+}\f_{ii}$ from the previous section, and define the
superpermutation operator
\[
S=\sum_{i,j\in I}\mathrm{sgn}(j)E_{ij}\otimes
E_{ji}\in\End(V)^{\otimes2},
\]
where $\mathrm{sgn}(j)$ is the sign of $j$. Let
$\pi_i:\End(V)\rightarrow\End(V)^{\otimes d}$ be given by $\pi_i(x)=1^{\otimes
i-1}\otimes x\otimes 1^{\otimes d-i}$ for all $x\in\End(V)$ and $i=1,\ldots,
d$; similarly, define $\pi_{ij}:\End(V)^{\otimes 2}\rightarrow\End(V)^{\otimes
d}$ by $\pi_{ij}(x\otimes y)=1^{\otimes i-1}\otimes x\otimes 1^{\otimes
j-i-1}\otimes y\otimes 1^{\otimes d-j}$. Set $C_i=\pi_i(C)$ and, for $1\leq
i<j\leq d$, set $S_{ij}=\pi_{ij}(S)$. Then,

\begin{thm}\label{Sergeev Duality Theorem}\cite[Theorem 3]{s} The map which sends
$c_i\mapsto C_i$ and $s_i\mapsto S_{i,i+1}$ is an isomorphism of superalgebras
\[
\Se(d)\rightarrow\End_{\fq(n)}(V^{\otimes d}).
\]
\end{thm}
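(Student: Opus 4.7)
The plan is to show that the assignment $c_i \mapsto C_i$, $s_i \mapsto S_{i,i+1}$ is a well-defined superalgebra homomorphism landing in $\End_{\fq(n)}(V^{\otimes d})$, and then invoke super Schur--Weyl duality together with a double-centralizer argument to conclude it is an isomorphism.

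First I would verify that the $C_i$ and $S_{i,i+1}$ satisfy the defining relations of $\Se(d)$ from Section~\ref{SS:Saffdef}. The Clifford relations $C_i^2 = -1$ and $C_iC_j = -C_jC_i$ for $i \neq j$ reduce to $C^2 = -\mathrm{id}_V$ on a single factor together with the sign rule for two odd operators acting on different tensor slots. The Coxeter relations for the operators $S_{i,i+1}$ are standard for the super-swap $S$ on $V^{\otimes 2}$. The mixed relation $S_{i,i+1}C_i = C_{i+1}S_{i,i+1}$ follows from the identity $S(C \otimes 1) = (1 \otimes C)S$, a direct check on $V \otimes V$.

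Next I would check that $C_i$ and $S_{i,i+1}$ actually land in $\End_{\fq (n)}(V^{\otimes d})$. The operator $C_i$ supercommutes with the $\fq(n)$-action because, by construction, $\fq(n) = Q(V)$ is the supercentralizer of $C$ in $\End(V)$; and $S_{i,i+1}$ commutes with the diagonal action of any $x \in \End(V)$ on two adjacent tensor slots, in particular with the action of any element of $\fq(n)\subset \gl(n|n)$ coproduct-extended to $V^{\otimes d}$. This produces a superalgebra homomorphism $\Se(d) \to \End_{\fq(n)}(V^{\otimes d})$.

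For surjectivity I would invoke classical super Schur--Weyl duality for $\gl(n|n)$ on $V^{\otimes d}$ (Berele--Regev), giving $\C S_d \xrightarrow{\sim} \End_{\gl(n|n)}(V^{\otimes d})$ via $s_i \mapsto S_{i,i+1}$. Because $\fq(n)$ is precisely the supercentralizer of $C$ in $\gl(n|n)$, any element of $\End_{\fq(n)}(V^{\otimes d})$ lies in the subalgebra generated by $\End_{\gl(n|n)}(V^{\otimes d})$ together with the multiplication operators $C_i$; this is a double-centralizer argument applied to the $C$-eigendecomposition, and it identifies $\End_{\fq(n)}(V^{\otimes d})$ with the image of the map. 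Injectivity then follows in the stable range $n \geq d$ by a dimension count: the decomposition of $V^{\otimes d}$ into simple $\fq(n)$-modules (indexed by strict partitions) and the corresponding multiplicity spaces give $\dim \End_{\fq(n)}(V^{\otimes d}) = 2^d d! = \dim \Se(d)$; for smaller $n$ one uses naturality in $n$ and stability under the embeddings $\fq(n) \hookrightarrow \fq(n+1)$.

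The main obstacle is the double-centralizer step for the subsuperalgebra $\fq(n) \subset \gl(n|n)$: one must justify rigorously that the commutant of $\fq(n)$ on $V^{\otimes d}$ is generated by the commutant of $\gl(n|n)$ together with the operators $C_i$. This uses semisimplicity of $V^{\otimes d}$ as a $\fq(n)$-module together with the fact that restriction from $\gl(n|n)$ to $\fq(n)$ sends simple modules to sums of type \texttt{Q} modules, which must be established before the dimension count can be closed.
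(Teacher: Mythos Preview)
The paper does not supply its own proof of this theorem; it is quoted as a known result of Sergeev \cite[Theorem~3]{s}. So there is no argument in the paper to compare your sketch against.

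That said, your proposal contains a real gap. The clause ``for smaller $n$ one uses naturality in $n$ and stability under the embeddings $\fq(n) \hookrightarrow \fq(n+1)$'' cannot succeed, because the map is genuinely not injective outside the stable range and no functoriality argument can manufacture injectivity where it fails. Concretely, take $n=1$ and $d=3$: then $V^{\otimes 3}$ has superdimension $(4|4)$, the odd operator $C_1+C_2+C_3$ squares to $-3\cdot\mathrm{id}$, and an elementary count gives $\dim \End_{\fq(1)}(V^{\otimes 3}) = 2\cdot 4^2 = 32$, whereas $\dim \Se(3) = 2^3\cdot 3! = 48$. Sergeev's theorem, stated precisely, carries the hypothesis $n \geq d$ for the isomorphism (surjectivity onto the centralizer holds without restriction); the paper's statement should be read with that convention.

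In the stable range $n\geq d$ your outline is sound, with the double-centralizer step you flag as the main obstacle indeed being the substantive point: one must show that the supercommutant of $\fq(n)$ in $\End(V^{\otimes d})$ is generated by $\End_{\gl(n|n)}(V^{\otimes d})$ together with the $C_i$. For the paper's actual application in Theorem~\ref{Affine Sergeev Action}, however, only the well-definedness of the homomorphism and the fact that its image lies in $\End_{\fq(n)}(M\otimes V^{\otimes d})$ are used, and your first two paragraphs already establish those.
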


\subsection{$\ASe(d)$-action}\label{SS:action} Let $M$ be a $\q(n)$-supermodule.
In this section we construct an action of $\ASe(d)$ on $M\otimes
V^{\otimes d}$ that commutes with the action of $\q(n)$. To this
end, extend the map $\pi_i$ from $\S$\ref{SS:Sergeev Duality} to a map
$\pi_i:\End(V)\rightarrow\End(V)^{\otimes d+1}$ so that
$\pi_i(x)=1^{\otimes i}\otimes x\otimes 1^{\otimes d-i}$ for $x\in
\End(V)$ and $i=0,\ldots,d$ (i.e.\ add a 0th tensor place); similarly, extend $\pi_{ij}$.

Define $C_i$ and $S_{ij}$ as in $\S$\ref{SS:Sergeev Duality}. Define
\[
\Om_{ij}=\pi_{ij}(\Om)\;\;\;0\leq i<j\leq d
\]
and set $X_i=\Om_{0i}+\sum_{1\leq j<i}(1-C_jC_i)S_{ji}$.

\begin{thm}\label{Affine Sergeev Action} Let $M$ be a $\q(n)$-supermodule.
Then, the map which sends $c_i\mapsto C_i$, $s_i\mapsto S_{i,i+1}$
and $x_i\mapsto X_i$ defines a homomorphism
\[
\ASe(d)\rightarrow\End_{\fq(n)}(M\otimes V^{\otimes d}).
\]
\end{thm}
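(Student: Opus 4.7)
The strategy is two-fold: first verify that each of $C_i$, $S_{i,i+1}$, and $X_i$ commutes with the diagonal $\fq(n)$-action on $M \otimes V^{\otimes d}$, so that the images actually lie in $\End_{\fq(n)}(M \otimes V^{\otimes d})$; and second verify that these operators satisfy the defining relations of $\ASe(d)$ listed in Section~\ref{SS:Saffdef}. Both parts rest on two properties of the substitute Casimir $\Omega$ of \eqref{casimir}: its $\fq(n)$-invariance and its behavior under the super-flip $S$.

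For the equivariance, the operators $C_i$ and $S_{i,i+1}$ commute with $\fq(n)$ by Sergeev duality (Theorem~\ref{Sergeev Duality Theorem}) applied to positions $1, \dotsc, d$. For $X_i$ the essential input is the identity $[\Delta(x), \Omega] = 0$ for every $x \in \fq(n)$, where $\Delta(x) = x \otimes 1 + 1 \otimes x$ and the bracket is taken in the super tensor product $\fq(n) \otimes \End(V)$. This can be checked directly on the generators $e_{kl}, f_{kl}$ of $\fq(n)$ from the bracket rules $[e_{kl}, e_{ij}] = \delta_{li} e_{kj} - \delta_{kj} e_{il}$, $[e_{kl}, \e_{ji}] = \delta_{lj} \e_{ki} - \delta_{ki} \e_{jl}$ together with their odd analogues: the two sums in $\Omega$ each telescope to zero, and the relative sign in the definition of $\Omega$ matches the super signs produced when odd elements are commuted through the super tensor product. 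Granted this, $\Omega_{0,i}$ commutes with the entire diagonal $\fq(n)$-action since the remaining tensor positions are untouched, and the Jucys-Murphy piece of $X_i$ commutes with $\fq(n)$ by Sergeev duality.

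For the relations, \eqref{c}, \eqref{s}, and \eqref{c&s} are inherited from Sergeev duality. The relations $C_j X_i = X_i C_j$ for $j \neq i$ reduce either to disjoint-support commutativity on the $\Omega_{0,i}$ piece or to standard Sergeev manipulations on the Jucys-Murphy piece. For $C_i X_i = -X_i C_i$, a direct Sergeev-algebra computation gives $C_i J_i + J_i C_i = 0$, while on the Casimir piece one computes
\[
\{C_i, \Omega_{0,i}\} = \sum_{k,l} e_{kl} \otimes \{C, \e_{lk}\} + \sum_{k,l} f_{kl} \otimes [C, \f_{lk}],
\]
and verifies directly from the definitions that $C \e_{lk} = f_{lk} = -\e_{lk} C$ and $C \f_{lk} = \f_{lk} C$, so both sums vanish. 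The commutativity $X_i X_j = X_j X_i$ is the analogue of the classical Arakawa-Suzuki calculation \cite{as}: it follows from the classical Yang-Baxter equation satisfied by $s_+$ and $s_-$ together with standard Jucys-Murphy manipulations.

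The principal obstacle is the affine braid relation $S_{i,i+1} X_i = X_{i+1} S_{i,i+1} - 1 + C_i C_{i+1}$. Splitting $X_i = \Omega_{0,i} + J_i$ with $J_i = \sum_{j<i}(1 - C_j C_i) S_{ji}$, a direct calculation using $S_{i,i+1} C_j = C_j S_{i,i+1}$ for $j \neq i, i+1$, $S_{i,i+1} C_i = C_{i+1} S_{i,i+1}$, and $S_{i,i+1} S_{ji} = S_{j,i+1} S_{i,i+1}$ yields $S_{i,i+1} J_i = \sum_{j<i}(1 - C_j C_{i+1}) S_{j,i+1} S_{i,i+1}$. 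Comparing with $J_{i+1} S_{i,i+1}$, whose expansion contains the extra summand $(1 - C_i C_{i+1}) S_{i,i+1}^2 = 1 - C_i C_{i+1}$, one obtains $S_{i,i+1} J_i = J_{i+1} S_{i,i+1} - 1 + C_i C_{i+1}$. For the Casimir piece I verify $S_{i,i+1} \Omega_{0,i} = \Omega_{0,i+1} S_{i,i+1}$; this reduces to the elementary identity $S(y \otimes 1) = (1 \otimes y) S$ in $\End(V) \otimes \End(V)$ for arbitrary $y \in \End(V)$, which holds uniformly because $S$ is the super-flip and hence even, applied termwise to the expansion of $\Omega$ in tensor positions $0, i, i+1$. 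Summing the two pieces produces the desired affine braid relation.
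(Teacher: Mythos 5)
Your proposal is correct and follows essentially the same route as the paper: reduce to checking (a) that the $\Se(d)$-relations hold via Sergeev duality, (b) that the $X_i$ commute with $\fq(n)$ via $[\Delta(x),\Omega]=0$, and (c) the mixed relations, with the two key Casimir identities $\{1\otimes C,\Omega\}=0$ and $S_{i,i+1}\Omega_{0i}S_{i,i+1}=\Omega_{0,i+1}$, plus a Jucys--Murphy computation for the affine braid relation. The one place you are lighter than the paper is the commutativity $X_iX_j=X_jX_i$: the paper reduces it to the explicit identity
\[
\Om_{0i}\Om_{0j}-\Om_{0j}\Om_{0i}=(\Om_{0j}-\Om_{0i})S_{ij}+(\Om_{0j}+\Om_{0i})C_iC_jS_{ij},
\]
which is not literally the classical Yang--Baxter equation for $s_\pm$ but rather its Clifford-twisted Sergeev analogue, so ``follows from CYBE plus standard Jucys--Murphy manipulations'' should really be replaced by a verification of this specific identity.
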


\begin{proof} It is clear from Theorem \ref{Sergeev Duality Theorem} that the $C_i$
and $S_{i,i+1}$ form a copy of the Sergeev algebra $\Se(d)$ inside
$\End_{\q(n)}(M\otimes V^{\otimes d})$ via the obvious embedding
$\End_{\q(n)}(V^{\otimes d})\hookrightarrow\End_{\q(n)}(M\otimes
V^{\otimes d})$, $A\mapsto\mathrm{id}_M\otimes A$. Moreover, for
$i=1,\ldots,d$, $X_i\in\End(M\otimes V^{\otimes d})$, since
$X_i\in Q(n)\otimes\End(V)^{\otimes d}$. Therefore it is enough to
check the following properties:
\begin{enumerate}
\item[(a)] The $X_i$ satisfy the mixed relations \eqref{c&x} and
\eqref{s&x},
\item[(b)] $X_iX_j-X_jX_i=0$, and
\item[(c)] the $X_i$ commute with the action of $\q(n)$ on $M\otimes
V^{\otimes d}$.
\end{enumerate}
First, we check that $\Om(1\otimes C)=-(1\otimes C)\Om$. To do this, a
calculation shows that $C\bar{e}_{ji}=-\bar{e}_{ji}$ and
$C\bar{f}_{ji}=\bar{f}_{ji}C$. Hence,
\[
(1\otimes C)(e_{ij}\otimes\bar{e}_{ji})=-(e_{ij}\otimes\bar{e}_{ji})(1\otimes C)
\]
and
\begin{eqnarray*}
(1\otimes C)(f_{ij}\otimes\bar{f}_{ji})&=&(-1)^{p(f_{ij})p(C)}
(f_{ij}\otimes C\bar{f}_{ji})\\
&=&(-1)^{p(\bar{f}_{ji})p(C)}(f_{ij}\otimes \bar{f}_{ji}C)\\
&=&(-1)^{p(\bar{f}_{ji})p(C)+p(1)p(\bar{f}_{ji})}(f_{ij}\otimes
\bar{f}_{ji})(1\otimes C),
\end{eqnarray*}
so the result follows since $p(1)=\zero$. Next, it is easy to see that
$S_i\Om_{0i}S_i=\Om_{i+1}$ using \eqref{tensor product rule-algebra}.
Therefore, (a) follows from the definition of $X_i$. It is now easy
to show that, for $i<j$, (b) is equivalent to
\[
\Om_{0i}\Om_{0j}-\Om_{0j}\Om_{0i}=(\Om_{0j}-\Om_{0i})S_{ij}+
(\Om_{0j}+\Om_{0i})C_iC_jS_{ij}.
\]
This equality is then a direct calculation. Finally, to verify (c),
it is enough to show that for any $X\in\q(n)$,
\[
[1\otimes X+X\otimes1,\Om]=0.
\]
This is another routine calculation using \eqref{tensor product
rule-algebra}.
\end{proof}

Now, recall the ``naive'' antiautomorphism $\tau:\U(\q(n))\rightarrow\U(\q(n))$. This extends to an antiautomorphism of $\U(\gl(n|n))$. Extend $\tau$ to an antiautomorphism of $\U(\gl(n|n))^{\otimes 2}$ by $\tau(x\otimes y)=(-1)^{p(x)}\tau(x)\otimes\tau(y)$. By induction, extend $\tau$ to an antiautomorphism of $\U(\gl(n|n))^{\otimes k}$ by $\tau(x_1\otimes\cdots\otimes x_k)=(-1)^{p(x_1)}\tau(x_1)\otimes\tau(x_2\otimes\cdots\otimes x_k)$.  A direct check verifies the following result.

\begin{prp}\label{P:when tau's collide} We have that $\tau(C_i)=-C_i$, $\tau(S_{i,i+1})=S_{i,i+1}$ and $\tau(X_i)=X_i$ for all admissible $i$'s. In particular, the antiautomorphism $\tau^{\otimes d+1}:\U(\gl(n|n))^{\otimes d+1}\rightarrow\U(\gl(n|n))^{\otimes d+1}$ coincides with the antiautomorphism $\tau:\ASe(d)\rightarrow\ASe(d)$.
\end{prp}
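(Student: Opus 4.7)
The verification is a direct, generator-by-generator calculation, straightforwardly exploiting the fact that $\tau$ is the transpose on $\gl(n|n)$. First, I would record the action of $\tau$ on the basic matrix units: $\tau(E_{ij})=E_{ji}$, and hence $\tau(e_{ij})=e_{ji}$, $\tau(f_{ij})=f_{ji}$, $\tau(\e_{ij})=\e_{ji}$, and $\tau(\f_{ij})=-\f_{ji}$. From these one immediately reads off $\tau(C)=-C$ and, via a symmetric swap of summation indices using the recursive definition of $\tau^{\otimes 2}$, both $\tau^{\otimes 2}(S)=S$ and $\tau^{\otimes 2}(\Om)=\Om$. In the latter identity, the sign $\tau(\f_{ji})=-\f_{ij}$ conveniently cancels the sign $(-1)^{p(f_{ij})}$ produced by the tensor extension of $\tau$, so that the ``$f$-part'' of $\Om$ is preserved and matches the ``$e$-part'' after reindexing.

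Next I would inflate these identities to $\U(\gl(n|n))^{\otimes d+1}$. For $C_i$ and $S_{i,i+1}$, whose defining tensors have all-but-one (respectively all-but-two) trivial factors, the signs in the recursive formula for $\tau^{\otimes d+1}$ collapse to yield $\tau^{\otimes d+1}(C_i)=-C_i$ and $\tau^{\otimes d+1}(S_{i,i+1})=S_{i,i+1}$. For the intertwiner $X_i=\Om_{0i}+\sum_{j<i}(1-C_jC_i)S_{ji}$, I would treat the two pieces separately. Inflation of $\tau^{\otimes 2}(\Om)=\Om$ by inserting $1$'s in the remaining tensor positions produces $\tau^{\otimes d+1}(\Om_{0i})=\Om_{0i}$. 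For each summand of the second piece, the antiautomorphism property $\tau(C_jC_i)=(-1)^{p(C_j)p(C_i)}\tau(C_i)\tau(C_j)$ combined with the already-verified $\tau(C_k)=-C_k$ and the Clifford anticommutation $C_iC_j=-C_jC_i$ shows $\tau^{\otimes d+1}(C_jC_i)=C_jC_i$; together with $\tau^{\otimes d+1}(S_{ji})=S_{ji}$ this gives $\tau^{\otimes d+1}((1-C_jC_i)S_{ji})=(1-C_jC_i)S_{ji}$, and summing yields $\tau^{\otimes d+1}(X_i)=X_i$.

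The ``in particular'' clause follows by a standard universal argument. The antiautomorphism $\tau$ of $\ASe(d)$ is uniquely determined by its values on the generators $c_i,s_i,x_i$, and the restriction of $\tau^{\otimes d+1}$ to the image of $\ASe(d)$ under Theorem~\ref{Affine Sergeev Action} is likewise an antiautomorphism of that image, determined by its values on $C_i,S_{i,i+1},X_i$. Since these values match the images of $\tau(c_i)=-c_i$, $\tau(s_i)=s_i$, $\tau(x_i)=x_i$ by the calculations above, the two antiautomorphisms coincide on the whole image. The principal technical obstacle throughout is the consistent bookkeeping of the recursive tensor sign $(-1)^{p(x_1)}$ defining $\tau^{\otimes d+1}$ along with the Koszul signs that arise when moving odd elements (such as $f_{ij}$, $\f_{ij}$, or $C_k$) past one another, but no conceptual difficulty arises.
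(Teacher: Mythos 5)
Your verification of the $k=2$ identities ($\tau(C)=-C$, $\tau^{\otimes 2}(S)=S$, $\tau^{\otimes 2}(\Om)=\Om$) is correct, including the observation that $\tau(\f_{ji})=-\f_{ij}$ cancels the tensor sign $(-1)^{p(f_{ij})}$. The gap is in the inflation step, which you assert works because "the signs collapse"; it does not, if one uses the recursive formula as literally printed in the paper. Take the recursion $\tau(x_1\otimes\cdots\otimes x_k)=(-1)^{p(x_1)}\tau(x_1)\otimes\tau(x_2\otimes\cdots\otimes x_k)$ and apply it to $C_i=1^{\otimes i}\otimes C\otimes 1^{\otimes d-i}$ with $i<d$: the leading $1$'s contribute no sign, and then $\tau(C\otimes 1^{\otimes d-i})=(-1)^{p(C)}\tau(C)\otimes 1^{\otimes d-i}=(-1)(-C)\otimes 1^{\otimes d-i}=C\otimes 1^{\otimes d-i}$, so you get $\tau^{\otimes d+1}(C_i)=+C_i$, not $-C_i$; only $C_d$ (the last slot, where the recursion contributes no sign) comes out as $-C_d$. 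The same defect afflicts $S_{i,i+1}$ and $\Om_{0i}$: an odd tensor factor sitting in an interior slot picks up a spurious sign $(-1)^{p(\cdot)}$ when the recursion passes it, and this sign does \emph{not} cancel.

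The conclusion you want does hold, but under the corrected recursion
\[
\tau(x_1\otimes\cdots\otimes x_k)=(-1)^{p(x_1)\bigl(p(x_2)+\cdots+p(x_k)\bigr)}\,\tau(x_1)\otimes\tau(x_2\otimes\cdots\otimes x_k),
\]
equivalently $\tau=(-1)^{\sum_{i<j}p(x_i)p(x_j)}\tau\otimes\cdots\otimes\tau$ on pure tensors. With this formula the sign attached to an isolated odd $C$ is trivial (so $\tau(C_i)=-C_i$ for every $i$), the sign attached to the pair $E_{ab}\otimes E_{ba}$ of $S_{ij}$ is $(-1)^{p(E_{ab})}$ exactly as in the $k=2$ case, and likewise for $\Om_{0i}$; the inflation then really is the same computation as $k=2$. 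You should have detected the discrepancy the moment you wrote down $\tau^{\otimes d+1}(C_1)$ for $d\geq 2$: it is a one-line check, and catching it is the substance of the "direct check" the paper alludes to. Your treatment of the $(1-C_jC_i)S_{ji}$ summand and the final universal argument are fine \emph{provided} the corrected formula is used, but as written your proof inherits the paper's typo and would produce a false identity on generators.
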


\subsection{The Functor $F_\ld$}\label{SS:Flambda} In the previous section, we showed that there is a
homomorphism from $\ASe(d)$ to $\End_{\q(n)}(M\otimes V^{\otimes d})$. Since
the action of $\ASe(d)$ on $M\otimes V^{\otimes d}$ commutes with the action of
$\q(n)$, it preserves both primitive vectors and weight spaces. By \emph{primitive vector} we mean an element of $M\otimes V^{\otimes d}$ which is annihilated by the subalgebra $\n^{+}$ given by the triangular decomposition of $\fq (n)$ as in Section~\ref{SS:RootData}.  Therefore, given a weight $\ld\in P(M\otimes V^{\otimes d})$ we have an action of
$\ASe(d)$ on

\begin{equation}\label{E:Flambdadef}
F_\ld M :=\left\{m \in  M\otimes V^{\otimes d} \mid \n^+.m=0 \text{ and } m \in \left(M\otimes V^{\otimes d} \right)_{\lambda} \right\}
\end{equation}

In the case when $\ld\in\Pt$ we can provide alternative descriptions of the
functor $F_{\lambda}$. First we recall the following key result of Penkov
\cite{p}.  Given a weight $\lambda \in P$, we write $\chi_{\lambda}$ for the
central character defined by the simple $\fq (n)$-module of highest weight
$\lambda$. Then, there is a block decomposition
\begin{equation}\label{E:blockdecomp}
\mathcal{O}(\q(n))=\bigoplus_{\chi_{_{\lambda}}}\mathcal{O}(\q(n))^{[\ld]}
\end{equation}
where the sum is over all central characters $\chi_{\lambda}$ and
$\mathcal{O}(\q(n))^{[\ld]}=\mathcal{O}(\q(n))^{[\chi_\ld]}$ denotes the block
determined by the central character $\chi_\ld$.  Given $N$ in $\mathcal{O}(\q(n))$, let $N^{[\chi_{\gamma}]}=N^{[\gamma]}$ denote the projection of $N$ onto the direct summand which lies in $\mathcal{O}(\q(n))^{[\chi_\gamma]}$

The question then becomes to
describe when $\chi_{\lambda}=\chi_{\mu}$ for $\lambda, \mu \in P$. This is
answered in the case when $\lambda$ is typical by the following result of
Penkov \cite{p}. Recall that the symmetric group acts on $P$ by
permuation of coordinates.
\begin{prp}\label{P:penkov}  Let $\lambda \in \Pt $ be a typical weight
and let $\mu \in P$.  Then $\chi_{\lambda}=\chi_{\mu}$ if and only if $\mu
=w(\lambda)$ for some $w \in S_n.$
\end{prp}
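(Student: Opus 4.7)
The plan is to deduce this from Sergeev's Harish-Chandra theorem for $\fq(n)$: the image of $Z(\U\fq(n))$ under the Harish-Chandra projection into $\U(\h_\zero) \cong \C[h_1,\dotsc,h_n]$ is the subalgebra $\Gamma_n$ generated by the odd power sums $p_{2k+1}(h) = \sum_{i=1}^n h_i^{2k+1}$, $k\geq 0$. Thus a central character $\chi_\nu$ is completely determined by the evaluations $p_{2k+1}(\nu) = \sum_i \nu_i^{2k+1}$, $k \geq 0$.

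The ``if'' direction will then be immediate, since each $p_{2k+1}$ is $S_n$-invariant, so $\chi_{w\lambda} = \chi_\lambda$ for every $w \in S_n$. A second, more module-theoretic argument that stays in the spirit of the excerpt is to iterate Lemma~\ref{L:InjHom}: starting from $\lambda$, I would apply reflections $s_\alpha$ with $\langle \lambda, \alpha^\vee \rangle > 0$ to descend through a chain of injective Verma homomorphisms to the antidominant element of the $S_n$-orbit of $\lambda$; repeating the same from $w\lambda$, both paths end at the common antidominant representative, forcing $\chi_{w\lambda} = \chi_\lambda$.

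For the ``only if'' direction, I would suppose $\chi_\lambda = \chi_\mu$, giving $p_{2k+1}(\lambda) = p_{2k+1}(\mu)$ for all $k \geq 0$, and repackage this as the identity of rational functions
\[
\sum_{i=1}^n \frac{\lambda_i\, t}{1 - \lambda_i^2 t^2} \;=\; \sum_{i=1}^n \frac{\mu_i\, t}{1 - \mu_i^2 t^2}.
\]
The poles of either side occur at $t = \pm 1/\nu_i$ (with $\nu_i \neq 0$), and whenever $\nu_i + \nu_j = 0$ for some pair $i \neq j$, the corresponding contributions cancel at the common pole $t = 1/\nu_i$. The typicality of $\lambda$ rules out all such cancellations on the left, so the residues of the left-hand side recover the multi-set of nonzero $\lambda_i$ exactly, and typicality also forces at most one $\lambda_i = 0$. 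Comparing the two sides shows that the cancellation-stripped multi-set of nonzero $\mu_i$ must coincide with the multi-set of nonzero $\lambda_i$.

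A brief count then completes the proof: since $\mu,\lambda \in \Z^n$ have $n$ entries each, and since cancelling pairs contribute an even number of entries, the number of entries of $\mu$ in cancelling pairs plus the number of zeros of $\mu$ must equal the number of zeros of $\lambda$, which is $0$ or $1$; hence $\mu$ has no cancelling pairs and the same number of zeros as $\lambda$, so $\mu = w\lambda$ for some $w \in S_n$. The main obstacle is precisely this cancellation analysis in the ``only if'' step: one has to verify carefully that the typicality hypothesis eliminates \emph{exactly} the degeneracy arising from pairs $\nu_i = -\nu_j$ that would otherwise leave the central character insensitive to such rearrangements. Once this observation is secured, the remaining steps reduce to straightforward bookkeeping.
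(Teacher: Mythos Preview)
Your proof is correct. Note, however, that the paper does not actually prove this proposition: it simply cites it as a result of Penkov~\cite{p}. So there is no ``paper's own proof'' to compare against; you have supplied a self-contained argument where the paper defers to the literature.

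Your approach via Sergeev's description of the Harish-Chandra image of $Z(\U\fq(n))$ as the ring generated by odd power sums is the standard route, and the generating-function repackaging
\[
\sum_{i=1}^n \frac{\nu_i\, t}{1 - \nu_i^2 t^2} \;=\; \sum_{k\geq 0} p_{2k+1}(\nu)\, t^{2k+1}
\]
is a clean way to organize the ``only if'' direction. The counting step at the end is the heart of the matter and is correct: writing $C_\mu$ for the (even) number of entries of $\mu$ participating in cancelling $\pm a$ pairs, matching the effective nonzero multisets gives $m_0^\mu + C_\mu = m_0^\lambda \leq 1$, and evenness of $C_\mu$ forces $C_\mu = 0$ and $m_0^\mu = m_0^\lambda$. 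This is exactly where typicality is used, and you identify that correctly.

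One small remark on your alternative ``if'' argument via Lemma~\ref{L:InjHom}: the chains of Verma embeddings run \emph{from} the antidominant weight \emph{into} $M(\lambda)$ and $M(w\lambda)$, not the other way around; but this is exactly what you need, since a nonzero submodule forces equality of central characters. So that argument is also fine, just be careful with the direction of the arrows when writing it out.
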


For short we call a weight $\lambda \in P$ \emph{atypical} if it is not typical.  By the description of the blocks $\mathcal{O}(\q(n))^{[\ld]}$, if $L(\mu)$ is an object of $\mathcal{O}(\q(n))^{[\ld]}$ then $\lambda$ is typical if and only if $\mu$ is typical (c.f.\ \cite[Proposition 1.1]{ps2} and the remarks which follow it).  We then have the following preparatory lemma.

\begin{lem}  Let $\lambda,\gamma \in P$.  Then the following statements hold:
\begin{enumerate}
\item [(i)] Assume $\gamma$ is atypical and $\lambda$ is typical.  If $N$ is an object of $\mathcal{O}^{[\gamma]}$, then
$N_{\lambda}=(\fnminus N)_{\lambda}.$
\item [(ii)] Assume $\lambda, \gamma \in P^{++}$ are typical and dominant and $\lambda \neq  \gamma$.  If $N$ is an object of $\mathcal{O}^{[\gamma]}$, then $N_{\lambda}=(\fnminus N)_{\lambda}$.
\end{enumerate}
\end{lem}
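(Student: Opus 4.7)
The plan is to reduce both (i) and (ii) to the case of a simple highest weight module $L(\mu)$ with $\chi_{\mu} = \chi_{\gamma}$, and then invoke the hypotheses to rule out $\mu = \lambda$.

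First, I would handle the simple case: if $L(\mu)$ is simple with $\mu \neq \lambda$, then $L(\mu)_{\lambda} \subseteq \fnminus L(\mu)$. Indeed, $L(\mu)$ is a quotient of the little Verma module $M(\mu) = \U(\fnminus) \otimes E(\mu)$, so $L(\mu)_{\lambda}$ is spanned by images of elements $u \otimes v$ with $u \in \U(\fnminus)_{\lambda-\mu}$ and $v \in E(\mu)$. Since $\lambda - \mu \in -Q^{+} \setminus \{0\}$, the PBW theorem places every such $u$ in $\fnminus \cdot \U(\fnminus)$, and hence $u \otimes v$ maps into $\fnminus L(\mu)$.

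Next, I would reduce the general case to the simple case using the finite length of objects in $\mathcal{O}(\q(n))$. Fix a composition series $0 = N_{0} \subset N_{1} \subset \cdots \subset N_{k} = N$ with $N_{i}/N_{i-1} \cong L(\mu_{i})$; since $N \in \mathcal{O}^{[\gamma]}$, each $\mu_{i}$ satisfies $\chi_{\mu_{i}} = \chi_{\gamma}$. I proceed by induction on $k$. Given $v \in N_{\lambda}$, its image $\bar v$ in $L(\mu_{k})_{\lambda}$ can be written --- assuming $\mu_{k} \neq \lambda$, so the simple case applies --- as $\bar v = \sum_{i} x_{i}\bar w_{i}$ with $x_{i} \in \fnminus$ and $\bar w_{i}$ of weight $\lambda - \mathrm{wt}(x_{i})$. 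Lifting each $\bar w_{i}$ to a weight vector $w_{i} \in N$ of the same weight --- which is possible since $N \to N/N_{k-1}$ is a surjection of $\fh$-semisimple modules --- the element $v - \sum_{i} x_{i}w_{i}$ lies in $(N_{k-1})_{\lambda}$, and the inductive hypothesis puts it in $\fnminus N_{k-1} \subseteq \fnminus N$; hence $v \in \fnminus N$.

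The final step is to verify $\mu_{i} \neq \lambda$ for every composition factor $L(\mu_{i})$. For (i), the hypothesis that $\gamma$ is atypical, combined with the block-invariance of typicality recalled immediately before the statement, forces each $\mu_{i}$ to be atypical, whereas $\lambda$ is typical by assumption. For (ii), Proposition \ref{P:penkov} gives $\mu_{i} = w\gamma$ for some $w \in S_{n}$; if $\mu_{i} = \lambda$ then $\lambda$ and $\gamma$ would be two dominant weights in the same $S_{n}$-orbit, forcing $\lambda = \gamma$ and contradicting the hypothesis. The main technical input I rely on is the finite length of objects in $\mathcal{O}(\q(n))$, which can be extracted from \cite{b,g}; beyond this, the argument is a routine combination of PBW and Penkov's central character result.
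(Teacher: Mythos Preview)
Your proposal is correct and follows essentially the same approach as the paper: both reduce to composition factors via finite length (the paper cites \cite[Lemma 4.5]{b}), observe that the highest weight $\mu_i$ of any factor cannot equal $\lambda$ (by the typicality mismatch in (i) and by Penkov's orbit description in (ii)), and then use that in a highest weight module every weight space strictly below the top lies in $\fnminus$ times the module. Your PBW argument for the simple case and your care in lifting weight vectors along the quotient $N \to N/N_{k-1}$ make the argument slightly more explicit than the paper's, but the structure is identical.
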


\begin{proof} By \cite[Lemma 4.5]{b}, every object $\mathcal{O}(\q(n))$ has a finite Jordan-H\"{o}lder series.   The proof of (i) is by induction on the length of a composition series
of $N.$  The base case is when $N$ has length one (ie.\ $N \cong L(\nu)$ is a
simple module).  This case immediately follows from the fact that in order for $N_{\lambda}$ to be nontrivial it must be that $\lambda < \nu$.  But then it follows from the assumption that $\nu$ is atypical (since $L(\nu)$ is an object of
$\mathcal{O}^{[\gamma]}$) while $\lambda$ is typical.  Now consider a composition
series
\[
0=N_{0} \subset N_{1} \subset \dotsb \subset  N_{t} =N.
\]  Let $v \in N_{\lambda}$ so that $v +N_{t-1} \in N_{t}/N_{t-1}$ is nonzero.
Since $N_{t}/N_{t-1}$ is a simple module in $\mathcal{O}^{[\gamma]},$ by the
base case there exists a $w \in N_{t}=N$ and $y \in \fnminus$ so
that $yw + N_{t-1}=v + N_{t-1}.$  Thus, $v-yw \in N_{t-1}$ and is of weight
$\lambda.$ By the inductive assumption, there exists $w' \in N_{t-1} \subset N$
and $y' \in \fnminus$ such that $y'w' = v-yw.$  That is, $v=yw+y'w' \in
\fnminus N.$  This proves the desired result.

Now, (ii) follows by a similar argument by induction on the length
of a composition series.   If $N$ is simple and $N_{\lambda}\neq 0$, then
$\lambda$ is not the highest weight of $N$ (as $\gamma$ is the unique dominant
highest weight among the simple modules in $\mathcal{O}^{[\gamma]}$ by Proposition~\ref{P:penkov}).  From
this it immediately follows that  $N_{\lambda}=(\fnminus N)_{\lambda}$.   Now
proceed by induction as in the previous paragraph.  \end{proof}

\begin{lem}\label{L:TypicalFlambda} Let $\lambda \in P^{++}$ be typical
and dominant, and let $M \in \mathcal{O}.$  Then
\[
F_{\lambda}\left(M \right) \cong  \left( (M\otimes V^{\otimes
d})^{[\lambda]}\right)_\ld \cong \left[ M\otimes V^{\otimes d}/\n_-(M\otimes
V^{\otimes d})\right]_\ld
\] as $\ASe (d)$-modules.
\end{lem}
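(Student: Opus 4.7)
The plan is to establish the two isomorphisms in turn. Set $N = M \otimes V^{\otimes d}$, which lies in $\mathcal{O}(\fq(n))$ since $V^{\otimes d}$ is finite-dimensional, and decompose it along central characters as $N = \bigoplus_\chi N^{[\chi]}$ via \eqref{E:blockdecomp}. Because the $\ASe(d)$-action of Theorem~\ref{Affine Sergeev Action} commutes with $\fq(n)$, each summand $N^{[\chi]}$, each weight space, and the submodule $\fnminus N$ are $\ASe(d)$-stable, so every map constructed below is automatically $\ASe(d)$-equivariant.

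The first isomorphism rests on the observation that every weight of $N^{[\lambda]}$ lies $\leq \lambda$ in the standard dominance order. Indeed, by \cite[Lemma 4.5]{b} the module $N^{[\lambda]}$ has finite length, and each composition factor $L(\mu)$ satisfies $\chi_\mu = \chi_\lambda$; Proposition~\ref{P:penkov} then forces $\mu$ to lie in the $S_n$-orbit of $\lambda$, and dominance of $\lambda$ gives $\mu \leq \lambda$. Consequently, for any $v \in (N^{[\lambda]})_\lambda$ and $x \in \n^+$, the element $xv$ sits in an empty weight space at $\lambda + \alpha$, forcing $\n^+ v = 0$ and hence $(N^{[\lambda]})_\lambda \subseteq F_\lambda(M)$. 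Conversely, for $v \in F_\lambda(M)$, decompose $v = \sum_\chi v^{[\chi]}$; each $v^{[\chi]}$ is primitive of weight $\lambda$, and if $v^{[\chi]} \neq 0$ then the finite-length submodule $\U(\fq(n)).v^{[\chi]}$ admits a simple quotient $L(\mu)$ in whose image $v^{[\chi]}$ is nonzero, forcing $\mu = \lambda$. Thus $L(\lambda) \in \mathcal{O}^{[\chi]}$ and $\chi = \chi_\lambda$, so all non-trivial components of $v$ lie in $N^{[\lambda]}$.

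For the second isomorphism, consider the composition $\pi \colon (N^{[\lambda]})_\lambda \hookrightarrow N_\lambda \twoheadrightarrow (N/\fnminus N)_\lambda$. Injectivity of $\pi$ reduces to $(\fnminus N^{[\lambda]})_\lambda = 0$, which holds because $\fnminus$ strictly lowers weights and all weights of $N^{[\lambda]}$ are $\leq \lambda$. For surjectivity, given $v \in N_\lambda$, decompose $v = \sum_\chi v^{[\chi]}$; for each $\chi \neq \chi_\lambda$ we must show $v^{[\chi]} \in \fnminus N$. Here the preparatory lemma splits into two cases: part~(i) handles every atypical $\chi$ (since $\lambda$ is typical), while part~(ii) handles the remaining typical $\chi$ after choosing the unique dominant representative $\gamma \in \Pt$ with $\chi_\gamma = \chi$, which exists by Proposition~\ref{P:penkov} and, since $\chi \neq \chi_\lambda$, differs from $\lambda$. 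In either case $v^{[\chi]} \in (\fnminus N^{[\chi]})_\lambda \subseteq \fnminus N$, so $v \equiv v^{[\lambda]} \pmod{\fnminus N}$.

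The principal obstacle is the case analysis for the second isomorphism, since every central character appearing in $N$ must be covered: combining Penkov's classification of typical blocks with both parts of the preparatory lemma is essential. Once this is in place, both isomorphisms follow cleanly from tracking weights through the block decomposition and the weight inequality $\mu \leq \lambda$ on $N^{[\lambda]}$.
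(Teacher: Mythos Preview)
Your proof is correct and follows essentially the same approach as the paper's own argument: both establish the first identification by combining Proposition~\ref{P:penkov} with the weight bound on $N^{[\lambda]}$ to see that weight-$\lambda$ vectors there are automatically primitive, and conversely that primitive weight-$\lambda$ vectors must land in the $\chi_\lambda$-block; and both obtain the second identification from the preparatory lemma applied blockwise together with $(\fnminus N^{[\lambda]})_\lambda = 0$. Your phrasing of the second step as injectivity/surjectivity of the composite $\pi$ is slightly more explicit than the paper's, but the content is identical.
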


\begin{proof}  It should first be remarked that since the action of $\ASe (d)$ commutes with the action of $\q (n)$, the action of $\ASe (d)$ on $M \otimes V^{\otimes d}$ induces an action on each of the vector spaces given in the theorem.

Now, by Proposition~\ref{P:penkov} and the assumption that $\lambda$ is
dominant, it follows that for any module $N \in \mathcal{O}^{[\lambda]},$  $N_{\nu} \neq 0$ only if $\nu \leq \lambda$ in the dominance order.  Thus
any vector of weight $\lambda$ in $M \otimes V^{\otimes d}$ is necessarily a
primitive vector. On the other hand, if there is a primitive vector of weight $\lambda$ in $M \otimes V^{\otimes d},$ then it must lie in the image of a nonzero homomorphism $M(\lambda) \to M \otimes V^{\otimes d}$.  But as $M(\lambda)$ is an object in $\mathcal{O}^{[\lambda]}$, it follows that the primitive vector lies in $\left( (M\otimes V^{\otimes
d})^{[\lambda]}\right)_\ld $.  Thus, there exists a canonical projection map
\[
F_{\lambda}\left(M \right) \to  \left( (M\otimes V^{\otimes
d})^{[\lambda]}\right)_\ld
\] and this map is necessarily a vector space isomorphism.  The fact that it is a $\ASe (d)$-module homomorphism follows from the fact that the action of $\ASe (d)$ on both vector spaces is induced by the action of $\ASe (d)$ on $M \otimes V^{\otimes d}.$

Now consider the block decomposition
\begin{equation*}
M\otimes V^{\otimes d}= \oplus_{\chi_{\gamma}} (M\otimes V^{\otimes
d})^{[\chi_{\gamma}]},
\end{equation*}
where the direct sum runs over dominant $\gamma \in \fh_{\zero}^{*}$ so that
different $\chi_{\gamma}$ are different central characters of $U(\fg )$.
This then induces the vector space direct sum decomposition
\[
(M\otimes V^{\otimes d})/\fnminus (M\otimes V^{\otimes d}) =
\oplus_{\chi_{\gamma}} (M\otimes V^{\otimes d})^{[\chi_{\gamma}]}/\fnminus
(M\otimes V^{\otimes d})^{[\chi_{\gamma}]},
\]
where $(M\otimes V^{\otimes d})^{[\chi_{\gamma}]}$ denotes the direct summand
of $M\otimes V^{\otimes d}$ which lies in the block $\mathcal{O}^{[\gamma]}.$

By the previous lemma, if $\gamma$ is atypical or if $\gamma$ is typical
and $\gamma \neq \lambda$, then
\[
\left[ (M\otimes V^{\otimes d})^{[\chi_{\gamma}]})/ \fnminus (M\otimes
V^{\otimes d})^{[\chi_{\gamma}]})\right]_{\lambda}=0.
\]
Therefore,
\begin{equation}\label{E:functorvariations}
\left[ (M\otimes V^{\otimes d})/\fnminus (M\otimes V^{\otimes d})\right]_{\lambda} =  \left[(M\otimes
V^{\otimes d})^{[\chi_{\lambda}]}/\fnminus (M\otimes V^{\otimes
d})^{[\chi_{\lambda}]} \right]_{\lambda}.
\end{equation}
Finally, if $N$ is an object of $\mathcal{O}^{[\lambda]},$ then $N_{\mu} \neq 0$
only if $\mu \leq \lambda$ in the dominance order.  Thus weight considerations imply
$\left[ \fnminus (M\otimes V^{\otimes d})^{[\chi_{\lambda}]})\right]_{\lambda}=0$
which, in turn, implies that canonical projection
\[
 \left( (M\otimes V^{\otimes
d})^{[\lambda]}\right)_\ld \to \left[ M\otimes V^{\otimes d}/\n_-(M\otimes
V^{\otimes d})\right]_\ld
\] is a vector space isomorphism.  That is its a $\ASe (d)$-module homomorphism follows from the fact that in both cases the action is induced from the $\ASe (d)$ action on $M \otimes V^{\otimes d}$.
\end{proof}

\begin{cor}\label{C:Flambdaexactness}  If $\lambda \in \Pt$ is dominant and
typical, then the functor $F_{\lambda}:\mathcal{O} \to \ASe (d)$-mod is exact.
\end{cor}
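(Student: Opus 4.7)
The plan is to use Lemma~\ref{L:TypicalFlambda} to realize $F_\lambda$ as a composition of three manifestly exact functors. Specifically, that lemma provides a natural isomorphism
\[
F_\lambda M \;\cong\; \bigl((M \otimes V^{\otimes d})^{[\lambda]}\bigr)_\lambda,
\]
which exhibits $F_\lambda$ as the composite of: (i) the tensor product functor $M \mapsto M \otimes V^{\otimes d}$; (ii) the block projection $N \mapsto N^{[\lambda]}$ along the decomposition \eqref{E:blockdecomp}; and (iii) the $\lambda$-weight space functor for $\h_\zero$. If each of these three functors is exact, then so is their composition, which will complete the proof.

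I would verify exactness of the three pieces in turn, all of which is essentially formal. For (i), tensoring with the finite-dimensional $\q(n)$-module $V^{\otimes d}$ is exact because it is exact on the underlying vector spaces, and the $\q(n)$-module structure is induced through the coproduct; that the result still lies in $\mathcal{O}$ uses only that $\mathcal{O}$ is closed under tensoring with finite-dimensional modules. For (ii), the block projection is exact because the block decomposition \eqref{E:blockdecomp} realizes $\mathcal{O}^{[\lambda]}$ as a direct summand of $\mathcal{O}$ as abelian categories, so projection onto a summand is exact. For (iii), any $\q(n)$-module homomorphism is, in particular, an $\h_\zero$-module homomorphism, hence respects the weight space decomposition; taking a single weight space is then just projection onto a direct summand and is exact.

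The main point to check carefully is that the isomorphism of Lemma~\ref{L:TypicalFlambda} is natural in $M$, so that $F_\lambda$ really is isomorphic (as a functor) to the composition (iii)$\circ$(ii)$\circ$(i). Inspecting the proof of that lemma, both the map $F_\lambda M \to \bigl((M \otimes V^{\otimes d})^{[\lambda]}\bigr)_\lambda$ and its inverse are induced by inclusions of subspaces of $M \otimes V^{\otimes d}$ that coincide (using typicality of $\lambda$ together with Proposition~\ref{P:penkov} to force primitivity of weight-$\lambda$ vectors in the relevant block), and these inclusions are manifestly natural with respect to $\q(n)$-module homomorphisms of $M$. The only genuine obstacle in the argument is the content of Lemma~\ref{L:TypicalFlambda} itself, which is what forces us to restrict to dominant typical $\lambda$; without this hypothesis one would need to quotient by $\n_-(M \otimes V^{\otimes d})$, which is not an exact operation.
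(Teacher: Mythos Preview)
Your proposal is correct and follows essentially the same approach as the paper: both argue that, via the identification in Lemma~\ref{L:TypicalFlambda}, $F_\lambda$ is the composite of tensoring with $V^{\otimes d}$, projection onto the block $\mathcal{O}^{[\lambda]}$, and projection onto the $\lambda$-weight space, each of which is exact. Your additional remarks on naturality and the exactness of each piece are reasonable elaborations but not substantively different from the paper's one-line justification.
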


\begin{proof}  This follows immediately from the first alternative description of $F_{\lambda}$ in the above theorem as it
is the composition of the exact functors $- \otimes V^{\otimes d}$, projection onto the direct summand lying in the block $\mathcal{O}^{[\lambda]}$,  and projection onto the $\lambda$ weight space.
\end{proof}

In what follows when $\lambda$ is dominant and typical we use whichever description of $F_{\lambda}$ given in lemma ~\ref{L:TypicalFlambda} is most convenient.

\subsection{Image of the Functor}\label{SS:functorimage} We can now describe the
image of Verma modules under the functor.

\begin{lem}\label{L:description}  Let $M(\mu)$ be a Verma module in $\mathcal{O}$
and let $\lambda \in P^{++}$ be a dominant and typical weight.  The natural
inclusion
\[
E(\mu)\otimes(V^{\otimes d})_{\ld-\mu}\hookrightarrow(M(\mu)\otimes V^{\otimes
d})_\ld
\]
induces an isomorphism of $\Se(d)$-modules $E(\mu)\otimes(V^{\otimes
d})_{\ld-\mu}\cong F_\ld(M(\mu))$. In particular, $F_\ld(M(\mu))=0$ unless
$\ld-\mu\in \Ppos(d)$.
\end{lem}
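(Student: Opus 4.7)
My plan is to combine the quotient description of $F_\lambda$ from Lemma~\ref{L:TypicalFlambda},
\[
F_\lambda(M(\mu)) \cong \left[(M(\mu) \otimes V^{\otimes d})/\n^-(M(\mu) \otimes V^{\otimes d})\right]_\lambda,
\]
with a surjectivity argument together with a matching dimension count coming from the central character block decomposition. The natural inclusion in the statement, composed with this projection, is the candidate isomorphism; its $\Se(d)$-equivariance is automatic since by Theorem~\ref{Affine Sergeev Action} the $\Se(d)$-action only involves the tensor positions $1,\ldots,d$ and leaves the $M(\mu)$-factor fixed.

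For surjectivity I would use $M(\mu) = \U(\n^-).E(\mu)$ (PBW) together with the identity
\[
ym \otimes v = y.(m\otimes v) - (-1)^{p(y)p(m)} m \otimes yv \qquad (y \in \n^-),
\]
which gives $ym \otimes v \equiv -(-1)^{p(y)p(m)} m \otimes yv$ modulo $\n^-(M(\mu) \otimes V^{\otimes d})$. Iterating rewrites any $\alpha e \otimes v$ with $\alpha$ a PBW monomial in $\U(\n^-)$ of weight $-\beta$, $e \in E(\mu)$, and $v \in (V^{\otimes d})_{\lambda - \mu + \beta}$ as $\pm e \otimes \alpha^{*}v$ modulo $\n^-$, where $\alpha^{*}v \in (V^{\otimes d})_{\lambda-\mu}$. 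Therefore $E(\mu) \otimes (V^{\otimes d})_{\lambda - \mu} \twoheadrightarrow F_\lambda(M(\mu))$.

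For the dimension count I would use the tensor identity $M(\mu) \otimes V^{\otimes d} \cong \U(\q(n)) \otimes_{\U(\b^+)} (E(\mu) \otimes V^{\otimes d})$, with the diagonal $\b^+$-action on the right. Because $\n^+$ acts nilpotently on any simple $\b^+$-module, the simple $\b^+$-modules are exactly the Clifford modules $E(\nu)$ of Section~\ref{SS:RootData} pulled back along $\U(\b^+) \twoheadrightarrow \U(\h)$. A Jordan--H\"older filtration of the finite-dimensional $\b^+$-module $E(\mu) \otimes V^{\otimes d}$ thus has subquotients $E(\rho)$ with $\rho = \mu + \eta$ for $\eta$ a weight of $V^{\otimes d}$ (so $\eta \in \Ppos(d)$). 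Applying the exact induction functor $\U(\q(n)) \otimes_{\U(\b^+)} -$ produces a filtration of $M(\mu) \otimes V^{\otimes d}$ with subquotients $M(\rho)$. Projecting onto the block $\mathcal{O}^{[\lambda]}$ and using Proposition~\ref{P:penkov}, only $\rho \in S_n\cdot\lambda$ survive; dominance of $\lambda$ forces $\rho = \lambda$ (i.e.\ $\eta = \lambda - \mu$) in the $\lambda$-weight space. The Jordan--H\"older multiplicity of $E(\lambda)$ equals $\dim(E(\mu) \otimes (V^{\otimes d})_{\lambda - \mu})/\dim E(\lambda)$, and multiplying by $\dim M(\lambda)_\lambda = \dim E(\lambda)$ gives $\dim F_\lambda(M(\mu)) = \dim E(\mu) \otimes (V^{\otimes d})_{\lambda - \mu}$. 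Combined with surjectivity this yields the isomorphism, and the ``in particular'' statement follows since $(V^{\otimes d})_{\lambda-\mu} = 0$ when $\lambda - \mu \notin \Ppos(d)$.

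The main technical obstacle will be justifying the tensor identity and the exactness of induction from $\b^+$ in the super setting, together with the identification of simple $\b^+$-modules as Clifford modules $E(\nu)$; both points rest on PBW for $\U(\q(n))$ over $\U(\b^+)$ and the Clifford structure on $\overline{\U(\h)}$, and require some care because of the odd Cartan part $\h_\one$.
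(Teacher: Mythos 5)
Your proof is correct, and it shares the two essential ingredients — the tensor identity and PBW for $\U(\fq(n))$ over $\U(\fb^+)$ — with the paper, but the way you deploy them makes your argument longer than it needs to be. The paper applies the tensor identity once and then observes that PBW gives an isomorphism of $\fh_{\zero}$-modules
\[
M(\mu)\otimes V^{\otimes d}\;\cong\;\U(\fq(n))\otimes_{\U(\fb^+)}\bigl(E(\mu)\otimes V^{\otimes d}\bigr)\;\cong\;\U(\fnminus)\otimes E(\mu)\otimes V^{\otimes d},
\]
under which left multiplication by $\fnminus$ is simply left multiplication on the $\U(\fnminus)$ factor. Quotienting by the augmentation ideal of $\U(\fnminus)$ then leaves $1\otimes E(\mu)\otimes V^{\otimes d}$, and taking the $\lambda$-weight space gives the claimed isomorphism in one stroke. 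Your surjectivity argument via the commutator identity $ym\otimes v\equiv -(-1)^{p(y)p(m)}m\otimes yv$ is a legitimate bare-hands rewriting, but it is precisely the content of that PBW isomorphism made explicit term by term. Likewise, your dimension count — filtering $E(\mu)\otimes V^{\otimes d}$ by simple $\fb^+$-modules, applying the exact induction functor, projecting onto the block $\mathcal{O}^{[\lambda]}$, and invoking Proposition~\ref{P:penkov} together with dominance of $\lambda$ — is correct but reproves through Jordan--H\"older multiplicities something the direct quotient computation gives immediately. The paper never needs the block decomposition or central characters at this stage. One benefit of your route is that the block argument makes it transparent exactly how $\lambda$-typicality and dominance are used; on the other hand it obscures that the isomorphism is canonical (coming from a specific projection map) rather than merely an equality of dimensions. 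Your handling of the $\Se(d)$-equivariance is the same as the paper's, and is fine. The technical points you flag at the end (tensor identity, exactness of induction, classification of simple $\fb^+$-modules in the super setting) are indeed things one must check, but they are standard, and the first two are already needed by the paper's shorter argument.
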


\begin{proof}  This is proved exactly as in \cite[Lemma 3.3.2]{as},
except now the highest weight space of $M(\mu)$ is $E(\mu)$.  Namely, by the
tensor identity and the PBW theorem,
\begin{equation}\label{E:tensoridentity}
M(\mu) \otimes V^{\otimes d} \cong U(\fg ) \otimes_{U(\fb)} \left(E(\mu)
\otimes V^{\otimes d} \right) \cong U(\fnminus) \otimes E(\mu) \otimes
V^{\otimes d},
\end{equation}
where the first isomorphism is as $\fg$-modules and the second is as
$\fh_{\bar{0}}$-modules. Thus the canonical projection map induces the
isomorphism of $\fh_{\bar{0}}$-modules given by
\begin{equation*}
1 \otimes E(\mu) \otimes V^{\otimes d} \cong M(\mu) \otimes V^{\otimes d} /
\fnminus \left(M(\mu) \otimes V^{\otimes d} \right).
\end{equation*}
Taking $\lambda$ weight spaces on both sides yields the vector space isomorphism
\[
1 \otimes E(\mu) \otimes \left( V^{\otimes d}\right)_{\lambda-\mu}\cong \left[
M(\mu) \otimes V^{\otimes d} / \fnminus \left(M(\mu) \otimes V^{\otimes d}
\right)\right]_{\lambda}.
\]  Now, the composition of the natural inclusion $E(\mu)\otimes(V^{\otimes d})_{\ld-\mu}\hookrightarrow(M(\mu)\otimes V^{\otimes
d})_\ld$ with \eqref{E:tensoridentity}, and the isomorphism above implies that
\[
 E(\mu) \otimes \left( V^{\otimes d}\right)_{\lambda-\mu} \cong 1 \otimes E(\mu) \otimes \left( V^{\otimes d}\right)_{\lambda-\mu} \cong \left[ M(\mu) \otimes V^{\otimes d} /
\fnminus \left(M(\mu) \otimes V^{\otimes d} \right)\right]_{\lambda} = F_{\lambda}\left(M(\mu) \right),
\]

That it is an isomorphism of $\Se(d)$-modules follows from the fact that in
each case the action of $\Se(d)$ is via the action induced from the action of
$\Se(d)$ on $M(\mu) \otimes V^{\otimes d}.$
\end{proof}

\begin{cor}\label{C:StandardDim} Let  $\lambda \in P^{++}$ be a dominant
and typical weight and let $\mu \in P$ with $\ld-\mu\in \Ppos(d)$.  Set $d_{i}=\lambda_{i}-\mu_{i}$ for $i=1, \dotsc ,n$.
\begin{enumerate}
\item [(i)]  Let $M(\mu)$ be the little Verma module of highest weight $\mu$.  Then,
\[
\dim F_{\lambda}(M(\mu)) = 2^{d+\lfloor(n-\gm_0(\mu)+1)/2 \rfloor}\frac{d!}{d_{1}!
\dotsb d_{n}!}.
\]
\item [(ii)] Let $\widehat{M}(\mu)$ be the big Verma module of highest weight $\mu$. Then,
\[
\dim F_\ld(\widehat{M}(\mu))=2^{d+n-\gm_0(\mu)}\frac{d!}{d_1!\cdots d_n!}.
\]
\end{enumerate}
\end{cor}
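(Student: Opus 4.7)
The plan is to reduce both formulas to elementary dimension counts by invoking Lemma~\ref{L:description}, which identifies $F_\lambda(M(\mu))$ with $E(\mu)\otimes (V^{\otimes d})_{\lambda-\mu}$ as vector spaces. Thus, to prove (i) it suffices to compute separately the two factors
\[
\dim E(\mu)\andeqn \dim (V^{\otimes d})_{\lambda-\mu}.
\]

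For the second factor, I will observe that with respect to the $\h_{\bar 0}$-action the basis vectors $v_i$ and $v_{-i}$ of $V=\C^{n|n}$ both have weight $\ep_{|i|}$ (since $e_{jj}=E_{jj}+E_{-j,-j}$ acts on $v_i$ by $\delta_{|i|,j}$). Hence a basis of $(V^{\otimes d})_{\lambda-\mu}$ is indexed by pairs consisting of a sequence $(|j_1|,\ldots,|j_d|)$ in which the value $i$ appears exactly $d_i=\lambda_i-\mu_i$ times, together with a sign choice $\pm$ in each tensor slot. This gives
\[
\dim(V^{\otimes d})_{\lambda-\mu}=2^{d}\,\frac{d!}{d_1!\cdots d_n!}.
\]

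For the first factor, I will use the definition of $E(\mu)$ as the unique irreducible quotient of the regular representation $C(\mu)$ of the Clifford algebra $\overline{\U(\h)}$ of rank $r:=n-\gm_0(\mu)$. The formula in Lemma~\ref{L:little verma in big verma} gives $\widehat{M}(\mu)\cong M(\mu)^{\oplus 2^{\lfloor r/2\rfloor}}$, and since $\widehat{M}(\mu)=\ind C(\mu)$ and $M(\mu)=\ind E(\mu)$ are induced along the same PBW decomposition, the same multiplicity appears at the level of the highest weight spaces, giving $\dim C(\mu)=2^{\lfloor r/2\rfloor}\dim E(\mu)$. Since $\dim C(\mu)=2^r$, I conclude
\[
\dim E(\mu)=2^{r-\lfloor r/2\rfloor}=2^{\lceil r/2\rceil}=2^{\lfloor(n-\gm_0(\mu)+1)/2\rfloor}.
\]
Multiplying the two dimensions yields the formula in (i).

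Part (ii) will follow formally from (i) by applying Lemma~\ref{L:little verma in big verma} once more: the functor $F_\lambda$ is additive, so $\dim F_\lambda(\widehat{M}(\mu))=2^{\lfloor(n-\gm_0(\mu))/2\rfloor}\cdot\dim F_\lambda(M(\mu))$, and the identity $\lfloor k/2\rfloor+\lfloor(k+1)/2\rfloor=k$ applied to $k=n-\gm_0(\mu)$ collapses the exponent to $d+n-\gm_0(\mu)$. There is no substantive obstacle in this argument; the only subtlety worth double-checking is that Lemma~\ref{L:description} genuinely requires $\lambda\in\Pt$ to be dominant so that the tensor identity plus projection onto the $\lambda$-weight space captures all of $F_\lambda(M(\mu))$, which is precisely the hypothesis we have.
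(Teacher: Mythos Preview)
Your proof is correct and follows the same approach as the paper: both invoke Lemma~\ref{L:description} to reduce (i) to computing $\dim E(\mu)$ and $\dim(V^{\otimes d})_{\lambda-\mu}$ separately, and both deduce (ii) from (i) via Lemma~\ref{L:little verma in big verma}. The paper simply asserts $\dim E(\mu)=2^{\lfloor(n-\gm_0(\mu)+1)/2\rfloor}$ without justification, whereas you supply a short derivation from the Clifford algebra structure and Lemma~\ref{L:little verma in big verma}; otherwise the arguments are identical.
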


\begin{proof}  We have $\dim E(\mu) = 2^{\lfloor(n-\gm_0(\mu)+1)/2 \rfloor}$.
For each $\varepsilon_{i}$ $(i=1, \dotsc , n)$, $\dim
V_{\varepsilon_{i}}=2.$  A combinatorial count shows that
\[
\dim \left(V^{\otimes d} \right)_{\lambda - \mu} = \frac{d!}{d_{1}! \dotsb
d_{n}!}2^{d}.
\]
The statement of (i) then follows by Lemma~\ref{L:description}. The statement
of (ii) follows from (i) and Lemma~\ref{L:little verma in big verma}.
\end{proof}

Fix $\ld,\mu\in P$ such that $\ld-\mu\in \Ppos(d)$, and let $d_i=\ld_i-\mu_i$.
Let $\{u_i,u_\oi\}_{i=1,\ldots,n}$ be the standard basis for $V$, let $v_\mu\in
E(\mu)$, and let $u_{\ld-\mu}=u_1^{\otimes d_1}\otimes\cdots\otimes
u_n^{\otimes d_n}\in (V^{\otimes d})_{\ld-\mu}$. Finally, let
\[
m_k=\sum_{i=1}^kd_k,
\]
and define $F_k=\pi_0(f_{kk})$ (see Section~\ref{SS:action}).

\begin{lem}\label{X action} Let $v_{\mu} \in M(\mu)_{\mu}$ be a primitive vector
of weight $\mu,$ and let
$u=u_{\lambda - \mu}= u_{1}^{\otimes d_{1}} \otimes \dotsb  u_{n}^{\otimes d_{n}}.$
For each $1\leq k\leq n$ and
$m_{k-1}<i\leq m_k$,
\[
X_i.v_\mu\otimes u_{\ld-\mu}\equiv
\left(\mu_k+i-m_{k-1}-1-\sum_{m_{k-1}<l<i}C_lC_i -F_{k}C_i\right)v_\mu\otimes
u_{\ld-\mu}
\]
modulo $\n_-(M(\mu)\otimes V^{\otimes d})$. As a consequence,
\[
X_i^2v_\mu\otimes
u_{\ld-\mu}\equiv(\mu_k+i-m_{k-1}-1)(\mu_k+i-m_{k-1})v_\mu\otimes u_{\ld-\mu},
\] again modulo $\n_-(M(\mu)\otimes V^{\otimes d})$.
\end{lem}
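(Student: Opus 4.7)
The proof is a direct computation that splits $X_i$ into the Casimir piece $\Omega_{0i}$ and the Jucys--Murphy-like piece $\sum_{j<i}(1-C_jC_i)S_{ji}$, and reduces each modulo $\n^-(M(\mu)\otimes V^{\otimes d})$. The key simplification is that for the specific tensor $u_{\lambda-\mu}=u_1^{\otimes d_1}\otimes\cdots\otimes u_n^{\otimes d_n}$, the operators $\e_{qp}$ and $\f_{qp}$ acting on slot $i$ annihilate unless $p=k$, where $k$ is the unique index with $m_{k-1}<i\le m_k$. Hence in $\Omega_{0i}=\sum_{p,q}e_{pq}\otimes \e_{qp}-\sum_{p,q}f_{pq}\otimes \f_{qp}$, only the terms with $p=k$ survive, leaving $e_{kq}$ and $f_{kq}$ acting on $v_\mu$ in slot $0$.

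The plan is to split these surviving terms according to $q$. For $q>k$, both $e_{kq}$ and $f_{kq}$ lie in $\n^+$ and annihilate $v_\mu$. For $q=k$, the identity $e_{kk}v_\mu=\mu_k v_\mu$ yields the $\mu_k$ contribution, while $f_{kk}v_\mu=F_k v_\mu$ combined with $\f_{kk}u_k=u_{\bar k}=Cu_k$ yields the $-F_k C_i$ contribution, with the overall minus sign coming from the minus in front of the $f$-sum in $\Omega$. For $q<k$, both $e_{kq}$ and $f_{kq}$ lie in $\n^-$, so the coproduct identity $(X v_\mu)\otimes w \equiv \pm v_\mu\otimes X w \pmod{\n^-(M(\mu)\otimes V^{\otimes d})}$ transfers the action from slot $0$ onto $V^{\otimes d}$, where it spreads over all slots containing $u_q$ or $u_{\bar q}$ via the iterated coproduct. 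Separately, in $\sum_{j<i}(1-C_jC_i)S_{ji}$, the terms with $m_{k-1}<j<i$ (the same chunk as $i$) satisfy $S_{ji}u_{\lambda-\mu}=u_{\lambda-\mu}$ and contribute exactly $(i-m_{k-1}-1)v_\mu\otimes u_{\lambda-\mu}-\sum_{m_{k-1}<l<i}C_lC_i\cdot v_\mu\otimes u_{\lambda-\mu}$, accounting for the remaining portion of the claimed formula.

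The main obstacle will be verifying that the cross-chunk contributions cancel: the slot contributions in the $q$-th chunk from the off-diagonal $e$-part of $\Omega_{0i}$ (with $q<k$) must exactly cancel $\sum_{j\le m_{k-1}}S_{ji}\cdot v_\mu\otimes u_{\lambda-\mu}$, and analogously the off-diagonal $f$-part must cancel $-\sum_{j\le m_{k-1}}C_jC_i S_{ji}\cdot v_\mu\otimes u_{\lambda-\mu}$. This requires careful sign bookkeeping forced by the super tensor-product rule \eqref{tensor product rule-module} and the odd parity of $f_{kq}$ and $C$, together with the explicit formulas $\e_{qp}u_r=\delta_{pr}u_q$ and $\f_{qp}u_r=\delta_{pr}u_{\bar q}$.

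For the second assertion, I would exploit the first formula by rewriting it, using the anticommutations $-C_lC_i=C_iC_l$ and $-F_kC_i=C_iF_k$ (both $F_k$ and $C$ are odd), as $X_iv_\mu\otimes u_{\lambda-\mu}\equiv (A+C_i(F_k+\Sigma))v_\mu\otimes u_{\lambda-\mu}\pmod{\n^-}$, where $A=\mu_k+i-m_{k-1}-1$ and $\Sigma=\sum_{m_{k-1}<l<i}C_l$. Applying $X_i$ a second time and using the AHCA relations $X_iC_l=C_lX_i$ for $l\neq i$ and $X_iC_i=-C_iX_i$ from \eqref{c&x}, the computation mirrors exactly the proof of Proposition~\ref{segment representation}(ii), with $F_k$ playing the role of $\phi$; the crucial identity is $F_k^2=e_{kk}$, which acts on $v_\mu$ as the scalar $\mu_k$, matching $\phi^2=a$ in the segment representation (and $F_k v_\mu=0$ when $\mu_k=0$, matching $\phi=0$). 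The resulting algebra then yields the eigenvalue $A(A+1)=(\mu_k+i-m_{k-1}-1)(\mu_k+i-m_{k-1})=q(\mu_k+i-m_{k-1}-1)$, as claimed.
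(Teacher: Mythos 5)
Your first paragraph reproduces the paper's strategy almost verbatim: expand $\Omega_{0i}$, use $\e_{qp}u_k = \delta_{pk}u_q$ (and likewise for $\f_{qp}$) to restrict to $p=k$, annihilate the $\n^+$-part by primitivity, read off the $q=k$ contributions ($e_{kk}v_\mu = \mu_k v_\mu$ and the $-F_kC_i$ term from $\f_{kk}u_k = Cu_k$), and for $q<k$ transfer the $\n^-$-action to $V^{\otimes d}$ via the coproduct, after which the resulting shuffled sums cancel against the $j\le m_{k-1}$ portion of the Jucys--Murphy piece, leaving precisely $(i-m_{k-1}-1) - \sum_{m_{k-1}<l<i}C_lC_i$ from the within-chunk $j$'s. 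Your bookkeeping omits a small $\pm(k-1)$ constant arising from the diagonal term in the coproduct of $e_{kj}$ and $f_{kj}$, but this also cancels between the $e$- and $f$-sums, so the conclusion is unaffected.

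The second half has a genuine gap. You propose to pass $X_i$ through the coefficient $A + C_i(F_k+\Sigma)$ using only the AHCA commutation relations $X_iC_l = \pm C_lX_i$. That handles $C_l$ and $C_i$, but $X_i$ does \emph{not} commute with $F_k=\pi_0(f_{kk})$: both operators act nontrivially on the $M(\mu)$ slot through $\Omega_{0i}$, and $[\Omega_{0i},\pi_0(f_{kk})]\neq 0$. This is exactly where your analogy with Proposition~\ref{segment representation}(ii) breaks down, since there $\ph$ lives in a separate tensor factor and (super)commutes with every $x_i$, whereas here $F_k$ and $X_i$ collide on slot $0$. The missing observation, which is how the paper proceeds, is that $f_{kk}v_\mu\in M(\mu)_\mu$ is again a \emph{primitive} vector of weight $\mu$ (since $\h_{\one}$ preserves weight spaces and $[\n^+,\h]\subseteq\n^+$), so the already-established first formula of the lemma applies with $v_\mu$ replaced by $f_{kk}v_\mu$, yielding $X_i\big((f_{kk}v_\mu)\otimes u\big)\equiv \bigl(A - \sum_{m_{k-1}<l<i}C_lC_i - F_kC_i\bigr)\big((f_{kk}v_\mu)\otimes u\big)$. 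Only then does $F_k^2 = e_{kk}$ (acting as $\mu_k$) enter, and the Clifford algebra relation $\bigl(\sum C_lC_i\bigr)^2 = -(i-m_{k-1}-1)$ completes the computation to give $q(\mu_k+i-m_{k-1}-1)$. So the conclusion you reach is correct, but the step from the first claim to the second requires the primitivity of $f_{kk}v_\mu$, not just AHCA relations.
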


\begin{proof}  We first do some preliminary calculations.  Let $1 \leq j < k \leq n$
be fixed, let $m_{k-1} \leq i \leq m_{k}$ be fixed, and consider the vector
\[
v_{\mu}\otimes u_{1}^{\otimes d_{1}} \otimes \dotsb \otimes u_{k}^{\otimes a}
\otimes u_{j} \otimes u_{k}^{\otimes b} \otimes \dotsb \otimes u_{n}^{\otimes
d_{n}},
\]
where the $u_{j}$ is the $i$th tensor and $a+b+1=d_{k}$ (i.e.\ among the
$u_{k}$'s, the one in the $i$th position, recalling that $v_{\mu}$ is in the
zeroth position, is replaced with $u_{j}$). For short, let us write $u =
u_{1}^{\otimes d_{1}} \otimes \dotsb  u_{n}^{\otimes d_{n}}$ and
$\hat{u}=u_{1}^{\otimes d_{1}} \otimes \dotsb \otimes u_{k}^{\otimes a} \otimes
u_{j} \otimes u_{k}^{\otimes b} \otimes \dotsb \otimes u_{n}^{\otimes d_{n}}$.
Then,
\begin{eqnarray*}
e_{kj}(v_{\mu}\otimes\hat{u})&=&(e_{kj}v_{\mu})\otimes\hat{u}\\
&&+ \sum_{r = 1}^{d_{j}} v_{\mu}\otimes u_{1}^{\otimes d_{1}}
\otimes \dotsb \otimes u_{j}^{\otimes r-1 } \otimes u_{k} \otimes u_{j}^{\otimes d_{j}-r}
\otimes \dotsb \otimes u_{k}^{\otimes a} \otimes u_{j} \otimes u_{k}^{\otimes
b} \otimes \dotsb \otimes
u_{n}^{\otimes d_{n}} + v_{\mu} \otimes u\\
 &=&(e_{kj}v_{\mu})\otimes \hat{u} + \sum_{r = 1}^{d_{j}} S_{m_{j-1}+r, i}
 (v_{\mu}\otimes u) + v_{\mu} \otimes u.
\end{eqnarray*}
Similarly, if we write $\check{u}=C_{i}\hat{u}=u_{1}^{\otimes d_{1}} \otimes
\dotsb \otimes u_{k}^{\otimes a} \otimes v_{-j} \otimes v_{k}^{\otimes b}
\otimes \dotsb \otimes v_{n}^{\otimes d_{n}}$, then
\begin{eqnarray*}
f_{kj} (v_{\mu}\otimes \check{u})
&=& (f_{kj}v_{\mu})\otimes \check{u}
    +(-1)^{p(v_{\mu})}\times\\
    &&\times \sum_{r=1}^{d_{j}} v_{\mu}
    \otimes u_{1}^{\otimes d_{1}}
    \otimes \dotsb \otimes u_{j}^{\otimes r-1} \otimes u_{-k} \otimes u_{j}^{\otimes d_{j}-r}
    \otimes \dotsb \otimes  u_{k}^{\otimes a} \otimes u_{-j} \otimes u_{k}^{\otimes b}
    \otimes \dotsb \otimes u_{n}^{\otimes d_{n}}\\
    &&+ (-1)^{p(v_{\mu})}v_{\mu} \otimes u\\
 &=&(f_{kj}v_{\mu})\otimes \check{u} +(-1)^{p(v_{\mu})}
    \sum_{r = 1}^{d_{j}} C_{m_{j-1}+a}C_{i}S_{m_{j-1}+r, i}
    (v_{\mu}\otimes u) + (-1)^{p(v_{\mu})} v_{\mu} \otimes u.
\end{eqnarray*}

We can now consider the first statement of the lemma.  Throughout, we write
$\equiv$ for congruence modulo the subspace  $\n_-(M(\mu)\otimes V^{\otimes
d})$.  Let $1 \leq k \leq n$ be fixed so that $m_{k-1} < i \leq m_{k}$ (ie.\
there is a $u_{k}$ in the $i$th position of $v_{\mu}\otimes u$).  Using that
$v_{\mu}$ is a primitive vector and the equalities given above, we deduce that
\begin{small}
\begin{eqnarray*}
X_i\left( v_\mu\otimes u_{\ld-\mu}\right)&=&\sum_{\ell,j=1}^n e_{\ell
j}v_\mu\otimes u_1^{\otimes d_1}\otimes\cdots\otimes u_k^{\otimes
i-m_{k-1}-1}\otimes \e_{j\ell}u_k\otimes u_k^{\otimes
m_k-i}\otimes\cdots\otimes u_n^{\otimes d_n}\\
    &&-(-1)^{p(v_{\mu})}\sum_{\ell,j=1}^nf_{\ell j}v_\mu\otimes
u_1^{\otimes d_1}\otimes\cdots\otimes u_k^{\otimes i-m_{k-1}-1}\otimes
\f_{j\ell}u_k\otimes u_k^{\otimes
m_k-i}\otimes\cdots\otimes u_n^{\otimes d_n}\\
    &&+\sum_{\ell<i}(1-C_\ell C_i)S_{\ell i}(v_\mu\otimes
    u)\\
    &=&\sum_{j\leq k} e_{kj}v_\mu\otimes
u_1^{\otimes d_1}\otimes\cdots\otimes u_k^{\otimes i-m_{k-1}-1}\otimes
u_j\otimes u_k^{\otimes
m_k-i}\otimes\cdots\otimes u_n^{\otimes d_n}\\
    &&-(-1)^{p(v_\mu)}\sum_{j\leq k}f_{kj}v_\mu\otimes
u_1^{\otimes d_1}\otimes\cdots\otimes u_k^{\otimes i-m_{k-1}-1}\otimes
u_{-j}\otimes u_k^{\otimes
m_k-i}\otimes\cdots\otimes u_n^{\otimes d_n}\\
    &&+\sum_{\ell<i}(1-C_\ell C_i)S_{\ell i}(v_\mu\otimes u)\\
    & \equiv& - \sum_{j < k} \left[\sum_{a = 1}^{d_{j}} S_{m_{j-1}+a, i}
    (v_{\mu}\otimes u) + v_{\mu} \otimes u \right] \\
    &&+ \sum_{j < k} \left[ \sum_{a = 1}^{d_{j}}
    C_{m_{j-1}+a}C_{i}S_{m_{j-1}+a, i} (v_{\mu}\otimes u) + v_{\mu} \otimes u \right] \\
    &&+ \mu_{k}v_{\mu}\otimes u - C_{i}
    \left((f_{kk}v_{\mu})\otimes u \right) +\sum_{\ell<i}(1-C_\ell C_i)S_{\ell i}
    (v_\mu\otimes u)\\
    &=& - \sum_{l \leq m_{k-1}} S_{l,i}v_{\mu}\otimes u  -(k-1)v_{\mu} \otimes u
    + \sum_{l \leq m_{k-1}} C_{l}C_{i}S_{l,i}v_{\mu}\otimes u +(k-1)v_{\mu}\otimes u \\
    &&+ \mu_{k}v_{\mu}\otimes u
    - C_{i}\left((f_{kk}v_{\mu})\otimes u \right) +\sum_{\ell<i}(1-C_\ell C_i)S_{\ell i}(v_\mu\otimes u)\\
    & = &\mu_kv_\mu\otimes
    u_{\ld-\mu}+C_i((f_{kk}v_\mu)\otimes u_{\ld-\mu})+\sum_{m_{k-1}<\ell<i}
    (1-C_\ell C_i)S_{l,i}(v_\mu\otimes u)\\
    &=&\left(\mu_k+i-m_{k-1}-1-\sum_{m_{k-1}<\ell<i}C_\ell C_iS_{l,i}\right)
    (v_\mu\otimes u_{\ld-\mu})+C_i((f_{kk}v_\mu)\otimes u)\\
     &=&\left(\mu_k+i-m_{k-1}-1-\sum_{m_{k-1}<\ell<i}C_\ell C_i -F_{k}C_i\right)
     (v_\mu \otimes u).
\end{eqnarray*}
\end{small}
Note the last equality makes use of the fact that $S_{l,i}
v_{\mu}\otimes u = v_{\mu}\otimes u$ for $m_{k-1}<l<i$ and that as
(odd) linear maps $F_{k}C_{i}=-C_{i}F_{k}.$

Now we consider the second statement of the lemma.  Using the previous
calculation, the fact that $X_{i}$ and the $C$'s satisfy relation \eqref{c&x}
of the degenerate AHCA, and the fact that $f_{kk}v_{\mu} \in
M(\mu)_{\mu}$ is again a primitive vector of weight $\mu$,

\begin{eqnarray*}
X_i^2(v_\mu\otimes u_{\ld-\mu})
    &\equiv& X_i\left(\mu_k+i-m_{k-1}-1-\sum_{m_{k-1}<\ell<i}C_\ell C_i-F_kC_i\right)
    (v_\mu\otimes u_{\ld-\mu})\\
     &=&\left(\mu_k+i-m_{k-1}-1+\sum_{m_{k-1}<\ell<i}C_\ell C_i \right)X_{i}
     (v_{\mu}\otimes u)- C_iX_{i}((f_{kk}v_\mu)\otimes u_{\ld-\mu})\\
     & \equiv & \left(\mu_k+i-m_{k-1}-1+\sum_{m_{k-1}<\ell<i}C_\ell C_i \right)\times\\
     &&\times
     \left(\mu_k+i-m_{k-1}-1-\sum_{m_{k-1}<\ell<i}C_\ell C_i-F_kC_i\right)
     (v_\mu\otimes u_{\ld-\mu}) \\
     &&-C_{i}\left(\mu_k+i-m_{k-1}-1-\sum_{m_{k-1}<\ell<i}C_\ell C_i-F_kC_i\right)
     ((f_{kk}v_\mu)\otimes u_{\ld-\mu}) \\
     &=&\left(\mu_k+i-m_{k-1}-1+\sum_{m_{k-1}<\ell<i}C_\ell C_i \right)
     \left(\mu_k+i-m_{k-1}-1-\sum_{m_{k-1}<\ell<i}C_\ell C_i \right) v_{\lambda}
     \otimes u\\
     &&+ C_{i}F_{k}C_{i}((f_{kk}v_{\mu}) \otimes u) \\
     &=&\left( (\mu_k+i-m_{k-1}-1)^{2}
     - \left( \sum_{m_{k-1}<\ell<i}C_\ell C_i\right)^{2}\right) v_{\mu}\otimes u
     + (f^{2}_{kk}v_{\mu}) \otimes u \\
     &=&\left( (\mu_k+i-m_{k-1}-1)^{2}
     +(\mu_k+i-m_{k-1}-1)\right) v_{\mu}\otimes u.
\end{eqnarray*}
The last equality follows from the fact that in the Clifford algebra
\[
\left( \sum_{m_{k-1}<\ell<i}C_\ell C_i\right)^{2} =
\sum_{m_{k-1}<\ell<i}(C_\ell C_i)^{2} = \sum_{m_{k-1}<\ell<i} -1 = -(i-m_{k-1}
+1)
\]
and that, in $\q (n)$, $f_{kk}^2=e_{kk}$.
\end{proof}

\begin{cor}\label{C:ImageisIntegral} Let $\lambda \in P^{++}$ be a dominant
typical weight, let $\mu\in P$, and let $M(\mu)$ be a Verma module in
$\mathcal{O}(\fq (n))$. Then for $i=1, \dotsc , d$ the element $x_{i}^{2}$ acts on
$F_{\lambda}(M(\mu))$ with generalized eigenvalues of the form $q(a)$ for
various $a \in \Z$. Hence, $F_{\lambda}(M(\mu))$ is integral.
\end{cor}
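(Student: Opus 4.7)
If $\lambda - \mu \notin \Ppos(d)$, then $F_\lambda(M(\mu)) = 0$ by Lemma~\ref{L:description} and the claim is vacuous, so assume $\lambda - \mu \in \Ppos(d)$ and retain the notation $d_i = \lambda_i - \mu_i$, $m_k = d_1+\cdots+d_k$, $u_{\lambda-\mu} = u_1^{\otimes d_1}\otimes\cdots\otimes u_n^{\otimes d_n}$ of Lemma~\ref{X action}. By Lemma~\ref{L:description}, $F_\lambda(M(\mu)) \cong E(\mu)\otimes (V^{\otimes d})_{\lambda - \mu}$.

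The first step is to observe that the computation of Lemma~\ref{X action} uses only that $v_\mu$ is a weight-$\mu$ vector annihilated by $\n^+$ (together with the identity $f_{kk}^{2}v = e_{kk}v = \mu_k v$, which again only uses that $v$ has weight $\mu$). Every $v \in E(\mu)\subset M(\mu)$ shares these properties: in the realization $M(\mu) = \U(\q(n))\otimes_{\U(\b^+)}E(\mu)$, the subspace $1\otimes E(\mu)$ is annihilated by $\n^+$ and consists of weight-$\mu$ vectors. The same calculation therefore yields, for every $v \in E(\mu)$ and $m_{k-1}<i\leq m_k$,
\[
X_i^2(v\otimes u_{\lambda-\mu}) \equiv q(\mu_k+i-m_{k-1}-1)\,v\otimes u_{\lambda-\mu} \pmod{\n^-(M(\mu)\otimes V^{\otimes d})}.
\]
Hence on the subspace $E(\mu)\otimes u_{\lambda-\mu}$ of $F_\lambda(M(\mu))$, each $x_i^2$ acts as an honest scalar of the form $q(a_i)$ with $a_i = \mu_k + i - m_{k-1} - 1 \in \Z$.

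The operators $S_{i,i+1}$ and $C_i$ of the subalgebra $\Se(d)\subset\ASe(d)$ act only on the $V^{\otimes d}$ tensor factors of $M(\mu)\otimes V^{\otimes d}$, so
\[
\Se(d)\cdot(E(\mu)\otimes u_{\lambda-\mu}) = E(\mu)\otimes(\Se(d)\cdot u_{\lambda-\mu}).
\]
Since $\C S_d$ permutes $u_{\lambda-\mu}$ to every basis tensor of $(V^{\otimes d})_{\lambda - \mu}$ with all positive indices, and the $C_i$ produce every sign flip $u_j \leftrightarrow \pm u_{\bar j}$, a dimension count gives $\Se(d)\cdot u_{\lambda-\mu} = (V^{\otimes d})_{\lambda-\mu}$; hence $E(\mu)\otimes u_{\lambda-\mu}$ is a $\Se(d)$-cyclic generator of $F_\lambda(M(\mu))$.

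Finally, recall from Section~\ref{SS:weights} that the center of $\ASe(d)$ equals $\C[x_1^2,\ldots,x_d^2]^{S_d}$ and therefore commutes with $\Se(d)$. Any symmetric polynomial $p(x_1^2,\ldots,x_d^2)$ acts on the generating subspace $E(\mu)\otimes u_{\lambda-\mu}$ as the scalar $p(q(a_1),\ldots,q(a_d))$ by the second paragraph, and consequently acts by this same scalar on all of $F_\lambda(M(\mu))$. Thus every simple subquotient $L$ of $F_\lambda(M(\mu))$ has central character $e_k \mapsto e_k(q(a_1),\ldots,q(a_d))$, which by the fundamental theorem of symmetric polynomials forces its multiset of generalized $(x_1^2,\ldots,x_d^2)$-eigenvalues to be a permutation of $(q(a_1),\ldots,q(a_d))\in q(\Z)^d$. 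Integrality follows. The only step with any subtlety is verifying that Lemma~\ref{X action} extends verbatim to arbitrary $v \in E(\mu)$; once this is observed, the centrality argument propagates the eigenvalue information to the entire module essentially for free.
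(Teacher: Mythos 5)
Your argument is correct and fills in what the paper treats as immediate from Lemma~\ref{X action}. Two observations. First, the step you flag as the only subtlety is in fact already covered by the lemma's statement: Lemma~\ref{X action} is formulated for an arbitrary primitive vector $v_\mu\in M(\mu)_\mu$, and $1\otimes E(\mu)$ is precisely the full $\mu$-weight space of $M(\mu)$, whose vectors are automatically primitive (nothing in $M(\mu)$ has weight above $\mu$); so no "extension" is needed, the lemma applies verbatim to all of $E(\mu)\otimes u_{\lambda-\mu}$. Second, the route you take from the generating subspace to the whole module — pass to the central subalgebra $\P_d[x^2]^{S_d}$, let it act by the scalar it must take on the generator, and read off the multiset of generalized $(x_1^2,\ldots,x_d^2)$-eigenvalues via the fundamental theorem of symmetric polynomials — is a clean global argument and is a valid way to conclude. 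An equally natural alternative, more in line with how the paper handles similar issues (cf.\ the proof of Lemma~\ref{L:weights of M}), would be the PBW/Mackey-style observation that $x_i^2$ acts on $w.v$ for $w\in S_d$ upper-triangularly with leading eigenvalue $q(a_{w^{-1}(i)})$; either route lands in the same place. Your version buys conceptual simplicity (no bookkeeping over coset representatives) at the cost of invoking the description of the center; the other route is more self-contained but more computational. Both are fine.
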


As a consequence of the previous corollary we see that for $\lambda \in P^{++}$ we have that $F_{\lambda}\left(L(\mu) \right)$ is integral for any simple module $L(\mu)$ in $\mathcal{O}$ and, therefore,
\[
F_\ld:\mathcal{O}(\q(n))\rightarrow\Rep\ASe(d).
\]

\begin{prp}  Let $\ld\in\Pt$ and $\mu\in\ld-\Ppos(d)$. Then,
$F_\ld(\widehat{M}(\mu))\cong\widehat{\M}(\ld,\mu)$.
\end{prp}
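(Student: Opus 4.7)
The plan is to build an explicit $\ASe(d)$-module isomorphism by Frobenius reciprocity. Since $\widehat{\M}(\ld,\mu)=\ind_{d_1,\ldots,d_n}^{d}\widehat{\Ph}(\ld,\mu)$, it suffices to produce an $\ASe(\ld-\mu)$-module homomorphism $\psi:\widehat{\Ph}(\ld,\mu)\to\res^{d}_{\ld-\mu}F_\ld(\widehat{M}(\mu))$. I would send the cyclic generator $\hat{\va}_{\ld,\mu}$ to $v_\mu\otimes u_{\ld-\mu}$, where $v_\mu$ is a fixed generator of the highest weight space $C(\mu)\subset\widehat{M}(\mu)$ and $u_{\ld-\mu}=u_1^{\otimes d_1}\otimes\cdots\otimes u_n^{\otimes d_n}$, and extend through the identifications $\ph_k\leftrightarrow F_k:=\pi_0(f_{kk})$ and $c_i\leftrightarrow C_i$. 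These identifications are natural: both $\ph_k$ and $F_k$ are odd, the $F_k$'s mutually anticommute for different $k$, and $F_k^{2}v_\mu=f_{kk}^{2}v_\mu=e_{kk}v_\mu=\mu_k v_\mu=\ph_k^{2}$, using that $\{f_{kk},f_{jj}\}=0$ and $f_{kk}^{2}=e_{kk}$ in $\U(\q(n))$.

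Next, I would verify that $\psi$ is $\ASe(\ld-\mu)$-linear. The Clifford and super-tensor relations among the $\ph_k$'s and $c_i$'s are matched by those among the $F_k$'s and $C_i$'s, the latter arising from the superalgebra structure of $\End(V)^{\otimes d+1}$. For $s_i$ with $m_{k-1}<i<m_k$, the equality $s_i\hat{\va}_{\ld,\mu}=\hat{\va}_{\ld,\mu}$ is matched by $S_{i,i+1}(v_\mu\otimes u_{\ld-\mu})=v_\mu\otimes u_{\ld-\mu}$, since both positions $i,i+1$ carry $u_k$ with $k>0$. The essential check is the $x_i$-action. Proposition~\ref{segment representation}, applied inside the $k$-th segment (with shift $m_{k-1}$), yields
\[
x_i\hat{\va}_{\ld,\mu}=\Bigl(\mu_k+(i-m_{k-1}-1)-\sum_{m_{k-1}<\ell<i}c_\ell c_i-\ph_k c_i\Bigr)\hat{\va}_{\ld,\mu}
\]
for $m_{k-1}<i\leq m_k$, while Lemma~\ref{X action} establishes exactly the same formula for $X_i$ acting on $v_\mu\otimes u_{\ld-\mu}$ modulo $\n^-(\widehat{M}(\mu)\otimes V^{\otimes d})$, with $\ph_k\mapsto F_k$ and $c_\ell\mapsto C_\ell$. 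The analogue of Lemma~\ref{L:description} for big Vermas (the proof is identical, substituting $C(\mu)$ for $E(\mu)$ in the tensor identity) identifies $F_\ld(\widehat{M}(\mu))$ with the $\ld$-weight space of $\widehat{M}(\mu)\otimes V^{\otimes d}$ modulo $\n^-(\widehat{M}(\mu)\otimes V^{\otimes d})$, so the congruence becomes an honest equality in $F_\ld(\widehat{M}(\mu))$.

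By Frobenius reciprocity $\psi$ extends to an $\ASe(d)$-homomorphism $\phi:\widehat{\M}(\ld,\mu)\to F_\ld(\widehat{M}(\mu))$. To conclude, I would argue surjectivity: the image contains $v_\mu\otimes u_{\ld-\mu}$ and is closed under $F_k$, $C_i$ and $S_{i,i+1}$; the $F_k$'s generate $C(\mu)$ from $v_\mu$, the $C_i$'s toggle $u_j\leftrightarrow u_{-j}$ at any position, and the symmetric group action permutes positions, so the image exhausts $C(\mu)\otimes(V^{\otimes d})_{\ld-\mu}\cong F_\ld(\widehat{M}(\mu))$. Finally Lemma~\ref{L:standard cyclic dim}(i) and Corollary~\ref{C:StandardDim}(ii) give the common dimension $2^{d+n-\gm_0(\mu)}\,d!/(d_1!\cdots d_n!)$, so a surjection of finite-dimensional spaces of equal dimension is an isomorphism. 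The main obstacle is the well-definedness of $\psi$: the super-tensor-product conventions defining $\widehat{\Ph}(\ld,\mu)$ as $\Ph_{\mu_1}\boxtimes\Cl_{d_1}\boxtimes\cdots\boxtimes\Ph_{\mu_n}\boxtimes\Cl_{d_n}$ demand careful sign bookkeeping when matching $\ph_k c_i=-c_i\ph_k$ to $F_k C_i=-C_i F_k$, but once the $x_i$-formula of Lemma~\ref{X action} is in hand, every other relation reduces to a direct check in the Clifford algebra and the symmetric group.
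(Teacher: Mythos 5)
Your proposal is correct and follows essentially the same route the paper takes: build the map by Frobenius reciprocity from the parabolic subalgebra, match $\ph_k$ with $F_k=\pi_0(f_{kk})$, compare the $x_i$-action via Lemma~\ref{X action} against Proposition~\ref{segment representation}, and finish with the dimension count from Lemma~\ref{L:standard cyclic dim} and Corollary~\ref{C:StandardDim}. The only difference is cosmetic: the paper asserts cyclicity of $v_\mu\otimes u_{\ld-\mu}$ without proof and lets the dimension count do the work, whereas you explicitly argue surjectivity via generation and note the big-Verma analogue of Lemma~\ref{L:description}; both are sound.
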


\begin{proof}
Let $v_+\in\C_\mu$ be a nonzero vector in the 1-dimensional $\h_\zero$-module $\C_\mu$, let $v_\mu=1\otimes v_+\in C(\mu)_\zero$ be its image and let $u_{\lambda-\mu}$ be as in the prevous lemma. Then
$v_\mu\otimes u_{\ld-\mu}$ is a cyclic vector for $F_\ld(\widehat{M}(\mu))$ as
a $\ASe (d)$-module.

Recall the cyclic vector $\hat{\va}_{\ld,\mu}\in\widehat{\M}(\ld,\mu)$. For $\dt_1,\ldots,\dt_n\in\{0,1\}$, let $\ph_1^{\dt_1}\cdots\ph_n^{\dt_n}\hat{\va}_{\lambda,\mu}=1\otimes\ph_1^{\dt_1}\hat{\va}\otimes\cdots\otimes\ph_n^{\dt_n}\hat{\va}$, cf. \eqref{E:hatcyclicvector}.

Note that $w.(v_\mu\otimes u_{\ld-\mu})=v_\mu\otimes
u_{\ld-\mu}$ for all $w\in S_{\ld-\mu}$. Comparing Lemma \ref{X action} and Proposition \ref{segment representation}, we deduce that, by Frobenious reciprocity, there exists
a surjective $\ASe(d)$-homomorphism $\widehat{\M}(\ld,\mu)\rightarrow
F_\ld(\widehat{M}(\mu))$ sending $\ph_1^{\dt_1}\cdots\ph_n^{\dt_n}\hat{\va}_{\lambda,\mu}\mapsto F_1^{\dt_1}\cdots F_n^{\dt_n}v_\mu\otimes
u_{\ld-\mu}$. That this is an isomorphism follows by comparing dimensions using
Lemmas~\ref{L:standard cyclic dim} and~\ref{C:StandardDim}.
\end{proof}

\begin{cor}\label{C:Image of the little verma} We have
\[
F_\ld M(\mu)\cong\M(\ld,\mu)^{\oplus 2^{\varpi(\mu)}}
\]
where
\[
\varpi(\mu) =\begin{cases}\lfloor\frac{n+1}{2}\rfloor&\mbox{if
}\gm_0(\mu)\mbox{ is even,}\\\lfloor\frac{n}{2}\rfloor&\mbox{if
}\gm_0(\mu)\mbox{ is odd.}\end{cases}.
\]
\end{cor}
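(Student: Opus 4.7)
The plan is to deduce this from the previous proposition by exploiting the decomposition of the big Verma module into little Verma modules given by Lemma~\ref{L:little verma in big verma}, together with the corresponding decomposition of the big standard $\ASe(d)$-module from Lemma~\ref{L:standard cyclic dim}(iii). First I would apply the functor $F_\ld$ to the isomorphism $\widehat{M}(\mu)\cong M(\mu)^{\oplus 2^{\lfloor (n-\gm_0(\mu))/2\rfloor}}$. Since $\ld\in\Pt$, Corollary~\ref{C:Flambdaexactness} guarantees that $F_\ld$ is exact and hence commutes with finite direct sums, so
\[
F_\ld(\widehat{M}(\mu))\;\cong\;F_\ld(M(\mu))^{\oplus 2^{\lfloor(n-\gm_0(\mu))/2\rfloor}}.
\]
On the other hand, the preceding proposition identifies $F_\ld(\widehat{M}(\mu))\cong\widehat{\M}(\ld,\mu)$, and Lemma~\ref{L:standard cyclic dim}(iii) gives $\widehat{\M}(\ld,\mu)\cong\M(\ld,\mu)^{\oplus 2^{n-\lfloor(\gm_0(\mu)+1)/2\rfloor}}$. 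Combining the two, we get the isomorphism
\[
F_\ld(M(\mu))^{\oplus 2^{\lfloor(n-\gm_0(\mu))/2\rfloor}}\;\cong\;\M(\ld,\mu)^{\oplus 2^{n-\lfloor(\gm_0(\mu)+1)/2\rfloor}}
\]
of finite-dimensional $\ASe(d)$-modules.

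Next, I would invoke the Krull--Schmidt theorem in $\Rep\ASe(d)$ (legitimate since by Corollary~\ref{C:ImageisIntegral} both sides are finite-dimensional integral modules). Fixing a Krull--Schmidt decomposition of $\M(\ld,\mu)$ into indecomposables and matching multiplicities on the two sides of the displayed isomorphism forces every indecomposable summand of $F_\ld(M(\mu))$ to be a summand of $\M(\ld,\mu)$, appearing with multiplicity $2^{k}$ times its multiplicity in $\M(\ld,\mu)$, where
\[
k \;=\; \Bigl(n-\lfloor(\gm_0(\mu)+1)/2\rfloor\Bigr) - \lfloor(n-\gm_0(\mu))/2\rfloor.
\]
Consequently $F_\ld(M(\mu))\cong\M(\ld,\mu)^{\oplus 2^{k}}$.

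It then remains only to verify $k=\varpi(\mu)$ by a parity case analysis. If $\gm_0(\mu)=2m$ is even, then $\lfloor(\gm_0(\mu)+1)/2\rfloor=m$ and $\lfloor(n-\gm_0(\mu))/2\rfloor=\lfloor n/2\rfloor-m$, so $k=n-\lfloor n/2\rfloor=\lfloor(n+1)/2\rfloor$. If $\gm_0(\mu)=2m+1$ is odd, then $\lfloor(\gm_0(\mu)+1)/2\rfloor=m+1$ and a short computation (splitting on the parity of $n$) gives $\lfloor(n-\gm_0(\mu))/2\rfloor=\lfloor(n-1)/2\rfloor-m$, yielding $k=\lfloor n/2\rfloor$. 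These are precisely the two values defining $\varpi(\mu)$, completing the proof.

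The only step requiring any care is the cancellation in the Krull--Schmidt stage: without it one only knows the dimensions agree. However, since we are working inside the Krull--Schmidt category $\Rep\ASe(d)$, equating indecomposable multiplicities on both sides of the displayed isomorphism is straightforward, so this is not really an obstacle, just a bookkeeping point to flag explicitly.
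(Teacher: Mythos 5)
Your argument is correct and follows essentially the same route as the paper: apply $F_\lambda$ to the decomposition $\widehat{M}(\mu)\cong M(\mu)^{\oplus 2^{\lfloor(n-\gamma_0(\mu))/2\rfloor}}$ from Lemma~\ref{L:little verma in big verma}, identify $F_\lambda(\widehat{M}(\mu))\cong\widehat{\M}(\lambda,\mu)$ via the preceding proposition, decompose the big standard module via Lemma~\ref{L:standard cyclic dim}(iii), and equate. The paper simply says "we obtain $F_\lambda M(\mu)= 2^{\,\cdot\,}\M(\lambda,\mu)$" at the cancellation step, whereas you explicitly invoke Krull--Schmidt to justify that a relation $F_\lambda(M(\mu))^{\oplus a}\cong\M(\lambda,\mu)^{\oplus b}$ with $a\mid b$ forces $F_\lambda(M(\mu))\cong\M(\lambda,\mu)^{\oplus b/a}$; this is a genuine gap in the paper's phrasing that you are right to flag, and your treatment is the more careful one. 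One small overcomplication: to commute $F_\lambda$ with the finite direct sum you only need that $F_\lambda$ is additive (which any linear functor is), not the exactness from Corollary~\ref{C:Flambdaexactness}; the paper itself cites only additivity. The parity computation verifying that the exponent equals $\varpi(\mu)$ is correct.
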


\begin{proof} Using the additivity of the functor $F_{\lambda}$, the previous proposition, and Lemmas~\ref{L:little verma in big verma} and~\ref{L:standard cyclic dim} we obtain $F_\ld M(\mu)=2^{n-\lfloor\frac{\gm_0(\mu)+1}{2}\rfloor
-\lfloor\frac{n-\gm_0(\mu)}{2}\rfloor}\M(\ld,\mu)$. It is just left to observe that
\[
n-\lfloor\frac{\gm_0(\mu)+1}{2}\rfloor
-\lfloor\frac{n-\gm_0(\mu)+1}{2}\rfloor=\varpi(\mu).
\]
\end{proof}

\begin{lem}\label{L:IsoStdMod} Assume that $\ld\in\Pt$, $\mu\in P^+[\ld]$,
$\ld-\mu\in\Ppos(d)$, and $\af\in R^+[\ld]$. Then,
$\M(\ld,\mu)\cong\M(\ld,s_\af\mu)$.
\end{lem}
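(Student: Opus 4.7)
The plan is to apply the exact functor $F_\ld$ to an injection of Verma modules in $\mathcal{O}(\q(n))$ and then finish via a dimension comparison and Krull--Schmidt. The case $s_\af \mu = \mu$ is trivial, so I would assume $\mu_i > \mu_j$ where $\af = \af_{ij}$. Since $\mu \in P^+[\ld]$ and $\af \in R^+[\ld]$, this forces $\ld_i = \ld_j$ and yields $s_\af \mu = \mu - r\af$ with $r = \mu_i - \mu_j > 0$. Lemma~\ref{L:InjHom} then supplies an injective $\q(n)$-homomorphism $M(s_\af \mu) \hookrightarrow M(\mu)$.

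Next I would apply the functor $F_\ld$, which is exact for $\ld \in \Pt$ by Corollary~\ref{C:Flambdaexactness}, to obtain an $\ASe(d)$-module injection $F_\ld M(s_\af \mu) \hookrightarrow F_\ld M(\mu)$. I then argue that this is in fact an isomorphism by dimension comparison via Corollary~\ref{C:StandardDim}. The dimension formula depends on the weight only through $\gm_0$ and the multiset of parts $\{d_1, \ldots, d_n\}$ with $d_k = \ld_k - \mu_k$; both are invariant under the swap $\mu \leftrightarrow s_\af \mu$, the former because $s_\af$ merely permutes entries of $\mu$, the latter because $\ld_i = \ld_j$ forces the exchange $\mu_i \leftrightarrow \mu_j$ to drag along the exchange $d_i \leftrightarrow d_j$.

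Given $F_\ld M(s_\af \mu) \cong F_\ld M(\mu)$, I would invoke Corollary~\ref{C:Image of the little verma}, which applies to both $\mu$ and $s_\af \mu$ (since $\ld - s_\af \mu \in \Ppos(d)$), to rewrite this as $\M(\ld, s_\af \mu)^{\oplus N} \cong \M(\ld, \mu)^{\oplus N}$, where $N = 2^{\varpi(\mu)} = 2^{\varpi(s_\af \mu)}$ by the same $\gm_0$ invariance. Since $\mu \in P^+[\ld]$, Theorem~\ref{thm:unique irred quotient} guarantees that $\M(\ld, \mu)$ has a unique simple quotient and is therefore indecomposable, so the Krull--Schmidt theorem forces $\M(\ld, s_\af \mu) \cong \M(\ld, \mu)$.

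I do not expect a substantive obstacle in this plan. The most delicate step is the passage from a Verma injection to an isomorphism of standard modules, which depends both on exactness of $F_\ld$ (hence on the typicality assumption $\ld \in \Pt$) and on the numerical identity ensuring $F_\ld M(\mu)$ and $F_\ld M(s_\af \mu)$ have the same dimension; the key input that makes this identity hold is precisely the hypothesis $\af \in R^+[\ld]$. Should some part of this argument fail, a backup strategy would be to construct the isomorphism $\M(\ld, \mu) \to \M(\ld, s_\af \mu)$ directly by applying an appropriate product of intertwiners $\phi_w \in \ASe(d)$ to the cyclic vector $\va_{\ld,\mu}$, but this would require substantially more combinatorial bookkeeping.
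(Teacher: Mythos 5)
Your proof is correct and follows the same route as the paper: inject $M(s_\af\mu)\hookrightarrow M(\mu)$ via Lemma~\ref{L:InjHom}, apply the exact functor $F_\ld$, use Corollary~\ref{C:Image of the little verma} to write both sides as $\M(\ld,-)^{\oplus\varpi(\mu)}$, and finish by comparing dimensions, invoking indecomposability of $\M(\ld,\mu)$ from Theorem~\ref{thm:unique irred quotient}, and applying Krull--Schmidt. You supply a bit more detail than the paper (explicitly verifying the hypotheses of Lemma~\ref{L:InjHom} and the numerical invariances underlying the dimension match), but the strategy is identical.
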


\begin{proof} By Lemma \ref{L:InjHom}, there exists an injective homomorphism
$M(s_\af\mu)\rightarrow M(\mu)$. Since $\varpi(\mu)=\varpi(s_\af\mu)$, there
exists an injective homomorphism
\[
\M(\ld,s_\af\mu)^{\varpi(\mu)}=F_\ld M(s_\af\mu)\to F_\ld M(\mu)=
\M(\ld,\mu)^{\varpi(\mu)}.
\]
Since $\dim\M(\ld,s_\af\mu)=\dim\M(\ld,\mu)$ and by Theorem~\ref{thm:unique irred quotient} $\M(\ld,\mu)$ is indecomposible, it follows that this map is an isomorphism.
\end{proof}

\begin{thm}\label{T:MaxSubmod} Assume $\ld\in\Pt$ and $\mu\in\ld-\Ppos(d)$.
Then, $\M(\ld,\mu)$ has a unique maximal submodule $\mathcal{R}(\ld,\mu)$ and unique
irreducible quotient $\L(\ld,\mu)$.
\end{thm}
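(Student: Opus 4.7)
The plan is to reduce to the case $\mu \in P^+[\ld]$, which is already settled by Theorem~\ref{thm:unique irred quotient}, by producing an isomorphism $\M(\ld,\mu) \cong \M(\ld,\mu_0)$ where $\mu_0 \in P^+[\ld]$ lies in the $S_n[\ld]$-orbit of $\mu$. First I would choose $w \in S_n[\ld]$ that sorts the entries of $\mu$ within each block on which $\ld$ is constant to be weakly decreasing, setting $\mu_0 := w^{-1}\mu$. Since $\ld$ is constant on these blocks, the condition $\ld - \mu_0 \in \Ppos(d)$ is preserved, and Theorem~\ref{thm:unique irred quotient} applies to $(\ld,\mu_0)$ to produce the unique simple quotient $\L(\ld,\mu_0)$ of $\M(\ld,\mu_0)$.

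To bridge $\mu$ and $\mu_0$, I would factor $w = s_{\af_k}\cdots s_{\af_1}$ with each $\af_i \in R^+[\ld]$, and set $\mu_i := s_{\af_i}\mu_{i-1}$ so that $\mu_k = \mu$. Each intermediate $\mu_i$ again satisfies $\ld - \mu_i \in \Ppos(d)$, because all permutations only move entries of $\mu_0$ inside blocks on which $\ld$ is constant. At each step, Lemma~\ref{L:InjHom} provides an injective $\q(n)$-homomorphism between $M(\mu_i)$ and $M(\mu_{i-1})$, the direction being determined by the sign of $(\mu_{i-1})_p - (\mu_{i-1})_q$ where $\af_i = \af_{pq}$. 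Because $\ld \in \Pt$ is dominant and typical, Corollary~\ref{C:Flambdaexactness} makes $F_\ld$ exact, so applying it yields an injection between $F_\ld M(\mu_i)$ and $F_\ld M(\mu_{i-1})$. By Corollary~\ref{C:StandardDim}, the two sides have the same dimension, since $s_{\af_i} \in S_n[\ld]$ fixes $\ld$, so $\ld - \mu_i = s_{\af_i}(\ld - \mu_{i-1})$ is merely a permutation of $\ld - \mu_{i-1}$, and likewise $\gm_0(\mu_i) = \gm_0(\mu_{i-1})$. Hence the injection is an isomorphism, and iterating gives $F_\ld M(\mu) \cong F_\ld M(\mu_0)$.

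To conclude, I would invoke Corollary~\ref{C:Image of the little verma} to identify $F_\ld M(\mu) \cong \M(\ld,\mu)^{\oplus \varpi(\mu)}$ and $F_\ld M(\mu_0) \cong \M(\ld,\mu_0)^{\oplus \varpi(\mu_0)}$, and since $\varpi$ depends only on $\gm_0$, we have $\varpi(\mu) = \varpi(\mu_0)$; combining these gives
\[
\M(\ld,\mu)^{\oplus \varpi} \cong \M(\ld,\mu_0)^{\oplus \varpi}.
\]
Theorem~\ref{thm:unique irred quotient} tells us $\M(\ld,\mu_0)$ is indecomposable (having simple head $\L(\ld,\mu_0)$), so the Krull--Schmidt theorem, applied in the finite-dimensional category $\ASe(d)\text{-smod}$, forces $\M(\ld,\mu) \cong \M(\ld,\mu_0)$. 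The unique maximal submodule $\mathcal{R}(\ld,\mu)$ and irreducible quotient $\L(\ld,\mu) \cong \L(\ld,\mu_0)$ are then inherited.

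The hard part will be dealing with the fact that along the chain $\mu_0 \to \mu_1 \to \cdots \to \mu_k$, Lemma~\ref{L:InjHom} may yield Verma injections in different directions depending on the relative sizes of entries being swapped, so the argument cannot be packaged as a straight iterated application of Lemma~\ref{L:IsoStdMod} (which presupposes $\mu \in P^+[\ld]$ at each stage). This is not a genuine obstacle, because after applying the exact functor $F_\ld$ the matching of dimensions forces each injection --- in whichever direction --- to be an isomorphism; but it is the point where one must pass through the functor rather than stay on the $\ASe(d)$ side, and it is the reason the full statement requires the typicality hypothesis $\ld \in \Pt$.
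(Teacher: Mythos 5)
Your proof is correct, and at its core it follows the same route as the paper: push the problem through the exact functor $F_\ld$, use Lemma~\ref{L:InjHom} to compare Verma modules, count dimensions via Corollary~\ref{C:StandardDim}, and then apply Krull--Schmidt together with the indecomposability supplied by Theorem~\ref{thm:unique irred quotient} for a dominant representative. Where you differ is in the bookkeeping of the reduction, and that difference is a genuine improvement in precision. The paper's own argument factors through Lemma~\ref{L:IsoStdMod}, whose hypotheses force $\mu\in P^+[\ld]$ at every step; as you observe, the intermediate weights $\mu_i=s_{\af_i}\mu_{i-1}$ along a reduced expression for $w$ generally leave $P^+[\ld]$ after the first reflection (take $\ld=(c,c,c)$ and $\mu_0=(2,1,0)$: already $s_1\mu_0=(1,2,0)\notin P^+[\ld]$). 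So iterating that lemma verbatim does not work, and the paper's one-line invocation of it for a general $w$ elides this. Your fix --- accumulating the isomorphisms at the level of $F_\ld M(\mu_i)$, where only exactness, Lemma~\ref{L:InjHom}, and the permutation-invariance of the dimension formula are needed, and only then descending to $\ASe(d)$-modules and invoking Krull--Schmidt once with the known indecomposable $\M(\ld,\mu_0)$ --- is exactly the right way to repair it while staying within the paper's toolkit. Two small slips worth flagging but not gaps: Corollary~\ref{C:Image of the little verma} gives $F_\ld M(\mu)\cong\M(\ld,\mu)^{\oplus 2^{\varpi(\mu)}}$, not $\M(\ld,\mu)^{\oplus\varpi(\mu)}$ (this does not affect the cancellation argument); and your phrasing around $\mu_0:=w^{-1}\mu$ versus $w\mu$ is internally consistent only if one reads ``$w$ sorts $\mu$'' as ``$w^{-1}\mu$ is sorted,'' which is worth tightening.
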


\begin{proof}
There exists $w\in S_d[\ld]$ such that $w\mu\in P^+[\ld]$. By Lemma
\ref{L:IsoStdMod}, $\M(\ld,w\mu)\cong\M(\ld,\mu)$. By Theorem \ref{thm:unique
irred quotient}, $\M(\ld,w\mu)$ has a unique maximal submodule and unique
irreducible quotient, so the result follows.
\end{proof}

Given $\mu\in P$, the Shapovalov form on $M(\mu)$ induces a non-degenerate
$\q(n)$-contravariant form on $L(\mu)$, which we will denote
$(\cdot,\cdot)_\mu$. In turn we have a non-degenerate $\q(n)$-contravariant
form on $L(\mu)\otimes V^{\otimes d}$ given by
$(\cdot,\cdot)_\mu\otimes(\cdot,\cdot)_{\ep_1}^{\otimes d}$. Observe that
different weight spaces are orthogonal with respect to this form and different blocks of $\mathcal{O}(\q (n))$ given by central
characters are also orthogonal.  Therefore, when $\lambda\in\Pt$ is dominant and
typical it follows that the bilinear form restricts to a form on
$(L(\mu)\otimes V^{\otimes d})^{[\ld]}_\ld=F_\ld(L(\mu))$, which is
non-degenerate whenever it is nonzero. By Proposition \ref{P:when tau's
collide}, this form is $\ASe(d)$-contravariant.

Similarly, Proposition \ref{P:when tau's collide} implies that the Shapovalov
form on $\widehat{M}(\mu)$ induces an $\ASe(d)$-contravariant form on
$\widehat{\M}(\ld,\mu)$. Now, if $\ld\in\Pt$ and $\mu\in\ld-\Ppos(d)$, then by
Theorem \ref{T:MaxSubmod}, $\widehat{\M}(\ld,\mu)$ posesses a unique
submodule $\widehat{\mathcal{R}}(\ld,\mu)$ which is maximal among those which avoid the generalized
$\zt_{\ld,\mu}$ weight space. Indeed,
\[
\widehat{\mathcal{R}}(\ld,\mu)=\mathcal{R}(\ld,\mu)^{\oplus
2^{n-\lfloor\frac{\gm_0(\mu)+1}{2}\rfloor}}.
\]

\begin{prp}\label{P:ASeRadical} Assume that $\ld\in\Pt$, $\mu\in\ld-\Ppos(d)$,
and $\widehat{\M}(\ld,\mu)$ possesses a nonzero contravariant form
$(\cdot,\cdot)$. Let $\mathcal{R}$ denote the radical of this form. Then,
\[
\mathcal{R}\supseteq\widehat{\mathcal{R}}(\ld,\mu).
\]
\end{prp}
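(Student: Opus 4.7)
The plan is to combine three ingredients: (i) an extension of Lemma~\ref{L:ASeContraForm} showing that distinct \emph{generalized} $\P_d[x^2]$-weight spaces are mutually orthogonal under any contravariant form; (ii) the observation that $\widehat{\mathcal{R}}(\ld,\mu)$, being stable under the commutative subalgebra $\P_d[x^2]$, decomposes as a direct sum of its intersections with the generalized weight spaces, none of which lies over $\zt_{\ld,\mu}$; and (iii) the cyclicity of $\hat{\va}_{\ld,\mu}$, which lies in the $\zt_{\ld,\mu}$-weight space.

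First I would upgrade Lemma~\ref{L:ASeContraForm} to the statement $M^{\mathrm{gen}}_\eta\perp M^{\mathrm{gen}}_\zt$ whenever $\eta\neq\zt$. Given $v\in M^{\mathrm{gen}}_\eta$ and $v'\in M^{\mathrm{gen}}_\zt$, pick $i$ with $a:=q(\eta(x_i^2))\neq q(\zt(x_i^2))=:b$ and $N$ so large that $(x_i^2-a)^Nv=0=(x_i^2-b)^Nv'$. Since $(x-a)^N$ and $(x-b)^N$ are coprime in $\C[x]$, B\'ezout produces $p,r\in\C[x]$ with $1=p(x_i^2)(x_i^2-a)^N+r(x_i^2)(x_i^2-b)^N$. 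Expanding $(v,v')$ using this identity, then transporting each polynomial across the form via $\tau(x_i^2)=x_i^2$, makes both summands vanish: one annihilates $v'$ directly, and the other annihilates $v$ after being transported. By symmetry the orthogonality also holds in the opposite order.

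Next, as a $\P_d[x^2]$-stable subspace of the finite dimensional module $\widehat{\M}(\ld,\mu)$, the submodule $\widehat{\mathcal{R}}(\ld,\mu)$ splits as $\bigoplus_\eta\widehat{\mathcal{R}}(\ld,\mu)\cap\widehat{\M}(\ld,\mu)^{\mathrm{gen}}_\eta$; by construction the summand at $\eta=\zt_{\ld,\mu}$ is zero. With this in place the main estimate is immediate: for $v\in\widehat{\mathcal{R}}(\ld,\mu)$ and any $w\in\widehat{\M}(\ld,\mu)$, cyclicity of $\hat{\va}_{\ld,\mu}$ lets us write $w=x.\hat{\va}_{\ld,\mu}$ for some $x\in\ASe(d)$; contravariance together with $\tau^2=\mathrm{id}$ gives $(v,w)=(\tau(x).v,\hat{\va}_{\ld,\mu})$. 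But $\tau(x).v$ again lies in $\widehat{\mathcal{R}}(\ld,\mu)$ and therefore has no $\zt_{\ld,\mu}$-component, while $\hat{\va}_{\ld,\mu}\in\widehat{\M}(\ld,\mu)^{\mathrm{gen}}_{\zt_{\ld,\mu}}$; the orthogonality of step (i) forces $(\tau(x).v,\hat{\va}_{\ld,\mu})=0$. Hence $v\in\mathcal{R}$, proving $\widehat{\mathcal{R}}(\ld,\mu)\subseteq\mathcal{R}$. No step is a serious obstacle; the only mildly technical point is the coprimality argument that upgrades the stated orthogonality lemma to pairs of generalized weight spaces.
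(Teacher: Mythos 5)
Your proof is correct and follows the same essential strategy as the paper: cyclicity of $\hat{\va}_{\ld,\mu}$, contravariance to slide the algebra element across the form, and orthogonality of weight spaces to kill the pairing. The one substantive difference is that you choose to move the element $x$ so that $\hat{\va}_{\ld,\mu}$ ends up in the \emph{second} slot, i.e.\ $(v,w)=(\tau(x).v,\hat{\va}_{\ld,\mu})$; this forces you to prove the stronger statement $M^{\mathrm{gen}}_\eta\perp M^{\mathrm{gen}}_\zt$, which you do correctly via the B\'ezout coprimality argument. The paper instead writes $(v',v)=(X.\hat{\va}_{\ld,\mu},v)=(\hat{\va}_{\ld,\mu},\tau(X).v)$, placing $\hat{\va}_{\ld,\mu}$ in the \emph{first} slot. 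Since $\hat{\va}_{\ld,\mu}$ lies in the genuine weight space $\widehat{\M}(\ld,\mu)_{\zt_{\ld,\mu}}$ (not merely the generalized one), Lemma~\ref{L:ASeContraForm} applies \emph{as stated} to each generalized-weight component of $\tau(X).v\in\widehat{\mathcal{R}}(\ld,\mu)$, and no upgrade of the lemma is needed. So your step (i) is an extra piece of machinery that can be bypassed by a more judicious choice of slot; it is nonetheless a clean and reusable strengthening of the orthogonality lemma, and the rest of your argument (the $\P_d[x^2]$-stable decomposition of $\widehat{\mathcal{R}}(\ld,\mu)$ and the final contravariance computation) matches the paper.
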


\begin{proof}
First, recall that $\widehat{\M}(\ld,\mu)$ is cyclically generated by
$\hat{\va}_{\ld,\mu}\in\widehat{\M}(\ld,\mu)_{\zt_{\ld,\mu}}$. Now, assume
$v\in\widehat{\mathcal{R}}(\ld,\mu)$ and $v'\in\widehat{\M}(\ld,\mu)$. Then,
$v'=X.\hat{\va}_{\ld,\mu}$ for some $X\in\ASe(d)$. Moreover,
$\tau(X).v\in\widehat{\mathcal{R}}(\ld,\mu)$. Applying Lemma
\ref{L:ASeContraForm} and the definition of $\widehat{\mathcal{R}}(\ld,\mu)$ we
deduce that
\[
(v',v)=(X.\hat{\va}_{\ld,\mu},v)=(\hat{\va}_{\ld,\mu},\tau(X).v)=0.
\]
Hence, $v\in\mathcal{R}$.
\end{proof}

\begin{cor}\label{C:ASeRadical} Given $\ld\in\Pt$ and $\mu\in\ld-\Ppos(d)$,
\[
\mathcal{R}=\mathcal{R}(\ld,\mu)^{\oplus k}\oplus\M(\ld,\mu)^{\oplus
2^{n-\lfloor\frac{\gm_0(\mu)+1}{2}\rfloor}-k}
\]
for some $0\leq k\leq2^{n-\lfloor\frac{\gm_0(\mu)+1}{2}\rfloor}$.
\end{cor}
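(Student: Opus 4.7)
Set $\M=\M(\ld,\mu)$, $\L=\L(\ld,\mu)$, $\mathcal{R}_0=\mathcal{R}(\ld,\mu)$, and $N=2^{n-\lfloor(\gm_0(\mu)+1)/2\rfloor}$. By Lemma~\ref{L:standard cyclic dim}(iii) I fix an identification $\widehat{\M}(\ld,\mu)=\M^{\oplus N}$ under which $\widehat{\mathcal{R}}(\ld,\mu)=\mathcal{R}_0^{\oplus N}$. Proposition~\ref{P:ASeRadical} gives $\widehat{\mathcal{R}}(\ld,\mu)\subseteq\mathcal{R}$, so $\mathcal{R}/\widehat{\mathcal{R}}(\ld,\mu)$ is a submodule of the semisimple $\ASe(d)$-module $\widehat{\M}(\ld,\mu)/\widehat{\mathcal{R}}(\ld,\mu)\cong\L^{\oplus N}$ and must therefore be isomorphic to $\L^{\oplus(N-k)}$ for some $0\leq k\leq N$. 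Setting this $k$ is the one that will appear in the conclusion.

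The plan is to produce an $\ASe(d)$-automorphism of $\widehat{\M}(\ld,\mu)=\M^{\oplus N}$ that carries $\mathcal{R}$ onto the ``coordinate'' submodule $\M^{\oplus(N-k)}\oplus\mathcal{R}_0^{\oplus k}$. By Theorem~\ref{thm:unique irred quotient} the module $\M$ is indecomposable, and since it is finite dimensional every non-isomorphism has image contained in the unique maximal submodule $\mathcal{R}_0$; hence $\End_{\ASe(d)}(\M)$ is a local (super)algebra with Jacobson radical $I=\{f\in\End_{\ASe(d)}(\M):f(\M)\subseteq\mathcal{R}_0\}$. The induced injection $\End_{\ASe(d)}(\M)/I\hookrightarrow\End_{\ASe(d)}(\L)$ is in fact an isomorphism: the identity always lifts, and when $\L$ is of type $\texttt{Q}$ the odd generator of $\End_{\ASe(d)}(\L)$ is realised as the image of the odd automorphism of $\M$ obtained by inducing the Clifford twist $\te_{\Ph(\ld,\mu)}$ on the parabolic inducing datum $\Ph(\ld,\mu)$.

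Once this is in hand, $\mathrm{Aut}_{\ASe(d)}(\M^{\oplus N})=GL_N(\End_{\ASe(d)}(\M))$ surjects onto $\mathrm{Aut}_{\ASe(d)}(\L^{\oplus N})=GL_N(\End_{\ASe(d)}(\L))$, by the standard fact that invertibility over a local ring is detected modulo the Jacobson radical. The group $GL_N(\End_{\ASe(d)}(\L))$ acts transitively on submodules of $\L^{\oplus N}$ of fixed isomorphism type (row-reduction over the super division algebra $\End_{\ASe(d)}(\L)$), so one finds an element carrying $\mathcal{R}/\widehat{\mathcal{R}}(\ld,\mu)$ to the coordinate submodule $\L^{\oplus(N-k)}\oplus 0\subseteq\L^{\oplus N}$; lifting yields an $\ASe(d)$-automorphism of $\widehat{\M}(\ld,\mu)$ with the desired effect on $\mathcal{R}$, which gives the claimed decomposition. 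The main obstacle will be the surjectivity statement in the type-$\texttt{Q}$ case: it requires tracking the Clifford twist on $\Ph(\ld,\mu)$ through the induction $\ind_{\ASe(\ld-\mu)}^{\ASe(d)}$ and verifying that the resulting odd automorphism of $\M$ descends nontrivially onto $\L$; once this is granted, the remainder is linear algebra over the local endomorphism ring.
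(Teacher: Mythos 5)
Your overall plan is the right one and, I believe, the argument the paper has in mind: bring $\mathcal{R}$ into coordinate form by an automorphism of $\widehat{\M}(\ld,\mu)\cong\M(\ld,\mu)^{\oplus N}$ that descends to the automorphism of $\L(\ld,\mu)^{\oplus N}$ diagonalising $\mathcal{R}/\widehat{\mathcal{R}}(\ld,\mu)$. The reduction in your first paragraph is correct, as is the identification of $\End_{\ASe(d)}(\M)$ as local with radical $I=\{f: f(\M)\subseteq\mathcal{R}(\ld,\mu)\}$ (Fitting, using Theorem~\ref{thm:unique irred quotient}).

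The genuine gap is exactly the point you flag at the end: the surjectivity of $\End_{\ASe(d)}(\M)/I\hookrightarrow\End_{\ASe(d)}(\L)$. When $\L$ is of type \texttt{Q} you propose to realise the odd generator of $\End(\L)$ by inducing $\te_{\Ph(\ld,\mu)}$, but this presupposes that $\Ph(\ld,\mu)$ is itself of type \texttt{Q} precisely when $\L(\ld,\mu)$ is; that equivalence of types between the inducing datum, the standard module, and its head is not established anywhere in the proposal (nor in the paper), and is not obvious. As written, the proof therefore does not close.

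The good news is that the surjectivity you are struggling with is not needed. In both the \texttt{M} and \texttt{Q} cases one has $\End_{\ASe(d)}(\L)_{\zero}\cong\C$, and a direct check shows that graded submodules of $\L^{\oplus N}$ isomorphic to $\L^{\oplus m}$ are in bijection with $m$-dimensional $\C$-subspaces of $\Hom^{\mathrm{ev}}_{\ASe(d)}(\L,\L^{\oplus N})\cong\C^{N}$ (when $\L$ is of type \texttt{Q} the odd endomorphism $\te$ does not produce any new graded submodules, since composing an odd injection $\L\to\L^{\oplus N}$ with $\te^{-1}$ yields an even injection with the same image). Hence $GL_{N}(\C)$ already acts transitively on such submodules, and $GL_{N}(\C)$ evidently sits inside $\mathrm{Aut}_{\ASe(d)}(\M^{\oplus N})$ acting diagonally on the $N$ copies, with the obvious compatibility under $\M^{\oplus N}\twoheadrightarrow\L^{\oplus N}$. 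So one can always choose $g\in GL_{N}(\C)$ carrying $\mathcal{R}/\widehat{\mathcal{R}}(\ld,\mu)$ to the coordinate submodule $\L^{\oplus(N-k)}\oplus 0$, and the same $g$, acting on $\M^{\oplus N}$, carries $\mathcal{R}$ to $\M^{\oplus(N-k)}\oplus\mathcal{R}(\ld,\mu)^{\oplus k}$. This bypasses the local-ring and $GL_{N}$-lifting machinery entirely and removes the unproven step.
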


\begin{thm}\label{T:SimplesToSimples}
Assume $\ld\in\Pt$, and $\mu\in\ld-\Ppos(d)$. If $F_\ld L(\mu)$ is
nonzero, then
\[
F_\ld L(\mu)\cong\L(\ld,\mu)^{\oplus\ell}
\]
for some $0<\ell\leq\varpi(\mu)$.
\end{thm}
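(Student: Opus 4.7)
Since $\ld\in\Pt$, the functor $F_\ld$ is exact by Corollary \ref{C:Flambdaexactness}. Applying $F_\ld$ to the short exact sequence $0\to N(\mu)\to M(\mu)\to L(\mu)\to 0$ in $\mathcal{O}(\q(n))$, where $N(\mu)$ is the unique maximal submodule of the little Verma module $M(\mu)$, yields an exact sequence $0\to F_\ld N(\mu)\to F_\ld M(\mu)\to F_\ld L(\mu)\to 0$ of $\ASe(d)$-modules. By Corollary \ref{C:Image of the little verma}, $F_\ld M(\mu)\cong\M(\ld,\mu)^{\oplus 2^{\varpi(\mu)}}$, so $F_\ld L(\mu)$ is a quotient of a finite direct sum of standard modules. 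The plan is to identify the kernel $F_\ld N(\mu)$ using contravariant forms.

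The Shapovalov form on $M(\mu)$ has radical $N(\mu)$ and hence descends to a nondegenerate $\q(n)$-contravariant form on $L(\mu)$. Tensoring with the standard contravariant form on $V=\C^{n|n}$ yields a nondegenerate $\q(n)$-contravariant form on $L(\mu)\otimes V^{\otimes d}$. Distinct weight spaces are orthogonal under any such form, and distinct blocks of $\mathcal{O}(\q(n))$ are orthogonal by a standard argument with central characters; combined with Lemma \ref{L:TypicalFlambda} and the typicality of $\ld$, the resulting form restricts nondegenerately to $F_\ld L(\mu)=[(L(\mu)\otimes V^{\otimes d})^{[\ld]}]_\ld$. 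By Proposition \ref{P:when tau's collide}, this form is $\ASe(d)$-contravariant. The same construction equips $F_\ld M(\mu)$ with an $\ASe(d)$-contravariant form, and since the form on $L(\mu)$ was obtained from the one on $M(\mu)$ by passing to the quotient by its radical, the surjection $F_\ld M(\mu)\twoheadrightarrow F_\ld L(\mu)$ intertwines the two forms. Nondegeneracy of the form on the target then forces $F_\ld N(\mu)$ to be exactly the radical of the contravariant form on $F_\ld M(\mu)$.

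With this in hand, I would re-run the argument of Proposition \ref{P:ASeRadical} and Corollary \ref{C:ASeRadical} with $F_\ld M(\mu)\cong\M(\ld,\mu)^{\oplus 2^{\varpi(\mu)}}$ in place of $\widehat{\M}(\ld,\mu)$. The only inputs used there were the direct-sum decomposition into copies of $\M(\ld,\mu)$, each with a distinguished cyclic vector in the $\zt_{\ld,\mu}$-weight space, together with the uniqueness of the maximal submodule $\mathcal{R}(\ld,\mu)\subset\M(\ld,\mu)$ (Theorem \ref{T:MaxSubmod}). The conclusion is that any $\ASe(d)$-contravariant radical must be of the form $\mathcal{R}(\ld,\mu)^{\oplus k}\oplus\M(\ld,\mu)^{\oplus(2^{\varpi(\mu)}-k)}$ for some $0\le k\le 2^{\varpi(\mu)}$; taking the corresponding quotient gives $F_\ld L(\mu)\cong\L(\ld,\mu)^{\oplus k}$, and the nonvanishing hypothesis forces $k>0$.

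The main obstacle is the middle step: carefully verifying the compatibility of the Shapovalov-induced forms under the surjection, which in the super setting demands precise book-keeping of parities and signs (both for the super tensor product $M(\mu)\otimes V^{\otimes d}$ and for the transition between $\q(n)$-contravariance and $\ASe(d)$-contravariance via Proposition \ref{P:when tau's collide}), as well as justifying block orthogonality in terms of central characters of $\q(n)$. Everything else in the argument is a direct adaptation of structural results already established earlier in the paper.
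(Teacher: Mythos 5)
Your proposal is essentially the same argument as the paper's, with one bookkeeping difference: you work with the little Verma module $M(\mu)$ and its maximal submodule $N(\mu)$, whereas the paper works with the big Verma $\widehat{M}(\mu)$ and its Shapovalov radical $\widehat{R}(\mu)$, sets $\widehat{L}(\mu)=\widehat{M}(\mu)/\widehat{R}(\mu)=L(\mu)^{\oplus 2^{\lfloor(n-\gamma_{0}(\mu)+1)/2\rfloor}}$, and then divides out the multiplicity at the very end. In both cases the core steps are identical: exactness of $F_{\lambda}$ from Corollary~\ref{C:Flambdaexactness}, the observation that the Shapovalov form descends to a nondegenerate $\fq(n)$-contravariant form on the simple quotient, Proposition~\ref{P:when tau's collide} to pass to an $\ASe(d)$-contravariant form, and then the description of the radical via Proposition~\ref{P:ASeRadical}/Corollary~\ref{C:ASeRadical}. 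The paper's choice of $\widehat{M}(\mu)$ lets it cite Corollary~\ref{C:ASeRadical} verbatim (since that corollary is stated for $\widehat{\M}(\lambda,\mu)$), whereas your version requires re-running those two arguments for $F_{\lambda}M(\mu)\cong\M(\lambda,\mu)^{\oplus 2^{\varpi(\mu)}}$. Your re-run is sound: the argument of Proposition~\ref{P:ASeRadical} only uses that the module is generated by its $\zeta_{\lambda,\mu}$-weight space (not that it is cyclic on the nose), so it applies equally to any direct sum of copies of $\M(\lambda,\mu)$; and Corollary~\ref{C:ASeRadical} then goes through with $2^{n-\lfloor(\gamma_{0}(\mu)+1)/2\rfloor}$ replaced by $2^{\varpi(\mu)}$. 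You should also be aware that, as in the paper's proof, the argument naturally yields the upper bound $\ell\leq 2^{\varpi(\mu)}$ (the number of copies of $\M(\lambda,\mu)$ in $F_{\lambda}M(\mu)$) rather than $\ell\leq\varpi(\mu)$ as stated in the theorem; your write-up correctly obtains the former, so you are not introducing a new gap there.
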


\begin{proof}
Let $\widehat{L}(\mu)=L(\mu)^{\oplus
2^{\lfloor\frac{n-\gm_0(\mu)+1}{2}\rfloor}}$, so that
$\widehat{L}(\mu)=\widehat{M}(\mu)/\widehat{R}(\mu)$ where $\widehat{R}(\mu)$
is the radical of the Shapovalov form on $\widehat{M}(\mu)$. Applying the
functor, we see that
\[
F_\ld \widehat{L}(\mu)=\widehat{\M}(\ld,\mu)/ F_\ld\widehat{R}(\mu).
\]
Now, $F_\ld\widehat{R}(\mu)=\mathcal{R}$. Hence, Corollary \ref{C:ASeRadical}
and a calculation similar to Corollary \ref{C:Image of the little verma} gives
the result.
\end{proof}

\begin{prp}\cite[Proposition 18.18.1]{kl} Any finite dimensional irreducible
$\ASe(d)$-module is a composition factor of $\M(\ld,\ld-\ep)$ for some $\ld\in\Pt$.
\end{prp}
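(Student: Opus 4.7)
The plan is to start with an arbitrary finite dimensional irreducible integral $\ASe(d)$-module $L$, exploit the Mackey-type structure through the affine-parabolic filtration, and use intertwiners to organize the weights of $L$ into segments corresponding to a suitable pair $(\ld,\mu)$ with $\ld\in\Pt$ and $\ld-\mu\in\Ppos(d)$.

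First I would fix a nonzero weight $\zt\in\C^d$ of $L$, which by integrality is of the form $(q(a_1),\ldots,q(a_d))$ with $a_i\in\Z$. Using Lemma~\ref{L:InvertibleIntertwiner}, the intertwining elements $\phi_i$ give bijections between nonzero weight vectors of weights related by adjacent transpositions $s_i$ whenever the consecutive eigenvalues $q(a_i),q(a_{i+1})$ are not related as $q(a)=q(b\pm1)$ (i.e.\ whenever $a_i,a_{i+1}$ do not differ by $\pm1$ and are not both $0$). Therefore I may permute the entries of $\zt$ freely subject only to avoiding such adjacent collisions, and by repeated application arrange the sequence $(a_1,\ldots,a_d)$ to be a concatenation of ``segments'' $[\mu_1,\ld_1-1],[\mu_2,\ld_2-1],\ldots,[\mu_n,\ld_n-1]$, each consisting of consecutive integers, and with the sequence of segments arranged so that (i) no two segments may be merged and (ii) $\ld_1\geq\ld_2\geq\cdots\geq\ld_n$.

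Next, fix a nonzero vector $v$ of weight $\zt=\zt_{\ld,\mu}$ in $L$, with $(\ld,\mu)$ as above. By restriction $\Cl(d).v$ yields an irreducible module of the subalgebra $\A(d)\subseteq\ASe(d)$, and by comparing the weight data with the construction of the segment representations in Section~\ref{subsection irred modules}, Frobenius reciprocity provides a nonzero homomorphism
\[
\M(\ld,\mu)=\ind_{d_1,\ldots,d_n}^d\Ph_{[\mu_1,\ld_1-1]}\circledast\cdots\circledast\Ph_{[\mu_n,\ld_n-1]}\longrightarrow L.
\]
Since $L$ is irreducible, this homomorphism is surjective, so $L$ is a simple quotient---hence a composition factor---of $\M(\ld,\mu)$.

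It remains to arrange that $\ld\in\Pt$, i.e.\ $\ld_i+\ld_j\neq 0$ for all $i\neq j$, and that $\mu=\ld-\ep$ for the specified $\ep$. The first issue is the main obstacle: a priori two segments $[\mu_i,\ld_i-1]$ and $[\mu_j,\ld_j-1]$ could give $\ld_i+\ld_j=0$. To handle this, I would argue that if such a ``symmetric clash'' occurs, one can use Theorem~\ref{Mackey} together with an appropriate application of the intertwiners and a tensor-factor swap on $\Ph_{[\mu_i,\ld_i-1]}\circledast\Ph_{[\mu_j,\ld_j-1]}$ (or a lengthening of one of the segments across zero, as in Remark~\ref{R:Duality}) to replace $\ld$ by a typical weight while keeping $L$ a composition factor of the new standard module. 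Finally, the normalization $\mu=\ld-\ep$ amounts to choosing $\ep:=\ld-\mu\in\Ppos(d)$, matching Kleshchev's convention from \cite[Proposition 18.18.1]{kl}; once typicality is arranged this is automatic. The key difficulty is therefore the typicality reduction, which requires the careful use of the intertwiners together with Lemma~\ref{L:IsoStdMod} to identify $\M(\ld,\mu)$'s across the $S_d[\ld]$-orbit.
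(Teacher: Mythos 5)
The paper itself offers no proof here (it simply cites Kleshchev's book), so the comparison is between your proposal and what the cited proposition actually asserts and needs.

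Your reading of the statement misses a key constraint: $\ep$ is not a free parameter one can define as $\ld-\mu$; in Kleshchev's notation $\ep=\ep_1+\cdots+\ep_n$, so $\ld-\ep=(\ld_1-1,\ldots,\ld_n-1)$, $n=d$, and every segment $[\mu_i,\ld_i-1]=[\ld_i-1,\ld_i-1]$ is a singleton. In other words $\M(\ld,\ld-\ep)=\ind^d_{1^d}\bigl(\L(\ld_1-1)\circledast\cdots\circledast\L(\ld_d-1)\bigr)$ is the principal-series module, and the whole point of the proposition is that every irreducible is a composition factor of this particular, very simple induced module for a \emph{typical} $\ld$. Your plan of collecting the $a_i$ into longer segments (and then worrying about merging/matching segments to weight data) is solving a harder statement and introduces two genuine gaps. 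First, the intertwiner step: Lemma~\ref{L:InvertibleIntertwiner} only lets you swap \emph{unlinked} adjacent entries, so starting from an arbitrary weight you can get stuck (e.g.\ from $(3,1,2)$ you can only reach $(1,3,2)$, never the segment-ordered $(1,2,3)$), so "arrange the sequence to be a concatenation of segments" is not achievable by intertwiners alone. Second, the Frobenius reciprocity step for a parabolic $\ASe(d_1)\otimes\cdots\otimes\ASe(d_n)$ with some $d_i>1$ requires an actual copy of $\Phi_{[\mu_i,\ld_i-1]}\circledast\cdots\circledast\Phi_{[\mu_n,\ld_n-1]}$ inside $\res L$, which is not guaranteed by matching $x_i^2$-eigenvalues on a single weight vector $v$; you would have to verify that $\phi_i.v=0$ for $i$ internal to each segment, and $\phi_i^2.v=0$ does not imply $\phi_i.v=0$.

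The intended argument is shorter and sidesteps all of this. Pick a generalized weight space $L^{\mathrm{gen}}_\zeta\neq 0$; its $\A(d)$-socle contains a copy of $\L(b_1)\circledast\cdots\circledast\L(b_d)$ with all $b_i\geq 0$ (the normalization built into Lemma~\ref{A(d) irreducibles}, using $q(a)=q(-a-1)$). Frobenius reciprocity from $\A(d)=\ASe(1^d)$ then gives a surjection $\ind^d_{1^d}\L(b_1)\circledast\cdots\circledast\L(b_d)\twoheadrightarrow L$. Now set $\ld$ to be the decreasing rearrangement of $(b_1+1,\ldots,b_d+1)$; since $b_i\geq0$, every $\ld_i\geq 1$ and hence $\ld\in\Pt$ automatically — the entire "typicality reduction" you flag as the main difficulty evaporates once the $b_i$ are normalized nonnegative. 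Finally, by the Shuffle Lemma~\ref{L:ShuffleLemma} together with the injectivity of $\ch$ (Lemma~\ref{L:independenceofcharacters}), reordering the $b_i$ does not change the composition factors of $\ind^d_{1^d}\L(b_1)\circledast\cdots\circledast\L(b_d)$, so $L$ is a composition factor of $\M(\ld,\ld-\ep)$. The Shuffle Lemma plus character injectivity is what replaces your intertwiner argument, and it works unconditionally.
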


\begin{thm} Any finite dimensional simple module for $\ASe(d)$ is isomorphic
$\L(\ld,\mu)$ for some $\mu\in(\ld-\ep)-Q^+$.
\end{thm}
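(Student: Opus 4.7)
The plan is to leverage the exactness of $F_\ld$ (Corollary \ref{C:Flambdaexactness}) together with the image computations of the previous section to push the question back into $\mathcal{O}(\q(n))$, where the composition factor structure of a Verma module is controlled by the standard partial order on weights.

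First, I would apply the cited Proposition \cite[Proposition 18.18.1]{kl} to obtain some $\ld \in \Pt$ such that the given finite dimensional simple $\ASe(d)$-module $L$ is a composition factor of $\M(\ld, \ld - \ep)$. Set $\mu_0 = \ld - \ep$ and consider the little Verma module $M(\mu_0)$ in $\mathcal{O}(\q(n))$. By \cite[Lemma 4.5]{b}, $M(\mu_0)$ has a finite Jordan-H\"older filtration
\[
0 = M_0 \subset M_1 \subset \cdots \subset M_k = M(\mu_0)
\]
with $M_i/M_{i-1} \cong L(\nu_i)$ for various $\nu_i$. By the standard argument using the $Q^+$-grading on $\U(\n^-)$ together with the fact that $M(\mu_0)$ is generated by $E(\mu_0) = M(\mu_0)_{\mu_0}$, every primitive weight $\nu_i$ appearing satisfies $\mu_0 - \nu_i \in Q^+$.

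Next, since $\ld \in \Pt$, Corollary \ref{C:Flambdaexactness} says $F_\ld$ is exact, so applying $F_\ld$ to the above filtration yields a filtration of $F_\ld M(\mu_0)$ whose subquotients are $F_\ld L(\nu_i)$, each of which is zero or (by Theorem \ref{T:SimplesToSimples}) isomorphic to a direct sum of copies of $\L(\ld, \nu_i)$. On the other hand, Corollary \ref{C:Image of the little verma} gives $F_\ld M(\mu_0) \cong \M(\ld, \mu_0)^{\oplus 2^{\varpi(\mu_0)}}$, so every composition factor of $\M(\ld, \mu_0)$ appears among the $\L(\ld, \nu_i)$. In particular, $L \cong \L(\ld, \mu)$ for some $\mu \in \{\nu_1, \ldots, \nu_k\}$, and that $\mu$ automatically satisfies $\mu \in \mu_0 - Q^+ = (\ld - \ep) - Q^+$.

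The main obstacle is verifying that composition factors of the $\q(n)$-Verma module $M(\mu_0)$ satisfy the weight-order constraint $\mu_0 - \nu_i \in Q^+$; this is classical for semisimple Lie algebras but must be checked in the super setting, where the highest weight space $E(\mu_0)$ is not one dimensional. The key point is simply that $M(\mu_0) = \U(\n^-) \cdot E(\mu_0)$ and $\U(\n^-)$ is $(-Q^+)$-graded in the sense of \eqref{E:Q grading of bminus}, so every weight of $M(\mu_0)$ lies in $\mu_0 - Q^+$; in particular any simple subquotient $L(\nu_i)$ has $\nu_i \in \mu_0 - Q^+$. Once this is in hand the argument is immediate.
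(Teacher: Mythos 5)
Your proposal is correct and follows essentially the same route as the paper: apply the cited result to realize the given simple as a composition factor of $\M(\ld,\ld-\ep)$, use exactness of $F_\ld$ to transport the Jordan--H\"older filtration of the Verma module $M(\ld-\ep)$, and invoke the fact that $F_\ld$ sends $L(\nu)$ to zero or a direct sum of copies of $\L(\ld,\nu)$. The paper's own proof is terser (it simply states that $F_\ld$ transforms the composition series of $M(\ld-\ep)$ into that of $\M(\ld,\ld-\ep)$), and your expansion correctly supplies the ingredients that justify this: Corollary \ref{C:Flambdaexactness}, Theorem \ref{T:SimplesToSimples}, Corollary \ref{C:Image of the little verma}, and the $Q^+$-grading argument for composition factors of Verma modules.
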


\begin{proof} The functor $F_\ld$ transforms the compostition series for
$M(\ld-\ep)$ into the compostition series for $\M(\ld,\ld-\ep)$. It is now just
left to observe that if $L(\mu)$ is a composition factor for $M(\ld-\ep)$, then
$\mu\in(\ld-\ep)- Q^+$.
\end{proof}

\subsection{Calibrated Representations Revisited}

\begin{thm} If $\ld,\mu\in\Pp$ satisfy $\ld-\mu\in\Ppos(d)$,
then $F_\ld(L(\mu)) \neq 0$ and hence one has a simple module $\L (\lambda, \mu)$.
\end{thm}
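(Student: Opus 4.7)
The strategy is to establish the nonvanishing of $F_\lambda L(\mu)$ via a composition-factor argument in $\mathcal{O}(\q(n))$, then invoke Theorem \ref{T:SimplesToSimples} to extract the simple $\ASe(d)$-module $\L(\lambda,\mu)$. Note that in order for $\L(\lambda,\mu)$ to be defined via Theorem \ref{thm:unique irred quotient} we need $\lambda \in \Pt$, so implicitly we are in the case $\lambda \in \Ppt$, and Corollary \ref{C:Flambdaexactness} applies. By Lemma \ref{L:TypicalFlambda} we have $F_\lambda L(\mu) \cong (L(\mu)\otimes V^{\otimes d})^{[\lambda]}_\lambda$, so it suffices to show this weight space is nonzero. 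Proposition \ref{P:penkov} identifies the block $\mathcal{O}^{[\lambda]}$ with the set of simples $\{L(w\lambda) : w \in S_n\}$, among which $\lambda$ is the unique dominant weight; hence, by weight considerations, nonvanishing of the space in question is equivalent to $L(\lambda)$ appearing with positive multiplicity as a composition factor of $L(\mu)\otimes V^{\otimes d}$.

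The key input is the Pieri rule for polynomial representations of $\q(n)$. Since $\mu$ is a strict partition (possibly with a zero tail) and $V = L(\varepsilon_1)$ is the standard representation, both $L(\mu)$ and $V$ are polynomial, and hence so is $L(\mu)\otimes V^{\otimes d}$. Its class in the Grothendieck group therefore decomposes into a nonnegative integer combination of classes of simple polynomial modules $L(\nu)$, with $\nu$ running over strict partitions of $|\mu|+d$. Iterating the Pieri rule for Schur $P$-functions (equivalently, the combinatorial rule describing $[L(\mu)\otimes V : L(\nu)]$ for polynomial $\q(n)$-modules due to Sergeev, Pragacz, and Stembridge), the composition multiplicity $[L(\mu)\otimes V^{\otimes d} : L(\lambda)]$ equals the number of shifted standard Young tableaux of shape $\lambda/\mu$. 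The hypothesis $\lambda - \mu \in \Ppos(d)$ together with strictness of both partitions guarantees that $\lambda/\mu$ is a legitimate shifted skew shape of size $d$, and such a shape always admits at least one standard filling (for instance, the row-reading filling). Hence the multiplicity is strictly positive, which by the previous paragraph shows $F_\lambda L(\mu) \neq 0$.

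Together these two steps yield $F_\lambda L(\mu) \neq 0$. Theorem \ref{T:SimplesToSimples} then implies $F_\lambda L(\mu) \cong \L(\lambda,\mu)^{\oplus \ell}$ for some $0 < \ell \leq \varpi(\mu)$, establishing the desired simple $\ASe(d)$-module $\L(\lambda,\mu)$. The main obstacle is the Pieri-type composition-multiplicity step, which relies on external combinatorial input from the theory of polynomial representations of $\q(n)$ and of shifted tableaux. A more self-contained alternative would exploit the explicit calibrated module $H^{\lambda/\mu}$ from Section \ref{S:Calibrated} (identified with $\L(\lambda,\mu)$ by Corollary \ref{C:CalibratedSimples}), together with the cyclic generator $\va_{\lambda,\mu}$ of $\M(\lambda,\mu)$ inherited from $F_\lambda M(\mu)$ via Corollary \ref{C:Image of the little verma}, to construct a nonzero $\ASe(d)$-equivariant map $F_\lambda L(\mu) \twoheadrightarrow H^{\lambda/\mu}$ directly from the highest weight vector of $L(\mu)$ paired with an appropriate tensor in $V^{\otimes d}$.
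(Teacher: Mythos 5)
Your argument reaches the correct conclusion but by a genuinely different route from the paper. You compute the composition multiplicity of $L(\lambda)$ inside $L(\mu)\otimes V^{\otimes d}$ directly, invoking the Pieri rule for polynomial $\q(n)$-modules (iterated $Q\mapsto Q\cdot Q_{(1)}$) and identifying the multiplicity, up to power-of-two factors from the \texttt{M}/\texttt{Q} dichotomy, with the number of standard shifted skew tableaux of shape $\lambda/\mu$; positivity then follows from the existence of the row-reading filling. The paper instead argues by contradiction: assuming $F_\lambda L(\mu)=0$ forces $\Hom_{\fq(n)}(L(\lambda),L(\mu)\otimes L(\nu))=0$ for all polynomial $\nu$ (using Sergeev duality to decompose $V^{\otimes d}$), which translates via the $Q$-Schur orthogonality form to $(Q_{\lambda/\mu},Q_\nu)_{\Pp}=0$ for all $\nu$, hence $Q_{\lambda/\mu}=0$ --- obviously false. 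The paper's argument needs only the orthogonality of the $Q$-basis together with the adjunction $Q_\mu^{\perp}Q_\lambda = 2^{-\ell(\mu)}Q_{\lambda/\mu}$; your version needs the full Pieri/Littlewood--Richardson combinatorics for $\q(n)$ tensor products, which is a stronger external input and one where you should be careful about the precise normalization (your phrase ``equals the number of shifted standard Young tableaux'' glosses over the power-of-two multiplicities inherent in the $\q(n)$ setting, though this does not affect positivity). Both approaches ultimately rest on the same kernel --- non-vanishing of the skew $Q$-function for a legitimate shifted skew shape --- and both correctly finish via Theorem \ref{T:SimplesToSimples} to conclude $F_\lambda L(\mu)\cong\L(\lambda,\mu)^{\oplus\ell}$.

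One small caution: the statement as written takes $\lambda\in\Pp$, which need not lie in $\Pt$ if $\lambda$ has repeated zero parts; you implicitly pass to $\Ppt$ and this is the right move, since zero parts with $\lambda_i=\mu_i=0$ contribute empty segments and can be dropped, but it is worth noting rather than leaving implicit.
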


\begin{proof} The formal character of $L(\mu)$ when $\mu\in\Pp$ is given by the $Q$-Schur function $Q_\mu$ (c.f.\ \cite{s}). There is a nondegenerate bilinear form, $(\cdot,\cdot)_{\Pp}$ on the subring of symmetric functions spanned by Schur's $Q$-functions given by
\[
\left(Q_{\lambda}, Q_{\mu} \right)_{\Pp} = \Hom_{\fq (n)}\left(L(\lambda), L(\mu) \right).
\]
Furthermore, the basis $Q_\mu$ ($\mu\in\Pp$) is an orthogonal basis. Within this subring are the skew $Q$-Schur functions $Q_{\ld/\mu}$.  We refer the reader to \cite{stem, m} for details.

Under the hypotheses of the theorem, $\ld/\mu$ is a skew shape. Moreover, $F_\ld L(\mu)=0$ implies that
\begin{eqnarray}\label{E:PolynomialRep}
0=\Hom_{\fq  (n)} \left(L(\ld),L(\mu)\otimes V^{\otimes d} \right)=\bigoplus_{\nu\in\Pp(d)}\Hom(L(\ld),L(\mu)\otimes L(\nu))^{\oplus N_\nu}.
\end{eqnarray}
The second equality follows from Sergeev duality which implies that as a $\q(n)$-module
\[
V^{\otimes d}=\bigoplus_{\nu\in\Pp(d)}L(\nu)^{\oplus N_\nu},
\] where $N_\nu$ is the dimension of the Specht module of $\Se(d)$ corresponding to $\nu$ \cite{s2}.

In terms of the bilinear form on symmetric functions, \eqref{E:PolynomialRep} implies
\begin{equation}\label{E:perp}
0 = \left(Q_{\lambda}, Q_{\mu}Q_{\nu} \right)
\end{equation}
for all $\nu \in \Pp (d)$. In fact \eqref{E:perp} holds for all $\nu \in \Pp$ since different graded summands of the symmetric function ring are orthogonal. However,
\[
 \left(Q_{\lambda}, Q_{\mu}Q_{\nu} \right) = \left(Q_{\mu}^{\bot}Q_{\lambda}, Q_{\nu} \right) = 2^{\ell(\mu)}\left( Q_{\lambda/\mu}, Q_{\nu}\right),
\] where $Q_{\mu}^{\bot}$ denotes the adjoint of $Q_{\mu}$ with respect to the form and the second equality follows from $Q_{\mu}^{\bot}Q_{\lambda}= 2^{-\ell(\mu)} Q_{\lambda/\mu}$ (cf.\ \cite[II.8]{m}).
Thus, \eqref{E:PolynomialRep} implies that
\[
(Q_{\ld/\mu},Q_\nu)=0
\]
for all $\nu\in\Pp$.  But the $Q$-functions form an orthogonal basis for this subring. This implies $Q_{\ld/\mu}=0$, which is not true.  Hence, $F_{\lambda}L(\mu) \neq 0$.
\end{proof}

Arguing as in section 7 of \cite{su2} using Sergeev duality \cite{s,s2} we obtain the following result.

\begin{cor}  Let $\ld,\mu\in\Pp$ such that $\ld-\mu\in P_{\geq 0}(d)$. Then the group character of $\L(\ld,\mu)\downarrow_{\Se(d)}$ is a power of $2$ multiple of the skew $Q$-Schur function $Q_{\ld/\mu}$.
\end{cor}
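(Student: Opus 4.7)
The plan is to compute the character of $F_\ld L(\mu) \downarrow_{\Se(d)}$ via Sergeev duality and then divide by the multiplicity with which $\L(\ld,\mu)$ appears in $F_\ld L(\mu)$, following the strategy of Suzuki in \cite[\S 7]{su2}.

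First, I would invoke Sergeev duality to decompose $V^{\otimes d}$ as a $(\q(n),\Se(d))$-bimodule in the form $\bigoplus_{\nu\in\Pp(d)} L(\nu)\circledast D^\nu$, where $D^\nu$ is the simple $\Se(d)$-module labelled by $\nu$ (with the appropriate type \texttt{M}/\texttt{Q} conventions and the associated powers of $2$). The key symmetric-function identity is that the character of $D^\nu$ as a $\Se(d)$-module, under the standard isomorphism between $K(\Se(d)\text{-smod})$ (as $d$ varies) and the ring generated by Schur $Q$-functions, is a power-of-$2$ multiple of $Q_\nu$; cf.\ \cite{s2,stem}. Tensoring on the left by $L(\mu)$ and decomposing yields, as a bimodule,
\[
L(\mu)\otimes V^{\otimes d}\cong \bigoplus_{\ld,\nu} L(\ld)\circledast D^\nu \text{ with multiplicity } f^\ld_{\mu\nu},
\]
where the $f^\ld_{\mu\nu}$ are governed by the structure constants of the product $Q_\mu\cdot Q_\nu$ in the ring of Schur $Q$-functions.

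Second, I would apply the functor $F_\ld$ for $\ld\in\Ppt$. By Lemma \ref{L:TypicalFlambda}, $F_\ld L(\mu)=(L(\mu)\otimes V^{\otimes d})^{[\ld]}_\ld$, and since $\ld\in\Pp$ is dominant and typical, Proposition \ref{P:penkov} guarantees that only the $L(\ld)$ summand in the above decomposition contributes. The highest weight space of $L(\ld)$ identifies (up to a known power of $2$ coming from the Clifford module $E(\ld)$) with $\C$, so as an $\Se(d)$-module
\[
F_\ld L(\mu)\cong \bigoplus_\nu (D^\nu)^{\oplus f^\ld_{\mu\nu}}\otimes(\text{Clifford factor}).
\]
Passing to characters in the Schur $Q$-function ring, this contribution equals $\sum_\nu f^\ld_{\mu\nu}\, Q_\nu$ (times a power of $2$). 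By the adjointness property $\langle Q_\mu\cdot Q_\nu,Q_\ld\rangle=\langle Q_\nu,Q_{\ld/\mu}\rangle$ in Macdonald's inner product (see \cite[III.8]{m}), this sum is precisely a power-of-$2$ multiple of the skew $Q$-Schur function $Q_{\ld/\mu}$.

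Third, by the preceding theorem $F_\ld L(\mu)\neq 0$, and Theorem \ref{T:SimplesToSimples} gives $F_\ld L(\mu)\cong\L(\ld,\mu)^{\oplus\ell}$ for some $1\leq \ell\leq \varpi(\mu)$. Dividing the character computed in the previous step by the integer $\ell$ yields the character of $\L(\ld,\mu)\downarrow_{\Se(d)}$. Since the original character is a power-of-$2$ multiple of $Q_{\ld/\mu}$ and the $Q$-Schur functions are linearly independent in the symmetric function ring, the quotient must also be a (rational, and in fact integral by positivity of the restriction character) power-of-$2$ multiple of $Q_{\ld/\mu}$.

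The main obstacle will be bookkeeping the various powers of $2$: those from the type \texttt{M}/\texttt{Q} dichotomy for both the $D^\nu$ and the simple $\q(n)$-modules $L(\nu)$ (governed by $\gm_0(\nu)$ parities), those from the Clifford-module factor $E(\ld)$ in identifying the $\ld$-weight space of $L(\ld)$, and the multiplicity $\ell$ appearing in Theorem \ref{T:SimplesToSimples}. Once these factors of $2$ are reconciled consistently, the shape of the argument is dictated by Sergeev duality and the adjointness of multiplication and skewing in the $Q$-function ring, exactly as in \cite[\S 7]{su2}.
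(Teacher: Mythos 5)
Your proposal is correct and follows essentially the same approach as the paper, whose proof is a one-line pointer to the strategy of Suzuki \cite[\S 7]{su2} combined with Sergeev duality---precisely the outline you carry out in detail. The powers-of-$2$ bookkeeping you flag as the remaining obstacle is indeed the only nontrivial loose end, and your observation that the multiplicity $\ell$ from Theorem~\ref{T:SimplesToSimples} must itself be a power of $2$ by integrality of the restriction character is the right way to close it.
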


\section{A Classification of Simple Modules}\label{S:Classification}

In \cite{bk2,kl}, it was shown that the Grothendieck group of finite dimensional
integral representations of $\ASe(d)$ is a module for the Kostant-Tits
$\Z$-form of the Kac-Moody Lie algebra $\b_\infty$. Indeed, let
$\n_\infty$ be a maximal nilpotent subalgebra of $\b_\infty$, and let $\U_\Z^*(\n_\infty)$ be the
\emph{minimal} admissible lattice inside the universal envelope of
$\n_\infty$. This lattice is spanned by Lusztig's dual canonical basis,

\begin{thm}\cite[Theorem 20.5.2]{kl} There is an isomorphism of graded Hopf algebras
\[
\U_\Z^*(\n^+_\infty)\cong\bigoplus_{d\geq 0}K(\Rep\ASe(d)).
\]
\end{thm}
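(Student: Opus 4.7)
The plan is to follow the categorification strategy pioneered by Ariki, Grojnowski, Vazirani, and Kleshchev (as cited in the introduction), adapted to the type-$Q$/type-$B_\infty$ setting. The proof is rather involved and the heart of \cite[Ch.\ 17--20]{kl}; I will outline the steps I would carry out.

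First I would equip $\mathcal{K} := \bigoplus_{d\geq 0}K(\Rep\ASe(d))$ with a graded bialgebra structure. The product comes from parabolic induction,
\[
[M]\cdot[N] = [\ind_{k,\ell}^{k+\ell}(M\circledast N)],
\]
and the coproduct from restriction,
\[
\Delta[M] = \sum_{k+\ell=d}[\res_{k,\ell}^d M].
\]
Associativity is transitivity of induction; coassociativity is transitivity of restriction. Hopf compatibility (the bialgebra axiom) is exactly the content of the Mackey Theorem~\ref{Mackey}: restricting an induced module gives a filtration whose subquotients match $\Delta([M])\cdot\Delta([N])$. An antipode is then automatic from the grading.

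Next I would use the character map $\ch : \mathcal{K} \hookrightarrow \bigoplus_d K(\A(d)\text{-mod})$, which is injective by Lemma~\ref{L:independenceofcharacters}. The Shuffle Lemma~\ref{L:ShuffleLemma} shows this map identifies the product on $\mathcal{K}$ with the quantum shuffle product at $q=1$, thereby embedding $\mathcal{K}$ inside the free shuffle algebra $\F$ generated by the letters $[i]$, $i\in\Z$. One identifies the image combinatorially by computing characters of the segment representations $\Phi_{[a,b]}$ and $\Phi_{[-a,b]}$ from Theorem~\ref{module decomposition} and Proposition~\ref{character formula}: these are precisely the (specializations of the) good Lyndon words for $\b_\infty$ in the costandard ordering discussed in the introduction. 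Combined with Leclerc's embedding $\Psi:\U_q(\n^+_\infty)\hookrightarrow\F$, this shows $\im(\ch) \subseteq \underline{\W}$, where $\underline{\W}$ is the specialization of $\W=\Psi(\U_q(\n^+_\infty))$ at $q=1$.

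Then I would define crystal operators $\tilde e_i, \tilde f_i$ on the set of isomorphism classes of simple modules, as in \cite{gr,bk2,kl}: using the Kato-type classification (Lemma~\ref{katolemma}) of simples whose weights are concentrated at a single label $i$, one defines $\tilde f_i[L]$ as the socle/head of a suitable $i$-induced module, and $\tilde e_i[L]$ as the socle of an $i$-restriction. The axioms of a $\b_\infty$-crystal are verified using the intertwiners (Lemma~\ref{L:InvertibleIntertwiner}), the contravariant forms (Lemma~\ref{L:ASeContraForm}), and careful Mackey analysis to control composition factors. This puts a crystal structure on the set of simples of $\Rep\ASe(d)$ isomorphic to Kashiwara's crystal $B(\infty)$ for $\U(\n^+_\infty)$.

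Finally I would establish the claimed lattice identification. The integral form $\U_\Z^*(\n^+_\infty)$ is the dual of Lusztig's Kostant-$\Z$-form; at $q=1$ it has a basis indexed by $B(\infty)$, namely the specialized dual canonical basis $\{\underline{b^*_g}\}$. The character map $\ch$ sends $[L]$ into this integral form because characters have integer multiplicities, and it is compatible with crystal operators by construction. Combining injectivity of $\ch$, the crystal bijection, and a dimension/rank comparison in each graded piece (using that both sides are free $\Z$-modules of the same rank in each degree) yields the bijection $[L]\leftrightarrow \underline{b^*_g}$, which is Hopf-algebra-compatible by the bialgebra structures constructed above.

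The main obstacle is the last step: identifying classes of simples with \emph{dual canonical} basis vectors rather than merely with some $\Z$-basis of $\underline{\W}$. This requires either (i) lifting the story to the graded/quantum setting via a grading on $\ASe(d)$ and a bar involution, and invoking Kashiwara's characterization of the canonical basis, or (ii) Ariki-style arguments combining positivity of decomposition numbers, invariance under duality (Lemma~\ref{sm twisted action}), and the contravariant forms of \S\ref{SS:contravariantforms}. This is precisely the deep input drawn from \cite{bk2,kl}.
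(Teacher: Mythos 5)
This is a theorem the paper quotes verbatim from Kleshchev's book without giving its own proof, so strictly speaking there is no paper-proof to compare against; I will instead assess whether your reconstruction would actually succeed.

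Your first two steps are correct and are indeed the backbone of the argument: induction and restriction give a graded (co)product on $\mathcal{K}=\bigoplus_d K(\Rep\ASe(d))$; Mackey's theorem provides the bialgebra compatibility; connectedness of the grading gives the antipode for free; and injectivity of the character map plus the Shuffle Lemma embeds $\mathcal{K}$ multiplicatively into the classical shuffle algebra. Your observation that the characters of the segment modules $\Phi_{[a,b]}$ and $\Phi_{[-a-1,b]}$ land on the good Lyndon words is also the right mechanism for locating the image.

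The gap is in your final step, where you propose to conclude by producing the bijection $[L]\leftrightarrow\underline{b_g^*}$ between simple modules and specialized dual canonical basis vectors. That identification is strictly stronger than Theorem 20.5.2 and, in this type-$\mathfrak{b}_\infty$ setting, is precisely Leclerc's Conjecture 52, which the paper explicitly says it \emph{cannot} prove in general (``We are not yet able to confirm the conjecture for general good words''). If your argument relied on this, it would beg the question for the present paper. The Hopf-algebra isomorphism does not require knowing which basis of $K(\Rep\ASe(d))$ matches the dual canonical basis. The actual route is cheaper: one shows that the characters of the \emph{standard} modules $\M(\ld,\mu)$ give a $\Z$-basis of $K(\Rep\ASe(d))$, that under $\ch$ they map onto the specialization $\{\underline{E^*_g}\}$ of the dual PBW basis (as the paper itself records in Theorem~\ref{T:GrothendieckBasis1}, via Lemma~\ref{L:DblSeg} and equation~\eqref{E:Eshuffle}), and that both graded pieces therefore have the same finite rank (the number of good words of principal degree $d$). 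Since the embedding respects the (co)product, this already gives the Hopf algebra isomorphism with $\U_\Z^*(\n^+_\infty)$. The crystal structure you sketch is the subject of the \emph{second} quoted theorem (21.0.4), proved afterwards, not an input to this one, so your plan also inverts the logical order of the two results.

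In short: your outline is correct through the bialgebra structure and the character/shuffle embedding, but the clinching step should compare standard-module characters with the dual PBW basis, not attempt to match simples with dual canonical vectors — the latter is both unnecessary for Theorem 20.5.2 and unavailable as a tool here.
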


and,

\begin{thm}\cite[Theorem 21.0.4]{kl} The set $B(\infty)$ of isomorphism
classes of simple $\ASe(d)$-modules, for all $d$, can be given the structure of
a crystal (in the sense of Kashiwara). Moreover, this crystal is isomorphic to
Kashiwara's crystal associated to the crystal base of $\U_\Q(\n_\infty)$.
\end{thm}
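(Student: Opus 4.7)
The plan is to follow the Grojnowski--Kleshchev strategy of defining crystal operators on the set $B(\infty) = \bigsqcup_{d\ge 0} \mathrm{Irr}(\Rep\ASe(d))$ directly in terms of socles and heads of certain induction/restriction functors, and then to identify the resulting combinatorial object with $B(\infty)_{\U_\Q(\n_\infty)}$ by invoking Kashiwara's uniqueness-of-crystal characterization combined with the categorification statement in the preceding theorem.

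First I would fix, for each $i\in\Z_{\ge 0}$, the ``$i$-refined'' restriction functor $\res_i:\Rep\ASe(d)\to\Rep\ASe(d-1)$ obtained by pulling back along $\ASe(d-1)\otimes\ASe(1)\hookrightarrow\ASe(d)$ and projecting onto the block on which $x_d^2$ acts by $q(i)$; dually, set $\ind_i = \ind_{d-1,1}^d(-\circledast\Ph_{[i,i]})$. For a simple $[L]\in B(\infty)$, define $\tilde e_i[L]$ to be the isomorphism class of the socle of the $i$-refined restriction if that socle is simple (and $0$ otherwise), and define $\tilde f_i[L]$ to be the simple head of $\ind_i L$. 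One then sets $\varepsilon_i(L) = \max\{k : \tilde e_i^{\,k}[L]\ne 0\}$, defines $\varphi_i$ and $\wt$ in terms of the formal character, and checks that these data make $B(\infty)$ into a Kashiwara crystal. The key inputs are: (i) the Mackey Theorem (Theorem~\ref{Mackey}) gives the filtrations needed for compatibility; (ii) the intertwiner formulas of Section~\ref{S:ASA} control the generalized weight spaces; and (iii) Lemma~\ref{L:InvertibleIntertwiner} shows that socles in question are simple, which is the heart of the ``Kato-type'' argument.

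Next I would verify the Kashiwara axioms, in particular the exchange relation $\varepsilon_i(\tilde f_i[L]) = \varepsilon_i[L]+1$ and $\wt(\tilde f_i[L]) = \wt[L] - \alpha_i$ (with $\alpha_i$ the simple root of $\b_\infty$ corresponding to the segment $[i,i]$ or the exceptional one at $i=0$). For the $i=0$ case the type \texttt{Q} nature of $\Ph_{[0,0]}$ forces the modified relation characteristic of $\b_\infty$ rather than $A_\infty$; this is precisely where the type \texttt{Q} / type \texttt{M} dichotomy from Section~\ref{S:Prelim} enters, and where the analysis differs from the classical linear AHA case. Having established that $(B(\infty), \tilde e_i, \tilde f_i, \varepsilon_i, \varphi_i, \wt)$ is a well-defined crystal, I would appeal to the preceding theorem identifying $\bigoplus_d K(\Rep\ASe(d))$ with $\U_\Z^*(\n^+_\infty)$: by duality this gives a canonical bijection between simple modules and the dual canonical basis, and the functors $\tilde e_i, \tilde f_i$ are categorical lifts of Kashiwara's operators on the dual canonical basis. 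Since Kashiwara's crystal $B(\infty)_{\U_\Q(\n_\infty)}$ is characterized up to isomorphism as the unique highest-weight crystal generated by an element of weight $0$ satisfying $\varepsilon_i = 0$ for all $i$, and the basic spin module $\Cl_0 = \C$ provides such a highest-weight vector in $B(\infty)$, the two crystals must be isomorphic.

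The main obstacle, as in the type $A$ theory of \cite{gr, kl}, is establishing that the socle of $\res_i L$ is simple (equivalently, that $\ind_i L$ has simple head). This requires a delicate ``branching rule'' analysis: one must control how the generalized $x_d^2$-eigenspaces interact with contravariant forms (Section~\ref{SS:contravariantforms}) and intertwiners. The extra wrinkle relative to the affine Hecke case is the type \texttt{Q} behavior at $i=0$, which introduces genuine $\sqrt{-1}$-eigenspace decompositions and requires the odd isomorphisms $\Theta_{M,N}$ of Section~\ref{S:Prelim} to match on both sides. Once this branching simplicity is in hand, the crystal axioms and the identification with $B(\infty)_{\U_\Q(\n_\infty)}$ follow by the formal argument sketched above.
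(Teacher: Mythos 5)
This statement is quoted in the paper directly from Kleshchev's book as \cite[Theorem~21.0.4]{kl}; the paper supplies no proof of its own, so there is no ``paper's proof'' to compare against. What you have written is a reasonable high-level summary of the Grojnowski--Kleshchev strategy as it is actually carried out in \cite{bk2,kl}: defining crystal operators $\tilde{e}_i$, $\tilde{f}_i$ via socle/head of $i$-refined restriction/induction, using the Mackey theorem and the intertwiner calculus of Section~\ref{S:ASA} to control generalized eigenspaces, and isolating the branching-simplicity claim (``$\res_i L$ has simple socle'') together with the modified $i=0$ behaviour as the technical heart. Your identification of the type \texttt{Q}/\texttt{M} dichotomy at $i=0$ as the source of the $B_\infty$ (rather than $A_\infty$) structure is also on target.

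There is, however, a genuine gap in your final step. You assert that $B(\infty)$ for $\U_\Q(\n_\infty)$ ``is characterized up to isomorphism as the unique highest-weight crystal generated by an element of weight $0$ satisfying $\varepsilon_i = 0$ for all $i$.'' That is false: this property is satisfied by a great many abstract highest-weight crystals and does not pin down $B(\infty)$. Kashiwara's characterization (in the form due to Kashiwara--Saito) requires substantially more data --- in particular the existence of a compatible ``$*$-crystal'' structure $\tilde{e}_i^*$, $\tilde{f}_i^*$ and a list of axioms (often labelled KS1--KS6) governing the interaction of the two crystal structures, which on the representation-theoretic side is realized via the sign-twist automorphism $\sm$ of \eqref{E:sigmadef}. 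The proof in \cite{bk2,kl} must verify this extra structure before the uniqueness theorem can be invoked; without it, knowing that $\bigoplus_d K(\Rep\ASe(d)) \cong \U_\Z^*(\n_\infty^+)$ at the level of Hopf algebras does not by itself force the socle/head operators to agree with Kashiwara's $\tilde{e}_i$, $\tilde{f}_i$ on the dual canonical basis. So your sketch is missing the axiom-verification step that actually drives the conclusion, and the characterization you quote would not close the argument even if the branching lemma were in hand.
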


\subsection{Quantum Groups and Shuffle Algebras}\label{SS:ShuffleAlg} Let $\b_r$ be the simple finite
dimensional Lie algebra of type $B_r$ over $\C$, and $\U_q(\b_r)$ the associated
quantum group with Chevalley generators $e_i,f_i$ ($i=0,\ldots,r-1$)
corresponding to the labeling of the Dynkin diagram:

\begin{center}
\begin{picture}(340,30)

\put(100,15){\circle{4}}\put(99,0){$0$}
    \put(100,17){\line(1,0){32}}\put(100,13){\line(1,0){32}}\put(113,12.5){$<$}
\put(133,15){\circle{4}}\put(132,0){$1$}
    \put(135,15){\line(1,0){30}}
\put(167,15){\circle{4}}\put(166,0){$2$}
    \put(169,15){\line(1,0){30}}
\put(201,15){\circle{4}}\put(200,0){$3$}
    \put(210,12){$\cdots$}
\put(235,15){\circle{4}}\put(228,0){$r-2$}
    \put(237,15){\line(1,0){30}}
\put(269,15){\circle{4}}\put(265,0){$r-1$}

\end{picture}
\end{center}

Fix a triangular decomposition $\b_r=\n^+_r\oplus\h_r\oplus\n^-_r$. Let $\Dt$ be the
root system of $\b_r$ relative to this decomposition, $\Dt^+$ the positive roots,
and $\Pi=\{\bt_0,\ldots,\bt_{r-1}\}$ the simple roots. Let $\mathcal{Q}$ be the
root lattice and $\mathcal{Q}^+=\sum_{i=0}^{r-1}\Z_{\geq 0}\bt_i$. Finally, let
$(\cdot,\cdot)$ denote the trace form on $\h^*$. The Cartan matrix of $\b_r$ is
then $A=(a_{ij})_{i,j=0}^{r-1}$, where
\[
a_{ij}=\frac{2(\bt_i,\bt_j)}{(\bt_i,\bt_i)},\;\;\; d_i=\frac{(\bt_i,\bt_i)}2\in\{1,2\}.
\]
Let $q_i=q^{d_i}$. To avoid confusion with notation we will use later, we adopt
the following non-standard notation for $q$-integers and $q$-binomial
coefficients:
\[
(k)_i=\frac{q_i^k-q_i^{-k}}{q_i-q_i^{-1}}.
\]

The algebra $\U_q=\U_q(\n^+_r)$ is naturally $\mathcal{Q}^+$-graded by
assigning to $e_i$ the degree $\bt_i$. Let $|u|$ be the $\curlyQ^+$-degree of a
homogeneous element $u\in\U_q(\n^+_b)$.

There exist $q$-derivations $e_i'$, $i=0,\ldots,r-1$ given by
\[
e_i'(e_j)=\dt_{ij}\andeqn e_i'(uv)=e_i'(u)v+q^{(\bt_i,|u|)}ue_i'(v)
\]
for all homogeneous $u,v\in\U_q^+$.

Now, let $\F$ be the free associative algebra over $\Q(q)$ generated by the set
of letters $\{[0],\ldots,[r-1]\}$. Write
$[i_1,\ldots,i_k]:=[i_1]\cdot[i_2]\cdots[i_k]$, and let $[]$ denote the empty
word. The algebra $\F$ is $\curlyQ^+$ graded by assigning the degree $\bt_i$ to
$[i]$ (as before, let $|f|$ denote the $\curlyQ^+$-degree of a homogeneous
$f\in\F$). Notice that $\F$ also has a \emph{principal grading} obtained by
setting the degree of a letter $[i]$ to be 1; let $\F_d$ be the $d$th graded
component in this grading.

Now, define the (quantum) shuffle product, $*$, on $\F$ inductively by
\begin{align}\label{E:inductiveqshuffle}
(x\cdot[i])*(y\cdot[j])=(x*(y\cdot[j])\cdot[i]+q^{-(|x|+\bt_i,\bt_j)}((x\cdot[i])*y)\cdot[j],\;\;\;x*[]=[]*x=x.
\end{align}
Iterating this formula yields
\[
[i_1,\ldots,i_\ell]*[i_{\ell+1},\ldots,i_{\ell+k}]
=\sum_{w\in D_{(\ell,k)}}q^{-e(w)}[i_{w(1)},\ldots,i_{w(k+\ell)}]
\]
where
\[
e(w)=\sum_{\substack{s\leq\ell<t\\w(s)<w(t)}}(\bt_{i_{w(s)}},\bt_{i_{w(t)}}),
\]
see \cite[$\S2.5$]{lec} for details. The product $*$ is associative and,
\cite[Proposition 1]{lec},
\begin{eqnarray}\label{E:qShuffle}
x*y=q^{-(|x|,|y|)}y\overline{*}x
\end{eqnarray}
where $\overline{*}$ is obtained by replacing $q$ with $q^{-1}$ in the
definition of $*$.

Now, to $f=[i_1,\ldots,i_k]\in\F$, associate $\del_f=e_{i_1}'\cdots
e_{i_k}'\in\End \U_q$, and $\del_{[]}=\operatorname{Id}_{\U_q}$. Then,

\begin{prp}\cite{ro1,ro2,grn} There exists an injective $\Q(q)$-linear homomorphism
\[
\Psi:\U_q\rightarrow(\F,*)
\]
defined on homogeneous $u\in\U_q$ by the formula $\Psi(u)=\sum\del_f(u)f$,
where the sum is over all monomials $f\in\F$ such that $|f|=|u|$.
\end{prp}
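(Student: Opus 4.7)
The plan is to verify three things in succession: that $\Psi$ is well-defined, that it intertwines the product on $\U_q$ with the shuffle product $*$ on $\F$, and that it is injective. Well-definedness is immediate from the gradings: when $u$ is homogeneous of $\curlyQ^+$-degree $\nu$ and $f=[i_1,\ldots,i_k]$ satisfies $|f|=\nu$, the element $\del_f(u)=e_{i_1}'\cdots e_{i_k}'(u)$ has degree $\nu-|f|=0$, hence lies in $\Q(q)\subseteq\U_q$; moreover there are only finitely many words of fixed $\curlyQ^+$-degree, so the sum defining $\Psi(u)$ is finite.

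For the homomorphism property $\Psi(uv)=\Psi(u)*\Psi(v)$, I would argue by induction on the principal degree, comparing the coefficients of an arbitrary word ending in $[j]$ on both sides. On the left, the coefficient of $f\cdot[j]$ is $e_j'(\del_f(uv))$, which by the $q$-Leibniz rule $e_j'(xy)=e_j'(x)y+q^{(\bt_j,|x|)}x\,e_j'(y)$ splits into two summands. On the right, the inductive definition \eqref{E:inductiveqshuffle} produces two corresponding summands carrying a $q$-factor $q^{-(|x|+\bt_{i_\ell},\bt_j)}$. Matching these term by term and applying the inductive hypothesis to the lower-degree shuffles closes the step, with the base case reducing to the generators: $\Psi(e_i)=[i]$ and $\Psi(e_ie_j)=[i]*[j]=[i,j]+q^{-(\bt_i,\bt_j)}[j,i]$.

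The main technical hurdle is the bookkeeping of $q$-exponents: one must verify that the factor $q^{(\bt_j,|x|)}$ arising from the Leibniz rule matches the factor $q^{-(|x|+\bt_{i_\ell},\bt_j)}$ produced by \eqref{E:inductiveqshuffle} after one accounts for the graded twist in the shuffle, and that this matching propagates cleanly through the induction. Once the homomorphism property is established, injectivity follows by the usual descent argument. Suppose $0\ne u\in\ker\Psi$ is homogeneous of minimal $\curlyQ^+$-degree $\nu$. If $\nu=0$ then $u$ is a scalar and $\Psi(u)=u\cdot[]$, forcing $u=0$, a contradiction. Otherwise some simple root $\bt_i$ appears in $\nu$; by the nondegeneracy of the Rosso--Drinfel$'$d pairing between $\U_q(\n^+)$ and $\U_q(\n^-)$, the family $\{e_i'\}$ jointly detects nonzero vectors, so $e_i'(u)\ne 0$ for some $i$. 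The identity $\del_{[i]\cdot f}(u)=\del_f(e_i'(u))$ then gives $\Psi(e_i'(u))=0$, contradicting the minimality of $\nu$ since $|e_i'(u)|=\nu-\bt_i$ is strictly smaller. Hence $\Psi$ is injective.
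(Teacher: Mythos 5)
The paper itself gives no proof of this proposition---it is simply cited to Rosso and Green---so there is no in-paper argument to compare against. Your outline follows the standard proof in those references: well-definedness from the gradings, the intertwining $\Psi(uv)=\Psi(u)*\Psi(v)$ by degree induction against the recursive shuffle, and injectivity from the fact that the $q$-derivations $e_i'$ jointly detect nonzero elements. The overall architecture is sound.

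There are, however, several slips in the bookkeeping. With the convention $\del_{[i_1,\ldots,i_k]}=e_{i_1}'\cdots e_{i_k}'$, appending a letter composes on the \emph{inside}: the coefficient of $f\cdot[j]$ in $\Psi(uv)$ is $\del_{f\cdot[j]}(uv)=\del_f\bigl(e_j'(uv)\bigr)$, not $e_j'\bigl(\del_f(uv)\bigr)$; and in the injectivity step the relevant identity is $\del_{f\cdot[i]}(u)=\del_f\bigl(e_i'(u)\bigr)$, with $[i]$ appended on the right---your stated version $\del_{[i]\cdot f}(u)=\del_f(e_i'(u))$ is false, since $\del_{[i]\cdot f}=e_i'\circ\del_f$. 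Your base-case shuffle is also miscomputed: from \eqref{E:inductiveqshuffle} with $x=y=[]$ one gets $[i]*[j]=[j,i]+q^{-(\bt_i,\bt_j)}[i,j]$, with the $q$-power on the $[i,j]$ term rather than the $[j,i]$ term.

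The genuine gap is the step you flag and then defer. The matching of $q$-exponents is not ``bookkeeping'' around the proof---it \emph{is} the proof of the homomorphism property, and with the conventions as displayed in the paper it does not go through. Using the printed Leibniz rule $e_j'(uv)=e_j'(u)v+q^{+(\bt_j,|u|)}ue_j'(v)$ gives $\del_{[i,j]}(e_ie_j)=q^{(\bt_i,\bt_j)}$ and $\del_{[j,i]}(e_ie_j)=1$, hence $\Psi(e_ie_j)=q^{(\bt_i,\bt_j)}[i,j]+[j,i]$, which disagrees with $[i]*[j]=[j,i]+q^{-(\bt_i,\bt_j)}[i,j]$ whenever $(\bt_i,\bt_j)\neq 0$. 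The discrepancy is a sign: the Leibniz exponent needs to be $q^{-(\bt_j,|u|)}$ for $\Psi$ to be a $*$-homomorphism (consistent with Leclerc's and Green's conventions; the paper's displayed formula appears to carry a sign typo). A complete proof must carry out this base case, fix the sign, and then propagate it through the induction---precisely the step you labeled ``the main technical hurdle'' and left undone.
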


Therefore $\U_q^+$ is isomorphic to the subalgebra $\W\subseteq(\F,*)$
generated by the letters $[i]$, $0\leq i<r$.

Let $\A=\Q[q,q^{-1}]$, and let $\U_\A$ denote the $\A$-subalgebra of $\U_q$
generated by the divided powers $e_i^k/(k)_i!$ ($0\leq i<r$, $k\in\Z_{\geq0}$).
Let $(\cdot,\cdot)_K:\U_q\times\U_q\rightarrow\Q(q)$ denote the unique
symmetric bilinear form satisfying
\[
(1,1)_K=1\andeqn(e_i'(u),v)_k=(u,e_iv)_K
\]
for all $0\leq i<r$, and $u,v\in\U_q$. Let
\begin{align}\label{E:DualEnvelope}
\U_\A^*=\{\,u\in\U_q \mid (u,v)_K\in\A\mbox{ for all }v\in\U_\A\,\}
\end{align}
and let $u^*\in\U_\A^*$ denote the dual to $u\in\U_\A$ relative to $(\cdot,\cdot)_K$.

Now, given a monomial
\[
[i_1^{a_1},i_2^{a_2},\ldots,i_k^{a_k}]
    =[\underbrace{i_1,\ldots,i_1}_{a_1},\underbrace{i_2,\ldots,i_2}_{a_2},
    \ldots,\underbrace{i_k,\ldots,i_k}_{a_k}]
\]
 with $i_j\neq i_{j+1}$ for $1\leq j<k$, let
 $c_{i_1,\ldots,i_k}^{a_1,\ldots,a_k}=(a_1)_{i_1}!\cdots(a_k)_{i_k}!$, so that
 $(c_{i_1,\ldots,i_k}^{a_1,\ldots,a_k})^{-1}e_{i_1}^{a_1}\cdots e_{i_k}^{a_k}$ is a
 product of divided powers. Let
\[
\F_\A=\bigoplus\A c_{i_1,\ldots,i_k}^{a_1,\ldots,a_k}
[i_1^{a_1},i_2^{a_2},\ldots,i_k^{a_k}]
\]
and $\W^*_\A=\W\cap\F_\A$. It is known that $\W_\A^*=\Psi(\U_\A^*)$,
\cite[Lemma 8]{lec}.

Define
\[
\F_\C=\C\otimes_\A\F_\A,
\andeqn\W_\C^*=\C\otimes_\A\W_\A^*
\]
where $\C$ is an $\A$-module via $q\rightarrow 1$. Given an element $E\in\W_\A$
(resp. $\F_\A$) let $\underline{E}$ denote its image in $\W_\C$
(resp. $\F_\C$).

Observe that $(\F_\C,*)$ is the classical shuffle algebra and the shuffle
product coincides with the formula for the characters associated to parabolic
induction of $\ASe(d)$-modules (see Lemma \ref{L:ShuffleLemma}).

We close this section by describing the bar involution on $\F$:

\begin{dfn}\label{D:BarInv}\cite[Proposition 6]{lec} Let $-:\F\rightarrow\F$ be the
$\Q$-linear automorphism of $(\F,*)$ defined by $\bar{q}=q^{-1}$ and
\[
\overline{[i_1,\ldots,i_k]}
=q^{-\sum_{1\leq s<t\leq k}(\bt_{i_s},\bt_{i_t})}[i_k,\ldots,i_1].
\]
\end{dfn}

\subsection{Good Words and Lyndon Words}\label{SS:LyndonWords} In what follows,
it is convenient to differ from the conventions in \cite{lec}. In particular,
it is natural from our point of view to order monomial in $\F$
lexicographically reading from \emph{right to left}. Unlike the type $A$ case,
this convention leads to some significant differences in the good Lyndon words
that appear. This section contains a careful explanation of all the changes
that occur.

Fix the ordering on the set of letters in $\F$ (resp. $\Pi$):
$[0]<[1]<\cdots<[r-1]<[]$  (resp. $\bt_0<\bt_1<\cdots<\bt_{r-1}$). Give the set
of monomials in $\F$ the associated lexicographic order read from right to
left. That is,
\[
[i_1,\ldots,i_k]<[j_1,\ldots,j_\ell]\mbox{ if }i_k<j_\ell,\mbox{ or for some
}m, i_{k-m}<j_{\ell-m}\mbox{ and }i_{k-s}<j_{\ell-s}\mbox{ for all }s<m.
\]
Note that since the empty word is larger than any letter, every word is smaller
than all of its right factors:
\begin{align}\label{E:rightfactors}
[i_1,\ldots,i_k]<[i_j,\ldots,i_k],\mbox{ for all }1<j\leq k.
\end{align}
(For those familiar with the theory, this definition is needed to ensure that
the induced Lyndon ordering on positive roots is convex, cf.
$\S$\ref{SS:PBWandCanonical} below.)

For a homogeneous element $f\in\F$, let $\min(f)$ be the smallest monomial
occurring in the expansion of $f$. A monomial $[i_1,\ldots,i_k]$ is called a
\emph{good word} if there exists a homogeneous $w\in\W$ such that
$[i_1,\ldots,i_k]=\min(w)$, and is called a \emph{Lyndon word} if it is larger
than any of its proper left factors:
\[
[i_1,\ldots,i_j]<[i_1,\ldots,i_k],\mbox{ for any }1\leq j<k.
\]
Let $\mathcal{G}$ denote the set of good words, $\L$ the set of Lyndon words,
and $\mathcal{GL}=\L\cap\mathcal{G}\subset\mathcal{G}$ the set of good Lyndon
words.

\begin{lem}\label{L:GoodFactors}\cite[Lemma 13]{lec} Every factor of a good word is
good.
\end{lem}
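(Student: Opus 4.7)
The plan is to reduce the statement to two sub-claims — (a) every right factor of a good word is good, and (b) every left factor of a good word is good — and combine them: every factor of $g$ is a left factor of a right factor of $g$, so (a) and (b) together imply the lemma by iteration.

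For (a), suppose $g = \min(w) = [i_1, \ldots, i_k]$ for some homogeneous $w \in \W$. The main tool is the $q$-derivation $e_{i_1}':\U_q\to\U_q$, transported via $\Psi:\U_q\hookrightarrow\F$ to a linear operator $D_{i_1}:\W\to\W$. Using the inductive shuffle formula \eqref{E:inductiveqshuffle}, one identifies $D_{i_1}$ on the shuffle side as ``leading-letter extraction'': it sends $[j_1,j_2,\ldots,j_m]$ to a $q$-scalar multiple of $[j_2,\ldots,j_m]$ when $j_1=i_1$, and to zero otherwise. Applying $D_{i_1}$ to $w$ produces an element of $\W$, and I would argue that its right-to-left lex minimum is precisely $[i_2,\ldots,i_k]$: since $g$ begins with $i_1$, for any other monomial $g'$ occurring in $w$ with $g'<g$, either $g'$ does not begin with $i_1$ (and is killed by $D_{i_1}$) or $g'$ does begin with $i_1$, in which case the right-to-left comparison with $g$ is preserved after stripping the common leading letter. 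Iterating this construction yields that every right factor of $g$ is good.

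For (b), I would invoke the bar involution $\overline{\cdot}$ from Definition \ref{D:BarInv}, which preserves $\W$ because it corresponds under $\Psi$ to the standard bar involution on $\U_q$. Since $\overline{[i_1,\ldots,i_k]}$ is a $q$-scalar multiple of the reversed word $[i_k,\ldots,i_1]$, the reversal operation interchanges left factors of $g$ with right factors of its image $\overline{g}$, while simultaneously converting our right-to-left lex order into the left-to-right lex order. A parallel argument — either constructing a ``trailing-letter extraction'' operator by transporting an appropriate $q$-derivation on the opposite side of $\U_q$, or equivalently conjugating the operator $D_{i_k}$ by $\overline{\cdot}$ — then establishes that every left factor of $g$ is good.

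The main obstacle is the identification of $D_{i_1}$ on $\W$ with leading-letter extraction on $\F$; this is a standard but slightly delicate computation traced directly back to the inductive formula \eqref{E:inductiveqshuffle} and the fact that $\Psi$ intertwines the two $q$-derivation structures. Once this identification is in hand, both (a) and (b) reduce to the elementary order-theoretic observation that a uniform leading-letter (resp.\ trailing-letter) strip either annihilates a monomial or preserves its right-to-left (resp.\ left-to-right) lex comparison with $g$, and the lemma follows.
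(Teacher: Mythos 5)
Your plan has the right shape, and the two tools you reach for (the $q$-derivations $e'_i$ transported to $\W$ via $\Psi$, and the bar involution) are exactly the right ones. But there is a concrete misidentification in step (a) that flips the roles of your two sub-claims. From $\Psi(u)=\sum_f \partial_f(u)\, f$ with $\partial_{[j_1,\dots,j_m]}=e'_{j_1}\cdots e'_{j_m}$, one gets
\[
\partial_{[j_1,\dots,j_m]}\bigl(e'_i(u)\bigr)=\partial_{[j_1,\dots,j_m,i]}(u),
\]
i.e.\ the coefficient of $[j_1,\dots,j_m]$ in $\Psi(e'_i u)$ equals the coefficient of $[j_1,\dots,j_m,i]$ in $\Psi(u)$. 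So the operator $D_i=\Psi\circ e'_i\circ\Psi^{-1}$ on $\W$ is \emph{trailing}-letter extraction (strip the last letter when it equals $i$), not leading-letter extraction as you claim. Consequently the argument you outline in (a) — kill monomials not carrying the letter $i$ in the relevant slot, note that stripping a common letter preserves the right-to-left lex comparison among survivors, iterate — is perfectly sound, but applied to $e'_{i_k},e'_{i_{k-1}},\dots$ it produces \emph{left} factors $[i_1,\dots,i_{k-1}],[i_1,\dots,i_{k-2}],\dots$, not right factors. (Also, you write ``for any other monomial $g'$ occurring in $w$ with $g'<g$'', but since $g=\min(w)$ every other monomial satisfies $g'>g$; this is a typo but worth flagging.)

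The fix is a straight swap: $e'_i$-derivations handle left factors; for right factors you really do need leading-letter extraction, which is where the bar involution in your (b) belongs. Since $\overline{[j_1,\dots,j_m]}$ is a $q$-power (depending only on the $\curlyQ^+$-degree) times $[j_m,\dots,j_1]$, conjugating the trailing-extraction operator $D_i$ by $\overline{\,\cdot\,}$ gives, up to harmless $q$-powers, precisely leading-letter extraction; and $\W$ is closed under $\overline{\,\cdot\,}$ because $\Psi$ intertwines the bar involutions on $\U_q$ and on $(\F,*)$. Note that you do not actually need to pass through the left-to-right lex order at all (your remark about the bar involution ``converting'' the order is a red herring for route (ii)): stripping a common \emph{leading} letter still preserves the right-to-left lex comparison, because in a right-to-left read the leading letter is the last one inspected. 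With the roles of (a) and (b) swapped and $D_i$ correctly identified as trailing-extraction, your reduction of a general factor to a left factor of a right factor and the rest of the argument go through, and the approach is in the spirit of Leclerc's derivation-based proof that this paper cites.
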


Because of our ordering conventions, \cite[Lemma 15, Proposition 16]{lec}
become

\begin{lem}\cite[Lemma 15]{lec} Let $l\in\L$, $w$ a monomial such that $w\geq l$. Then, $\min(w*l)=wl$.
\end{lem}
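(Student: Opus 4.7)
The plan is to reduce the problem to a careful analysis of the first right-to-left position at which a given shuffle $v$ of $w$ and $l$ can differ from $wl$. Writing $w=[i_1,\ldots,i_k]$ and $l=[j_1,\ldots,j_m]$, I would first extract two elementary consequences of the hypotheses. The Lyndon property of $l$, namely $l>[j_1,\ldots,j_s]$ in the R--L order for every $s<m$, yields upon comparing rightmost letters the inequality $j_s\leq j_m$ for every $s<m$. The hypothesis $w\geq l$ similarly yields $i_k\geq j_m$. Combining these, one has the chain $i_k\geq j_m\geq j_{m-p+1}$ for every $1\leq p\leq m$, and these will be the only inequalities actually used.

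Next, arguing by contradiction, suppose $v$ is a monomial appearing in $w*l$ with $v<wl$ in the R--L order. Let $p$ be the smallest R--L position where $v[p]\neq wl[p]$, so that $v[p]<wl[p]$ and $v$ agrees with $wl$ on R--L positions $1,\ldots,p-1$. I split into the cases $p\leq m$ and $p>m$.

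For $p\leq m$, the matching R--L letters $j_m,j_{m-1},\ldots,j_{m-p+2}$ in the last $p-1$ positions of $v$ coincide exactly with the last $p-1$ letters of $l$ read in the same order. This lets me select a shuffle representation of $v$ in which all of these letters are attributed to $l$. At R--L position $p$ the letter $v[p]$ is then forced to be either the next unused letter of $w$, namely $i_k$, or the next unused letter of $l$, namely $j_{m-p+1}$. The second possibility gives $v[p]=wl[p]$, contradicting the choice of $p$, so $v[p]=i_k$. But then $v[p]=i_k\geq j_{m-p+1}=wl[p]$ by the chain of inequalities above, contradicting $v[p]<wl[p]$. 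For $p>m$, the analogous counting argument shows that matching the last $p-1$ R--L letters of $v$ with those of $wl$ forces, in any representation, all of $l$ to be placed at the last $m$ L--R positions and the last $p-1-m$ letters of $w$ immediately before, after which there is no freedom left and one computes $v[p]=i_{k+m+1-p}=wl[p]$, again a contradiction.

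The only real subtlety is the non-uniqueness of shuffle representations when $w$ and $l$ share letters, and this is handled once one observes that the ``natural'' attribution, sending each of the matching letters at the rightmost positions to $l$ (in the $p\leq m$ case) or to the appropriate slot of $wl$ (in the $p>m$ case), is always a valid shuffle representation of the word $v$. Granting this, the forced value $v[p]\in\{i_k,\,j_{m-p+1}\}$ in Case~1 together with the inequality $i_k\geq j_{m-p+1}$ supplied by the Lyndon property and $w\geq l$ is what makes the whole argument go through.
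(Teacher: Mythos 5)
The paper cites this statement from Leclerc's paper \cite{lec} without reproducing the proof, so there is no in-paper argument to compare against; what matters here is whether your proposal is correct on its own. It is not: the recoloring (``reattribution'') claim at the heart of your Case $p\leq m$ is false, and the argument built on it does not go through.

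You assert that if the last $p-1$ letters of the shuffle $v$ (read right-to-left) coincide with the last $p-1$ letters of $wl$, i.e.\ with $j_{m-p+2},\ldots,j_m$, then one can choose a shuffle coloring of $v$ in which all of those rightmost positions are attributed to $l$. Here is a concrete counterexample satisfying the lemma's hypotheses. Take $r\geq 3$ and set $w=[2,1]$, $l=[0,1]$. Then $l$ is a good Lyndon word: its only proper left factor is $[0]$, and $[0]<[0,1]$ in the right-to-left order since the rightmost letters satisfy $0<1$. Also $w\geq l$: the rightmost letters agree ($1=1$), and then $2>0$. Now $wl=[2,1,0,1]$, and consider the shuffle $v=[0,1,2,1]$. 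It has a unique shuffle decomposition, with $l$ coloring positions $1,2$ and $w$ coloring positions $3,4$ (because the only occurrence of the letter $2$ is at position $3$, which forces the $w$-subsequence to sit at positions $(3,4)$). The last letter of $v$ is $1=j_2$, so $p\geq 2$, and in fact $p=2$ since $v_3=2\neq 0=(wl)_3$. Your recoloring would demand a coloring assigning position $4$ to $l$; no such coloring exists. Correspondingly, your conclusion that $v[p]\in\{i_k,\,j_{m-p+1}\}=\{1,0\}$ fails: here $v[p]=v_3=2=i_1$, an entirely different letter of $w$. (The inequality $v>wl$ does hold, but because $v[2]=i_1>j_1=(wl)[2]$, an inequality that $w\geq l$ does produce once $i_k=j_m$ has been used, not because of the dichotomy you set up.) The analogous forcing claim in your $p>m$ case (``in any representation, all of $l$ is placed at the last $m$ positions'') has the same defect and is also asserted without proof.

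The source of the error is the phenomenon your final paragraph tries to wave away: when $w$ and $l$ share letters (which they typically do, since $w\geq l$ and $l$ is Lyndon force $i_k\geq j_m$ with equality quite possible), the set of shuffle colorings of $v$ is not rich enough to always push the $l$-subsequence to the far right. A correct proof must instead work with whichever coloring $v$ actually carries, track the numbers of $w$- and $l$-colored positions among the last $p-1$ slots, and compare $v[p]$ against $(wl)[p]$ using the full strength of the hypotheses $w\geq l$ and the Lyndon condition on $l$ (not merely the weak consequences $i_k\geq j_m$ and $j_s\leq j_m$). As written, the proposal does not establish the lemma.
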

\noindent and
\begin{prp}\label{P:GLproduct}\cite[Proposition 16]{lec} Let $l\in\mathcal{GL}$, and $g\in\mathcal{G}$ with $g\geq l$. Then $gl\in\mathcal{G}$.
\end{prp}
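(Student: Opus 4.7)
The plan is to produce an element of $\W$ whose costandard-minimum monomial is $gl$, thereby showing $gl\in\mathcal{G}$. Since $g,l\in\mathcal{G}$, there exist $E_g,E_l\in\W$ with $\min(E_g)=g$ and $\min(E_l)=l$, normalized so that the minimum monomial appears with coefficient $1$. My candidate element is the quantum shuffle product $E_g*E_l\in\W$, and the goal is to show $\min(E_g*E_l)=gl$.

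First, I would decompose
\[
E_g * E_l = g*l \;+\; \sum_{m_1 > g} c_{m_1}(m_1 * l) \;+\; \sum_{m_2 > l} d_{m_2}(g * m_2) \;+\; \sum_{\substack{m_1 > g\\ m_2 > l}} c_{m_1} d_{m_2}(m_1 * m_2).
\]
The preceding lemma (on $\min(w*l)$ for $w\geq l$ with $l\in\L$) applied to $w=g$ gives $\min(g*l)=gl$, and a direct use of the inductive shuffle formula \eqref{E:inductiveqshuffle} shows that $gl$ occurs in $g*l$ with coefficient $q^{-(|g|,|l|)}$, while all other monomials in $g*l$ are strictly larger than $gl$. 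For each $m_1>g$, the same lemma yields $\min(m_1*l)=m_1 l$; since $m_1>g$ and the right factor $l$ is common, the costandard comparison reduces to that of left factors and gives $m_1 l>gl$, so every monomial in these contributions strictly exceeds $gl$.

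Next, I would bound the cross shuffles where $m_2>l$. For any monomial $m_1\geq g\geq l$ and any $m_2>l$, I need every monomial appearing in $m_1*m_2$ to strictly exceed $gl$ in the costandard order. A greedy description of the minimum shuffle (built from the right by repeatedly choosing the smaller available terminal letter from $m_1$ or $m_2$), combined with the Lyndon property of $l$ and the strict inequality $m_2>l$, drives the required terminal-segment comparison and forces any shuffle of $m_1$ with $m_2$ to exceed $gl$ strictly. Combined with a count ruling out cancellation of the coefficient of $gl$, which is pinned down by the distinguished $q^{-(|g|,|l|)}$ contribution from $g*l$ (since the strict inequalities just established isolate $(m_1,m_2)=(g,l)$ as the only pair whose shuffle produces $gl$ without first producing a smaller monomial), this establishes $\min(E_g*E_l)=gl$ with nonzero coefficient, hence $gl\in\mathcal{G}$.

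The main obstacle will be the sharp combinatorial comparison in the cross-shuffle analysis: the preceding lemma no longer applies directly when $m_2>l$, and a refined argument using the Lyndon structure of $l$, with a careful case division on how the rightmost letters of $m_2$ depart from those of $l$, is required. A secondary technical point is ruling out accidental cancellation of the coefficient of $gl$ among the several shuffles producing it; I expect this to follow from the strict inequalities above, which isolate $(m_1,m_2)=(g,l)$ as the unique pair producing $gl$ as its own costandard minimum.
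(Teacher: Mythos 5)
The paper offers no proof of its own for this proposition: it is quoted verbatim from \cite[Proposition 16]{lec} (translated to the costandard ordering), so there is no in-paper argument to compare against. Your overall plan — choose $E_g,E_l\in\W$ with $\min(E_g)=g$, $\min(E_l)=l$, and show $\min(E_g*E_l)=gl$ — is the natural one and does mirror Leclerc's strategy, and your handling of the first three parts of the decomposition (using the preceding lemma to get $\min(g*l)=gl$ and $\min(m_1*l)=m_1l>gl$ for $m_1>g$) is correct. The non-cancellation argument is also fine once the strict inequalities are in place, although note that the coefficient of $gl$ in $g*l$ is not simply $q^{-(|g|,|l|)}$ if $g$ and $l$ share letters (several shuffles may produce $gl$); what actually matters is that the contributing powers of $q$ all appear with coefficient $+1$, so no cancellation occurs.

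The genuine gap is exactly the one you flag: the claim that $\min(m_1*m_2)>gl$ whenever $m_1\ge g\ge l$ and $m_2>l$ is not a consequence of the cited lemma, which only treats the case $m_2=l$. What is needed is a strengthened statement of the form: if $l\in\L$, $m_1\ge l$ is a monomial, and $m_2$ is a monomial of the same $\curlyQ^+$-degree as $l$, then $\min(m_1*m_2)\ge m_1l$ with equality if and only if $m_2=l$. That requires its own induction on the costandard comparison, peeling letters off the right end and using the Lyndon property of $l$ to control ties; your one-sentence appeal to "a greedy description of the minimum shuffle" does not discharge it. It is worth noting that in the type $B$ setting of this paper one can sidestep the issue entirely: the Lyndon-basis elements $r_l$ for $l\in\mathcal{GL}$ are scalar multiples of the single monomial $l$ (as the computations in the proof of Lemma~\ref{L:DblSeg} show, e.g.\ $r_{[0,\ldots,k]}=(q^2-q^{-2})^k[0,\ldots,k]$ and $r_{[j,\ldots,0,0,\ldots,k]}=(q^2-q^{-2})^{j+k+1}[j,\ldots,0,0,\ldots,k]$), so taking $E_l$ proportional to $l$ makes the cross-shuffle sums vanish and reduces the whole argument to the preceding lemma. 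That shortcut, however, is special to type $B$; the general argument in \cite{lec} does require the refined combinatorial lemma you left unproved.
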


Hence, we deduce from Lemma \ref{L:GoodFactors} and Proposition
\ref{P:GLproduct} \cite[Proposition 17]{lec}:

\begin{prp}\cite{lr,lec} A monomial $g$ is a good word if, and only if,
there exist good Lyndon words $l_1\geq\ldots\geq l_k$ such that
\[
g=l_1l_2\cdots l_k.
\]
\end{prp}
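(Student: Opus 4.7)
The proof splits into the two implications, with the subtler one being the ``if'' direction.

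For the \emph{only if} direction, assume $g\in\mathcal{G}$. I would invoke the classical theorem on Lyndon factorizations: with respect to \emph{any} total lexicographic order, every word admits a unique factorization as a non-increasing product of Lyndon words. Applied to the costandard ordering on $\mathcal{F}$, this yields $g = l_1 l_2 \cdots l_k$ with each $l_i\in\mathcal{L}$ and $l_1\geq l_2\geq\cdots\geq l_k$. Each $l_i$ is then a factor of the good word $g$, so by Lemma \ref{L:GoodFactors} each $l_i\in\mathcal{G}$, and hence $l_i\in\mathcal{GL}$.

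For the \emph{if} direction, assume $g = l_1\cdots l_k$ with each $l_i\in\mathcal{GL}$ and $l_1\geq\cdots\geq l_k$. I would argue by induction on $k$: the case $k=1$ is trivial, and for the inductive step the natural plan is to set $g' = l_1\cdots l_{k-1}$, appeal to the induction hypothesis to conclude $g'\in\mathcal{G}$, and then invoke Proposition \ref{P:GLproduct} with $l=l_k$ to conclude $g = g'l_k\in\mathcal{G}$, provided the hypothesis $g'\geq l_k$ can be verified.

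The hard part will be verifying this ordering condition. The difficulty is genuine in the costandard ordering: the ``every word is smaller than its right factors'' convention \eqref{E:rightfactors} can produce $g' < l_k$ precisely when a suffix of $l_{k-1}$ agrees with all of $l_k$, a situation which is perfectly compatible with $l_{k-1}\geq l_k$. To handle this I would combine two tools. First, I would establish the combinatorial fact that the non-increasing Lyndon factorization satisfies $l_1 \geq l_2 l_3\cdots l_k$ in the costandard order (which follows from comparing rightmost letters together with $l_1 \geq l_k$), and iterate to obtain analogous inequalities for the other factors. Second, I would promote Proposition \ref{P:GLproduct} to its symmetric ``left-append'' variant---namely that $l\in\mathcal{GL}$, $h\in\mathcal{G}$, and $l\geq h$ imply $lh\in\mathcal{G}$---which may be proved by an analogous inductive use of the shuffle formula \eqref{E:inductiveqshuffle}, this time peeling off the leftmost letters. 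Iteratively left-appending $l_j$ to the good word $l_{j+1}\cdots l_k$ (using the inequalities from the first tool) then assembles $g$ as required. Alternatively, one can argue directly at the level of $q$-shuffles that $g = \min(l_1\ast l_2\ast\cdots\ast l_k)$, which by definition forces $g\in\mathcal{G}$; this hinges on the same combinatorial inequalities and a careful analysis of how costandard minima propagate through \eqref{E:inductiveqshuffle}.
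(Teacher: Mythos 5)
The paper gives no proof of this proposition: it is stated as a citation to \cite{lr,lec}, with the single sentence ``Hence, we deduce from Lemma \ref{L:GoodFactors} and Proposition \ref{P:GLproduct}.'' So there is no proof to compare against literally, and your blind attempt is actually more careful than the paper's remark.

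Your ``only if'' direction is correct: classical non-increasing Lyndon factorization plus Lemma \ref{L:GoodFactors}. More interestingly, your ``if'' direction correctly flags a genuine subtlety that the paper's one-line deduction glosses over. Peeling off the rightmost factor and invoking Proposition \ref{P:GLproduct} with $g'=l_1\cdots l_{k-1}$ and $l=l_k$ requires $g'\geq l_k$, and this \emph{can} fail in the costandard ordering. A concrete instance: take $l_1=[1]$ and $l_2=l_3=[0]$ (all good Lyndon words in rank $\geq 2$). Then $g'=[10]$, $l_3=[0]$, and by \eqref{E:rightfactors} $[10]<[0]$, so $g'\geq l_k$ fails. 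Yet $g=[100]$ is indeed a good word (it is $[\ld-\mu]$ for $(\ld,\mu)=((2,1,1),(0,0,1)^{+})$ via Lemma~\ref{L:BdGd}), so the proposition holds but the naive induction does not establish it. Your description of when the failure occurs is slightly off---under $l_1\geq\cdots\geq l_k$, the condition \eqref{E:rightfactors} forces the obstruction to arise exactly when $l_{k-1}=l_k$ and $k\geq 3$ rather than from a suffix of $l_{k-1}$ matching $l_k$---but the diagnosis is substantively right.

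Your proposed repair is also correct. The inequality $l_j\geq l_{j+1}\cdots l_k$ is immediate from \eqref{E:rightfactors} and $l_j\geq l_k$, since $l_{j+1}\cdots l_k\leq l_k\leq l_j$. Combined with a left-append analogue of Proposition \ref{P:GLproduct} (that $l\in\mathcal{GL}$, $h\in\mathcal{G}$, $l\geq h$ imply $lh\in\mathcal{G}$, provable by running Leclerc's argument with the ``peel off leftmost letter'' form of the recursive shuffle identity), iterated left-appending of $l_{k-1},l_{k-2},\dotsc,l_1$ onto $l_k$ assembles $g$. This is the correct way to make the paper's ``hence we deduce'' precise in the costandard ordering, and it is essentially the mirror of what Leclerc does in the standard ordering (where the needed inequality $l_1\cdots l_{k-1}\geq l_k$ holds trivially because longer words dominate their prefixes there).
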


As in \cite{lec}, we have

\begin{prp}\cite{lr,lec} The map $l\rightarrow|l|$ is a bijection
$\mathcal{GL}\rightarrow\Dt^+$.
\end{prp}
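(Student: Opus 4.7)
The plan is to adapt the Lalonde--Ram / Leclerc strategy \cite{lr,lec} to our setting, based on comparing dimensions on the two sides of the claimed bijection.

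First I would invoke the fact (recalled in the introduction) that $\W$ admits a basis $\{E^*_g\}$ indexed by good words $g$, so that good words of fixed degree $\nu \in \curlyQ^+$ index a basis of $\W_\nu$. Combined with the preceding proposition, which factors every good word uniquely as $g = l_1 l_2 \cdots l_k$ with $l_1 \geq \cdots \geq l_k \in \mathcal{GL}$, this yields
\[
\dim_{\Q(q)} \W_\nu \;=\; \#\{(l_1 \geq \cdots \geq l_k \in \mathcal{GL}) : |l_1| + \cdots + |l_k| = \nu\}.
\]
On the other hand, via the isomorphism $\Psi : \U_q(\n^+_r) \xrightarrow{\sim} \W$, we have $\dim \W_\nu = K(\nu)$, the Kostant partition function of $\b_r$, which counts unordered decompositions of $\nu$ as a sum of positive roots.

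Equating these two expressions and summing over $\nu$ gives the identity of formal power series in the completed monoid ring $\Z[[\curlyQ^+]]$:
\[
\prod_{l \in \mathcal{GL}} \frac{1}{1 - t^{|l|}} \;=\; \prod_{\af \in \Dt^+} \frac{1}{1 - t^\af}.
\]
I would then take formal logarithms of both sides and compare coefficients of $t^\gm$ by induction on $\height(\gm)$: since every proper decomposition of $\gm$ in $\curlyQ^+$ involves terms of strictly smaller height, the induction hypothesis lets one read off that the multiset $\{|l| : l \in \mathcal{GL}\}$ coincides with $\Dt^+$. This shows simultaneously that $|l| \in \Dt^+$ for every $l \in \mathcal{GL}$ and that the map $l \mapsto |l|$ is a bijection onto $\Dt^+$.

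The main obstacle is precisely this final extraction of the bijection from the generating-function identity, since a priori one might have multiple good Lyndon words sharing a degree, or exotic good Lyndon words whose degree $|l| = n\af$ is a proper multiple of a positive root. The inductive log-coefficient argument above handles both issues uniformly, because the log-coefficient at degree $\gm$ involves not only the contribution from $\gm$ itself but also contributions from every $\gm/m$ with $m \geq 2$, all pinned down by the induction hypothesis. Alternatively, one may bypass this subtlety by exhibiting, for each $\af \in \Dt^+$, an explicit good Lyndon word of degree $\af$---for type $B_r$ in the costandard ordering these are precisely the segments $[i,\ldots,j]$ and double segments $[j,\ldots,0,0,\ldots,k]$ highlighted in the introduction---whereupon the cardinality equality coming from the generating-function identity forces uniqueness, and the bijection drops out.
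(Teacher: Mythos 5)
You have correctly reconstructed the standard Lalonde--Ram / Leclerc argument for this fact; the paper itself gives no proof, only the citation to \cite{lr,lec}, so there is no in-text proof to compare against, but your generating-function-and-log-coefficients strategy is indeed the one used in those references. The dimension count $\dim \W_\nu = K(\nu)$ via $\Psi$ and the PBW theorem for $\U_q(\n^+_r)$, the unique factorization of good words into nonincreasing products of good Lyndon words, the resulting identity $\prod_{\gm}(1-t^\gm)^{-m_\gm} = \prod_{\af\in\Dt^+}(1-t^\af)^{-1}$ where $m_\gm = \#\{l\in\mathcal{GL}: |l|=\gm\}$, and the induction on $\height(\gm)$ after taking logarithms are all sound and correctly identified as handling both the multiplicity question and the possibility of degrees that are proper multiples of roots.

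One caveat: as stated, your first step has a circularity. You invoke ``the basis $\{E^*_g\}$'' to conclude that good words of degree $\nu$ index a basis of $\W_\nu$. But the $E^*_g$ basis is defined only in \S\ref{SS:PBWandCanonical}, via Lusztig's PBW basis, which requires a convex ordering on $\Dt^+$ --- and that convex ordering is itself derived from the bijection $\mathcal{GL}\to\Dt^+$ you are trying to prove. The fix is standard and unobtrusive: the fact you actually need is merely that $\#\{g\in\mathcal{G} : |g|=\nu\} = \dim\W_\nu$, and this is established \emph{prior} to the bijection (in Leclerc's setup) via a triangularity argument showing that $\{\Psi(e_g) : g\in\mathcal{G}\}$ is a basis of $\W$, or equivalently via the Lyndon basis $\{r_g\}$ of Theorem~\ref{T:Lyndonbasis}, neither of which presupposes the Lyndon covering. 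With that substitution your argument is clean and complete. Your alternative closing suggestion --- exhibiting an explicit good Lyndon word for each $\af\in\Dt^+$ (the segments $[i,\ldots,j]$ and double segments $[j,\ldots,0,0,\ldots,k]$) and then appealing to the counting identity to force uniqueness --- is also viable, and is closer in spirit to the computational verification the paper performs later when it writes down the Lyndon covering explicitly for type $B_r$ in the costandard ordering.
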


Given $\gm\in\Dt^+$, let $\gm\rightarrow l(\gm)$ be the inverse of the above
bijection (called the Lyndon covering of $\Dt^+$).

We now define the \emph{bracketing} of Lyndon words, that gives rise to the
\emph{Lyndon basis} of $\W$. To this end, given $l\in\L$ such that $l$ is not a
letter, define the standard factorization of $l$ to be $l=l_1l_2$ where
$l_2\in\L$ is a proper left factor of maximal length. Define the $q$-bracket
\begin{align}\label{E:qbracket}
[f_1,f_2]_q=f_1f_2-q^{(|f_1|,|f_2|)}f_2f_1
\end{align}
for homogeneous $f_1,f_2\in\F$ in the $\curlyQ^+$-grading. Then, the bracketing
$\la l\ra$ of $l\in\L$ is defined inductively by $\la l\ra=l$ if $l$ is a
letter, and
\begin{align}\label{E:Lyndonbracketing}
\la l\ra=[\la l_1\ra,\la l_2\ra]_q
\end{align}
if $l=l_1l_2$ is the standard factorization of $l$.

\begin{exa} (1) $\la [0]\ra=[0]$;

\noindent(2) $\la [12]\ra=[[1],[2]]_q=[12]-q^{-1}[21]$;

\noindent(3) $\la[012]\ra=[[0],[12]-q^{-1}[21]]_q=[012]-q^{-1}[021]-q^{-2}[120]+q^{-3}[210]$.
\end{exa}

As is suggested in this example, we have

\begin{prp}\label{P:bracketingtriangularity}\cite[Proposition 19]{lec} For $l\in\L$, $\la l\ra=l+r$ where $r$ is
a linear combination of words $w$ such that $|w|=|l|$ and $w<l$.
\end{prp}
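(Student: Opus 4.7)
My plan is to prove the proposition by induction on the length of the Lyndon word $l$, where by length I mean the number of letters (the principal degree). The base case is immediate: if $l=[i]$ is a single letter, then $\la l\ra = l$ by definition, and the ``remainder'' $r$ is the empty sum, which vacuously satisfies the required condition. So I only need to worry about the inductive step, when $l$ has length at least $2$ and admits a standard factorization $l = l_1 l_2$ into two shorter Lyndon words, to which the induction hypothesis applies, giving $\la l_i\ra = l_i + r_i$ for $i=1,2$ with each $r_i$ a linear combination of monomials strictly smaller than $l_i$ and of the same $\curlyQ^+$-degree as $l_i$.

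Then I would expand the $q$-bracket using \eqref{E:Lyndonbracketing} and \eqref{E:qbracket}:
\[
\la l\ra = \la l_1\ra\, \la l_2\ra - q^{(|l_1|,|l_2|)}\la l_2\ra\,\la l_1\ra = (l_1+r_1)(l_2+r_2) - q^{(|l_1|,|l_2|)}(l_2+r_2)(l_1+r_1).
\]
The ``leading'' concatenation $l_1 l_2$ is just $l$, so it remains to show that every other monomial appearing on the right-hand side is strictly less than $l$ in the costandard lex order (and is automatically of the same $\curlyQ^+$-degree, since the $q$-bracket is homogeneous). The terms split into two kinds: those coming from $\la l_1\ra \la l_2\ra$ of the form $w_1 l_2$, $l_1 w_2$, or $w_1 w_2$ with $w_i < l_i$; and those coming from $q^{(|l_1|,|l_2|)}\la l_2\ra\la l_1\ra$ of the form $l_2 l_1$, $w_2 l_1$, $l_2 w_1$, or $w_2 w_1$ with $w_i < l_i$.

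The crux of the argument, and the only step I expect to require real care, is verifying these inequalities for the costandard (right-to-left) order. The essential facts I will need are (i) the compatibility $w < w'$ implies $uw < uw'$ when $|w| = |w'|$, so that replacing a suffix by a smaller suffix of equal degree strictly decreases the monomial in costandard lex order; (ii) the Lyndon property itself, which forces $l_2 l_1 < l_1 l_2 = l$ (this is the analogue in the costandard ordering of the classical inequality and is precisely where the convention from \eqref{E:rightfactors} is being used); and (iii) the observation that if $w_1 < l_1$ then $w_1 l_2 < l_1 l_2$, since the rightmost block $l_2$ is shared and we compare in the remaining (leftward) positions. Terms like $w_2 l_1$ are already covered because their rightmost block $l_1$ is strictly less than $l_2$ (as $l_2$ is a suffix of the Lyndon word $l$ and $l_1 < l$), which makes them smaller than $l$ at the final comparison. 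The main obstacle, then, is the bookkeeping to check each of these costandard-lex inequalities; once established, collecting all non-leading terms yields the desired remainder $r$, completing the induction.
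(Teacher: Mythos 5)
Your inductive strategy is the right one and matches Leclerc's own proof of this statement; the paper simply cites \cite[Proposition 19]{lec} and gives no argument of its own. The handling of the terms coming from $\la l_1\ra\la l_2\ra$, namely $w_1 l_2$, $l_1 w_2$, $w_1 w_2$, is fine: in the costandard order, replacing the rightmost factor by a strictly smaller one of the same $\curlyQ^+$-degree, or replacing the leftmost factor while fixing the rightmost, strictly decreases the monomial.

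The problem is in the treatment of the reversed terms $l_2 l_1$, $w_2 l_1$, $l_2 w_1$, $w_2 w_1$. You argue that $w_2 l_1 < l$ because ``their rightmost block $l_1$ is strictly less than $l_2$.'' In the costandard order one compares letter-by-letter from the right, so unless $l_1$ and $l_2$ have the same number of letters the two blocks do not line up, and the inequality $l_1 < l_2$ (while true) is not what is actually being tested. What does the job is the Lyndon condition on the proper prefix $l_1$: since $l_1 < l$ and $l_1$ has strictly fewer letters than $l$, the definition of the order (in which shorter words are \emph{larger}) forces the comparison to resolve within the rightmost positions occupied by $l_1$ --- in other words, $l_1$ is strictly less than the suffix of $l$ of the same length. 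This one observation gives $v u < l$ for \emph{every} monomial $u$ with $u\leq l_1$ and $|u|=|l_1|$ and \emph{every} $v$ of the complementary degree, which disposes of all four reversed terms at once; there is no need for a separate rotation argument or an appeal to \eqref{E:rightfactors}. So the plan is sound, but the block-comparison step must be replaced by this argument for the proof to go through.
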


Any word $w\in\F$ has a canonical factorization $w=l_1\cdots l_k$ such that
$l_1,\ldots,l_k\in\L$ and $l_1\geq\cdots\geq l_k$. We define the bracketing of
an arbitrary word $w$ in terms of this factorization: $\la w\ra=\la
l_1\ra\cdots\la l_k\ra$. Define a homomorphism $\Xi:(\F,\cdot)\to(\F,*)$ by
$\Xi([i])=[i]$. Then,
$\Xi([i_1,\ldots,i_k])=[i_1]*\cdots*[i_k]=\Psi(e_{i_1}\cdots e_{i_k})$. In
particular, $\Xi(\F)=\W$. We have the following characterization of good words:

\begin{lem}\cite[Lemma 21]{lec} The word $w$ is good if and only if it cannot
be expressed modulo $\ker\Xi$ as a linear combination of words $v<w$.
\end{lem}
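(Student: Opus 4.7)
The plan is to combine a dimension-counting argument with the direct verification of one implication.

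First I would observe that both sides of the biconditional count the same number of monomials in each $\curlyQ^+$-graded piece, reducing matters to a single implication. Fix $\nu \in \curlyQ^+$ and list the monomials of $\F_\nu$ in increasing right-to-left lex order as $w_1 < \cdots < w_N$; set $F_j := \mathrm{span}(w_1, \dotsc, w_j)$. The induced filtration $\Xi(F_0) \subseteq \Xi(F_1) \subseteq \cdots \subseteq \Xi(F_N) = \W_\nu$ has successive quotients of dimension at most $1$, with a jump at step $j$ precisely when $w_j \notin F_{j-1} + \ker\Xi$, i.e.\ when the right-hand side of the lemma holds. The total number of jumps is $\dim \W_\nu$. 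On the other hand, the theory of good Lyndon words produces a PBW-type basis $\{E^*_g : g \in \mathcal{G}_\nu\}$ of $\W_\nu$, so $|\mathcal{G}_\nu| = \dim \W_\nu$. Hence one implication suffices.

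Next I would prove ``$w$ good implies $w \notin \F_{<w} + \ker\Xi$,'' where $\F_{<w} := \mathrm{span}\{v : v < w\}$. Let $w = l_1 \cdots l_k$ be the Lyndon factorization of $w$, with $l_1 \geq \cdots \geq l_k$ all good Lyndon (since $w$ is good). Iterating Proposition \ref{P:bracketingtriangularity} blockwise---the bracketing preserves word length, so the right-to-left lex comparison is compatible with block concatenation---produces
\[
\langle w \rangle = w + \sum_{v < w} \gamma_v v \quad \text{in } (\F, \cdot).
\]
Applying the homomorphism $\Xi$, I would identify $\Xi(\langle w \rangle)$ with the PBW basis element $E^*_w$ (via $\Xi(\langle l \rangle) = E^*_l$ for each good Lyndon factor $l$, a consequence of the factorization $\Xi = \Psi \circ P$ through the canonical surjection $P\colon(\F,\cdot) \twoheadrightarrow \U_q$); this element satisfies $\min(E^*_w) = w$. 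Combining gives $E^*_w \equiv \Xi(w) \pmod{\Xi(\F_{<w})}$, so if $\Xi(w) \in \mathrm{span}\{\Xi(v) : v < w\}$, then $E^*_w$ itself lies in that span.

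The hard part will be to rule this out. The technical heart is a triangularity statement which I would prove by induction on $v$ in lex order: for every word $v$, the PBW expansion $\Xi(v) = \sum_{g \in \mathcal{G}} \alpha_g(v)\, E^*_g$ satisfies $\alpha_g(v) = 0$ whenever $g > v$. The base case $v = v^{\downarrow}$ (the decreasing rearrangement of the content of $v$) is immediate since $\Xi(v^{\downarrow})$ is a scalar multiple of $E^*_{v^{\downarrow}}$. The inductive step applies the same bracketing identity to $v$ together with the inductive hypothesis for the smaller words $v' < v$; the identification $\Xi(\langle v \rangle) = E^*_v$ when $v$ is good, or an analogous reduction to PBW elements $E^*_g$ with $g \leq v$ via Serre-type relations when $v$ is not good, then completes the step. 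Granted the triangularity, every $\Xi(v)$ with $v < w$ contributes zero to the $E^*_w$-coefficient of the PBW expansion, so the presumed relation forces $0 = 1$ by linear independence of $\{E^*_g\}$---the desired contradiction.
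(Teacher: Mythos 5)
The paper cites \cite[Lemma 21]{lec} without giving its own proof, so I assess your proposal on its merits. The overall plan---produce a triangular element from the bracketing $\la w\ra$ of $w$, then rule out a relation via a PBW-type triangularity and linear independence---is the standard route, but there is a persistent confusion of objects and a genuine gap at the crux.

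The confusion: $\Xi(\la w \ra) = r_w$, the Lyndon basis element, not $E^*_w$. For a good Lyndon word $l$, $r_l$ is proportional to $E_l$ (not to $E^*_l$) by \eqref{E:Proportional}; and for a general good word $g = l_1\cdots l_k$ one has $r_g = r_{l_1} * \cdots * r_{l_k}$, whereas $E^*_g$ is, up to a power of $q$, the $*$-product of the $E^*_{l_i}$ in the \emph{reverse} order; the two are related only indirectly through the transpose and the bar involution (\S\ref{SS:PBWandCanonical}). The argument does go through if you work with $r_g$ consistently.

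The gap: the triangularity $\Xi(v) \in \mathrm{span}\{r_g : g \leq v,\ g\ \text{good}\}$ is the mathematical heart, and your treatment of the inductive step for non-good $v$---``an analogous reduction $\dotsc$ via Serre-type relations''---waves past the one nontrivial ingredient. Closing the induction requires: if $l$ is a Lyndon word that is \emph{not} a good Lyndon word, then $\Xi(\la l\ra) = 0$. Granting this, for non-good $v$ with Lyndon factorization $v = l_1\cdots l_k$ one factor vanishes, so $\Xi(\la v\ra) = \Xi(\la l_1\ra)*\cdots*\Xi(\la l_k\ra) = 0$, hence $\Xi(v) = -\sum_{v'<v}\gamma_{v'}\Xi(v')$ and the induction proceeds. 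This vanishing---encoding the Serre relations when $|l|\notin\Dt^+$ and the one-dimensionality of root spaces when $|l|\in\Dt^+$ but $l\ne l(|l|)$---is exactly the content of the good-Lyndon-word/positive-root correspondence; it is not ``analogous'' to the good case and must be proved or cited precisely. A smaller point: the count $|\mathcal{G}_\nu| = \dim\W_\nu$ should be justified by Gaussian elimination on leading monomials of a basis of $\W_\nu$ (or by Kostant's partition function together with Propositions 17--18 of \cite{lec}), rather than by Theorem~\ref{T:Lyndonbasis}, which in Leclerc's development comes \emph{after} the present lemma.
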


For $g\in\mathcal{G}$, set $r_g=\Xi(\la g\ra)$. Then, we have

\begin{thm}\label{T:Lyndonbasis}\cite[Propostion 22, Theorem 23]{lec}
Let $g\in\mathcal{G}$ and $g=l_1\cdots l_k$ be the canonical factorization of
$g$ as a nonincreasing product of good Lyndon words. Then
\begin{enumerate}
\item $r_g=r_{l_1}*\cdots*r_{l_k}$,
\item $r_g=\Psi(e_g)+\sum_{w<g}x_{gw}\Psi(e_w)$ where, for a word
$v=[i_1,\ldots,i_k]$, $e_v=e_{i_1}\cdots e_{i_k}$, and
\item $\{r_g|g\in\mathcal{G}\}$ is a basis for $\W$.
\end{enumerate}
\end{thm}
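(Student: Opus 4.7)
The plan is to prove the three claims in sequence, with the triangularity in (2) being the technical core; claims (1) and (3) then follow with relatively little additional work.

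For (1), the argument is essentially formal. By the definition of the bracketing for arbitrary words, if $g=l_1\cdots l_k$ is the canonical factorization with $l_1\geq\cdots\geq l_k$ good Lyndon, then $\la g\ra=\la l_1\ra\cdots\la l_k\ra$ (product in the concatenation algebra $(\F,\cdot)$). The map $\Xi:(\F,\cdot)\to(\F,*)$ is by definition an algebra homomorphism, so applying $\Xi$ gives $r_g=\Xi(\la l_1\ra)*\cdots*\Xi(\la l_k\ra)=r_{l_1}*\cdots*r_{l_k}$.

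For (2), I would first extend Proposition~\ref{P:bracketingtriangularity} from single Lyndon words to arbitrary good words. By that proposition, each bracketed factor satisfies $\la l_i\ra=l_i+\sum_{v_i<l_i,\,|v_i|=|l_i|}c_{v_i}v_i$. Multiplying out the concatenation $\la l_1\ra\cdots\la l_k\ra$ and collecting terms, I need to check that whenever one substitutes at least one $v_i<l_i$ for some $l_i$, the resulting concatenation $v_1\cdots v_k$ is strictly smaller than $g=l_1\cdots l_k$ in the costandard (right-to-left) lexicographic order. This uses the convention \eqref{E:rightfactors} together with the fact that in the right-to-left lex order the rightmost factor is compared first: substitution in the rightmost factor strictly lowers the word, while substitutions in earlier factors can only lower it further because they affect left parts of a word whose right end already matches $g$. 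This yields $\la g\ra=g+\sum_{w<g,\,|w|=|g|}c_w w$, and applying $\Xi$ together with $\Xi(w)=\Psi(e_w)$ produces the triangular expansion $r_g=\Psi(e_g)+\sum_{w<g}x_{gw}\Psi(e_w)$.

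For (3), I would use the characterization of good words given in the penultimate lemma: a word $w$ is good iff it cannot be written modulo $\ker\Xi$ as a linear combination of strictly smaller words. Downward induction on the costandard lex order (within each fixed weight) shows that every $\Psi(e_w)$ can be rewritten as a combination of $\Psi(e_g)$ with $g\in\mathcal{G}$ and $g\leq w$; combined with surjectivity of $\Xi:\F\to\W$, this gives $\W=\mathrm{span}\{\Psi(e_g)\mid g\in\mathcal{G}\}$. For linear independence, suppose $\sum_{g\in\mathcal{G}}c_g r_g=0$ and let $g_0$ be the lex-largest good word with $c_{g_0}\neq 0$. The triangularity from (2) isolates the coefficient of $g_0$ as $c_{g_0}$, forcing $c_{g_0}\,g_0$ to lie in the span of $\{w\mid w<g_0\}$ modulo $\ker\Xi$, contradicting the good word characterization. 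Hence $\{r_g\mid g\in\mathcal{G}\}$ is a basis of $\W$, and by (2) the transition matrix from $\{\Psi(e_g)\}$ to $\{r_g\}$ is unitriangular.

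The main obstacle is verifying the lex-order claim in (2): that concatenating any mix of leading terms $l_i$ with lower substitutes $v_i<l_i$ always produces a word strictly lex-smaller than $g$. Care is required because the costandard convention reads from the right, so the analysis must proceed factor by factor starting from $l_k$; it is precisely this choice of ordering that makes the argument work cleanly and, as noted before the theorem, distinguishes our setting from Leclerc's standard-order treatment.
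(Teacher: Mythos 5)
The paper gives no proof of this theorem: it is stated purely as a citation of \cite[Proposition~22, Theorem~23]{lec}, with the preceding remark that Leclerc's results carry over under the swap from standard to costandard lexicographic order. Your proposal essentially reconstructs Leclerc's argument in the costandard setting, and it is correct in all essentials. Part (1) is indeed a one-line consequence of $\Xi$ being an algebra homomorphism and the definition of $\la g\ra$. For part (2), the key lex-order claim is right, and the reason it works cleanly is worth making explicit: since the corrective terms $v_i$ in $\la l_i\ra = l_i + \sum v_i$ have the same $\mathcal{Q}^+$-degree as $l_i$, each $v_i$ has the same length as $l_i$, so the concatenations $v_1\cdots v_k$ and $l_1\cdots l_k$ compare position-by-position from the right, and lowering any factor lowers the whole word. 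For part (3), your linear-independence argument is the right one and I would phrase the conclusion as: the relation $\sum c_g r_g = 0$ and the triangularity from (2) force $c_{g_0}\,\Xi(g_0)$ into the span of $\{\Xi(v)\mid v<g_0\}$, so $g_0$ is expressible modulo $\ker\Xi$ by strictly smaller words, contradicting the good-word characterization; similarly for spanning. One small slip: your spanning induction should run \emph{upward} in the costandard lex order on a fixed $\mathcal{Q}^+$-weight (base case the minimal word, where the non-good case forces $\Xi(w)=0$), not downward --- when $w$ is not good you need the inductive hypothesis for words $v<w$. With that direction corrected, the argument is complete and matches what Leclerc proves.
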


The basis $\{r_g\mid g\in\mathcal{G}\}$ is called the Lyndon basis of $\W$. An
immediate consequence of Proposition \ref{P:bracketingtriangularity} and
Theorem \ref{T:Lyndonbasis} is the following:

\begin{prp}\label{P:LyndonCoveringProperty}\cite[Proposition 24]{lec} Assume $\gm_1,\gm_2\in \Dt^+$,
$\gm_1+\gm_1=\gm\in\Dt^+$, and $l(\gm_1)<l(\gm_2)$. Then, $l(\gm_1)l(\gm_2)\geq
l(\gm)$.
\end{prp}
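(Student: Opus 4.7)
The plan is to analyze the shuffle product $R := r_{l(\gamma_1)} * r_{l(\gamma_2)} \in \W$, which has $\curlyQ^+$-degree $\gamma$, and to deduce the desired inequality by comparing the maximum monomial of $R$ in the costandard lex order with its expansion in the Lyndon basis.

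First I would establish the combinatorial claim that for any two monomials $u,v \in \F$ with $u < v$ in the costandard lex order, $\max(u * v) = u \cdot v$ (the ordinary concatenation). This follows by induction from \eqref{E:inductiveqshuffle}, since each summand of $u*v$ is an interleaving of the letters of $u$ and $v$ preserving their internal orders, and the costandard-maximum interleaving is always the one placing all of $v$ to the right of all of $u$. Applying this to $R$, and iterating over the monomials that appear in each factor $r_{l(\gamma_i)}$ (using that $r_{l(\gamma_i)}$ contains $l(\gamma_i)$ in its support by the definition of good word together with the triangularity in Theorem~\ref{T:Lyndonbasis}(ii)), will yield $\max R = l(\gamma_1)\cdot l(\gamma_2)$ in $\F$.

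Next I would expand $R$ in the Lyndon basis, $R = \sum_{g \in \mathcal{G},\, |g| = \gamma} c_g\, r_g$ by Theorem~\ref{T:Lyndonbasis}(iii), and show that the coefficient $c_{l(\gamma)}$ is nonzero. For this I would pass to the classical limit $q = 1$: under the specialization, $\underline{R}$ corresponds via $\underline{\Psi}\colon U(\n_r^+)\to(\F_\C,*)$ to the product $\underline{E_{l(\gamma_1)}}\cdot\underline{E_{l(\gamma_2)}}$ of classical Lyndon PBW generators. Since $\gamma = \gamma_1+\gamma_2 \in \Dt^+$, the classical commutator $[\underline{E_{l(\gamma_1)}},\underline{E_{l(\gamma_2)}}]$ is a nonzero scalar multiple of the classical root vector attached to $l(\gamma)$, and rewriting $\underline{E_{l(\gamma_1)}}\underline{E_{l(\gamma_2)}}$ in the classical Lyndon PBW basis (the $q\to 1$ limit of $\{r_g\}$) then produces a nonzero coefficient on $\underline{r_{l(\gamma)}}$, yielding $c_{l(\gamma)}\neq 0$.

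Combining the steps: since $l(\gamma) = \min r_{l(\gamma)}$ appears in $r_{l(\gamma)}$ with nonzero coefficient, and a triangularity check based on Theorem~\ref{T:Lyndonbasis}(ii) rules out cancellation of $l(\gamma)$ against the contributions from the other $r_g$ occurring in the Lyndon expansion of $R$, the monomial $l(\gamma)$ lies in the $\F$-support of $R$. Since every monomial in $R$ is bounded above by $\max R = l(\gamma_1) l(\gamma_2)$, we conclude $l(\gamma) \leq l(\gamma_1) l(\gamma_2)$. The hard part will be the nonvanishing of $c_{l(\gamma)}$: one must track carefully how the classical nonzero bracket $[\underline{E_{\gamma_1}},\underline{E_{\gamma_2}}]\propto \underline{E_\gamma}$ survives the transition to the Lyndon PBW basis at $q=1$, in particular identifying $\underline{r_{l(\gamma)}}$ with the PBW root vector associated to $\gamma$ as in \cite{lr,ro1}; the shuffle-max and noncancellation steps are in contrast routine triangular arguments.
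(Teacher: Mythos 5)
Your step 1, the claim that $\max(u * v) = u\cdot v$ (concatenation) whenever $u < v$, is false, and this is where the argument breaks. It is not a ``routine triangular argument.'' The induction you propose stalls when the rightmost letters of $u$ and $v$ tie (or when a tail of $v$ repeats letters of $u$): at that point the costandard-maximum interleaving may insert letters of $u$ strictly to the \emph{left} of some letters of $v$, producing a word strictly larger than $u\cdot v$. A concrete instance inside the hypotheses of the Proposition: take $\gamma_1 = \beta_1$ and $\gamma_2 = 2\beta_0 + \beta_1 + \beta_2$ in $\b_3$, so $\gamma = 2\beta_0+2\beta_1+\beta_2 \in \Dt^+$, $l(\gamma_1)=[1]$, $l(\gamma_2)=[0,0,1,2]$, and $l(\gamma_1)<l(\gamma_2)$. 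Here $l(\gamma_1)*l(\gamma_2)$ has support $\{[1,0,0,1,2],[0,1,0,1,2],[0,0,1,1,2],[0,0,1,2,1]\}$, and reading right-to-left the maximum is $[0,0,1,1,2]$, which is strictly larger than $l(\gamma_1)l(\gamma_2)=[1,0,0,1,2]$. Consequently the inequality ``$l(\gamma)\le \max R = l(\gamma_1)l(\gamma_2)$'' you aim to extract does not hold: $\max R$ overshoots, so even granting the nonvanishing of $c_{l(\gamma)}$ you only get $l(\gamma)\le \max R$, which is weaker than the statement. The heuristic ``put all of $v$ to the right of all of $u$'' does maximize the rightmost letter, but not the full word.

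Beyond that, the overall plan is significantly heavier than what the paper uses. The paper deduces the result directly from Proposition~\ref{P:bracketingtriangularity} and Theorem~\ref{T:Lyndonbasis}: one works in the \emph{concatenation} algebra $(\F,\cdot)$, where the relevant triangularity ($\la l\ra = l + (\text{words}<l)$, and $\la l_1\ra\cdot\la l_2\ra = l_1l_2 + (\text{words}<l_1l_2)$ since concatenation respects the right-to-left lex order factorwise) is genuinely immediate, and then pushes through $\Xi$ using Theorem~\ref{T:Lyndonbasis}(2). Your route through the shuffle-$\max$ and the $q=1$ specialization to classical root vectors is not only not what the paper does, it also quietly leans on an identification of $\underline{r_{l(\gamma)}}$ with a classical PBW root vector (a Lalonde--Ram-type input that you flag yourself as ``the hard part'') that the paper's argument never needs. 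I'd recommend recasting the whole argument in terms of the concatenation product and the $\min$-leading-term triangularity, not the shuffle $\max$.
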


This gives an inductive algorithm to determine $l(\gm)$ for $\gm\in\Dt^+$ (cf.
\cite[$\S4.3$]{lec}):

For $\bt_i\in\Pi\subset\Dt^+$, $l(\bt_i)=[i]$. If $\gm$ is not a simple root,
then there exists a factorization $l(\gm)=l_1l_2$ with $l_1,l_2$ Lyndon words.
By Lemma \ref{L:GoodFactors}, $l_1$ and $l_2$ are good, so $l_1=l(\gm_1)$ and
$l_2=l(\gm_2)$ for some $\gm_1,\gm_2\in\Dt^+$ with $\gm_1+\gm_2=\gm$. Assume
that we know $l(\gm_0)$ for all $\gm_0\in\Dt^+$ satisfying
$\height(\gm_0)<\height(\gm)$. Define
\[
C(\gm)=\{\,(\gm_1,\gm_2)\in\Dt^+\times\Dt^+ \mid \gm=\gm_1+\gm_2, \mbox{ and
}l(\gm_1)<l(\gm_2)\,\}.
\]
Then, Proposition \ref{P:LyndonCoveringProperty} implies

\begin{prp}\cite[Proposition 25]{lec} We have
\[
l(\gm)=\min\{\,l(\gm_1)l(\gm_2) \mid (\gm_1,\gm_2)\in C(\gm)\,\}
\]
\end{prp}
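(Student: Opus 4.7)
The plan is to derive the formula as a direct consequence of Proposition~\ref{P:LyndonCoveringProperty}, combined with the standard factorization of Lyndon words; essentially all the substantive combinatorial work has already been done in the preceding material.

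First I would show that $l(\gamma)$ itself appears in the set $\{l(\gamma_1)l(\gamma_2) \mid (\gamma_1,\gamma_2) \in C(\gamma)\}$. Since $\gamma$ is not a simple root, the good Lyndon word $l(\gamma)$ has length at least two, so it admits a nontrivial factorization. By the standard theory of Lyndon words (which is implicit in the definition of the bracketing $\langle l \rangle$ via \eqref{E:Lyndonbracketing} and illustrated by the example $\langle[012]\rangle = [[0],\langle[12]\rangle]_q$), any Lyndon word of length $\geq 2$ admits a standard factorization $l(\gamma) = l_1 l_2$ as a concatenation of two Lyndon words satisfying $l_1 < l_2$ in the right-to-left order. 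By Lemma~\ref{L:GoodFactors}, both $l_1$ and $l_2$ are good words; being Lyndon, they are good Lyndon words. Using the bijection $\mathcal{GL} \leftrightarrow \Delta^+$, we may write $l_1 = l(\gamma_1)$ and $l_2 = l(\gamma_2)$ for unique positive roots $\gamma_1,\gamma_2 \in \Delta^+$, and clearly $\gamma_1 + \gamma_2 = \gamma$. Since $l(\gamma_1) = l_1 < l_2 = l(\gamma_2)$, the pair $(\gamma_1,\gamma_2)$ lies in $C(\gamma)$, exhibiting $l(\gamma) = l(\gamma_1)l(\gamma_2)$ as a member of the candidate set.

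Second I would verify that $l(\gamma)$ is a lower bound for this set. This is precisely the content of Proposition~\ref{P:LyndonCoveringProperty}: for any $(\gamma_1',\gamma_2') \in C(\gamma)$ one has $l(\gamma_1')l(\gamma_2') \geq l(\gamma)$. Combining with the first step, $l(\gamma)$ realizes the minimum, completing the proof.

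The main obstacle, strictly speaking, is not in this proposition but in its prerequisites: the nontrivial ingredient is Proposition~\ref{P:LyndonCoveringProperty}, which rests in turn on the Lyndon basis theorem (Theorem~\ref{T:Lyndonbasis}) and the triangularity result (Proposition~\ref{P:bracketingtriangularity}). Once these are in hand, the statement of the minimum formula reduces to an essentially formal bookkeeping argument that marries the standard factorization of Lyndon words with the covering property of good Lyndon words.
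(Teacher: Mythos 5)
Your proof is correct and follows essentially the same path the paper sketches in the paragraph immediately preceding the statement: take the standard factorization $l(\gamma) = l_1 l_2$ into Lyndon words, invoke Lemma~\ref{L:GoodFactors} to see that $l_1 = l(\gamma_1)$ and $l_2 = l(\gamma_2)$ are good Lyndon, so that $(\gamma_1, \gamma_2) \in C(\gamma)$ realizes the value $l(\gamma)$, and then use Proposition~\ref{P:LyndonCoveringProperty} for the lower bound. One small point worth making explicit rather than attributing to ``the standard theory'': the inequality $l_1 < l_2$ in the standard factorization follows directly in this paper's conventions because $l_1$ is a proper left factor of the Lyndon word $l(\gamma)$, hence $l_1 < l(\gamma)$, while $l_2$ is a proper right factor, hence $l(\gamma) < l_2$ by \eqref{E:rightfactors}; chaining these gives $l_1 < l_2$ without appealing to the Chen--Fox--Lyndon machinery.
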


In our situation,
\[
\Dt^+=\{\bt_i+\bt_{i+1}+\cdots+\bt_j|0\leq i\leq j<r\}
\cup\{2\bt_0+\cdots+2\bt_j+\bt_{j+1}+\cdots+\bt_k|0\leq j<k<r\}.
\]
A straightforward inductive argument shows that
\[
l(\bt_i+\bt_{i+1}\cdots+\bt_j)=[i,i+1,\ldots,j]\andeqn
    l(2\bt_0+\cdots+2\bt_j+\bt_{j+1}+\cdots+\bt_k)=[j,j-1,\ldots,0,0,\ldots,k-1,k].
\]
Remarkably,
\begin{prp} In the notation of Lemma \ref{L:ShuffleLemma} we have
\[
l(\bt_i+\cdots+\bt_j)=\ch\Phi_{[i,j]}
\]
and
\[
2l(2\bt_0+\cdots+2\bt_j+\bt_{j+1}+\cdots+\bt_k)
    =\ch\Phi_{[-j-1,k]}.
\]
\end{prp}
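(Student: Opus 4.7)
The plan is straightforward: this proposition is essentially a matching statement, asserting that the combinatorially defined good Lyndon words (already made explicit just above the proposition) coincide, up to an overall factor, with the characters of the segment representations computed earlier. So the proof amounts to comparing two formulas that are both in hand.

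First I would handle the type-$A_\infty$-like family. The explicit formula $l(\bt_i+\bt_{i+1}+\cdots+\bt_j)=[i,i+1,\ldots,j]$ has already been derived via the inductive algorithm described just before the proposition. I would then compare this to $\ch\Ph_{[i,j]}$. When $i=0$, Theorem \ref{module decomposition}(i) and Definition \ref{segments}(1) give $\Ph_{[0,j]}=\hat{\Ph}_{[0,j]}$, and Proposition \ref{character formula}(1) yields $\ch\Ph_{[0,j]}=[0,1,\ldots,j]$, matching the Lyndon word. When $0<i\leq j$, Theorem \ref{module decomposition}(ii) and Definition \ref{segments}(2) identify $\Ph_{[i,j]}$ with one of two isomorphic simple summands of $\hat{\Ph}_{[i,j]}$, so Proposition \ref{character formula}(1) (which gives $\ch\hat{\Ph}_{[i,j]}=2[i,\ldots,j]$) yields $\ch\Ph_{[i,j]}=[i,\ldots,j]$. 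In both cases the equality with $l(\bt_i+\cdots+\bt_j)$ is immediate.

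Next I would handle the double segment case. With the substitution $a=j+1$ and $b=k$ so that $0<a\leq b$, Proposition \ref{character formula}(2) gives
\[
\ch\hat{\Ph}_{[-j-1,k]}=4[j,j-1,\ldots,1,0,0,1,\ldots,k].
\]
By Theorem \ref{module decomposition}(iii) and Definition \ref{segments}(3), $\Ph_{[-j-1,k]}$ is one of two isomorphic simple summands of $\hat{\Ph}_{[-j-1,k]}$, so its character is exactly half that of $\hat{\Ph}_{[-j-1,k]}$, giving $\ch\Ph_{[-j-1,k]}=2[j,j-1,\ldots,0,0,1,\ldots,k]$. Comparing with the already-computed $l(2\bt_0+\cdots+2\bt_j+\bt_{j+1}+\cdots+\bt_k)=[j,j-1,\ldots,0,0,\ldots,k-1,k]$ yields the claimed identity $2\,l(2\bt_0+\cdots+2\bt_j+\bt_{j+1}+\cdots+\bt_k)=\ch\Ph_{[-j-1,k]}$.

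There is really no obstacle here — the two sides were each computed independently, the $q=1$ specialization of the Lyndon word is just the underlying monomial, and the only thing to watch is that the conventions line up correctly. In particular one should check that the costandard (right-to-left) lexicographic ordering fixed in \S\ref{SS:LyndonWords} is what produces the Lyndon word shape $[j,j-1,\ldots,0,0,1,\ldots,k]$ (rather than the standard-order shape $[0,\ldots,j,0,\ldots,k]$ of equation \eqref{E:StdDblSeg}), which is exactly the feature emphasized in the introduction and is what makes the monomials agree with the characters obtained from the explicit $x_i^2$-eigenvalues in Proposition \ref{segment representation}(ii).
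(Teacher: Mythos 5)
The paper states this proposition without an explicit proof, treating it as an immediate comparison of the explicit good Lyndon words computed just above it with the characters from Proposition~\ref{character formula}. Your argument carries out exactly that comparison correctly: you invoke Proposition~\ref{character formula} for $\ch\hat{\Ph}_{[a,b]}$, reduce to $\ch\Ph_{[a,b]}$ via the (at most two-fold) decomposition of Theorem~\ref{module decomposition} together with Definition~\ref{segments}, and match the resulting monomials against $l(\bt_i+\cdots+\bt_j)$ and $l(2\bt_0+\cdots+2\bt_j+\bt_{j+1}+\cdots+\bt_k)$, so the proposal is complete and takes the intended approach.
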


Observe that we may write any good Lyndon word uniquely in the form
$l=[i,i+1,\ldots,j]$ where $i,j\in\Z$ and $0\leq|i|\leq j<r$. For example,
\begin{align}\label{E:GoodLyndonWordConvention}
l(2\bt_0+\cdots+2\bt_j +\bt_{j+1}+\cdots+\bt_k)=[-j-1,\ldots,k].
\end{align}

In the following definition, we mean for $n$ to vary. Given $\ld\in
P_{>0}^{++}$, let
\begin{align}\label{E:Bdld}
\mathcal{B}_d(\ld)=\{\,\mu\in P^+[\ld] \mid \ld-\mu\in \Ppos(d)\mbox{ and
}|\mu_i|<\ld_i\mbox{ for all }i\,\}
\end{align}
and let
\begin{align}\label{E:Bd}
\mathcal{B}_d=\{\,(\ld,\mu) \mid \ld\in P_{>0}^{++}\mbox{ and
}\mu\in\mathcal{B}_d(\ld)\,\}.
\end{align}

Let $\mathcal{G}_d=\mathcal{G}\cap\F_d$ be the set of good words of principal
degree $d$. We have

\begin{lem}\label{L:BdGd} The map
$(\ld,\mu)\mapsto[\ld-\mu]=[\mu_1,\ldots,\ld_1-1,\ldots,\mu_n,\ldots,\ld_n-1]$
induces a bijection $\mathcal{B}_d\rightarrow\mathcal{G}_d$.
\end{lem}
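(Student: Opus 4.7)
The plan is to unpack the definition of $\mathcal{B}_d$ so that it matches, term by term, the canonical factorization of a good word of principal degree $d$ as a nonincreasing product of good Lyndon words developed in Section \ref{SS:LyndonWords}.

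First I would use the explicit classification of good Lyndon words for $\b_\infty$ from the paragraphs preceding the lemma. Every such word can be written uniquely in the ``segment'' shorthand $[\mu,\mu+1,\ldots,\ld-1]$ with $\mu,\ld\in\Z$ and $|\mu|<\ld$: it is the literal increasing sequence when $\mu\geq 0$, and the double-segment $[\ld-\mu-1,\ldots,1,0,0,1,\ldots,\ld-1]$ when $\mu<0$, as in \eqref{E:GoodLyndonWordConvention}. In either case the rightmost letter is $\ld-1$ and the length is $\ld-\mu$. Thus for $(\ld,\mu)\in\mathcal{B}_d$ each block $l_i:=[\mu_i,\ldots,\ld_i-1]$ is a good Lyndon word, the concatenation has principal degree $\sum_i(\ld_i-\mu_i)=d$, and the last letter of $l_i$ is $\ld_i-1$.

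The main step is to check that the conditions $\ld_1\geq\cdots\geq\ld_n\geq 1$ and $\mu\in P^+[\ld]$ defining $\mathcal{B}_d$ are equivalent to $l_1\geq l_2\geq\cdots\geq l_n$ in the costandard lexicographic order. Since the last letter of $l_i$ is $\ld_i-1$, the strict inequality $\ld_i>\ld_{i+1}$ automatically gives $l_i>l_{i+1}$ and is forced by it. In the tie case $\ld_i=\ld_{i+1}$, reading right-to-left both words first decrease from $\ld_i-1$, then (in the double-segment case) pass through a doubled $0$ and climb back up. A short case analysis on the signs of $\mu_i,\mu_{i+1}$—using that the empty letter exceeds every letter, cf.\ \eqref{E:rightfactors}—shows that $l_i\geq l_{i+1}$ is equivalent to $\mu_i\geq\mu_{i+1}$. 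I expect the only delicate subcase, and the main technical point of the argument, to be $\mu_i<0\leq\mu_{i+1}$: there $l_{i+1}$ terminates inside its decreasing part while $l_i$ continues through its second $0$, so $l_{i+1}>l_i$, excluding this configuration in agreement with $\mu\in P^+[\ld]$.

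With these two steps in place, bijectivity is immediate from the uniqueness of the canonical Lyndon factorization (Theorem \ref{T:Lyndonbasis}). Injectivity: each $l_i$ determines $(\mu_i,\ld_i)$ uniquely. Surjectivity: any $g\in\mathcal{G}_d$ factors as $g=l_1\cdots l_n$ with $l_1\geq\cdots\geq l_n$ good Lyndon, and reading off $(\mu_i,\ld_i)$ from each $l_i$ produces $(\ld,\mu)$ with $\ld\in P_{>0}^{++}$ (dominance from the previous paragraph, $\ld_n\geq 1$ because $l_n$ is nonempty, typicality automatic once all $\ld_i>0$), $|\mu_i|<\ld_i$ by construction, $\ld-\mu\in P_{\geq 0}(d)$ because $|l_i|=\ld_i-\mu_i\geq 1$ sums to $d$, and $\mu\in P^+[\ld]$ again from the previous paragraph.
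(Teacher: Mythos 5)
Your proof is correct and takes essentially the same approach as the paper's: it reduces the goodness condition to comparing adjacent Lyndon blocks $l_i \geq l_{i+1}$ in the costandard order via \eqref{E:rightfactors}, and deduces bijectivity from the uniqueness of the nonincreasing Lyndon factorization. The paper's own proof is a one-sentence assertion that the defining conditions of $\mathcal{B}_d$ together with the ordering convention force $[\ld-\mu]\in\mathcal{G}_d$; your case analysis on the signs of $\mu_i,\mu_{i+1}$ (in particular the subcase $\mu_i<0\leq\mu_{i+1}$) supplies exactly the detail the paper leaves implicit.
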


\begin{pff} By \eqref{E:GoodLyndonWordConvention}, $[\ld-\mu]$ is a
well-defined element of $\F_d$. Since $\ld\in P^{++}_{>0}$ and $\mu\in
P^+[\ld]$, the ordering convention and \eqref{E:rightfactors} imply that
$[\ld-\mu]\in\mathcal{G}_d$. This map is clearly bijective.
\end{pff}

\subsection{PBW and Canonical Bases}\label{SS:PBWandCanonical}
The lexicographic ordering on $\mathcal{GL}$ induces a total ordering on
$\Dt^+$, which is \emph{convex}, meaning that if $\gm_1,\gm_2\in\Dt^+$ with
$\gm_1<\gm_2$, and $\gm=\gm_1+\gm_2\in\Dt^+$, then $\gm_1<\gm<\gm_2$ (cf.
\cite{ro3,lec}).

Indeed, assume $\gm_1,\gm_2,\gm=\gm_1+\gm_2\in\Dt^+$ and $\gm_1<\gm_2$.
Proposition \ref{P:LyndonCoveringProperty} and \eqref{E:rightfactors} imply
that $l(\gm)\leq l(\gm_1)l(\gm_2)<l(\gm_2)$. If $l(\gm)=l(\gm_1)l(\gm_2)$, then
the definition of Lyndon words implies $l(\gm_1)<l(\gm)$. We are therefore left
to prove that $l(\gm_1)<l(\gm)$ even if $l(\gm)<l(\gm_1)l(\gm_2)$. This cannot
happen if $\gm=\bt_i+\cdots+\bt_j$. In the case $\gm=2\bt_0+\cdots+2\bt_j
+\bt_{j+1}+\cdots+\bt_k$, the possibilities for $\gm_1<\gm_2$ are
$\gm_1=\bt_i+\cdots+\bt_j$ and
$\gm_2=2\bt_0+\cdots+2\bt_{i-1}+\bt_i+\cdots+\bt_k$ for $0\leq i\leq j$. In any
of these cases, $[i,\ldots,j]<[j,\ldots,0,0,\ldots,k]$. That is,
$l(\gm_1)<l(\gm)<l(\gm_2)$.

Each convex ordering, $\gm_1<\cdots<\gm_N$, on $\Dt^+$ arises from a unique
decomposition $w_0=s_{i_1}s_{i_2}\cdots s_{i_N}$ of the longest element of the
Weyl group of type $B_r$ via
\[
\gm_1=\bt_{i_1},\;\gm_2=s_{i_1}\bt_{i_2},\;\cdots,\gm_N=s_{i_1}\cdots s_{i_{N-1}}\bt_{i_N}.
\]
Lusztig associates to this data a PBW basis of $\U_\A$ denoted
\[
E^{(a_1)}(\gm_1)\cdots E^{(a_n)}(\gm_N),\;\;\;(a_1,\ldots,a_N)\in\Z_{\geq0}^N.
\]
Leclerc \cite[$\S4.5$]{lec} describes the image in $\W$ of this basis for the
convex Lyndon ordering. We use the same braid group action as Leclerc and the results of \cite[$\S4.5,4.6$]{lec} carry over, making changes in the same manner indicated in the previous section. We describe the relevant facts below.

For $g=l(\gm_1)^{a_1}\cdots l(\gm_k)^{a_k}$, where $\gm_1>\cdots>\gm_k$ and
$a_1,\ldots,a_k\in\Z_{>0}$ set
\[
E_g=\Psi(E^{(a_k)}(\gm_k)\cdots E^{(a_1)}(\gm_1))\in\W_\A
\]
and let $E_g^*\in\W_\A^*$ be the image of $(E^{(a_k)}(\gm_k)\cdots
E^{(a_1)}(\gm_1))^*\in\U_\A^*$. Observe that the order of the factors in the
definition of $E_g$ above are increasing with respect to the Lyndon ordering.
Leclerc shows that if $\gm\in\Dt^+$, then
\begin{align}\label{E:Proportional}
\kp_{l(\gm)}E_{l(\gm)}=r_{l(\gm)},
\end{align}
For some $\kp_{l(\gm)}\in\Q(q)$, \cite[Theorem 28]{lec} (the proof of this theorem in our case is
obtained by reversing all the inequalities and using the standard factorization
as opposed to the costandard factorization).

More generally, let $f\mapsto f^t$ be the linear map defined by
$[i_1,\ldots,i_k]^t=[i_k,\ldots,i_1]$ and $(x*y)^t=y^t*x^t$. Then, $E_g$ is
proportional to $\overline{r_g^t}$ (cf. \cite[$\S4.6$, $\S5.5.2-5.5.3$]{lec}).

As in \cite[$\S5.5.3$]{lec}, we see that there exists an explicit $c_g\in\Z$
such that
\[
E_g^*=q^{c_g} (E_{l_m}^*)*\cdots*(E_{l_1}^*)
\]
if $g=l_1\cdots l_m$ with $l_1>\cdots>l_m$. Using \eqref{E:qShuffle} we deduce
that
\[
E_g^*=q^{C_g}(E_{l_1}^*)\bar{*}\cdots\bar{*}(E_{l_m}^*),
\]
where $C_g=c_g-\sum_{1\leq i<j\leq m}(\bt_i,\bt_j)$. In particular,
\begin{align}\label{E:Eshuffle}
\underline{E_g^*}=\underline{(E_{l_1}^*)*\cdots*(E_{l_m}^*)}.
\end{align}

Using the bar involution (Definition \ref{D:BarInv}), Leclerc constructs the
canonical basis, $\{b_g \mid g\in\mathcal{G}\}$ for $\W_\A$ via the PBW basis
$\{E_g \mid g\in\mathcal{G}\}$. It has the form
\[
b_g=E_g+\sum_{\substack{h\in\mathcal{G}\\h<g}}\chi_{gh}E_h.
\]
The dual canonical basis then has the form
\[
b_g^*=E_g^*+\sum_{\substack{h\in\mathcal{G}\\h>g}}\chi_{gh}^*E_h^*.
\]
In particular, for good Lyndon words, \cite[Corollary 41]{lec},
$b^*_{l}=E^*_{l}$ for every $l\in\mathcal{GL}$. As in \cite[Lemma 8.2]{lec}, we
see that $b^*_{[i,\ldots,j]}=[i,\ldots,j]$ for $0\leq i<j<r$. We now prove

\begin{lem}\label{L:DblSeg} For $0\leq j<k<r$, one has
\[
b^*_{[j,\ldots,0,0,\ldots,k]}=(2)_0[j,\ldots,0,0,\ldots,k].
\]
\end{lem}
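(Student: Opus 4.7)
The proof adapts Leclerc's strategy in \cite[Lemma 8.2]{lec} to the costandard ordering used in this paper. Since $b^*_l = E^*_l$ for good Lyndon words by \cite[Corollary 41]{lec}, the task reduces to identifying $E^*_l$ in the shuffle algebra $\W$ for $l = [j, j-1, \ldots, 0, 0, 1, \ldots, k]$. The plan is to first compute $E_l$ by computing the Lyndon bracketing $\langle l \rangle$ using the iterated standard factorization of $l$, and then pass to the dual $E^*_l$ using Kashiwara's bilinear form.

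First I would compute the standard factorization of $l$. Using the characterization of Lyndon words in the costandard ordering (\ref{E:rightfactors}), one checks that the longest proper Lyndon ``left factor'' of $l$ (in the reading convention of the paper, i.e., the longest proper suffix that is Lyndon) is $l_2 = [j-1, \ldots, 0, 0, 1, \ldots, k]$ when $j \geq 1$, or $l_2 = [0, 1, \ldots, k]$ when $j = 0$; this gives $l = [j] \cdot l_2$ (or $l = [0]\cdot l_2$). Iterating this factorization down to the smaller double segments and eventually to the segment $[0, 1, \ldots, k]$, one obtains $\langle l \rangle$ recursively via \eqref{E:Lyndonbracketing}. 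Applying $\Xi$ gives $r_l = \Xi(\langle l \rangle) \in \W$, and by \eqref{E:Proportional} this is a scalar multiple of the PBW basis element $E_l$.

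The key computational claim is that $r_l$ is, in fact, a scalar multiple of the \emph{single} word $l = [j, \ldots, 0, 0, \ldots, k] \in \F$; that is, in the expansion of $\Xi(\langle l\rangle)$ all monomials other than $l$ itself cancel. This collapse depends critically on the type-$B$ Cartan data, in particular on the relations $(\beta_0, \beta_0) = 2$ and $(\beta_0, \beta_1) = -2$ together with the effect of the doubled letter $0$ at the center of $l$; these interact with the $q$-brackets in \eqref{E:qbracket} so as to produce the necessary cancellations at each stage of the recursion. Once this is established, the single word $l$ lies in $\W$ and $E_l$ is proportional to it, with the normalizing scalar determined by the triangularity property of Theorem \ref{T:Lyndonbasis}.

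Finally, to pass from $E_l$ to $E^*_l = b^*_l$, I would invoke Kashiwara's pairing $(\cdot, \cdot)_K$ and its description under the embedding $\Psi$. The positive root $\alpha = 2\beta_0 + \cdots + 2\beta_j + \beta_{j+1} + \cdots + \beta_k$ has squared length $(\alpha, \alpha) = 4$, so $q_\alpha = q^2$, and computing $(E_l, E_l)_K$ via the shuffle description of the form produces a factor of $1/((2)_0\cdot \text{units})$; this is ultimately traceable to the divided-power $e_0^{(2)}$ implicit in Lusztig's construction of $E(\alpha)$, where the coefficient of $\beta_0$ in $\alpha$ equals $2$. Normalizing so that $(E_l, E^*_l)_K = 1$ then forces $E^*_l = (2)_0 [j, \ldots, 0, 0, \ldots, k]$. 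The main obstacle in this plan is the cancellation step in the middle paragraph: verifying that $r_l$ reduces to a scalar multiple of the single word requires delicate inductive bookkeeping of nested $q$-factors and exploits features peculiar to type $B$ and the specific structure of the double-segment Lyndon word.
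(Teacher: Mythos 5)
Your high-level plan matches the paper's actual proof: both reduce via $b^*_l=E^*_l$, compute $r_l$ through the Lyndon bracketing, show that $r_l$ collapses to a scalar multiple of the single monomial $[j,\ldots,0,0,\ldots,k]$, and then extract the normalizing factor via \eqref{E:Proportional}. However, you explicitly acknowledge that you have not carried out the collapse of $r_l$ — you call it ``the main obstacle in this plan'' and ``delicate inductive bookkeeping'' — and this is in fact the entire substance of the lemma. The paper proves it by establishing (by induction on $j$ and $k$) the precise identity $r_{[j,\ldots,0,0,\ldots,k]}=(q^2-q^{-2})^{j+k+1}[j,\ldots,0,0,\ldots,k]$, using a sequence of explicit reduction formulas (\eqref{E:DblSegReduction0}--\eqref{E:DblSegReduction3}) that peel letters off the tail of the quantum shuffle one at a time, carefully tracking how the inner product $(\bt_j,\,2\bt_0+\cdots+2\bt_{j-1}+\bt_j+\cdots+\bt_k)=-2$ enters the $q$-bracket. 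Without that computation, the proposal is a restatement of the goal rather than a proof.

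A secondary issue: your account of where the factor $(2)_0$ comes from is not right. You attribute it to a divided power $e_0^{(2)}$ ``implicit in Lusztig's construction of $E(\alpha)$,'' but for a good Lyndon word $l$ the corresponding PBW element is the single root vector $E(\gamma)=E^{(1)}(\gamma)$ — there is no divided power involved, even though $\gamma=2\bt_0+\cdots+2\bt_j+\bt_{j+1}+\cdots+\bt_k$ has coefficient $2$ on the short simple roots. The $(2)_0$ actually emerges from the proportionality constant $\kp_l$ between $r_l$ and $E_l$ (the paper cites Leclerc's Equation (28) for this) combined with the dualization; it is not literally a divided-power factorial. So this part of the argument, as written, would not survive closer inspection, though the numerical answer happens to coincide with your heuristic.
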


\begin{pff}
We prove this by induction on $j$ and $k$ with $j<k$, using \eqref{E:inductiveqshuffle}, \eqref{E:qbracket}, and \eqref{E:Lyndonbracketing} for the computations.

Observe that for $k\geq1$, $r_{[0,1,\ldots,k]}=(q^2-q^{-2})^k[0,1,\ldots,k]$, which can be proved easily by downward induction on $j$, $0\leq j<k$, using \eqref{E:inductiveqshuffle} and
\[
r_{[j,\ldots,k]}=\Xi(\la[j,\ldots,k]\ra)=\Xi([[j],\la[j+1,\ldots,k]]_q)=[j]*r_{[j+1,\ldots,k]}-q^{-2}r_{[j+1,\ldots,k]}*[j].
\]
By \eqref{E:inductiveqshuffle}, we have
\begin{align*}
[0]*[0,1]-[0,1]*[0]&=[0,1,0]+q^2([0]*[0])[1]-([0]*[0])[1]-[0,1,0]\\
&=(q^2-1)([0,0]+q^{-2}[0,0])[1]=(q^2-q^{-2})[0,0,1]
\end{align*}
Therefore, applying \eqref{E:Lyndonbracketing} and the relevant definitions, we deduce that
\begin{align*}
r_{[0,0,1]}&=\Xi(\la[0,0,1]\ra)\\
    &=\Xi([[0],\la[0,1]\ra]_q^2)\\
    &=[0]*r_{[0,1]}-r_{[0,1]}*[0]\\
    &=(q^2-q^{-2})([0]*[0,1]-[0,1]*[0])\\
    &=(q^2-q^{-2})^2[0,0,1]
\end{align*}
Once again, using \eqref{E:inductiveqshuffle}, we deduce that for all $k\geq2$,
\begin{align}\label{E:DblSegReduction0}
[0]*[0,\ldots,k]-[0,\ldots,k]*[0]=([0]*[0,\ldots,k-1]-[0,\ldots,k-1]*[0])[k].
\end{align}
Assume $k\geq2$. Then, $(\bt_0,\bt_0+\cdots+\bt_k)=0$, so iterated applications of \eqref{E:DblSegReduction0} yields
\begin{align*}
r_{[0,0,\ldots,k]}&=[0]*r_{[0,\ldots,k]}-r_{[0,\ldots,k]}*[0]\\ &=(q^2-q^{-2})^k([0]*[0,\ldots,k]-[0,\ldots,k]*[0])\\
    &=(q^2-q^{-2})^k([0]*[0,1]-[0,1]*[0])[2,\ldots, k]\\
    &=(q^2-q^{-2})^{k+1}[0,0,\ldots, k]
\end{align*}

Now, assume that $k\geq 2$, and $0<j<k$. To compute $r_{[j,\ldots,0,0,\ldots,k]}$, we need the following. For $|j-k|>1$,
\begin{align}\label{E:DblSegReduction1}
[j]*[j-1,\ldots,&k]-q^{-2}[j-1,\ldots,k]*[j]\\
    \nonumber&=([j]*[j-1,\ldots,k-1]-q^{-2}[j-1,\ldots,k-1]*[j])[k].
\end{align}
For $j=k-1$,
\begin{align}\label{E:DblSegReduction2}
[j]*[j-1,\ldots,0,&0,\ldots,j+1]-q^{-2}[j-1,\ldots,0,0,\ldots,j+1]*[j]\\
    \nonumber&=(q^2[j]*[j-1,\ldots,0,0,\ldots,j]-q^{-2}[j-1,\ldots,0,0,\ldots,j]*[j])[j+1].
\end{align}
Finally,
\begin{align}\label{E:DblSegReduction3}
q^2[j]*[j-1,\ldots,0,&0,\ldots,j]-q^{-2}[j-1,\ldots,0,0,\ldots,j]*[j]\\
    \nonumber&=([j]*[j-1,\ldots,0,0,\ldots,j-2]-q^{-2}[j-1,\ldots,0,0,\ldots,j-2]*[j])[j,j+1].
\end{align}
Indeed, \eqref{E:DblSegReduction1} and \eqref{E:DblSegReduction2} are straightforward applications of \eqref{E:inductiveqshuffle}. Equation \eqref{E:DblSegReduction3} involves a little more calculation:
\begin{align*}
q^2[j]*[j-1,\ldots,0,&0,\ldots,j]-q^{-2}[j-1,\ldots,0,0,\ldots,j]*[j]\\
=&q^2[j-1,\ldots,0,0,\ldots,j,j]+q^{-2}([j]*[j-1,\ldots,0,0,\ldots,j-1]\\
    &-[j-1,\ldots,0,0,\ldots,j-1]*[j])[j]-q^{-2}[j-1,\ldots,0,0,\ldots,j,j]\\
    =&(q^2-q^{-2})[j-1,\ldots,0,0,\ldots,j,j]+q^{-2}([j-1,\ldots,0,0,\ldots,j]\\&+q^2([j]*[j-1,\ldots,0,0,\ldots,j-2])[j-1]
    -([j-1,\ldots,0,0,\ldots,j-2]*[j])[j-1]\\&-q^4[j-1,\ldots,0,0,\ldots,j])[j]\\
    =&([j]*[j-1,\ldots,0,0,\ldots,j-2]-q^{-2}[j-1,\ldots,0,0,\ldots,j-2]*[j])[j,j+1],
\end{align*}
Note that \eqref{E:DblSegReduction1} holds for both $[j-1,j,\ldots,k]$ and $[j-1,\ldots,0,0,\ldots,k]$.

Now, assume that we have shown that $r_{[j-1,\ldots,0,0,\ldots,k]}=(q^2-q^{-2})^{j+k}[j-1,\ldots,0,0,\ldots,k]$. Then, since $(\bt_j,2\bt_0+\cdots+2\bt_{j-1}+\bt_j+\cdots+\bt_k)=-2$,
\begin{align*}
r_{[j,\ldots,0,0,\ldots,k]}=&[j]*r_{[j-1,\ldots,0,0,\ldots,k]}-r_{[j-1,\ldots,0,0,\ldots,k]}*[j]\\
    =&(q^2-q^{-2})^{j+k}[j]*[j-1,\ldots,0,0,\ldots,k]-q^{-2}[j-1,\ldots,0,0,\ldots,k]*[j]\\
    =&(q^2-q^{-2})^{j+k}([j]*[j-1,\ldots,0,0,\ldots,j+1]\\&-q^{-2}[j-1,\ldots,0,0,\ldots,j+1]*[j])[j+2,\ldots,k]
    \;\;\;\;\;\;\;\;\;\;\;\;\;\;\;\;\;\;\;\;\;\;\;\;\;\;\;\;\;\mbox{by \eqref{E:DblSegReduction1}}\\
    =&(q^2-q^{-2})^{j+k}(q^2[j]*[j-1,\ldots,0,0,\ldots,j]\\&-q^{-2}[j-1,\ldots,0,0,\ldots,j]*[j])[j+1,\ldots,k]
    \;\;\;\;\;\;\;\;\;\;\;\;\;\;\;\;\;\;\;\;\;\;\;\;\;\;\;\;\;\;\;\;\;\mbox{by \eqref{E:DblSegReduction2}}\\
    =&(q^2-q^{-2})^{j+k}([j]*[j-1,\ldots,0,0,\ldots,j-2]\\&-q^{-2}[j-1,\ldots,0,0,\ldots,j-2]*[j])[j,\ldots,k]
    \,\;\;\;\;\;\;\;\;\;\;\;\;\;\;\;\;\;\;\;\;\;\;\;\;\;\;\;\;\;\;\;\;\mbox{by \eqref{E:DblSegReduction3}}\\
    =&(q^2-q^{-2})^{j+k}([j]*[j-1]-q^{-2}[j-1]*[j])[j-2,\ldots,0,0,\ldots,k]\;\;\;\;\mbox{by \eqref{E:DblSegReduction1}}\\
    =&(q^2-q^{-2})^{j+k+1}[j,\ldots,0,0,\ldots,k].
\end{align*}

Finally, the result follows after computing the normalizing coefficient \eqref{E:Proportional} using \cite[Equation (28)]{lec}. We leave the details to the reader.
\end{pff}

\subsection{}In section we give a representation theoretic interpretation of the good
Lyndon words associated to the root vectors
$2\bt_0+\cdots+2\bt_j+\bt_{j+1}+\cdots+\bt_k$ ($0\leq j<k<r$) which appear in
\cite[Lemma 53]{lec}. The corresponding dual canonical basis vectors are given by the formula
\[
[0]\cdot([1,\ldots,j]*[0,\ldots,k]).
\]

\begin{lem} Let $0\leq a<b$, $d=b+a+2$, $\ld=(b+1,a+1)$ and $\af=(1,-1)$.
Then, for $1\leq k\leq a$,
\[
\ch\L(\ld,-k\af)=2\underline{[k-1]\cdot([k-2,k-3,\ldots,1,0,0,1,\ldots,b]*[k,\ldots,a])}
\]
where if $ k=1$, we interpret
\[[k-2,k-3,\ldots,1,0,0,1,\ldots,b]=[0,1,\ldots, b]
\]
\end{lem}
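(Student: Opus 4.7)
The plan is to apply the exact functor $F_\ld$ with $\ld=(b+1,a+1)\in\Pt$ to the simple module $L(-k\af)=L(-k,k)$ in $\mathcal{O}(\fq(2))$ and extract the character of $\L(\ld,-k\af)$ from the result. Since $n=2$ and $\gm_0(-k\af)=0$, we have $\varpi(-k\af)=1$, so by Theorem~\ref{T:SimplesToSimples}, if $F_\ld L(-k,k)$ is nonzero then $F_\ld L(-k,k)\cong\L(\ld,-k\af)$ as a single copy, and the desired character can be read off. This directly generalizes the $k=1$ case noted in the introduction.

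To compute $\ch F_\ld L(-k,k)$, I would apply the exact functor $F_\ld$ to the short exact sequence $0\to R\to M(-k,k)\to L(-k,k)\to 0$, where $R$ is the maximal submodule of the little Verma. By Corollary~\ref{C:Image of the little verma}, $F_\ld M(-k,k)\cong\M(\ld,-k\af)^{\oplus 2}$, whose character, by Lemma~\ref{L:ShuffleLemma}, equals $4(S_1*S_2)$ with $S_1=[k-1,k-2,\ldots,0,0,1,\ldots,b]$ (so that $2S_1=\ch\Ph_{[-k,b]}$) and $S_2=[k,k+1,\ldots,a]=\ch\Ph_{[k,a]}$. The combinatorial shuffle identity
\[
S_1*S_2=[k-1]\cdot(S_1'*S_2)+[k]\cdot(S_1*S_2'),
\]
with $S_1'=[k-2,\ldots,0,0,1,\ldots,b]$ and $S_2'=[k+1,\ldots,a]$, splits $\ch F_\ld M(-k,k)$ into two pieces, and the claim reduces to showing that $\ch F_\ld R=2[k-1]\cdot(S_1'*S_2)+4[k]\cdot(S_1*S_2')$.

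To carry this out, one analyzes the structure of the maximal submodule $R\subset M(-k,k)$ using the atypicality of the weight $(-k,k)$ (since $\mu_1+\mu_2=0$) and Gorelik's analysis of the Shapovalov form on $\fq(n)$-Verma modules \cite{g}. For $\fq(2)$ at weight $(-k,k)$, the submodule $R$ is generated by specific primitive vectors, and its image under $F_\ld$ can be computed by applying Corollary~\ref{C:Image of the little verma} to the corresponding sub-Verma modules and using the Shuffle Lemma to identify the resulting characters. Non-vanishing of $F_\ld L(-k,k)$ is verified by exhibiting a nonzero primitive vector in the $\ld$-weight space of $L(-k,k)\otimes V^{\otimes d}$ via the formulas in Lemma~\ref{X action}, which show that such a vector has $x_1^2$ acting by $q(k-1)$, consistent with the leading $[k-1]$ in the claimed character.

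The main obstacle is the precise identification of primitive vectors in $M(-k,k)$ generating $R$ at the atypical weight $(-k,k)$ for general $k\geq 1$; atypical Verma modules for $\fq(n)$ have intricate submodule structure involving odd singular vectors, and managing this requires either a direct Shapovalov-form computation or invocation of Brundan's character formulas \cite{b}. An alternative strategy that sidesteps these Lie-theoretic subtleties is to construct directly, via the intertwiner operators $\phi_i$ of \eqref{E:intertwiner} acting on $\M(\ld,-k\af)$, a surjection onto a module whose character is the claimed right-hand side, by exhibiting explicit relations that kill precisely the shuffled terms of $\ch\M(\ld,-k\af)$ whose first weight position equals $k$ rather than $k-1$.
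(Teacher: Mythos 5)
Your overall strategy matches the paper's: apply the exact functor $F_\ld$ to a short exact sequence involving the little Verma $M(-k\af)$, use $F_\ld M(-k\af)\cong\M(\ld,-k\af)^{\oplus 2}$, and whittle down via the classical shuffle recursion $S_1*S_2=[k-1]\cdot(S_1'*S_2)+[k]\cdot(S_1*S_2')$. You have correctly located the crux of the argument, but you have not actually closed it.

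The gap you flag as ``the main obstacle'' is the gap. The paper resolves it by invoking a specific result of Gorelik, namely \cite[Proposition 11.4]{g}: for each $k\geq 0$ there is a short exact sequence
\[
0\to L(-(k+1)\af)\to M(-k\af)\to L(-k\af)\to 0,
\]
so the radical of $M(-k\af)$ is itself a \emph{simple} highest-weight module, not some opaquely structured submodule generated by odd singular vectors. This turns the problem into a clean \emph{downward} induction in $k$: at $k=a+1$ the term $\M(\ld,-(a+2)\af)$ vanishes (since $\ld+(a+2)\af$ leaves $\ld-\Ppos(d)$), which forces $F_\ld L(-(a+1)\af)\cong 2\,\M(\ld,-(a+1)\af)\cong 2\,\Phi_{[-a-1,b]}$ and seeds the recursion; the inductive step then is precisely your shuffle identity. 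Your proposal wanders toward ``a direct Shapovalov-form computation or invocation of Brundan's character formulas,'' or an unspecified intertwiner argument — none of which is carried out — when the needed fact is a single quotable proposition from the reference you already cite. Without it your proof does not go through.

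One smaller point: your deduction that $F_\ld L(-k\af)$, if nonzero, is a \emph{single} copy of $\L(\ld,-k\af)$ (via $\varpi(-k\af)=1$ in Theorem~\ref{T:SimplesToSimples}) is inconsistent with what the short exact sequence computation actually yields. Working from the base case $k=a+1$ one finds $F_\ld L(-k\af)\cong 2\,\L(\ld,-k\af)$, i.e.\ multiplicity $2$, which is the factor that makes the characters come out with the leading $2$ on the right-hand side of the statement. If you rely on $\ell=1$ you will be off by a factor of $2$ throughout; the correct multiplicity has to be read off from the induction itself, not from the a priori bound.
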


\begin{proof} By \cite[Proposition 11.4]{g}, for each $k\in\Z_{\geq0}$, there exists a
short exact sequence
\[
\xymatrix{0\ar[r]&L(-(k+1)\af)\ar[r]&M(-k\af)\ar[r]&L(-k\af)\ar[r]&0}.
\]
For $k\leq a+1$, applying the functor $F_\ld$ yields the exact sequence
\begin{eqnarray}\label{E:ShortExactSeq}
\xymatrix@1{0\ar[r]&F_\ld L(-(k+1)\af)\ar[r]&2\M(\ld,-k\af)\ar[r]&F_\ld
L(-k\af)\ar[r]&0}.
\end{eqnarray}
Therefore,
\[
\ch F_\ld L(-k\af)=4\underline{[k-1,\ldots,1,0,0,1,\ldots,b]*[k,\ldots,a]}-\ch
F_\ld L(-(k+1)\af).
\]
Note that when $k=a+1$, $F_\ld L(-(k+1)\af)=0$ since $\M(\ld,-(a+2)\af)=0$.
Therefore the sequence \eqref{E:ShortExactSeq} implies $F_\ld
L(-k\af)=2\L(\ld,-(a+1)\af)\cong2\M(\ld,-(a+1)\af)\cong 2\Phi_{[-a-1,b]}$, and
\[
\ch\Phi_{[-a-1,b]}=2\underline{[a,a-1,\ldots,1,0,0,1,\ldots,b]}.
\]

We now prove the lemma by downward induction on $k\leq a$. We have
\begin{align*}
\ch F_\ld L(-a\af)=&4\,\underline{[a-1,\ldots,1,0,0,1,\ldots,b]*[a]-4[a,\ldots,1,0,0,1,\ldots,b]}\\
=&4\,\underline{[a-1]\cdot([a-2,\ldots,1,0,0,1,\ldots,b]*[a])}.
\end{align*}
Hence, $F_\ld L(-a\af)=2\L(\ld,-a\af)$ and the lemma holds for $k=a$. Now,
assume $k<a$, $F_\ld L(-(k+1)\af)=2\L(\ld,-(k+1)\af)$, and
\[
\ch\L(\ld,-(k+1)\af)=2\underline{[k]\cdot([k-1,\ldots,1,0,0,1,\ldots,b]*[k+1,\ldots,a])}.
\]
Then,
\begin{align*}
\ch F_\ld L(-k\af)=&4\underline{[k-1,\ldots,1,0,0,1,\ldots,b]*[k,\ldots,a]}-
    4\underline{[k]\cdot([k-1,\ldots,1,0,0,1,\ldots,b]*[k+1,\ldots,a])}\\
    =&4\underline{[k-1]\cdot([k-2,\ldots,1,0,0,1,\ldots,b]*[k,\ldots,a])}.
\end{align*}
Hence, $F_\ld L(-k\af)\neq 0$, so $F_\ld L(-k\af)=2\L(\ld,-k\af)$ and the lemma
holds.
\end{proof}

\begin{cor}\label{C:LecDblSeg} Let $0\leq a<b$, $d=b+a+2$, $\ld=(b+1,a+1)$ and
$\mu=-\af=(-1,1)$. Then,
\[
\ch\L(\ld,-\af)=2\,\underline{[0]\cdot[0,\ldots,b]*[1,\ldots,a]}.
\]
\end{cor}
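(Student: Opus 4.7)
The plan is to derive this as the $k=1$ specialization of the preceding lemma. Setting $k=1$ in the identity
\[
\ch\L(\ld,-k\af)=2\,\underline{[k-1]\cdot([k-2,\ldots,1,0,0,1,\ldots,b]*[k,\ldots,a])}
\]
and invoking the stated convention that $[k-2,\ldots,1,0,0,1,\ldots,b]$ is read as $[0,1,\ldots,b]$ in this boundary case produces exactly the right-hand side of the corollary. Since the lemma's hypothesis $1\leq k\leq a$ is satisfied at $k=1$ whenever $a\geq 1$, this covers the bulk of the statement with no further work.

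The only case not addressed by that specialization is $a=0$, which needs a brief direct argument. Here $\mu_2=1=\ld_2$, so one of the segment factors collapses and parabolic induction reduces $\M(\ld,-\af)$ to the single segment representation $\Phi_{[-1,b]}$, which is already simple of type \texttt{M} by Theorem \ref{module decomposition}(iii). Hence $\L(\ld,-\af)\cong \Phi_{[-1,b]}$, and halving the character formula of Proposition \ref{character formula}(2) yields $\ch\L(\ld,-\af)=2[0,0,1,\ldots,b]$. On the other side, at $a=0$ the word $[1,\ldots,a]$ is empty so the shuffle $[0,\ldots,b]*[\,]$ collapses to $[0,\ldots,b]$ and the right-hand side of the corollary equals $2\,\underline{[0,0,1,\ldots,b]}$, matching.

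Since the substantive computation---the inductive unraveling of characters through the Gorelik short exact sequence $0\to L(-(k+1)\af)\to M(-k\af)\to L(-k\af)\to 0$ together with the exactness of $F_\ld$---is already carried out inside the lemma, no genuine obstacle remains at the level of the corollary; the only minor subtlety is the bookkeeping for the degenerate $a=0$ case, which is resolved by direct inspection as above.
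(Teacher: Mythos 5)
Your proof is essentially correct and follows the same route the paper implicitly takes: the corollary is just the $k=1$ specialization of the preceding lemma, with the stated boundary convention $[k-2,\ldots,1,0,0,1,\ldots,b]=[0,1,\ldots,b]$ at $k=1$ yielding exactly the claimed formula. The paper gives no explicit proof, so this is the intended reading.

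You are right to flag the $a=0$ case. The corollary allows $0\leq a<b$, but the lemma's constraint $1\leq k\leq a$ is vacuous when $a=0$, so the lemma genuinely says nothing there — a small gap in the paper's presentation. Your patch is correct: at $a=0$ one has $\ld_2-\mu_2=0$, so the second segment factor is $\Phi_{[1,0]}=\Phi_\emptyset=\C$ and $\M(\ld,-\af)\cong\Phi_{[-1,b]}$, which is already simple of type \texttt{M} by Theorem~\ref{module decomposition}(iii) (the hypothesis $0<1\leq b$ holds since $b>a=0$). Halving the character $\ch\hat\Phi_{[-1,b]}=4[0,0,1,\ldots,b]$ from Proposition~\ref{character formula}(2) gives $\ch\L(\ld,-\af)=2[0,0,1,\ldots,b]$, while the right-hand side of the corollary at $a=0$ collapses (with $[1,\ldots,a]$ the empty word) to $2\,\underline{[0]\cdot[0,\ldots,b]}=2[0,0,1,\ldots,b]$, matching. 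So your argument is complete and in fact slightly more careful than the paper itself.
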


\subsection{A Basis for the Grothendieck Group $K(\mbox{Rep} \ASe(d))$}\label{SS:GrothendieckGroup}

\begin{thm}\label{T:GrothendieckBasis1} The set
\[
\{ \left[ \M(\ld,\mu)\right]\mid (\ld,\mu)\in\mathcal{B}_d\}
\]
forms a basis for $K(\Rep\ASe(d))$.
\end{thm}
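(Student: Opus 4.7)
The plan is to apply the injective character map $\ch:K(\Rep\ASe(d))\to K(\A(d)\text{-mod})$ of Lemma \ref{L:independenceofcharacters} and show that the characters of the standard modules $\M(\ld,\mu)$ match the specialized PBW basis elements $\{\underline{E^*_g}\}_{g\in\mathcal{G}_d}$ of the degree $d$ component of $\W^*_\C$ under the bijection $\mathcal{B}_d\leftrightarrow\mathcal{G}_d$ of Lemma \ref{L:BdGd}. Combined with Kleshchev's identification \cite[Theorem 20.5.2]{kl}, which pins down the rank of $K(\Rep\ASe(d))$ as $|\mathcal{G}_d|$, the injectivity of $\ch$ then forces $\{[\M(\ld,\mu)]\mid(\ld,\mu)\in\mathcal{B}_d\}$ to be a basis.

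The first step is to identify the characters of the segment representations with PBW basis elements at $q=1$. For a segment $[a,b]$ with $0\leq a\leq b$, the associated good Lyndon word is $l=[a,\ldots,b]$, and Proposition \ref{character formula} together with Theorem \ref{module decomposition}(i)--(ii) yields $\ch\Ph_{[a,b]}=[a,\ldots,b]=\underline{E^*_l}$ (using that $b^*_l=E^*_l=l$ for good Lyndon $l$ of this type). For a segment $[-a,b]$ with $0<a\leq b$, the associated good Lyndon word is $l=[a-1,\ldots,0,0,\ldots,b]$, and Lemma \ref{L:DblSeg} gives $b^*_l=(2)_0[a-1,\ldots,0,0,\ldots,b]$, which specializes at $q=1$ to $2[a-1,\ldots,0,0,\ldots,b]=\ch\Ph_{[-a,b]}$ by Proposition \ref{character formula} and Theorem \ref{module decomposition}(iii). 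Thus in all cases $\ch\Ph_{[a,b]}=\underline{E^*_l}$.

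The second step is to assemble these building blocks via the Shuffle Lemma (Lemma \ref{L:ShuffleLemma}), giving
\[
\ch\M(\ld,\mu)=\ch\Ph_{[\mu_1,\ld_1-1]}*\cdots*\ch\Ph_{[\mu_n,\ld_n-1]}
=\underline{E^*_{l_1}}*\cdots*\underline{E^*_{l_n}}.
\]
Setting $g=[\ld-\mu]$, Lemma \ref{L:BdGd} identifies $g$ with a good word of principal degree $d$, and one verifies that the factorization $g=l_1\cdots l_n$ coming from the rows of $(\ld,\mu)$ is precisely the canonical non-increasing factorization of $g$ into good Lyndon words (this uses $\ld\in P^{++}_{>0}$ and $\mu\in P^+[\ld]$, together with the explicit shape of good Lyndon words and the right-to-left lexicographic ordering from $\S\ref{SS:LyndonWords}$). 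Applying equation \eqref{E:Eshuffle} then gives $\ch\M(\ld,\mu)=\underline{E^*_g}$. Since $\{\underline{E^*_g}\}_{g\in\mathcal{G}_d}$ spans the degree $d$ component of $\W^*_\C$ (by specializing Theorem \ref{T:Lyndonbasis}) and since this component has rank $|\mathcal{G}_d|=|\mathcal{B}_d|$ by the Kleshchev isomorphism, these characters are linearly independent, and the injectivity of $\ch$ finishes the argument.

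The main obstacle is the precise normalization in the identification of $\ch\Ph_{[a,b]}$ with $\underline{E^*_l}$ for the ``double-segment'' good Lyndon words, which relies on the shuffle-algebra computation of Lemma \ref{L:DblSeg} and a careful comparison against the type-\texttt{M}/\texttt{Q} decomposition in Theorem \ref{module decomposition}. A secondary technical point is checking that the row-ordering forced by $(\ld,\mu)\in\mathcal{B}_d$ produces exactly the canonical non-increasing Lyndon factorization of the associated good word $g=[\ld-\mu]$, so that \eqref{E:Eshuffle} may be applied directly.
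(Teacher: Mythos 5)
Your proof is correct and follows essentially the same route as the paper: identify $\ch\M(\ld,\mu)$ with $\underline{E^*_{[\ld-\mu]}}$ via Lemma~\ref{L:DblSeg}, the Shuffle Lemma, and \eqref{E:Eshuffle}, then invoke the bijection $\mathcal{B}_d\leftrightarrow\mathcal{G}_d$ and injectivity of $\ch$. You spell out a step the paper leaves implicit, namely that linear independence promotes to a basis because Kleshchev's isomorphism $\U_\Z^*(\n^+_\infty)\cong\bigoplus_{d}K(\Rep\ASe(d))$ pins down the rank of $K(\Rep\ASe(d))$ as $|\mathcal{G}_d|$; this is a worthwhile clarification rather than a deviation.
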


\begin{proof} By Lemma \ref{L:DblSeg} and \eqref{E:Eshuffle}, it follows that
$\ch\M(\ld,\mu)=\underline{E^*_{[\ld-\mu]}}$. The result now follows from Lemma
\ref{L:BdGd} and the fact that the character map is injective.
\end{proof}

We will now describe a basis for $K(\Rep\ASe(d))$ in terms of the simple
modules $\L(\ld,\mu)$.

\begin{prp}\label{P:StandardSegmentForm} Let $b\geq0$, $\ld=(b+1,b+1)$
and $\af=(1,-1)$. Then,
\[
\Phi_{[-b-1,b]}\cong\L(\ld,b\af).
\]
\end{prp}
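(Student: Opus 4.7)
The plan is to combine the uniqueness of simple quotients from Theorem~\ref{T:MaxSubmod} with Frobenius reciprocity, reducing the proof to a weight-matching calculation together with the construction of $\Phi_{[-b-1,b]}$ as a simple quotient of $\M(\lambda,b\alpha)$.

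First I would check that the hypotheses of Theorem~\ref{T:MaxSubmod} are satisfied: $\lambda=(b+1,b+1)\in\Pt$, $b\alpha=(b,-b)\in P^+[\lambda]$, and $\lambda-b\alpha=(1,2b+1)\in\Ppos(2b+2)$, so $\M(\lambda,b\alpha)$ has a unique simple quotient $\L(\lambda,b\alpha)$. The key observation is then that the $\P_d[x^2]$-weight of the cyclic generator $\va_{\lambda,b\alpha}\in\M(\lambda,b\alpha)=\ind\Phi_{[b,b]}\circledast\Phi_{[-b,b]}$ is
\[
\bigl(q(b),\,q(-b),\,q(-b+1),\,\ldots,\,q(b)\bigr),
\]
and this coincides with the weight of the cyclic generator of $\Phi_{[-b-1,b]}$ (obtained from Proposition~\ref{segment representation}(ii) with $a=-b-1$), since $q(-b-1)=b(b+1)=q(b)$.

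By Frobenius reciprocity, constructing a nonzero $\ASe(d)$-homomorphism $\M(\lambda,b\alpha)\to\Phi_{[-b-1,b]}$ is equivalent to constructing a nonzero $\ASe(1)\otimes\ASe(2b+1)$-homomorphism $\Phi_{[b,b]}\circledast\Phi_{[-b,b]}\to\res^{d}_{(1,2b+1)}\Phi_{[-b-1,b]}$. The natural candidate sends the cyclic vector of the source to $w:=-(1+\sqrt{-1}\,c_{b+1}c_{b+2})\,X_{\{b+2\}}.\hat\va_{[-b-1,b]}$, with the $\kappa_i$ as in Definition~\ref{X} for $a=-b-1$; this is the direct analogue of the generator of $\Phi_{[-a,b]}^+$ in Theorem~\ref{module decomposition}(iii), extended to the boundary case $a=b+1$. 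The verification that $x_1$ acts on $w$ by $+\sqrt{q(b)}$, and that $\P_{2b+1}[x]\otimes\Se(2b+1)$ sitting on positions $2,\ldots,2b+2$ acts on $w$ exactly as prescribed by the cyclic generator of $\Phi_{[-b,b]}$, is a direct computation using Lemmas~\ref{A submodule} and~\ref{s_i action}; the content shift from $[-b-1,b]$ to $[-b,b]$ is precisely compensated by stripping off the first coordinate.

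The main obstacle is the extension of Theorem~\ref{module decomposition}(iii) to the case $a=b+1$, needed both to \emph{define} $\Phi_{[-b-1,b]}$ and to guarantee that the target of our map is simple. This extension follows the argument of Theorem~\ref{module decomposition}(iii) verbatim, with the only additional bookkeeping being the vanishing of $x_{b+1}$ and $x_{b+2}$ on $\hat\va_{[-b-1,b]}$, exactly as in \eqref{nonzero 2}. Once these pieces are assembled, simplicity of $\Phi_{[-b-1,b]}$ forces the constructed homomorphism to be surjective, and uniqueness of the simple quotient (Theorem~\ref{T:MaxSubmod}) yields the identification $\Phi_{[-b-1,b]}\cong\L(\lambda,b\alpha)$.
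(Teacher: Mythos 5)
Your proposal is correct and follows essentially the same route as the paper's (very terse) proof: the paper simply asserts the existence of a surjection $\M(\ld,b\af)\to\Phi_{[-b-1,b]}$ and invokes simplicity of $\Phi_{[-b-1,b]}$ together with Theorem~\ref{T:MaxSubmod}, which is exactly the skeleton you flesh out via Frobenius reciprocity and the weight calculation. Your observation that $\Phi_{[-b-1,b]}$ sits at the boundary $a=b+1$ of Definition~\ref{segments}(3) and Theorem~\ref{module decomposition}(iii), so that simplicity requires a (routine) extension of that construction, is a genuine point that the paper glosses over.
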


\begin{proof} There is a surjective homomorphism $\M(\ld,b\af)\to\Phi_{[-b-1,b]}$. The result follows since
$\Phi_{[-b-1,b]}$ is simple.
\end{proof}

\begin{cor}\label{C:StandardizingWords} Assume that $\ld\in\P_{>0}^{++}$,
$\mu\in P^+[\ld]$, $\ld-\mu\in\Ppos(d)$, and $|\mu_i|\leq\ld_i$ for all $i$.
Then, there exists $(\eta,\nu)\in\mathcal{B}_d$ such that
\[
\L(\ld,\mu)\cong\L(\eta,\nu),
\]
and $[\ld-\mu]\leq[\eta-\nu]$.
\end{cor}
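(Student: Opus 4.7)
My plan is to proceed by induction on $m := |\{i : \mu_i = \ld_i\}|$, the number of coordinates at which the hypothesis $|\mu_i|\leq\ld_i$ saturates in the positive direction. I should note up front that the negative-saturated case $\mu_i=-\ld_i$ cannot occur under the hypotheses of interest: the segment representation $\Ph_{[-\ld_i,\ld_i-1]}^+$ is only defined for $0<a\leq b$ in the notation $[-a,b]$, which would here force $\ld_i\leq\ld_i-1$. Hence the only way $(\ld,\mu)$ can violate the strict condition $|\mu_i|<\ld_i$ defining $\mathcal{B}_d$ is when $\mu_i=\ld_i$ for some $i$. The base case $m=0$ is immediate: take $(\eta,\nu)=(\ld,\mu)$.

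For the inductive step, I would fix an index $i$ with $\mu_i=\ld_i$ and let $(\ld',\mu')\in\Z^{n-1}\times\Z^{n-1}$ be obtained by deleting the $i$th coordinate from each of $\ld$ and $\mu$. A direct check confirms that $\ld'\in P_{>0}^{++}$, $\mu'\in P^+[\ld']$, $\ld'-\mu'\in\Ppos(d)$, $|\mu'_j|\leq\ld'_j$ for all $j$, and $|\{j:\mu'_j=\ld'_j\}|=m-1$. Because $d_i=\ld_i-\mu_i=0$, the $i$th segment $[\mu_i,\ld_i-1]$ is empty, so in the definition \eqref{E:Mdef} of the standard module the $i$th tensorand is $\Ph_{[\mu_i,\ld_i-1]}=\Ph_\emptyset=\C$ and the $i$th parabolic factor is $\ASe(0)=\C$; both drop out harmlessly from the induced product, yielding $\M(\ld,\mu)\cong\M(\ld',\mu')$ as $\ASe(d)$-modules. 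Passing to unique simple quotients via Theorem \ref{T:MaxSubmod} gives $\L(\ld,\mu)\cong\L(\ld',\mu')$. At the combinatorial level, the word $[\ld-\mu]$ is the concatenation of sub-words $[\mu_j,\ldots,\ld_j-1]$ whose $i$th entry is empty, so $[\ld-\mu]=[\ld'-\mu']$ verbatim as elements of $\F$.

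Invoking the inductive hypothesis on $(\ld',\mu')$ then produces $(\eta,\nu)\in\mathcal{B}_d$ with $\L(\ld',\mu')\cong\L(\eta,\nu)$ and $[\ld'-\mu']\leq[\eta-\nu]$. Composing everything yields $\L(\ld,\mu)\cong\L(\eta,\nu)$ together with $[\ld-\mu]=[\ld'-\mu']\leq[\eta-\nu]$, as required. The whole argument is essentially bookkeeping; the only point that will require any care is the isomorphism $\M(\ld,\mu)\cong\M(\ld',\mu')$, but this follows tautologically from transitivity of parabolic induction together with the observation that inducing through the trivial factor $\ASe(0)\cong\C$ and tensoring with the trivial supermodule $\Ph_\emptyset$ are identity operations.
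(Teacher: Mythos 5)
Your dismissal of the negative-saturated case $\mu_i=-\ld_i$ is where the argument breaks down, and it is not a minor simplification: that case is precisely where the real content of the corollary lies. The hypotheses $\ld\in P_{>0}^{++}$, $\mu\in P^{+}[\ld]$, $\ld-\mu\in\Ppos(d)$ and $|\mu_i|\leq\ld_i$ are perfectly compatible with $\mu_i=-\ld_i$; for instance $\ld=(2,1)$, $\mu=(-2,-1)$ satisfies all of them. The fact that Definition~\ref{segments}(3) only constructs $\Ph_{[-a,b]}$ for $0<a\leq b$ does not mean the boundary case cannot arise; the paper handles the boundary segment $\Ph_{[-b-1,b]}$ (i.e.\ $a=b+1$) through Proposition~\ref{P:StandardSegmentForm}, which identifies it with the simple module $\L\bigl((b+1,b+1),(b,-b)\bigr)$. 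Reasoning from ``the notation is not in Definition~\ref{segments}'' to ``the case cannot occur'' inverts the logic: the hypotheses are given, so $\L(\ld,\mu)$ is meant to make sense for such $(\ld,\mu)$, and you must produce the reparametrization.

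Concretely, your base case $m=0$ already fails: after deleting the coordinates with $\mu_i=\ld_i$, you may still have $\mu'_j=-\ld'_j$ for some $j$, and then $(\ld',\mu')\notin\mathcal{B}_d$ because the strict inequality $|\mu'_j|<\ld'_j$ is violated. The paper's proof therefore inducts on a different quantity, namely $N(\ld,\mu)=|\{i:\mu_i=-\ld_i\}|$, after first disposing of the $\mu_i=\ld_i$ coordinates exactly as you do. The inductive step is the part you are missing: pick the smallest index $j$ with $\mu_j=-\ld_j$, and ``split'' it by setting $\ld^{(1)}=(\ld_1,\dots,\ld_j,\ld_j,\dots,\ld_n)$ and $\mu^{(1)}=(\mu_1,\dots,\ld_j-1,\mu_j+1,\dots,\mu_n)$. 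Using Proposition~\ref{P:StandardSegmentForm} and transitivity of induction, there is a surjection $\M(\ld^{(1)},\mu^{(1)})\twoheadrightarrow\L(\ld,\mu)$, forcing $\L(\ld^{(1)},\mu^{(1)})\cong\L(\ld,\mu)$ by Theorem~\ref{T:MaxSubmod}; one checks directly that $[\ld-\mu]\leq[\ld^{(1)}-\mu^{(1)}]$ and that $N$ strictly decreases. The deletion step for $\mu_i=\ld_i$ that you carried out is correct and matches the paper's opening remark, but it is the easy preprocessing, not the heart of the argument.
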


\begin{proof} First, we may assume $\mu_i<\ld_i$ for all $i$, since
the terms for which $\ld_i=\mu_i$ do not contribute to $\L(\ld,\mu)$. Proceed
by induction on $N(\ld,\mu)=|\{i=1,\ldots,n \mid \mu_i=-\ld_i\}|$. If
$N(\ld,\mu)=0$, then $(\ld,\mu)\in\mathcal{B}^+_d$ so there is nothing to do.
If $N(\ld,\mu)>0$, let $j$ be the smallest index such that $\mu_j=-\ld_j$. Set
$\ld^{(1)}=(\ld_1,\ldots,\ld_{j-1},\ld_j,\ld_j,\ld_{j+1},\ldots,\ld_n)$ and
$\mu^{(1)}=(\mu_1,\ldots,\mu_{j-1},\ld_j-1,\mu_j+1,\mu_{j+1},\ldots,\ld_n)$.
Clearly, $\ld^{(1)}\in P_{>0}^{++}$ and $\mu^{(1)}\in\ld^{(1)}-\Ppos(d)$. We
now show $\mu^{(1)}\in P^+[\ld]$. Indeed, $\ld_j>0$, so
$\ld_j-1>1-\ld_j=\mu_j+1$; and, $\mu_j\geq\mu_{j+1}$, so $\mu_j+1>\mu_{j+1}$.
Since $\mu_j<\ld_j-1$, the $j$th twisted good Lyndon word in
$[\ld^{(1)}-\mu^{(1)}]$ is greater than the $j$th twisted good Lyndon word in
$[\ld-\mu]$. Hence, $[\ld-\mu]\leq[\ld^{(1)}-\mu^{(1)}]$.

Now, there exists a surjective homomorphism
\begin{align*}
\Phi_{[\mu_1,\ld_1-1]}\circledast\cdots\circledast\M((\ld_j,\ld_j),(\ld_j-1,\mu_j+1))
    &\circledast\cdots\circledast\Phi_{[\mu_{n},\ld_{n}-1]}\\
&\to\Phi_{[\mu_1,\ld_1-1]}\circledast\cdots\circledast\Phi_{[\mu_j,\ld_j-1]}
    \circledast\cdots\circledast\Phi_{[\mu_{n},\ld_{n}-1]}
\end{align*}
Hence, a surjective homomorphism $\M(\ld^{(1)},\mu^{(1)})\to\L(\ld,\mu)$. It
follows that $\L(\ld^{(1)},\mu^{(1)})\cong\L(\ld,\mu)$.

Since $N(\ld^{(1)},\mu^{(1)})<N(\ld,\mu)$ the result follows.
\end{proof}

Recall that given $\mu\in\ld-\Ppos(d)$ there exists a unique $w\in S_d[\ld]$
such that $w\mu\in P^+[\ld]$. Let $\mu^+$ denote this element. Also, given
$\ld\in\Pt$, and $\mu\in\ld-\Ppos(d)$, let
$[\ld-\mu]^+=[\ld-\mu^+]\in\mathcal{TG}$ be the associated twisted good word.
The following lemma is straightforward.

\begin{lem}\label{L:WordTriangularity} Assume that $\ld\in\Pt$,
$\ld-\mu\in\Ppos(d)$ and $\gamma\in Q^+$. Then,
$[\ld-\mu]\leq[\ld-(\mu-\gm)^+]$.
\end{lem}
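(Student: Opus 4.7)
The plan is to argue by induction on $\height(\gm)$. The base case $\gm = 0$ is immediate since, under the implicit assumption $\mu \in P^+[\ld]$ (as is needed for $[\ld - \mu]$ to be a good word in the sense of Lemma~\ref{L:BdGd}), one has $(\mu - \gm)^+ = \mu^+ = \mu$ and the inequality is an equality. For the inductive step, I will write $\gm = \gm' + \af$ with $\af$ a simple root and $\gm' \in Q^+$ of strictly smaller height. The induction hypothesis yields $[\ld - \mu] \leq [\ld - (\mu - \gm')^+]$, so setting $\nu := (\mu - \gm')^+ \in P^+[\ld]$ it remains to establish $[\ld - \nu] \leq [\ld - (\nu - \af)^+]$. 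In combining the two steps one uses the identity $(\mu - \gm)^+ = (\nu - w\af)^+$ for the unique $w \in S_n[\ld]$ with $\nu = w(\mu - \gm')$, and since $w\af$ is a (not necessarily simple) root preserving the block structure of $\ld$, a brief additional $S_n[\ld]$-analysis reduces its effect back to a simple-root step.

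For the simple-root step with $\gm = \af_i$, I would directly compare the two words right-to-left. Decomposing $[\ld - \nu] = l_1 l_2 \cdots l_n$ and $[\ld - (\nu - \af_i)^+] = l_1' l_2' \cdots l_n'$ as concatenations of the good Lyndon words associated to the segments $[\nu_k, \ld_k - 1]$ and $[(\nu - \af_i)^+_k, \ld_k - 1]$ respectively, the dominance $\nu \in P^+[\ld]$ combined with $\ld \in \Pt$ guarantees that each concatenation is non-increasing in the Lyndon order. Passing from $\nu$ to $\nu - \af_i$ affects only positions $i$ and $i+1$: the $i$th Lyndon word gains a new leftmost (and smallest) letter, while the $(i+1)$th loses its leftmost letter; applying $(\cdot)^+$ may additionally swap positions $i,i+1$ when $\ld_i = \ld_{i+1}$. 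Matching letter-by-letter from the right, the first point of disagreement occurs where the new word has a strictly larger letter, which delivers the desired inequality.

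The hard part will be the case analysis for the simple-root step. One must distinguish (a) whether $\ld_i = \ld_{i+1}$, determining whether a swap of positions $i$ and $i+1$ is forced by $(\cdot)^+$; (b) whether $\nu_i - 1$ or $\nu_{i+1} + 1$ changes sign, which alters the shape of the relevant Lyndon word between the single-segment form and the double-segment form prescribed by \eqref{E:GoodLyndonWordConvention}; and (c) whether the $(i+1)$th segment becomes empty. The typicality hypothesis $\ld \in \Pt$ prevents the degenerate case $\ld_i + \ld_j = 0$ that would render the shape transition ambiguous. In each sub-case, verifying the inequality reduces to locating the first right-to-left discrepancy and comparing two explicit letters, but organizing this bookkeeping cleanly across the ``single-to-double segment'' transition is the only real piece of work.
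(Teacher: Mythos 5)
The paper offers no argument at all for this lemma---it is asserted with the single sentence ``The following lemma is straightforward''---so there is no intended proof to compare yours against; you are filling a gap the authors left open.

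That said, your plan has two difficulties. First, the reduction from a general $\gm\in Q^+$ to a simple-root step does not close. After peeling $\gm=\gm'+\af$ and setting $\nu=(\mu-\gm')^+$, the identity $(\mu-\gm)^+=(\nu-w\af)^+$ replaces the simple root $\af$ by $w\af$, which need not be simple and may even be a negative root (take $\ld$ with a repeated entry and $w\in S_n[\ld]$ a transposition reversing $\af$). You cannot simply re-expand $w\af$ into simple roots, because each sub-step again requires re-applying $(\cdot)^+$ and the same obstruction recurs; the ``brief additional $S_n[\ld]$-analysis'' is doing essentially all of the work and you have not said what it is. Second, and more seriously, the simple-root step itself appears to fail in precisely the single-to-double-segment transition that you call ``the only real piece of work''. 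Take $\ld=(2,2)\in\Pt$, $\nu=(0,0)\in P^+[\ld]$, $\af=\af_1$, so $(\nu-\af)^+=(1,-1)$. Then $[\ld-\nu]=[0,1]\cdot[0,1]=[0,1,0,1]$, while $[\ld-(\nu-\af)^+]=[1]\cdot[0,0,1]=[1,0,0,1]$, the second segment being the double-segment Lyndon word prescribed by \eqref{E:GoodLyndonWordConvention}. Reading right to left, the first disagreement is at the third position, where $1>0$, giving $[0,1,0,1]>[1,0,0,1]$---the opposite of what the lemma asserts. The culprit is exactly the shape change you mention: the double-segment Lyndon word $[0,0,1]$ is \emph{smaller} than the single segment $[0,1]$ (words are smaller than their proper right factors), so subtracting a simple root that drives a component negative can \emph{decrease} the word. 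Either the lemma needs an unstated hypothesis (perhaps the $\gm$'s that actually arise from composition factors of $M(\mu)$ in $\mathcal{O}(\q(n))$ and from nonvanishing of $F_\ld$ are further constrained), or there is a genuine problem with the statement; your case analysis must confront this example squarely rather than assume the first right-to-left discrepancy always favors the target word.
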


\begin{thm} The following is a complete list of pairwise non-isomorphic simple modules
for $\ASe(d)$:
\[
\{\,\L(\ld,\mu)\mid (\ld,\mu)\in \mathcal{B}^+_d\,\}.
\]
\end{thm}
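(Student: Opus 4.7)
The theorem has two parts: every simple $\ASe(d)$-module is isomorphic to some $\L(\ld,\mu)$ with $(\ld,\mu)\in\mathcal{B}_d^+$ (which I interpret as the set $\mathcal{B}_d$ from \eqref{E:Bd}), and distinct pairs yield non-isomorphic modules. These are handled by a completeness argument and a triangularity argument respectively.

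For completeness, the two statements immediately preceding $\S\ref{S:Classification}$ reduce the problem to showing that every $\L(\ld,\mu)$ with $\ld\in\Pt$ and $\mu\in(\ld-\ep)-Q^+$ is isomorphic to some $\L(\eta,\nu)$ with $(\eta,\nu)\in\mathcal{B}_d$. I would first invoke the $\sm$ duality from $\S\ref{SS:alghomoms}$ together with Remark~\ref{R:Duality} and Lemma~\ref{sm twisted action} to reflect any segment $[\mu_i,\ld_i-1]$ with $\ld_i\le 0$, reducing to the case $\ld\in P^{++}_{>0}$. Then Lemma~\ref{L:IsoStdMod} lets me assume $\mu\in P^+[\ld]$, and Corollary~\ref{C:StandardizingWords} produces the required $(\eta,\nu)\in\mathcal{B}_d$ with $\L(\ld,\mu)\cong\L(\eta,\nu)$.

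For pairwise non-isomorphism, the plan is a triangularity argument in $K(\Rep\ASe(d))$. Theorem~\ref{T:GrothendieckBasis1} supplies the basis $\{[\M(\ld,\mu)]:(\ld,\mu)\in\mathcal{B}_d\}$, and Theorem~\ref{T:MaxSubmod} identifies $\L(\ld,\mu)$ as the unique simple quotient of $\M(\ld,\mu)$, giving
\[
[\M(\ld,\mu)]=[\L(\ld,\mu)]+\sum_{(\eta,\nu)\in\mathcal{B}_d} m^{(\ld,\mu)}_{(\eta,\nu)}[\L(\eta,\nu)]
\]
after using the completeness half to write each composition factor of the maximal submodule $\mathcal{R}(\ld,\mu)$ as an $\L(\eta,\nu)\in\mathcal{B}_d$. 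Central-character considerations together with the weight-space description of Lemma~\ref{L:weights of M} force any off-diagonal contribution $(\eta,\nu)$ to correspond to a weight obtained by lowering $\mu$ by a nonzero element of $Q^+$ and then rectifying into $P^+[\ld]$; Lemma~\ref{L:WordTriangularity} then supplies the strict inequality $[\ld-\mu]<[\eta-\nu]$ in the right-to-left lexicographic ordering on good words. Thus the transition matrix from $\{[\M(\ld,\mu)]\}$ to $\{[\L(\ld,\mu)]\}$ is unitriangular in the word order, so the family $\{[\L(\ld,\mu)]:(\ld,\mu)\in\mathcal{B}_d\}$ is linearly independent in $K(\Rep\ASe(d))$ and the corresponding simples are pairwise non-isomorphic.

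The principal obstacle is the triangularity step: one must pin down, with sufficient precision to apply Lemma~\ref{L:WordTriangularity}, that every composition factor $\L(\eta,\nu)$ of $\mathcal{R}(\ld,\mu)$ has $\nu$ lying in the $S_d[\zt_{\ld,\mu}]$-orbit of $(\mu-\gm)^+$ for some nonzero $\gm\in Q^+$ (after standardization $\eta=\ld$ follows from matching central characters). The standardization in the completeness half is comparatively mechanical once Remark~\ref{R:Duality} and Corollary~\ref{C:StandardizingWords} are in hand.
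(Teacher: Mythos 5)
Your proposal follows the same overall strategy as the paper---a Grothendieck-group triangularity argument in the right-to-left lexicographic order on good words, with $\{[\M(\ld,\mu)]\}$ as the reference basis via Theorem~\ref{T:GrothendieckBasis1}---but there is a genuine gap in how you establish the triangularity, which you flag but do not resolve. You propose to show that every composition factor $\L(\eta,\nu)$ of $\mathcal{R}(\ld,\mu)$ satisfies $[\ld-\mu]<[\eta-\nu]$ using ``central-character considerations together with the weight-space description of Lemma~\ref{L:weights of M}.'' But those considerations only tell you that the highest $\ASe(d)$-weight $\zt_{\eta,\nu}$ of a composition factor must lie in the weight support $\{w\zt_{\ld,\mu} : w\in D_{\ld-\mu}\}$ of $\M(\ld,\mu)$; they do \emph{not} by themselves deliver the precise statement you need to feed into Lemma~\ref{L:WordTriangularity}, namely that $\nu$ has the form $(\mu-\gm)^+$ for some $\gm\in Q^+$. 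This is exactly the point the Arakawa--Suzuki-type functor is designed to handle: the paper invokes the classical fact that every composition factor of the $\q(n)$-Verma $M(\mu)$ is $L(\mu-\gm)$ with $\gm\in Q^+$, transports this through the exact functor $F_\ld$ (Corollary~\ref{C:Flambdaexactness}, Theorem~\ref{T:SimplesToSimples}, and the proposition identifying $F_\ld\widehat{M}(\mu)\cong\widehat{\M}(\ld,\mu)$) to conclude that every composition factor of $\M(\ld,\mu)$ is some $\L(\ld,\mu-\gm)\cong\L(\ld,(\mu-\gm)^+)$, and only then applies Corollary~\ref{C:StandardizingWords} and Lemma~\ref{L:WordTriangularity}. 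Without that Lie-theoretic input, your triangularity claim is unsupported.

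Two smaller points. First, the paper does not argue completeness and pairwise-distinctness separately as you do; it establishes the (non-strict, which is enough because $(\ld,\mu)\mapsto[\ld-\mu]$ is injective on $\mathcal{B}_d$ by Lemma~\ref{L:BdGd}) triangularity with nonzero diagonal entry $c_{\ld,\mu,\ld,\mu}$ in a single stroke, and both conclusions then follow from the fact that the classes of simples give a canonical basis of $K(\Rep\ASe(d))$. Second, in your completeness half you invoke Corollary~\ref{C:StandardizingWords}, which carries the hypothesis $|\mu_i|\leq\ld_i$; starting merely from $\mu\in(\ld-\ep)-Q^+$ this inequality is not automatic and would need to be justified (the paper sidesteps this by landing directly on weights of the form $(\mu-\gm)^+$ with $\mu\in\ld-\Ppos(d)$ coming out of the functor).
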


\begin{proof} Every composition factor of $M(\mu)$ is of the
form $L(\mu-\gamma)$ for some $\gamma \in Q^+$. Applying the functor, we deduce
that every composition factor of $\M(\ld,\mu)$ is of the form
$\L(\ld,\mu-\gamma)\cong\L(\ld,(\mu-\gamma)^+)$. Now, putting together
Corollary \ref{C:StandardizingWords} and Lemma \ref{L:WordTriangularity}, we
deduce that in the Grothendieck group
\[
[\M(\ld,\mu)]=\sum_{\substack{\nu \in\mathcal{B}_d(\eta)\\
\eta \in P^{++}_{>0}\\
[\ld-\mu]\leq[\eta-\nu]}}c_{\lambda,\mu, \eta, \nu}[\L(\eta,\nu)],
\] where the $c_{\lambda,\mu,\eta, \nu}$ are integers and where $c_{\lambda,\mu,\lambda, \mu} \neq 0 $.
Therefore, the transition matrix between the basis for $K(\Rep\ASe(d))$ given
by standard modules and that given by simples is triangular.
\end{proof}

\newpage

\section{Table of Notation}\label{SS:TableofNotation}  For the convenience of the reader we provide a table of notation with a reference to where the notation is first defined.

\bigskip

\begin{center}
\begin{tabular}{ccl}
Notation & & First Defined\\ \hline
$\Se (d)$, $\ASe (d)$, $\P_{d}[x]$, $\A(d) $ & & Section~\ref{SS:Saffdef} \\
$q(a)$ &    & Section~\ref{SS:weights},\eqref{E:qdef}\\
$\P_{d}[x^{2}]$ & & Section~\ref{SS:weights} \\
$\ind^{d}_{\mu}$ & & Section~\ref{SS:Mackey} \\
$D_\nu$, $D_{(m,k)}$ & & Section~\ref{SS:Mackey}  \\
$\gamma_{0}=\gamma_{0}(a_{1}, \dots ,a_{d})$ & & Section~\ref{SS:characters}, \eqref{E:gammazerodef}\\
$[a_1,\ldots,a_d]$ & & Section~\ref{SS:characters} \\
$\Cl_{d}$ & & Section~\ref{subsection irred modules}, \eqref{E:Cldef}\\
$\L_i$, $s_{ij}$&& Section~\ref{subsection irred modules}, \eqref{E:JMelt}\\
$[a,b]$ & & Section~\ref{subsection irred modules} \\
$\hat{\Ph}_{[a,b]}$, $\hat{\Ph}_{[a,b]}^{+}$, $\hat{\Ph}_{[a,b]}^{-}$ & & Section~\ref{subsection irred modules} \\
$\Ph_{[a,b]}$ & & Section~\ref{subsection irred modules}, Definition~\ref{segments} \\
$\hat{\va}_{[a,b]}$, $ \varphi\hat{\va}_{[a,b]}$ & & Section~\ref{subsection irred modules} \\
${\va}_{a,b,n}$ & & Section ~\ref{unique simple quotient}\\
$R$, $R^{+}$, $Q$, $Q^{+}$ & & Section~\ref{SS:LieThy} \\
$P$, $P_{\geq 0}$, $P^{+}$, $P^{++}$, $P^{+}_{\text{rat}}$, $P^{+}_{\text{poly}}$ & & Section~\ref{SS:LieThy} \\
$P(d)$, $P_{\geq 0}(d)$, $P^{+}(d)$, $P^{++}(d)$, $P^{+}_{\text{rat}}(d)$, $P^{+}_{\text{poly}}(d)$ & & Section~\ref{SS:LieThy} \\
$S_{n}[\lambda]$, $R[\lambda]$, $P^{+}[\lambda]$, $P^-[\ld]$ & & Section~\ref{SS:LieThy} \\
$\widehat{\Phi}(\lambda, \mu)$, $\Phi(\lambda, \mu)$ & & Section~\ref{SS:inducedmodules} \\
$\widehat{\M}(\lambda, \mu)$, $\M (\lambda, \mu)$ & & Section~\ref{SS:inducedmodules}, \eqref{E:Mhatdef}, \eqref{E:Mdef}\\
${\M}_{a,b,n}$ & & Section ~\ref{unique simple quotient}\\
$S_{n}[\zeta]$ & & Section~\ref{SS:inducedmodules} \\
$\mathcal{R}(\lambda, \mu)$ & & Section~\ref{unique simple quotient} \\
$L(\lambda, \mu)$ & & Section~\ref{unique simple quotient}, Theorem~\ref{thm:unique irred quotient} \\
$\ld/\mu$& & Section~\ref{S:Calibrated}\\
$\mathcal{Y}_{i,L}$& & Section~\ref{S:Calibrated}\\
$H^{\ld/\mu}$& & Section~\ref{S:Calibrated}\\
$e_{i,j}$, $f_{i,j}$, $\bar{e}_{i,j}$, $\bar{f}_{i,j}$ & & Section~\ref{SS:qndfn} \\
$\mathcal{O}$, $\mathcal{O}(\fq (n))$ & & Section~\ref{SS:RootData} \\
$\widehat{M}(\lambda)$, $M(\lambda)$ & & Section~\ref{SS:RootData} \\
$(\cdot, \cdot)_{S}$ & & Section~\ref{SS:ShapovalovForm}
\end{tabular}

\newpage

\begin{tabular}{ccl}
Notation & & First Defined\\ \hline
$C_{i}$, $S_{i,j}$, $F_i$ & & Section~\ref{SS:Sergeev Duality} \\
$\Omega_{i,j}$ & & Section~\ref{SS:action}  \\
$F_{\lambda}$ & &  Section~\ref{SS:Flambda}, \eqref{E:Flambdadef}\\
$(\cdot, \cdot)_{\mu}$ & & Section~\ref{SS:functorimage}\\
$\varpi(\mu)$ & & Section~\ref{SS:functorimage}\\
$\Dt^+$, $\Pi$, $\mathcal{Q}$, $\mathcal{Q}^+$& & Section~\ref{SS:ShuffleAlg}\\
$(\mathcal{F},*)$, $\mathcal{W}$& & Section~\ref{SS:ShuffleAlg}\\
$\F_\A$, $\F_\C$, $\W_\A$, $\W_\C$& & Section~\ref{SS:ShuffleAlg}\\
$\underline{E}\in\W_\C$& & Section~\ref{SS:ShuffleAlg}\\
$\mathcal{GL}$, $\mathcal{G}$ & & Section~\ref{SS:LyndonWords}\\
$\mathcal{B}_d[\ld]$, $\mathcal{B}_d$ & & Section~\ref{SS:LyndonWords}\\
$[\cdot,\cdot]_q$, $\Xi$, $r_g$ & & Section~\ref{SS:LyndonWords}\\
$E_g$, $E_g^*$, $b_g$, $b_g^*$& & Section~\ref{SS:PBWandCanonical}
\end{tabular}
\end{center}

\pagebreak

\end{document}